  \theoremstyle{plain}
  \newtheorem{theorem}{Theorem}
  \newtheorem{lemma}[theorem]{Lemma}
  \newtheorem{proposition}[theorem]{Proposition}
  \newtheorem{corollary}[theorem]{Corollary}
  \theoremstyle{definition}
  \declaretheorem[style=definition,numbered=no,name=Remark]{remark}
  \declaretheorem[style=definition,numbered=no,name=Convention]{convention}
  \declaretheoremstyle[postheadspace=.4em,headfont=\scshape]{examplestyle}
  \declaretheorem[style=examplestyle,numbered=no,name=Examples]{examples}
  \declaretheorem[style=examplestyle,numbered=no,name=Example]{example}
  \newcounter{exampleno}
  \renewcommand{\theexampleno}{{\textcolor{black}{(}\roman{exampleno}\textcolor{black}{)}}}
  \newcommand{\exitem}[1]{\noindent\refstepcounter{exampleno}\label{#1}\theexampleno}
  \DeclareMathOperator{\tsum}{{\textstyle\sum}}
  \DeclareMathOperator{\msum}{{\medmath\sum}}
  \DeclareMathOperator{\medcup}{\mathbin{\scalebox{1.5}{\ensuremath{\cup}}}}%
\begin{document}

\begin{abstract}
  A distributional symmetry is invariance of a distribution under a group of transformations.
  Exchangeability and stationarity are examples.
  We explain that a result of ergodic theory provides a
  law of large numbers: If the group satisfies suitable conditions, expectations 
  can be estimated by averaging over subsets of transformations,
  and these estimators are strongly consistent. 
  We show that, if a mixing
  condition holds, the averages also satisfy a central limit theorem,
  a Berry-Esseen bound, and concentration. These are
  extended further to apply to
  triangular arrays, to randomly subsampled
  averages, and to a generalization of U-statistics. As applications, we obtain new results on exchangeability, random fields, network models,
  and a class of marked point processes. We also establish asymptotic normality of the empirical entropy for a large class of processes. Some known results are recovered as special cases, and can hence be interpreted as an outcome of symmetry.
  The proofs adapt Stein's method.
\end{abstract}

\begin{frontmatter}
  \title{Limit theorems for invariant distributions}
  \begin{aug}
    \author{\fnms{Morgane\ }\snm{Austern}\ead[label=e1]{}}
    \and
    \author{\fnms{Peter\ }\snm{Orbanz}\corref{}\ead[label=e2]{}}
    \affiliation{Harvard University and University College London}
  \end{aug}
  \begin{keyword}[class=MSC]
    \kwd[Primary ]{62G20} 
    \kwd[; secondary ]{37A30} 
    \kwd{62M99} 
    \kwd{60F05} 
    \kwd{60G09} 
  \end{keyword}

  \begin{keyword}
    \kwd{asymptotic normality, Berry-Esseen theorems, Lindenstrauss' point-wise theorem, Stein's method,
      symmetry, exchangeability, ergodicity}
  \end{keyword}
  \maketitle
\end{frontmatter}

\newcommand{\myitem}{{\raisebox{0.28ex}{\tiny$\bullet$}}}
\renewcommand{\labelitemi}{\myitem}
\newcommand{\myint}{\medint\int}
\renewcommand{\triangle}{\vartriangle}
\renewcommand{\emptyset}{\varnothing}

\def\group{\mathbb{G}}
\def\dgroup{d_{\text{\rm G}}}
\def\kword#1{\textbf{#1}}
\def\K{\mathcal{K}}
\def\xspace{\mathbf{X}}
\def\borel{\mathcal{B}}
\def\Folner{F{\o}lner }
\def\mean{\mathbb{E}}
\def\condind{{\perp\!\!\!\perp}}
\def\ie{i.e.\ }
\def\eg{e.g.\ }
\def\equdist{\overset{\scriptscriptstyle\smash{\text{\rm\tiny d}}\vphantom{x}}{=}}
\def\equas{=_{\text{\rm\tiny a.s.}}}
\def\braces#1{{\lbrace #1 \rbrace}}
\def\bigbraces#1{{\bigl\lbrace #1 \bigr\rbrace}}
\def\Bigbraces#1{{\Bigl\lbrace #1 \Bigr\rbrace}}
\def\simiid{\sim_{\mbox{\tiny iid}}}
\def\Law{\mathcal{L}}
\def\iid{i.i.d.\ }
\def\L{\mathbf{L}}
\def\INV{\mathbf{I}_{\group}}
\def\ind#1{\text{\tiny #1}}
\newcommand{\argdot}{{\,\vcenter{\hbox{\tiny$\bullet$}}\,}}
\def\A{\mathbf{A}}
\def\B{\mathbf{B}}
\def\cL{\overline{\mathbf{L}}}
\def\sigmaf{\sigma_{\! f}}
\def\dW{d_{\ind{\rm W}}}
\def\ERG{\mathbf{E}}
\def\pMeas{\mathcal{P}}
\def\Erdos{Erd\H{o}s}
\def\Renyi{R\'enyi\ }

\newcommand{\darrow}{\xrightarrow{\;\text{\rm\tiny d}\;}}
\newcommand{\cF}{\overline{\mathbb{F}}}

\section{Introduction}
Statistical models that can be characterized by symmetry, or transfor\-ma\-tion invariance,
include stationary processes \citep{Shields:1996}, 
graphon and graphex models of networks
\citep{Bickel:Chen:Levina:2011:1,Ambroise:Matias:2012:1,Klopp:Tsybakov:Verzelen:2017:1,Caron:Fox:2017:1,Veitch:Roy:2016:1,Borgs:Chayes:Cohn:Holden:2016:1},
the exchangeable random partitions that underpin much of Bayesian nonparametrics
\citep{James:Lijoi:Pruenster:2009,Pitman:2006}, and
rotation- and shift-invariant random fields
\citep{Bolthausen:1982:1,Jensen:Kuensch:1994:1}.
Examples from related fields are various models for relational data and preference prediction used in machine learning
\citep{Orbanz:Roy:2015},
point process representations of nearest neighbor methods and Voronoi tesselations
\citep{grain,merlin,Penrose:2007}, or self-similar
stochastic processes \citep{Kallenberg:2001}.
Recent advances in spin glass theory rely crucially
on exchangeable arrays \citep{Panchenko:2013}.

We consider estimation under such invariant models. For each example
above, a canonical estimator for expectations is known. We explain 
that these estimators are special cases of a general class of averages.
For such averages, the ergodic theorem
of Lindenstrauss \citep{Lindenstrauss:2001:1} provides what a statistician
would call a (strong) law of large
numbers. 
Starting from this result, we establish central limit theorems,
Berry-Esseen bounds, and a concentration inequality.
We then develop several applications in detail.

\subsection{Overview}
\label{sec:overview}
The remainder of this section is an informal summary of our
approach, and of the main results. For the purposes of this
introduction, we sidestep technicalities:
A key quantity
throughout is an infinite group $\mathbb{G}$. We
assume for now that $\group$ is countable, and postpone general
definitions to \cref{sec:background}.

Consider a random element $X$ of a space $\xspace$, and a real-valued function $f$.
Suppose the group $\group$ consists of measurable bijections
${\phi:\xspace\rightarrow\xspace}$. We can then transform $X$ by
$\phi$, where we use the notation $\phi(X)$ and $\phi X$ interchangeably.
The purpose of this work is to understand under what conditions the
expectation $\mean[f(X)]$ can be estimated by
\begin{equation}
  \label{eq:intro:0}
  \mathbb{F}_n(f,X):=\mfrac{1}{|\A_n|}\msum_{\phi\in\A_n} f(\phi X)\;,
\end{equation}
where ${\A_1,\A_2,\ldots}$ are finite subsets of
$\mathbb{G}$, and $|\argdot|$ denotes cardinality. (For uncountable groups, $\mathbb{F}_n$ integrates
over a compact set $\A_n$.)
Such averages occur in
dynamical systems 
\citep{Einsiedler:Ward:2011:1} and  
statistical mechanics \citep{Parisi:1988}.
Various examples are used in statistics:\nolinebreak
\begin{example}
  \exitem{ex:intro:Z}
  The window estimator for a random field on a grid
  \citep{Bolthausen:1982:1,Jensen:Kuensch:1994:1}. In this case,
  ${X=(X_{ij})_{i,j\in\mathbb{Z}}}$ is a collection of real-valued random variables.
  Let $f$ be a function that depends only on
  the value at the origin, so ${f(X)=g(X_{00})}$ for some
  function $g$.
  A transformation that shifts the grid is of the
  form ${\phi=(k,l)}$ for some ${k,l\in\mathbb{Z}}$.
  If we choose ${\A_n:=\braces{-n,\ldots,n}^2}$, then\nolinebreak
  \begin{equation}
  \label{intro:eq:rfield}
    \mathbb{F}_n(f,X)
    =
    \tfrac{1}{|\A_n|}\tsum_{(k,l)\in\A_n}f((X_{i+k,j+l})_{i,j\in\mathbb{Z}})
    =
    \tfrac{1}{(2n+1)^2}\tsum_{|i|,|j|\leq n}g(X_{ij})
  \end{equation}
  averages $g$ over all locations on the subgrid of radius $n$ around
  the origin. The group $\group$ is the group
  $\mathbb{Z}^2$ of all shifts, with addition as group operation.
\end{example}
\noindent
More generally, $X$ is a random object---such as a
random sequence, matrix, field, or graph---and $f$ is
a function that typically depends only on ``a small part'' of $X$. The group $\group$
is a set of transformations that ``move the domain'' of $f$
over $X$, and $\A_n$ contains those elements of $\group$ that cover
a suitably defined sample, whose size is a function of $n$. The next two examples choose
$\A_n$ as $\mathbb{S}_n$, the set of all
permutations of the set $\braces{1,\ldots,n}$.\nolinebreak
\begin{examples}
  \exitem{ex:intro:ex:sequence} The sample average over a random sequence
  ${X=(X_1,X_2,\ldots)}$. Consider a function
  ${f(X)=g(X_1)}$ of the first entry,
 and let each permutation ${\phi\in\mathbb{S}_n}$ transform $X$
 by permuting entries, ${\phi
   X\!:=(X_{\phi(1)},\ldots,X_{\phi(n)},X_{n+1},X_{n+2},\ldots)}$. Then
  \begin{equation}
  \label{intro:eq:sequence}
  \mathbb{F}_n(f,X)=\tfrac{1}{|\mathbb{S}_n|}\tsum_{\phi\in\mathbb{S}_n}f(\phi
  X)=\tfrac{1}{n!}\tsum_{\phi\in\A_n}g(X_{\phi(1)})=\tfrac{1}{n}\tsum_{i\leq n}g(X_{i})\;.
\end{equation}
 In this case, the group is ${\group=\cup_n\mathbb{S}_n}$,
the set of all finite permutations of $\mathbb{N}$.\\[-.7em]

\exitem{ex:intro:graph} The triangle density in network
analysis \citep{Bickel:Chen:Levina:2011:1,Ambroise:Matias:2012:1}. Here,
$X$ is a random undirected, simple graph with vertex set
  $\mathbb{N}$. Denote by ${X[i_1,\ldots,i_k]}$ the induced subgraph on the
vertices ${i_1,\ldots,i_k\in\mathbb{N}}$. Let $g$ be a function defined on
graphs with three vertices, and set ${f(X):=g(X[1,2,3])}$.
Suppose each
${\phi\in\mathbb{S}_n}$ transforms the graph by permuting the first $n$ vertices, so
${(\phi X)[1,2,\ldots]=X[\phi(1),\ldots,\phi(n),n+1,n+2,\ldots]}$. Then
$\mathbb{F}_n$ averages $g$ over all subgraphs of size 3 in the finite graph
${X[1,\ldots,n]}$:
\begin{equation*}
  \mathbb{F}_n(f,X)=\tfrac{1}{|\mathbb{S}_n|}\tsum_{\phi\in\mathbb{S}_n}g(X[\phi(1),\phi(2),\phi(3)])
  =\tfrac{1}{n(n-1)(n-2)}\tsum g(X[i,j,k])\;,
\end{equation*}
where the sum on the right runs over all distinct triples ${i,j,k\leq n}$.
\end{examples}

\subsubsection*{Tools from ergodic theory}
To characterize the behavior of $\mathbb{F}_n$, we borrow from
ergodic theory: Two key conditions are
\begin{equation}
  \label{intro:conditions}
  \text{(i)}\quad \phi X\equdist X
  \quad\text{ and }\quad
  \text{(ii)}\quad
  |\phi\A_n\cap\A_n|/|\A_n|\xrightarrow{n\rightarrow\infty}1
  \quad\text{ for all }\phi\in\group\;,
\end{equation}
where $\equdist$ is equality in distribution.
If (\ref{intro:conditions}i) holds, $X$ is called
$\group$-invariant.
If it also satisfies
\begin{equation}
    \label{intro:conditions:ergodic}
  P(X\in A)\in\braces{0,1}\;\;\text{ for every Borel set }A\text{
    with }\phi A = A\text{ for
    all }\phi\in\group\;,
\end{equation}
it is called $\group$-ergodic. (Uncountable groups require more
general formulations of (\ref{intro:conditions}ii) and
\eqref{intro:conditions:ergodic}, see \cref{sec:background}.) The
same terminology is applied to the distribution of $X$,
so a $\group$-ergodic probability measure is the law of
$\group$-ergodic random element, etc. \cref{tab:ergodic} lists examples.

To motivate the conditions informally, first observe that $\mathbb{F}_n$ attempts to estimate
$\mean[f(X)]$ from surrogate values $f(\phi X)$.
That should require ${\mean[f(X)]=\mean[f(\phi
  X)]}$, which is in turn implied by (\ref{intro:conditions}i).
Any valid estimator
$\mathbb{F}_n$ of $\mean[f(X)]$ must satisfy ${\mathbb{F}_n\approx\mean[f(X)]}$ in some suitable sense for large enough $n$, so it must also satisfy
\begin{equation*}
  \mathbb{F}_n(f,X)\approx\mean[f(X)]
  =\mean[f(\phi X)]\approx
  \mathbb{F}_n(f,\phi X)\;.
\end{equation*}
That is true if ${\phi\A_n\approx\A_n}$, which is guaranteed by
(\ref{intro:conditions}ii).
In statistics, this condition was first used by
Charles Stein, to characterize groups for which the Hunt-Stein theorem establishes
minimaxity of invariant tests \citep{Bondar:Milnes:1981:1}.
Ergodicity can be motivated as follows:
We hope to establish strong consistency of estimates, that is,
${\mathbb{F}_n(f,X)\rightarrow\mean[f(X)]}$ almost surely as ${n\rightarrow\infty}$.
That means the event ${\braces{\mathbb{F}_n(f,X)\rightarrow a}}$ must have
probability $1$ for ${a=\mean[f(X)]}$, and $0$ otherwise.
Since invariance implies ${\mean[f(\phi X)]=\mean[f(X)]}$, these
events are invariant sets for all ${a\in\mathbb{R}}$ (and indeed any invariant
measurable set can be characterized in this way for some $f$ and some invariant
$X$). In this sense, $\group$-ergodic distributions form a class for
which strong consistency might hold, provided one can establish a
suitable strong law of large numbers.

\begin{table}[b]
    \caption{}
    \label{tab:ergodic}
    \resizebox{\textwidth}{!}{
      \begin{tabular}{@{}llll@{}}
        $\group$-invariant objects $X$ & $\group$-ergodic objects &
        $\xi$ explained by
        & eq.\ \eqref{eq:intro:lindenstrauss} specializes to \\
        \midrule
        exchangeable sequences & \iid sequences & de Finetti's theorem \citep{Kallenberg:2001} & law of large numbers\\
        stationary Markov chain & irreducible chains
        \citep{Shields:1996} &
                                                          Rohlin's
                                                          source thm. \citep{Shields:1996} & Birkhoff's theorem \citep{Kallenberg:2001}\\
        exchangeable graphs & graphon models
                              \citep{Borgs:Chayes:Lovasz:Sos:Vesztergombi:2008,Diaconis:Janson:2007}
                                                & Aldous-Hoover thm.
                                               \citep{Kallenberg:2005}
        & graph limit convergence \citep{Borgs:Chayes:Lovasz:Sos:Vesztergombi:2008} \\
        graphs generated by & graphex models \citep{Caron:Fox:2017:1} & Kallenberg's represen-
        & empirical graphex \citep{Veitch:Roy:2016:1,Borgs:Chayes:Cohn:Holden:2016:1}\\
        {\ }inv. point processes & & {\ }tation theorem \citep{Kallenberg:2005} &\\
        exchangeable arrays & dissociated arrays
        \citep{Kallenberg:2005} & Aldous-Hoover
                                                   theorem &
                                                             Kallenberg's LLN \citep{Kallenberg:1999}
      \end{tabular}
  }
\end{table}

This law of large numbers is due to Lindenstrauss \citep{Lindenstrauss:2001:1}:
If (\ref{intro:conditions}ii) holds, and $X$ is $\group$-ergodic,\nolinebreak
\begin{equation}
  \label{eq:intro:lindenstrauss:ergodic}
  \mathbb{F}_n(f,X)\;\xrightarrow{n\rightarrow\infty}\;\mean[f(X)]
  \qquad\text{ almost surely }
\end{equation}
for any function $f$ with ${\mean[|f(X)|]<\infty}$.
The sets $\A_n$ must satisfy certain additional fine print, 
but they can
always be modified to do so if they satisfy
(\ref{intro:conditions}ii). \cref{theorem:lindenstrauss} in
\cref{sec:background} gives a proper statement.

The theorem can be extended to the $\group$-invariant case. The two cases are related by a property 
known as ergodic decomposition:
$\group$-invariant distributions are mixtures of $\group$-ergodic
ones. More formally, if $X$ is $\group$-invariant, there is a
random element $\xi$ of the set
of $\group$-ergodic distributions such that ${X|\xi\sim\xi}$
(see \cref{theorem:ergodic:decomposition} for details).
If $X$ is $\group$-invariant, \eqref{eq:intro:lindenstrauss:ergodic} becomes\nolinebreak
\begin{equation}
  \label{eq:intro:lindenstrauss}
  \mathbb{F}_n(f,X)\;\xrightarrow{n\rightarrow\infty}\;\mean[f(X)|\xi]=\myint f(x)\xi(dx)
  \qquad\text{ almost surely. }
\end{equation}
For example, a random sequence $(X_i)_{i\in\mathbb{Z}}$
is stationary if it is $\mathbb{Z}$-invariant (adding elements of
$\mathbb{Z}$ shifts the index set). In this case,
ergodic
decomposition becomes Rohlin's stationary source theorem \citep{Shields:1996},
and \eqref{eq:intro:lindenstrauss:ergodic} specializes to Birkhoff's ergodic theorem.
An exchangeable (i.e.\ permutation-invariant) sequence
is ergodic if it is i.i.d.---see \cref{ex:deFinetti} for details. Thus, ${X|\xi\sim\xi}$ means $X$ is 
``conditionally i.i.d.'', which is de Finetti's theorem, and
\eqref{eq:intro:lindenstrauss:ergodic} is the
strong law of large numbers.

\subsubsection*{Sketch of main results} Our results provide rates of
convergence for $\mathbb{F}_n$. 
Like certain convergence results for stationary processes,
they use a mixing condition to control dependence
within $X$:
A typical mixing condition for a discrete-time process
  ${(X_1,X_2,\ldots)}$ would be that any pair $(X_j,X_k)$, for
${j<k}$, is approximately independent of the tail ${(X_{k+n},X_{n+k+1},\ldots)}$ 
for large $n$
\citep{Billingsley:1995}. Informally, we replace the
tail by ${(f(\psi X))_{\psi\in G}}$, for a set
${G\subset\group}$, and require\nolinebreak
\begin{equation}
  \label{eq:intro:mixing}
  (f(\phi_1 X),f(\phi_2 X)) \;\condind\;
  (f(\psi X))_{\psi\in G}\;|\;\xi
  \quad\text{ approximately }
\end{equation}
whenever ${\phi_1,\phi_2\in\group}$ are far from $G$. The condition is
tailored to second-order results, hence the pair on the left.
Since ${X|\xi\sim\xi}$, conditional independence given $\xi$ suffices.
\cref{sec:mixing} gives a precise definition.

Our first result is a central limit theorem: If ${\mean[|f(X)|^{2+\varepsilon}]<\infty}$ for some
${\varepsilon>0}$, and the conditional mixing property above holds, then
\begin{equation*}
  \sqrt{|\A_n|}\;\bigl(\mathbb{F}_n(f,X)-\mean[f(X)|\xi]\bigr)\;\xrightarrow{\;\rm{\tiny
      d}\;}\;\eta
  Z\quad\text{ for }Z\sim N(0,1)\;.
\end{equation*}
The asymptotic variance $\eta^2$ is a random variable, independent of
$Z$, and constant if $X$ is $\group$-ergodic. That is \cref{theorem:CLT}. If ${\mean[|f(X)|^{4+2\varepsilon}]<\infty}$,
\cref{theorem:BE} bounds the approximation error as 
\begin{equation*}
  \dW\Bigl(
          {\textstyle\frac{\sqrt{|\A_n|}}{\eta}}\bigl(\mathbb{F}_n(f,X)-\mean[f(X)|\xi]\bigr),Z
          \Bigr)
          \;\leq\;
          u(\A_n,\eta)
\end{equation*}
for a suitable function $u$ and the Wasserstein distance $\dW$. This
result generalizes the Berry-Esseen theorem.
In either case, the moment condition can be relaxed to
${\varepsilon=0}$, at the price of stronger mixing.

In statistics, asymptotic normality results are often applied to
quantify uncertainty. \cref{CI} shows that, if $z_{1-\frac{\alpha}{2}}$ is the
${(1-\alpha)}$-quantile of the standard normal distribution, 
\begin{equation*}
  \limsup_{n\rightarrow \infty}   P\Big(\mathbb{E}[f(X)|\xi]\in \big[\mathbb{F}_n(f,X)- z_{1-\frac{\alpha}{2}}\mfrac{\hat\eta}{\sqrt{|\A_n|}},\mathbb{F}_n(f,X)+ z_{1-\frac{\alpha}{2}}\mfrac{\hat\eta}{\sqrt{|\A_n|}}\big]\Big)\le \alpha
\end{equation*}
under the distribution $P$ of $X$, where $\hat{\eta}$ is an empirical variance that can be
computed from a sample of size $n$. In other words, the
interval estimate above is a consistent confidence interval.

In \cref{sec:g}, we generalize $\mathbb{F}_n$ along three lines:
(i) $f$ and $X$ may change with $n$.
(ii) Averages may be subsampled or randomized. In the simplest case, that means
replacing $\A_n^{k_n}$ by a random subset $\smash{\widehat{\A}_n}$, and
generalizing $\mathbb{F}_n$ to
\begin{equation*}
  \widehat{\mathbb{F}}_n(f_n,X_n)=\tfrac{1}{|\widehat{\A}_n|}\tsum_{\phi\in\hat{\A}_n}f_n(\phi X_n)-\mean[f_n(X_n)|\xi_n]\;.
\end{equation*}
More generally, $\widehat{\mathbb{F}}_n$ is defined by a
random measure $\mu_n$ on $\A_n$, so that random subsets are the special
case where $\mu_n$ is uniform on $\smash{\widehat{\A}_n}$.
(iii) Each $\phi$ may be substituted by a vector of transformations,
with some number $k_n$ of elements, replacing $\group$ by
$\group^{k_n}$ and $\A_n$ by $\A_n^{k_n}$.
Our main results are a central limit theorem (\cref{theorem:CLT:g}) and a Berry-Esseen bound (\cref{theorem:BE:g}) for
$\widehat{\mathbb{F}}_n$.
We use the result for $k_n$-tuples to formulate a class of generalized U-statistics
(\cref{corollary:U:statistics}).

Since certain
asymptotic properties of i.i.d.\ sequences generalize to
$\group$-invariant objects, it is natural to ask whether 
finite-sample properties do so, too. \cref{sec:concentration} gives a concentration inequality of the form
\begin{equation*}
  \mathbb{P}\bigl(\widehat{\mathbb{F}}_n(f,X)\geq t\bigr)\;\leq\; 2e^{-\omega_n t^2}\qquad\text{ for all }t>0\;,
\end{equation*}
for certain constants $\omega_n$, 
where $\mathbb{P}$ denotes probability under the joint distribution of
$X$ and the (possibly randomized) average $\widehat{\mathbb{F}}_n$.

\subsection{Applications}

The remaining sections apply the theorems summarized above to
obtain new results for a number of specific models, and also highlight
how certain known results can be phrased as instances of invariance.
We consider two specific types of invariance---stationarity and
exchangeability---in some detail because of their importance to
statistics, but also discuss other applications, to point
processes and entropy.

For stationary random fields, the group $\group$ plays a dual role, as index set
and a set of shifts. Substituting a field indexed
by the grid ${\mathbb{Z}^d}$ into \cref{theorem:CLT}
recovers Bolthausen's central limit theorem
\citep{Bolthausen:1982:1}. Substituting other groups generalizes
this result, as \cref{corollary:Bolthausen} illustrates for a
continuous field on ${\group=\mathbb{R}^d}$.
\cref{theorem:BE} is a Berry-Essen bound for Bolthausen's theorem.
If $\group$ is uncountable, the estimator $\mathbb{F}_n$ becomes an integral.
In applications where computing the integral
is not be feasible, it can be discretized to a sum, by applying
\cref{theorem:CLT:g,theorem:BE:g}, with
$\mu_n$ chosen as almost surely discrete. Making $\mu_n$ 
non-random makes the discretization deterministic.
\cref{corollary:data:augmentation} illustrates both cases, again for a
continuous random field. 

Modeling assumptions made in statistics often imply some form of
invariance under the permutation group $\mathbb{S}_\infty$. Examples
include i.i.d.\ sequences, Bayesian models appealing to de
Finetti's theorem (which generate exchangeable sequences),
stochastic block models and graphon models (which generate
exchangeable graphs), and finite and Dirichlet process mixtures
(which generate exchangeable partitions).
\cref{theorem:exchangeable} is a general central limit theorem and
Berry-Esseen bound. It shows that 
any ${\mathbb{S}_\infty}$-invariant random object $X$ satisfies
\begin{equation*}
  \sqrt{n}\bigl(\tfrac{1}{n!}\tsum_{\phi\in\mathbb{S}_n}f(\phi X)-\mean[f(X)|\mathbb{S}_\infty]\bigr)
  \;\xrightarrow{\;d\;}\;
    \eta Z\qquad\text{ where }\eta\,\condind\, Z\sim N(0,1)\;.
\end{equation*}
The result does not require a mixing condition.

Some models do not make an invariance assumption on the data source,
but rather use invariant random objects as approximations or latent
variables. One example are nonparametric stochastic block models
that increase the number of ``communities'' in the model with sample size
\citep{Choi:Wolfe:Airoldi:2012:1}.
An observed graph with $n$ vertices
is explained by an exchangeable graph
$X_n$, whose distribution changes with $n$.
\cref{hypo} shows how to estimate a statistic,
where we choose the triangle density for illustration. Informally,
\begin{equation*}
  \mfrac{\sqrt{n}}{\eta_n}
  \bigl(
  \text{empirical triangle density}(n)-
  \text{population triangle density under }X_n\bigr)
  \;\xrightarrow{\;d\;}\;
  Z\;,
\end{equation*}
where $\eta_n$ is determined by the law of $X_n$.
Another example are graphex random graphs
\citep{Caron:Fox:2017:1,Borgs:Chayes:Cohn:Holden:2016:1,Veitch:Roy:2016:1}, which are not themselves
exchangeable, but generated by a latent point process with an
exchangeability property.
\cref{sec:graphex} explains how to apply \cref{theorem:exchangeable}
to such models by exctracting an exchangeable surrogate object.
\cref{result:graphex} is an example: The standard estimator for the graphex
equivalent of the edge density satisfies
\begin{equation*}
  \sqrt{s}
  \bigl(
  \text{empirical graphex subgraph density}(s)-
  \text{graphex subgraph density}\bigr)
  \;\xrightarrow{\;d\;}\;
  \eta Z\;,
\end{equation*}
where $s$ is the relevant notion of sample size.

In \cref{sec:marked:pp}, we obtain a central limit theorem and
Berry-Esseen bound for so-called random geometric measures, 
which have been used to study problems such as
nearest-neighbor methods and tessalations.
These are point processes that allow points to depend on neighboring points,
where the neighborhood is defined by an ``observation
window'' shifted over regions of the sample space.
Representing these shifts as elements of a group makes
\cref{theorem:CLT,theorem:BE} applicable.

\cref{sec:entropy} concerns entropy: The entropy of a stochastic process
is defined as a limit of so-called empirical entropies, computed from
the first $n$ values of the process. 
This definition can be extended to certain invariant random objects, by 
defining the $n$th empirical entropy using the transformation set $\A_n$.
The fact that the limit exists is, in the classical case, known as the
Shannon-McMillan-Breiman theorem. 
Lindenstrauss \citep{Lindenstrauss:2001:1} has generalized it
to the invariant case. We show asymptotic normality:
Conditions under which 
\begin{equation*}
  \sqrt{|\A_n|}
  \bigl(
  \text{empirical entropy}(n)-\text{entropy}
  \bigr)
  \quad\xrightarrow{\;d\;}\quad
  \eta Z
  \qquad\text{ as }n\rightarrow\infty\;
\end{equation*}
holds are given in \cref{theorem:entropy}.

\section{Background and definitions}
\label{sec:background}

Throughout, $\group$ is a group, with
identity element $e$. By $\xspace$, we always mean a standard Borel
space, with Borel $\sigma$-algebra $\borel(\xspace)$, and by
$\pMeas(\xspace)$ the space of probability measures on $\xspace$,
topologized by weak convergence.
For a random element $X$ of
$\xspace$, and ${p>0}$, define the norm
${\|f\|_p:=\mean[|f(X)|^p]^{1/p}}$ for 
measurable functions
${f:\xspace\rightarrow\mathbb{R}}$. The set of functions with
${\|f\|_p<\infty}$ is denoted 
${\L_p(X)}$. By ${f\in\L_p(X)}$, we refer to a function $f$, rather
than an equivalence class.

\subsection{Conditions on the group}

To explain the estimator \eqref{eq:intro:0} for an uncountable group $\group$,
we must define a topology and a measure on $\group$. Finite sets then generalize to
compact ones, and sums over group elements to integrals.
To cohere with group structure, the topology 
must make the group operation continuous.
If that is the case, and 
the topology is locally compact, second-countable, and Hausdorff, or lcscH, then
$\group$ is a \kword{lcscH group}. If $\group$ is countable, the discrete topology is lcscH,
and $\group$ is a \kword{discrete group}.
We always equip $\group$ with its Borel $\sigma$-algebra $\borel(\group)$.
On every lcscH group, there is a
$\sigma$-finite measure $|\argdot|$ that satisfies
\begin{equation}
  \label{Haar:measure}
  |\phi^{-1} A|=|A|\qquad\text{ for all }\phi\in\group\text{ and }A\in\borel(\group)\;,
\end{equation}
called a \kword{Haar measure}. It is unique up
to positive scaling, so ${c|\argdot|}$ is again a Haar measure for ${c>0}$ \citep{Kallenberg:2001}.
If a set ${A\subset\group}$ is compact, then
${|A|<\infty}$. Informally, Haar measures generalize volume,
and \eqref{Haar:measure} shows that a set can be shifted without changing
its volume. Examples of Haar measures are Lebesgue measure
on the groups $(\mathbb{R}^r,+)$, for ${r\in\mathbb{N}}$, or counting measure (cardinality) on a
discrete group. Our results do not assume a specific scaling $c$, but in
examples 
we always choose $|\argdot|$ as cardinality if $\group$ is discrete.

Like volume, distance can be defined in a shift-invariant way: If $\group$ is lcscH, there
exists a metric $d$ on $\group$
that is \kword{left-invariant},\nolinebreak
\begin{equation}
  \label{eq:left:invariant}
  d(\phi^{-1}\argdot,\phi^{-1}\argdot)=d(\argdot,\argdot) \qquad\text{ for all }\phi\in\group\;.
\end{equation}
We write ${\B_t(\phi):=\braces{\psi\in\group|d(\psi,\phi)\leq t}}$ for
a metric ball centered at $\phi$, and abbreviate by ${\B_t:=\B_t(e)}$ a metric
ball around the identity.
One can always choose a left-invariant metric on $\group$ such that $\B_n$
``grows evenly'' with $n$,\nolinebreak
\begin{equation}
  \label{eq:metric:condition}
  \tfrac{|\B_{n+1}\setminus \B_n|}{|\B_n\setminus \B_{n-1}|} = O(1)\;,
\end{equation}
see \citep{Loeh:2017}.
If $G$ and $A$ are sets in $\group$, we write ${GA:=\braces{\phi\psi|\phi\in G,\psi\in A}}$.
A \kword{\Folner sequence} is a sequence of compact sets ${\A_1,\A_2,\ldots\subset\group}$ such that\nolinebreak
\begin{equation}
  \label{eq:Folner}
  \frac{|G\A_n\cap\A_n|}{|\A_n|}\xrightarrow{n\rightarrow\infty}1
  \qquad\text{ for every compact }G\subset\group\;.
\end{equation}
If $\group$ is discrete, its compact sets are the finite sets, and \eqref{eq:Folner} is equivalent to (\ref{intro:conditions}ii).
A lcscH group that contains a \Folner sequence is called \kword{amenable}
\citep{Einsiedler:Ward:2011:1}.
A \Folner sequence is \kword{tempered} if
\begin{equation}
  \label{eq:Shulman}
  \bigl|{\medmath\bigcup}_{k<n}\A_k^{-1}\A_n\bigr|\leq c|\A_n|\qquad\text{ for some }c>0\text{ and all }n\in\mathbb{N}\;.
\end{equation}
Not every \Folner sequence is tempered, but every lcscH group containing a \Folner sequence also contains
a tempered \Folner sequence \citep[][Proposition
  1.4]{Lindenstrauss:2001:1}.
\begin{convention}
We use the shorthand \kword{nice group} for an amenable lcscH
group $\group$ equipped with a metric $d$ satisfying \eqref{eq:left:invariant} and \eqref{eq:metric:condition}.
\end{convention}
\begin{examples}
  \exitem{ex:action:sginf:sequences} The group
  $\mathbb{S}_{\infty}$ of all permutations of $\mathbb{N}$ with finite support:
  Define $\mathbb{S}_{n}$ as the group of permutations of $\braces{1,\ldots,n}$,
  and ${\mathbb{S}_{\infty}:=\cup_{n\in\mathbb{N}}\mathbb{S}_{n}}$.
  The canonical metric on $\mathbb{S}_\infty$ is
    \begin{equation}
    d(\phi,\phi'):=\min\braces{n\in\mathbb{N}\,|\,\phi(n,n+1,\ldots)=\phi'(n,n+1,\ldots)}\;.
  \end{equation}
  The sequence ${(\mathbb{S}_n)}$ is a tempered \Folner sequence:
  Each ${\phi\in\group}$ is in $\mathbb{S}_n$ for $n$
  sufficiently large, so
  ${\phi\mathbb{S}_n\cap\mathbb{S}_n=\mathbb{S}_n}$ eventually, and
  (\ref{intro:conditions}ii)
  holds. Since ${\mathbb{S}_k^{-1}\mathbb{S}_n=\mathbb{S}_n}$
  whenever ${k\leq n}$, the sequence is tempered.
  \\[-.7em]

\exitem{ex:action:Zd:fields}
The shifts of the $r$-dimensional grid $\mathbb{Z}^r$ form the group ${(\mathbb{Z}^r,+)}$:
An element $\mathbf{j}$ of the group shifts a grid point $\mathbf{i}$ to ${\mathbf{i}+\mathbf{j}}$.
Its canonical metric
\begin{equation}
  \label{metric:Zr}
  d(\mathbf{i},\mathbf{j})=\min_{k\leq r}|i_k-j_k|
\end{equation}
is left-invariant and satisfies \eqref{eq:metric:condition}.
The balls ${\B_n=\braces{-n,\ldots,n}^r}$, for ${n\in\mathbb{N}}$, form a tempered \Folner sequence,
and so do the sets ${\braces{1,\ldots,n}^r}$.
\\[-.7em]

\exitem{ex:Rd}
Similarly, ${(\mathbb{R}^r,+)}$ is the shift group of $\mathbb{R}^r$. Lebesgue measure is a Haar
measure, Euclidean distance is a left-invariant metric satisfying \eqref{eq:metric:condition},
and the balls ${\B_n}$ and the sets ${[0,n]^r}$ both form tempered \Folner sequences.
\end{examples}
Recall from the introduction that $|\A_n|$ can be interpreted as
sample size. If $\group$ is compact, ${|\A_n|\leq|\group|<\infty}$.
It is hence essential for asymptotics that $\group$ is not
compact.
Examples of nice, non-compact groups include the groups above,
the group ${(\mathbb{R}_{>0},\cdot)}$ (which
characterizes self-similarity of stochastic processes),
the group of translations and rotations of a Euclidean space,
and discrete and continuous Heisenberg groups
\citep{Bump:Diaconis:Hicks:Miclo:Widom:2017}.
See \citep{Loeh:2017,Einsiedler:Ward:2011:1} for more.

\subsection{Invariance and ergodicity}
We now let elements of $\group$ transform elements of a space
$\xspace$. We must specify what that means: Permuting a matrix,
say, could mean permuting rows, or columns, or
entries. Such a specification is called an action:
A \kword{measurable action} of $\group$ on $\xspace$ is a jointly
measurable map
${(\phi,x)\mapsto T_{\phi}(x)}$ that satisfies
\begin{equation}
  \label{eq:action}
  T_e(x)=x \quad\text{and}\quad
  T_{\phi\phi'}(x)=T_\phi(T_{\phi'}(x))
  \quad\text{ for }x\in\xspace\text{ and }\phi,\phi'\in\group\,.
\end{equation}
The conditions ensure that the set of transformations $T_\phi$ defined by $\group$
on $\xspace$ is itself a group.
We usually simplify notation and write ${\phi(x):=T_\phi(x)}$.
A random element $X$ of $\xspace$ with distribution $P$ is \kword{$\group$-invariant} if
\begin{equation*}
  \phi(X)\;\equdist\;X\qquad\text{ or equivalently }\qquad
  P=P\circ\phi^{-1}\qquad\text{ for all }\phi\in\group\;.
\end{equation*}
We then call $P$ a $\group$-invariant measure. A Borel set ${A\in\borel(\xspace)}$
is \kword{almost invariant} if ${P(\phi A\vartriangle A)=0}$ for all ${\phi\in\group}$ and all $\group$-invariant $P$,
where $\vartriangle$ denotes symmetric difference.
The almost invariant sets form a $\sigma$-algebra $\sigma(\group)$, and
we abbreviate conditioning on $\sigma(\group)$ as
\begin{equation*}
  \mean[\argdot|\group]:=\mean[\argdot|\sigma(\group)]
  \quad\text{ and }\quad
  P(\argdot|\group):=P(\argdot|\sigma(\group))\;.
\end{equation*}
A probability measure is \kword{$\group$-ergodic}
if it is $\group$-invariant and ${P(A)\in\braces{0,1}}$ for all
${A\in\sigma(\group)}$. This condition is
equivalent to \eqref{intro:conditions:ergodic} if $\group$ is
countable \citep{Einsiedler:Ward:2011:1}.
A random element is $\group$-ergodic if its
distribution is.

\subsection{Estimation}
We now come to the general form of the estimator \eqref{eq:intro:0}.
For a group $\group$ acting measurably on $\xspace$, a \Folner
sequence $(\A_n)$ on $\group$, and a Borel function $f$ on $\xspace$, define
\begin{equation*}
  \label{eq:F}
  \mathbb{F}_n(f,x):=\mfrac{1}{|\A_n|}\myint_{\A_n}f(\phi x)|d\phi|\;.
\end{equation*}
If $\group$ is discrete, $\mathbb{F}_n$ simplifies to the sum \eqref{eq:intro:0}.
The cornerstone of our work is a result of Lindenstrauss, which
concluded a long line of work
by Ornstein, Weiss, and others \citep[e.g.][]{Weiss:2003:1}.\nolinebreak
\begin{theorem}[E.\ Lindenstrauss \citep{Lindenstrauss:2001:1}]
  \label{theorem:lindenstrauss}
  If a random element $X$ of a standard Borel space is invariant under
  a measurable action of a nice group, and if $(\A_n)$ is a tempered
  \Folner sequence, then
  \begin{equation}
    \label{eq:lindenstrauss}
    \mathbb{F}_n(f,X)\;\xrightarrow{n\rightarrow\infty}\;\mean[f(X)|\group]
    \quad\text{ almost surely for all }f\in\L_1(X)\;,
  \end{equation}
  where ${\mean[f(X)|\group]=\mean[f(X)]}$ almost surely if $X$ is ergodic.
\end{theorem}
Where convenient, we center $\mathbb{F}_n$ around the limit as
\begin{equation}
  \label{F:centered}
  \overline{\mathbb{F}}_n(f,X):=\mathbb{F}_n(f,X)-\mean[f(X)|\group]\;.
\end{equation}
The next result gives an interpretation of the limit:
If $X$ is invariant, it can be generated by selecting an ergodic
measure $\xi$ at random, and then drawing $X$ from $\xi$. The
limit $\mean[f(X)|\group]$ is the expectation of $f$ under the
instance of the latent measure $\xi$ that has generated $X$.
\begin{theorem}[Ergodic decomposition, {Varadarajan \citep{Varadarajan:1963}}]
  \label{theorem:ergodic:decomposition}
  If a lcscH group $\group$ acts measurably on a standard Borel
  space $\xspace$, the set of $\group$-invariant probability measures is 
  convex. Its set of extreme points is the set $\ERG$ of
  $\group$-ergodic measures, and is measurable in $\pMeas(\xspace)$.  A random element $X$ of $\xspace$
  is $\group$-invariant if and only if
  \begin{equation}
    \label{eq:ergodic:decomposition}
    P[X\in\argdot|\group]=\xi(\argdot) \qquad\text{almost surely}
  \end{equation}
  for a random element $\xi$ of $\mathbf{E}$.
  The law of $\xi$ is uniquely determined by that of $X$.
\end{theorem}
Thus, conditioning
on $\sigma(\group)$ means conditioning on $\xi$.
Another implication is that ${\xi=P}$ almost surely if $P$ is itself ergodic,
and therefore
\begin{equation*}
  \mean[f(X)|\group]=\myint f(x)d\xi(x)=\mean[f(X)]
    \quad\text{ if }X\text{ is ergodic.}
\end{equation*}
Taking expectations on both sides of 
\eqref{eq:ergodic:decomposition}
shows that $P$ is $\group$-invariant if and only if
\begin{equation*}
  P(X\in\argdot)=\myint_{\ERG} m(\argdot)\mathbb{P}(\xi\in dm)\;.
\end{equation*}
That provides a more geometric interpretation: Recall that every
element of a polytope in Euclidean space is a convex combination of
extreme points. The integral 
similarly represents $P$ as a generalized convex combination, or barycenter,
of extreme points. Compare this 
to the theorems of 
Krein-Milman and Choquet, which generalize the same property of 
polytopes to certain compact convex
sets \citep{Alfsen:1971}:
\cref{theorem:ergodic:decomposition} makes stronger requirements on
the elements of the convex set (they are $\group$-invariant
measures), but does not require compactness.

\begin{examples}
\exitem{ex:deFinetti}
Let $\xspace$ be the space ${\mathbb{R}^{\mathbb{N}}}$ of real-valued
sequences. Define an action of the permutation group $\mathbb{S}_\infty$ as
${\phi(x):=(x_{\phi(1)},x_{\phi(2)},\ldots)}$, for ${x\in\xspace}$ and ${\phi\in\mathbb{S}_{\infty}}$.
An \kword{exchangeable sequence} is a $\mathbb{S}_\infty$-invariant random sequence
${X=(X_i)_{i\in\mathbb{N}}}$. It is ergodic if and only if it is i.i.d., a fact known as the
Hewitt-Savage 0--1 law \citep{Kallenberg:2001}.
It follows that $\xi$ factorizes as
${\xi=\xi_0^{\otimes\mathbb{N}}}$, for some random probability measure
$\xi_0$ on $\mathbb{R}$. \cref{theorem:ergodic:decomposition} then takes the form
\begin{equation*}
  P(X\in\argdot)=\myint_{\pMeas(\mathbb{R}^\mathbb{N})}m(\argdot)\mathbb{P}(\xi\in dm)=\myint_{\pMeas(\mathbb{R})}m_0^{\otimes\mathbb{N}}(\argdot)\mathbb{P}(\xi_0\in dm_0)\;,
\end{equation*}
which is de Finetti's theorem \citep{Kallenberg:2001}.
Let ${f(x)=g(x_1)}$ be a function of the first sequence entry, as in \eqref{intro:eq:sequence}.
\cref{theorem:lindenstrauss} becomes
\begin{equation*}
  \mfrac{1}{n!}\msum_{\phi\in\mathbb{S}_n}f(\phi X)=\mfrac{1}{n}\msum_{i\leq n}g(X_i)\xrightarrow{n\rightarrow\infty}\myint_{\mathbb{R}} g(x_1)\xi_0(dx_1)
  \quad\text{a.s.}
\end{equation*}
For ergodic $X$, this is the strong law of large numbers
for i.i.d.\ sequences.\\[-.7em]

\exitem{ex:Zd:fields}
Fix ${r\in\mathbb{N}}$, and set ${\xspace=\mathbb{R}^{\mathbb{Z}^r}}$.
An element ${x=(x_{i})_{i\in\mathbb{Z}^r}}$ of $\xspace$ is
hence a scalar field on an $r$-dimensional grid. Define an action of
${\group=\mathbb{Z}^r}$ on $\xspace$ as
${\phi(x):=(x_{i+\phi})_{i\in\mathbb{Z}^r}}$ for ${\phi\in\mathbb{Z}^r}$.
A \kword{stationary random field} is a $\mathbb{Z}^r$-invariant random element $X$ of $\xspace$.
Recall from
\cref{ex:action:Zd:fields} that ${\A_n=\braces{-n,\ldots,n}^r}$ defines a \Folner sequence.
Write ${\Omega_n:=\braces{-n,\ldots,n}^r}$ to distinguish the subset $\Omega_n$ of the \emph{index} set ${\mathbb{Z}^r}$ from the subset $\A_n$ of
the \emph{group} ${\mathbb{Z}^r}$. In this case, \eqref{eq:Folner} can
be rephrased in terms of the index set:
Since ${\Omega_n:=\A_n(0,\ldots,0)}$,
  \begin{equation*}
    |\partial\Omega_n|\,/\,|\Omega_n|\xrightarrow{n\rightarrow\infty} 0
  \quad\text{ where }\quad
  \partial\Omega_n=\Omega_n\!\setminus\!\Omega_{n-1}\;.
\end{equation*}
  In this form, the condition is well-known in statistics \citep{Bolthausen:1982:1,Jensen:Kuensch:1994:1}.
  For a function ${f(x)=g(x_{0,\ldots,0})}$ at the origin,
$\mathbb{F}_n$ is given by \eqref{intro:eq:rfield}.
We also noted already that $\A_n$ can
alternatively be chosen as ${\braces{1,\ldots,n}^r}$.
For the case ${r=1}$ of stationary sequences, \cref{theorem:lindenstrauss} then takes the form
${n^{-1}\sum_{i=1}^ng(X_i)\rightarrow\mean[g(X_1)|\group]}$, which is
Birkhoff's ergodic theorem \citep{Shields:1996}.
\end{examples}

\def\mysetminus{\!\setminus\!}

\section{Conditional mixing}
\label{sec:mixing}

This section formalizes the mixing condition
sketched in \eqref{eq:intro:mixing}.  The label ``mixing'' is used for
a range of conditions, whose common denominator is
typically that they quantify dependence
using terms of
the form ${|P(A)P(B)-P(A\cap B)|}$. Their
strengths and purposes vary---Bradley \citep{bradley2005basic},
for example, surveys an extensive list of mixing conditions for
stationary processes.
Our notion of mixing resembles that used in random field asymptotics
\citep{Georgii:2011,Bolthausen:1982:1}.
Ergodic theory defines mixing conditions to verify ergodicity, which are typically
much weaker \citep{Einsiedler:Ward:2011:1}.
Consider ${f\in\L_1(X)}$, and
a set ${G\subset\group}$.
The events in
$\xspace$ that can be formulated in terms of
${(f(\phi X))_{\phi\in G}}$ form the $\sigma$-algebra
\begin{equation*}
\sigmaf(G):=\sigma(f\circ\phi,\phi\in G)=\sigma\bigl({\textstyle\bigcup_{\phi\in
  G}}(f\circ\phi)^{-1}\borel(\mathbb{R})\bigr)\;,
\end{equation*}
where $\borel(\mathbb{R})$ is the Borel $\sigma$-algebra of $\mathbb{R}$.
Write ${\B_t(G):=\cup_{\phi\in G}\B_t(\phi)}$. The set of
group elements whose distance from $G$ exceeds $t$ is
${\group\setminus\B_{t}(G)}$.
The relevant set of events is then
\begin{equation*}
  \mathcal{C}(t):=\bigbraces{
    (A,B)\in\sigmaf(\phi_1,\phi_2)\otimes\sigmaf(G)\big\vert
    G\subset\group, \phi_1,\phi_2\in\group\mysetminus\B_{t}(G)}\;.
\end{equation*}
The \kword{mixing coefficient} for $f$ and $P$ is the function
\begin{equation*}
  \alpha(t):=\sup_{(A,B)\in\,\mathcal{C}(t)}|P(A)P(B)-P(A\cap
  B)|\quad\text{ for }t>0\;,
\end{equation*}
and $P$ is \kword{mixing} with respect to $f$
if ${\alpha(t)\rightarrow 0}$ as ${t\rightarrow\infty}$.
Similarly,
\begin{equation*}
  \alpha(t|\group):=\sup_{(A,B)\in\,\mathcal{C}(t)}\mean[|P(A|\group)P(B|\group)-P(A\cap
  B|\group)|] \quad\text{ for }t>0\;
\end{equation*}
is the \kword{conditional mixing coefficient}, and $P$ is
\kword{conditionally mixing} if ${\alpha(t|\group)\rightarrow 0}$ as
${t\rightarrow\infty}$. Both coefficients are decreasing in $t$,
since ${\mathcal{C}(t_1)\subset\mathcal{C}(t_2)}$ if ${t_1\leq t_2}$.
\begin{lemma}
  \label{lemma:hypotheses}
  The mixing coefficients satisfy ${\alpha(k|\group)\leq 4\alpha(k)}$ for all ${k\in\mathbb{N}}$.
\end{lemma}
Thus, mixing implies conditional mixing.
The first example below shows that the converse need not be true.
The second example describes a case where both properties hold.
\begin{examples}
  \exitem{ex:mixing:exchangeable}
  Any exchangeable sequence ${X=(X_1,X_2,\ldots)}$ is
  conditionally mixing with respect to ${f:(x_1,x_2,\ldots)\mapsto
    x_1}$: By de Finetti's theorem, its entries are conditionally independent.
  For subsets ${F,G\subset\mathbb{N}}$, that implies
  \begin{equation*}
      (X_i)_{i\in F}
      \;\condind\;
      (X_j)_{j\in G}\;|\;\group
      \qquad\text{ if }\min_{i\in F,\,j\in G}|i-j|\geq 1\;,
  \end{equation*}
  and hence ${\alpha(k|\group)=0}$ for all ${k\in\mathbb{N}}$. It need
  not be mixing: Draw once from a
  random variable $Y$,
  and set ${X_i:=Y}$ for all ${i\in\mathbb{N}}$. Then $X$ is exchangeable, but
  dependence of $X_1$ and $X_i$ does not diminish as $i$ grows.\\[-.7em]

  \def\i{\mathbf{i}}
  \def\j{\mathbf{j}}
  \exitem{ex:mixing:field} Let ${X=(X_\i)_{\i\in\mathbb{Z}^r}}$ be a
  stationary random field with the Markov property: For each ${\i\in\mathbb{Z}^r}$,
  ${X_\i\,\condind\,(X_\j)_{\j\in\mathbb{Z}^d\setminus\braces{\i}}\,|\,(X_\j)_{\j\in\B_1(\i)}}$.
  If $X$ satisfies the so-called Dobrushin condition, namely
  \begin{equation*}
      \vartheta:=\sup_{\i|d(\i,0)=1}\sup_{A,B\in\borel(\xspace)}|P(X_0\in A|X_\i\in B)-P(X_0\in A)|\;\leq\;\tfrac{1}{2r}\;,
  \end{equation*}
  it is mixing with respect to all coordinate
  functions:
There are positive constants $c_1$ and $c_2$ such that
${\alpha(k)\leq c_1e^{-c_2 k}}$ for all ${k\in\mathbb{N}}$ \citep[e.g.][8.28]{Georgii:2011}.
By \cref{lemma:hypotheses}, that also implies conditional mixing.
In general, if $(X_i)$ is a stationary sequence,
$\alpha(\cdot|\group)$ can be bounded by the classical $\alpha$-mixing
coefficients \cite[e.g.][]{bradley2005basic}.\\[-.7em]

\exitem{} If $X$ is conditionally mixing for $f$, it also is for
${g\circ f}$, for any function $g$.
\end{examples}

\section{Basic limit theorems}
\label{sec:main}
The central limit theorem requires conditional mixing and a second-moment 
condition. The strength of each can be traded off against the other:
The next two theorems assume either
\begin{align}
  \label{H1}
  \text{(i)}&\;\;
  \mean[f(X)^2]<\infty
  &
  \text{(ii)}&\;\;
  \alpha(K|\group)=0
  \quad\text{ for some }K\in\mathbb{N}\;,
  \intertext{or that there exists an ${\varepsilon>0}$ such that}
  \label{H2}
  \text{(i)}&\;\;
  \mean[f(X)^{2+\varepsilon}]<\infty
  &
  \text{(ii)}&\;\;
  \myint_{\group}\alpha(d(e,\phi)|\group)^{\frac{\varepsilon}{2+\varepsilon}}|d\phi| < \infty\;,
\end{align}
where $e$ is the identity element of $\group$.
If $\group$ is discrete,
(\ref{H2}ii) simplifies to
 \begin{equation*}
    \tsum_{n\in\mathbb{N}}|\B_{n+1}\!\setminus\!\B_n|\,\alpha(n|\group)^{\frac{\varepsilon}{2+\varepsilon}}<\infty\;.
\end{equation*}
We note only en passant that the quantity
${|\B_{n+1}\!\setminus\!\B_n|}$ plays a crucial role in group
theory, where it is known as the growth rate of $\group$ \citep{Loeh:2017}.

\begin{theorem}
  \label{theorem:CLT}
  Let $\group$ be a nice group with tempered \Folner sequence
  ${(\A_n)}$, acting measurably on a standard Borel
  space $\xspace$. If a $\group$-invariant random element $X$ of
  $\xspace$ and a function ${f:\xspace\rightarrow\mathbb{N}}$ satisfy either \eqref{H1} or
  \eqref{H2}, then
  \begin{equation}
    \label{eq:CLT}
    \sqrt{|\A_n|}\;\bigl(
    \mathbb{F}_n(f,X)-\mean[f(X)|\group]\bigr)
    \quad\xrightarrow{\;\;\text{d}\;\;}\quad\eta Z\qquad\text{ for }Z\sim N(0,1)\;.
  \end{equation}
  The asymptotic variance ${\eta^2}$ is a random variable distributed as 
  \begin{equation}
    \label{eq:variance}
    \eta^2\equdist\myint_{\group}\eta^2(\phi)|d\phi|
    \quad\text{ for }\quad
    \eta^2(\phi):=\mean[f(X)f(\phi X)|\group]\;,
  \end{equation}
  and satisfies ${\eta^2<\infty}$ almost surely. It is independent of
  $Z$, and constant almost surely if $X$ is $\group$-ergodic.
\end{theorem}
The rate of convergence in Lindenstrauss' theorem is
thus ${|\A_n|^{-\frac{1}{2}}}$, and depends only on the \Folner sequence.
The action does not affect the rate, but the mixing coefficient and
constants. The ergodic decomposition property is visible in
the independence of $\eta$ and $Z$:
\cref{theorem:ergodic:decomposition} shows
${\mean[\argdot|\group]=\mean[\argdot|\xi]}$, so $\eta$
is a function of $\xi$, and constant if $X$ is $\group$-ergodic. Informally, the randomness of $Z$
is due to ${X|\xi}$, that of $\eta$ is due to $\xi$.

In statistical terms, $\mathbb{F}_n(X,f)$ is an estimate of
${\mean[f(X)|\group]}$ computed from a sample of size
$|\A_n|$. \cref{theorem:lindenstrauss} shows this
estimator is (strongly) consistent, and \cref{theorem:CLT} provides
the rate of convergence and shows the estimation error is
asymptotically normal. That can be used to obtain a consistent
confidence interval, as the next result shows. The additional condition on $\eta$ ensures---in
the non-ergodic case, where $\eta$ is not almost surely constant---that
its law does not place too much mass very close to $0$, which could lead
to effectively degenerate behavior even if ${\eta>0}$ almost surely.
\begin{theorem}
  \label{CI}
  Assume the conditions of \cref{theorem:CLT}, and additionally that
  ${P(\eta<t)\rightarrow 0}$ if ${t\searrow 0}$. Let $(b_n)$ be an 
  increasing sequence of positive integers satisfying
  \begin{equation*}
    \text{(i) }\;b_n\rightarrow\infty
    \qquad
    \text{(ii) }\;
    |\B_{b_n}|=o(\sqrt{|\A_n|})
    \qquad
    \text{(iii) }\;
    |\A_n\setminus \B_{b_n}\A_n|=o(|\A_n|)\;,
  \end{equation*}
  and define the empirical  variance
  \begin{equation*}
    \hat{\eta}_n^2:=\frac{1}{|\A_n|}\myint_{\A_n}\myint_{\B_{b_n}(\phi)}\big(f(\phi
    X)-\mathbb{F}_n(f,X)\big)\big( f(\phi'
    X)-\mathbb{F}_n(f,X)\big)|d\phi'||d\phi|\;.
  \end{equation*}
  For any ${\alpha\in (0,1)}$, let $z_{1-\frac{\alpha}{2}}$ be the positive scalar satisfying ${P(|Z|> z_{1-\frac{\alpha}{2}})=\alpha}$.
  Then
  \begin{equation*}
    \limsup_{n\rightarrow \infty}P\Big(\mathbb{E}(f(X)|\group)\in \big[\mathbb{F}_n(f,X)\pm z_{1-\frac{\alpha}{2}}\mfrac{\hat{\eta}_n}{\sqrt{|\A_n|}}\big]\Big)\;\le\; \alpha\;.
  \end{equation*}
\end{theorem}
The left- and right-hand side in \eqref{eq:CLT} can be compared in
terms of the Wasserstein distance $\dW$. For two random
elements $Y$ and $Y'$ of $\mathbb{R}$, this is
\begin{equation*}
  \dW(Y,Y'):=\sup_{h\in\mathcal{L}}|\mean[h(Y)]-\mean[h(Y')]|\;,
\end{equation*}
where $\mathcal{L}$ are the Lipschitz functions on
$\mathbb{R}$ with Lipschitz 
constant 1 \citep[e.g.][]{Ross:2011:1}.
We denote normalized moments of $f$ by
\begin{equation*}
  s_p:=\mean\bigl[\bigl|\tfrac{f(X)}{\eta}\bigr|^p\bigr]^{\frac{1}{p}}=
  \bigl\|\tfrac{f(X)}{\eta}\bigr\|_p
  \quad\text{ for }p>0\;.
\end{equation*}
The bound on $\dW$ depends both on the value of the integral in
(\ref{H2}ii), and on the decay of its tail, and we define
\begin{equation}
  \label{eq:def:tau}
  \tau(b):=\myint_{\group\setminus
    \B_b}\alpha(d(e,\phi)|\group)^{\frac{\varepsilon}{2+\varepsilon}}|d\phi|
  \qquad\text{ for }b\geq 0\;.
\end{equation}
Condition (\ref{H2}ii) then amounts to ${\tau(0)<\infty}$.
The next result generalizes the Berry-Esseen theorem; it quantifies
the speed of convergence in \cref{theorem:CLT}, and the coverage of the
confidence interval.
\begin{theorem}
  \label{theorem:BE}
  Assume the conditions of \cref{theorem:CLT}, with $\eta$ defined as
  in \eqref{eq:variance}, and let $Z$ be a standard normal variable.
  If \eqref{H1} holds, and ${K\in\mathbb{N}}$ is the smallest number
  for which
  ${\alpha(K|\group)=0}$, then
  \begin{equation*}
    \dW\Bigl(\tfrac{\sqrt{|\A_n|}}{\eta}\,\cF_n(f,X),Z\Bigr)
   \;\leq\;
   \kappa s_2^2\frac{|\A_n\triangle \B_K\A_n|}{|\A_n|}
   +   \kappa\frac{\max(s_4^3,1)|\B_K|^2}{\sqrt{|\A_n|}}
  \end{equation*}
  for a positive constant $\kappa$.
  If $f$ satisfies \eqref{H2} for some ${\varepsilon>0}$,
  \begin{align*}
    \dW\Bigl(\tfrac{\sqrt{|\A_n|}}{\eta}\,\cF(f,X),Z\Bigr)
    \;&\leq\;
    \kappa s_{2+\varepsilon}^2\frac{|\A_n|-|\A_n\cap\B_{b_n}\A_n|}{|\A_n|}\\
    &+\;
    \kappa 
    \max(s_{4+2\varepsilon}^3,1)\tau(0)\Bigl(
    \tau(b_n)+\frac{|\B_{b_n}|}{\sqrt{|\A_n|}}\Bigr)
  \end{align*}
  for a positive constant ${\kappa}$, and any sequence ${b_1<b_2<\ldots}$ of positive scalars.
\end{theorem}
The choice of
${(b_n)}$ trades off $|\B_b|$, which increases with $b$,
against $\tau(b)$, which decreases.

\begin{example}\exitem{}
Let $X$ be an i.i.d.\
sequence, and hence exchangeable and ergodic.
For ${f\in\L_2(X_1)}$, we have
${\alpha(1|\group)=0}$,
and \cref{theorem:CLT} is the elementary central limit theorem.
\cref{theorem:BE} is the Berry-Esseen bound {\citep[e.g.][]{Ross:2011:1}}:
Hypothesis \eqref{H1} holds, the first term of the bound satisfies ${\A_n\!\triangle\!\B_1\A_n=O(1/n)}$, and the second term 
collapses
to ${1/\sqrt{n}}$.
\end{example}
A less elementary application is a real-valued random field
${(X_{\phi})_{\phi\in\group}}$ that is stationary,
i.e.\ invariant under the group ${\group}$ acting on the index set
$\group$. For the groups $\mathbb{Z}^r$ and
$\mathbb{R}^r$, for instance, substituting into \cref{theorem:CLT} yields:
\begin{corollary}
  \label{corollary:Bolthausen}
  Let ${X=(X_{\phi})_{\phi\in\group}}$ be a stationary random
  field, and $f$ a real-valued function that satisfies \eqref{H2}. If
  ${\group=(\mathbb{Z}^r,+)}$ for some ${r\in\mathbb{N}}$,
  \begin{equation*}
    \sqrt{n^r}\Bigl(\tfrac{1}{n^r}\msum_{\mathbf{i}\in\braces{0,\ldots,n}^r}f(X_{\mathbf{i}})\,-\,\mean[f(X)|\mathbb{Z}^r]\Bigr)
    \quad\darrow\quad
    \eta Z\qquad\text{ as }n\rightarrow\infty\;
  \end{equation*}
  for ${\eta^2:=\sum_{\mathbf{i}\in\mathbb{Z}^r}\mean\bigl[f(X_0)f(X_{\mathbf{i}})\big|\mathbb{Z}^r\bigr]}$.
  If ${\group=(\mathbb{R}^r,+)}$ instead, then
  \begin{equation*}
    \sqrt{n^r}\Bigl(\tfrac{1}{n^r}\myint_{[0,n]^r}f(X_t)|dt|\,-\,\mean[f(X)|\mathbb{R}^r]\Bigr)
    \quad\darrow\quad
    \eta Z\qquad\text{ as }n\rightarrow\infty\;,
  \end{equation*}
  where
  ${\eta^2:=\int_{\mathbb{R}^r}\mean\bigl[f(X_0)f(X_t)\big|\mathbb{R}^r\bigr]|dt|}$. In
  either case, ${\eta\,\condind\,Z}$.
\end{corollary}

The case ${\group=\mathbb{Z}^r}$ is Bolthausen's central limit theorem \citep{Bolthausen:1982:1}.
Thus, \cref{theorem:CLT} implies a generalization of Bolthausen's theorem to
random fields indexed by nice groups, as the second case illustrates.
If $X$ satisfies the condition ${\vartheta<1/(2r)}$ in
\cref{ex:mixing:field}, it is conditionally mixing with respect to each
coordinate function, and the corollary holds for all functions
${f(X)=g(X_{0})}$ with ${g\in\L_{2+\varepsilon}(X_0)}$.1

If we quantify the approximation error using \cref{theorem:BE}, additional
properties of the group play a role, and we hence consider a specific
class:
${(\mathbb{Z}^r,+)}$ is a so-called finitely generated nilpotent group of rank
${r}$. Such groups are nice, and each contains a finite set called a generator. The
minimal number of elements of this set required to transform one group element into another is
a metric, the word metric, whose metric balls $\B_n$ satisfy \eqref{eq:Folner} and
${1/|\B_n|=O(n^{-r})}$. We refer to \citep{Loeh:2017} for details.
Substituting into \cref{theorem:BE} yields:
\begin{corollary}
  \label{corollary:nilpotent}
  Let $\group$ be a finitely generated, nilpotent group of rank
  ${r\in\mathbb{N}}$, and set ${\A_n:=\B_n}$ for the word metric of
  a finite generator. If there exist ${\varepsilon,\delta>0}$ such that
  ${\alpha(k|\group)=O(k^{-(r+\delta)})}$ and ${f(X)/\eta\in\L_{4+2\varepsilon}(X)}$,
  then
  \begin{equation*}
    \dW\Bigl(
    {\textstyle\frac{\sqrt{|\A_n|}}{\eta}}(\mathbb{F}_n(f,X)-\mean[f(X)|\group]),Z
    \Bigr)
    =
    O(n^{-r\delta/(2(r+\delta))})
    \;\text{ for }
    Z\sim N(0,1)\;,
  \end{equation*}
  where ${\eta}$ is defined as in \cref{corollary:Bolthausen} and
  independent of $Z$.
\end{corollary}
For ${\group=\mathbb{Z}^r}$, the unit coordinate vectors
in $\mathbb{Z}^r$ are a finite generator, and the word
metric it defines is \eqref{metric:Zr}.

\section{Generalized limit theorems}
\label{sec:g}

\def\hA{\widehat{\A}}

\def\yspace{\mathbf{Y}}
\def\x{\mathbf{x}}
\def\bphi{\boldsymbol{\phi}}
\def\bpsi{\boldsymbol{\psi}}
\def\hF{\widehat{\mathbb{F}}}
\def\supp{\text{supp}}
\def\gen#1{\langle #1\rangle}
\def\pr{\text{\rm pr}}
\def\cR{\widehat{\mathbb{F}}}

This section extends our main theorems to a
generalized version of the estimator $\mathbb{F}_n(f,X)$.
We begin with an informal overview; proper definitions follow in \cref{sec:g:definitions}.
The generalized estimator we will define is of the form
\begin{equation*}
  \frac{1}{\mu_n(\A_n^{k_n})}\int_{\A_n^{k_n}}f_n(T_n(\bphi,X_n))\mu_n(d\bphi)\;,
\end{equation*}
and again involves a random quantity, now denoted $X_n$,
a real-valued function $f_n$, and a group action $T_n$. Additionally, 
$\mu_n$ is a random measure, and ${k_n\in\mathbb{N}}$.
The estimator combines three separate extensions of $\mathbb{F}_n$:
\begin{itemize}
  \item {\em Triangular arrays}.
    We permit the function $f_n$ and the law of $X_n$ to
    depend on $n$. That generalizes an invariant random object $X$ in a similar
    way as triangular arrays generalize i.i.d.\ sequences
    \citep[e.g.][]{Kallenberg:2001}.
    Changing $X_n$ with $n$ may involve changing the sample space $\xspace_n$ and the action $T_n$. 
    An application example is a nonparametric network model in
    \cref{sec:adaptation}, which uses $m(n)$ parameters to explain an observed graph of size $n$.
    In this case,
    $f_n$ and $\xspace_n$ are fixed, but $m(n)$, and hence the distribution of $X_n$, depends on $n$.\\[-.5em]

\item {\em Randomization}. The set $\A_n$ may be randomized, which we formalize as a random
measure $\mu_n$ on $\A_n$. For example, if 
${\phi_{n1},\ldots,\phi_{nj_n}}$ are 
sampled with replacement from $\A_n$,
\begin{equation*}
  \mu_n:=j_n^{-1}\tsum_{i\leq j_n}\delta_{\phi_{ni}}
  \quad\text{ yields the average }\quad
  j_n^{-1}\tsum_{i\leq j_n}f(\phi_{ni} X)\;.
\end{equation*}
More generally,
if $\group$ is countable, $\mu_n$ may generate subsets (sampling
without replacement), multisets (sampling with replacement), or sets
of weighted points. In the uncountable case, $\mu_n$ may be
discrete (which discretizes the integral in $\mathbb{F}_n$ to a
sum), or generate uncountable subsets. An illustration is
\cref{corollary:data:augmentation}, which subsamples a rotation
group.\\[-.5em]

\item {\em U-statistics}.
  Consider a function ${g:\mathbb{R}^k\rightarrow\mathbb{R}}$ and a
  random sequence $(Y_1,Y_2,\ldots)$ in
  $\mathbb{R}$.
  A U-statistic can be defined in several equivalent ways (see
  \cref{sec:u-stat}), one of which is
  \begin{equation*}
    n^{-k}\tsum_{i_1,\ldots,i_k\leq
      n}g(Y_{i_1},\ldots,Y_{i_k})
    \;.
  \end{equation*}
  It can be expressed in terms of shifts: If
  ${f:(y_i)\mapsto y_1}$ is the first coordinate function and shifts in
  the set ${\A_n:=\braces{0,1,\ldots,n-1}}$ act on the index set of $Y$ by
  addition, we have
  \begin{equation*}
    n^{-k}\tsum_{i_1,\ldots,i_k\leq
      n}g(Y_{i_1},\ldots,Y_{i_k})={|\A_n|^{-k}}\tsum_{\phi_1,\ldots,\phi_k\in\A_n}g(f(\phi_1Y),\ldots,f(\phi_kY))\;.
  \end{equation*}  
  If we instead choose $g$ as a function 
  ${g:\xspace_n^{k_n}\rightarrow\mathbb{R}}$ and replace the set of shifts by
  a subset $\A_n$ of a general nice group $\group$, we
  obtain a generalized U-statistic
  \begin{equation*}
    \mfrac{1}{|\A_n|^{k_n}}\myint_{\A_n^{k_n}}g(\phi_1X_n,\ldots,\phi_kX_n)|d\bphi|^{\otimes k}
  \end{equation*}
  for tuples ${\bphi=(\phi_1,\ldots,\phi_{k_n})}$.
  To average over tuples, we must choose $T_n$ as an action of $\group^{k_n}$, and we permit the dimension
  $k_n$ to grow with $n$.   Similarly as elementary U-statistics, these generalized U-statistics
  are asymptotically
  normal under suitable conditions (\cref{corollary:U:statistics}). A variant of this idea is used
  in the proof of \cref{theorem:exchangeable}, to approximate permutations by tuples
  of shifts.
\end{itemize}
Since these generalizations can be used in combination
with each other, we formulate a central
limit theorem (\cref{theorem:CLT:g}) and a Berry-Esseen bound
(\cref{theorem:BE:g}) simultaneously for all three.
The conditions of \cref{theorem:CLT,theorem:BE}---invariance, a moment condition, and
conditional mixing---are still applicable in principle, but become
rather restrictive in the general case, and we introduce the following relaxations:
\begin{itemize}
\item If $T_n$ is an action of $\group^{k_n}$,
  \cref{theorem:CLT} requires invariance under all tuples
  ${\bphi=(\phi_1,\ldots,\phi_{k_n})}$. A simple example shows how 
  strong this assumption is: 
  If $(Y_i)_{i\in\mathbb{Z}}$ is stationary, the U-statistic above
  involves the random field ${(g(Y_{i_1},\ldots,Y_{i_{k_n}}))_{i_1,\ldots,i_{k_n}\in\mathbb{Z}}}$,
  but this field is not invariant under shifts in
  $\mathbb{Z}^k$. To obtain a more suitable condition, we observe
  that the field is invariant under ``diagonal'' shifts
  \begin{equation*}
    (g(Y_{i_1},\ldots,Y_{i_{k_n}}))\quad\mapsto\quad(g(Y_{i_1+j},\ldots,Y_{i_{k_n}+j}))\qquad\text{
      for }j\in\mathbb{Z}\;.
  \end{equation*}
  More generally, if $(j_1,\ldots,j_n)$ is any fixed tuple, applying
  this tuple as a shift may change the distribution, but the
  shifted field $(g(Y_{i_1+j_1},\ldots,Y_{i_k+j_k}))$ is again invariant
  under diagonal shifts. The notion of variance assumed in this
  section, defined in \eqref{eq:generalized:invariance},
  generalizes this property from $\mathbb{Z}$ to a general
  group $\group$.\\[-.5em]
\item
  Recall that conditional mixing formulates conditions 
  on pairs $(\phi,\phi')$ in $\group$ that are far away from a set
  $G$. For tuples, this condition becomes stronger as $k_n$ grows---loosely 
  speaking because distances are larger in high dimensions.
  The marginal mixing condition defined in \cref{sec:mixing:mar} measures entry-wise
  distances (which tend to be smaller).\\[-.5em]
\item The bound on moments is relaxed to uniform integrability,
  similar to conditions assumed by central limit theorems for
  triangular arrays.
\end{itemize}
Randomization requires an additional condition: To guarantee
convergence, $\mu_n$ must not concentrate on an 
``unrepresentatively small'' part of $\A_n$. \cref{sec:wellspread}
makes that precise.

\subsection{Definitions}
\label{sec:g:definitions}

Let ${0<k_1\leq k_2\leq\ldots}$ be integers.
For each ${n\in\mathbb{N}}$, let $X_n$ be a random element of a standard Borel space $\xspace_n$,
and ${f_n:\xspace_n\rightarrow\mathbb{R}}$ a measurable function. 
If $\group$ is a nice group with Haar measure ${|\argdot|}$, the product
space ${\group^{k_n}}$ is a
nice group with Haar measure ${|\argdot|^{\otimes k_n}}$.
Similarly, if $(\A_n)_n$ is a tempered \Folner sequence in $\group$, so is
${(\A_i^{k_n})_{i\in\mathbb{N}}}$ in $\group^{k_n}$.
To randomize averages, let
$\mu_n$ be a random measure on $\group^{k_n}$ that satisfies
\begin{equation}
  \label{eq:random:measure}
  \text{(i) }\;\mu_n \text{ is }\sigma\text{-finite }
  \qquad
  \text{(ii) }\;\mu_n(\A_n^{k_n})>0\qquad\text{ almost surely.}
\end{equation}
(Formally, we equip the set of $\sigma$-finite measures on $\group^{k_n}$ with the
$\sigma$-algebra generated by the maps ${\mu\mapsto\mu(A)}$, for all Borel sets
${A\subset\group^{k_n}}$.
By a random measure, we mean a random element of this space \citep[e.g.][]{Kallenberg:2001}.)
Let ${T_n:\group^{k_n}\!\times\xspace_n\rightarrow\xspace_n}$
be a measurable action of $\group^{k_n}$, and write
\begin{equation*}
  \boldsymbol{\phi}x\,:=\,T_n(\phi_1,\ldots,\phi_{k_n},x)\quad\text{ for }x\in\xspace_n\text{ and }\boldsymbol {\phi}=(\phi_1,\ldots,\phi_{k_n})\in\group^{k_n}\;.
\end{equation*}
The \kword{diagonal action} associated with $T_n$ consists of all transformations\nolinebreak
\begin{equation}
  \label{eq:diagonal:action}
    (\phi,\ldots,\phi)x=T_n(\phi,\ldots,\phi,x)\quad\text{ for }\phi\in\group\;.
\end{equation}
The notion of invariance assumed in this section is
\begin{equation*}
  T_n((\phi,\ldots,\phi),T_n(\boldsymbol{\psi},X_n))\equdist T_n(\boldsymbol{\psi},X_n)\quad\text{ for every }\phi\in\group\text{ and }\boldsymbol{\psi}\in\group^{k_n}
\end{equation*}
or equivalently, in more concise notation,
\begin{equation}
  \label{eq:generalized:invariance}
  (\phi,\ldots,\phi)\boldsymbol{\psi}X_n\equdist\boldsymbol {\psi}X_n\quad\text{ for every }\phi\in\group\text{ and }\boldsymbol{\psi}\in\group^{k_n}\;.
\end{equation}
That is a stronger requirement than diagonal invariance, but weaker
than $T_n$-invariance.

To define conditioning, we denote by $\sigma_n(\group)$ the $\sigma$-algebra
\begin{equation*}
  \sigma_n(\group):=\braces{A\subset\xspace_n\text{ Borel}\,|\,(\phi,\ldots,\phi)A=A\text{ for all }\phi\in\group}\;,
\end{equation*}
and abbreviate ${\mean[\argdot|\group]:=\mean[\argdot|\sigma_n(\group)]}$ and ${P(\argdot|\group)=P(\argdot|\sigma_n(\group))}$.
We then consider the random, conditionally centered average
\begin{equation}\label{eq:gen:mor}
  \cR_n(f_n,X_n):=\frac{1}{\mu_n(\A_n^{k_n})}\int_{\A_n^{k_n}}f_n(\bphi X_n)\;-\;\mean[f_n(\bphi X_n)|\group]~\mu_n(d\bphi).
\end{equation}
If ${k_n=1}$, and ${\mu_n(\argdot)=|\argdot|}$ for all $n$, and if all
$X_n$ and all $\xspace_n$ are identical, we recover
${\sigma_n(\group)=\sigma(\group)}$ and 
${\cR_n=\cF_n}$.

\subsection{Marginal mixing}\label{sec:mixing:mar}
To formulate a suitable mixing condition, we modify the definitions in \cref{sec:mixing}:
Again consider two elements
${\bphi}$ and ${\bphi'}$ and a subset $G$, now all in ${\group^{k_n}}$.
We measure how close the entries $\phi_i$ and $\phi_k'$ are
to the remaining entries of $\bphi$ or $\bphi'$, or to any entry of vectors in
$G$. To do so, we define the set of ``all other'' entries,
\begin{equation*}
  \mathcal{E}_{i,k}(\bphi,\bphi',G):=\braces{\phi_j|j\neq
    i}\cup\braces{\phi_j'|j\neq k}\cup\braces{\pi_j|\pi\in G,j\leq
    k_n}\;.
\end{equation*}
In terms of the metric $d$ on $\group$, the shortest distance from
$\phi_i$ or $\phi'_k$ to any of these is
\begin{equation}
  \label{def:set:delta}
  \delta_{i,k}(\bphi,\bphi',G):=\inf\braces{d(\braces{\phi_i,\phi'_k},\psi)\,|\,\psi\in\mathcal{E}_{i,k}}\;.
\end{equation}
For the given function $f_n$, we then define the set of events
\begin{equation*}
  \mathcal{C}_{i,k}(t):=
  \bigcup\sigma_{f_n}(\bphi)\otimes
  \sigma_{f_n}(\bphi')\otimes\sigma_{f_n}\!(G)
\end{equation*}
where the union runs over all pairs $(\bphi,\bphi')$ 
and all measurable sets ${G}$ in ${\group^{k_n}}$ with ${\delta_{i,k}(\bphi,\bphi',G)\!\geq \!t}$.
Recall that the conditional mixing coefficient was defined
in terms of the conditional $P(\argdot|\group)$. Using Lindenstrauss' theorem,
the latter can be written as
\begin{equation*}
    P(A|\group)
    \;=\;
    \mean[\,\mathbb{I}\braces{X\!\in\! A}|\group]
    \;=\;
    \lim_{m\rightarrow\infty}\mfrac{1}{|\A_m|}\myint_{\A_m}\mathbb{I}\braces{\phi X\in A}|d\phi|\;.
\end{equation*}
To measure the effect of transforming only by coordinates $i$ and $k$,
we substitute this by
\begin{equation*}
    P_{i,k}(A,A')
    \;:=\;
    \lim_{m\rightarrow\infty}\medmath{\frac{1}{|\A_m|}}\myint_{\A_m}\mathbb{I}\braces{e_{i,\psi} X_n\in A, e_{k,\psi}X_n\in A'}|d\psi|\;,
  \end{equation*}
  where ${e_{i,\psi}:=(e,\dots,e,\psi,e,\dots,e)}$ has $k_n$
  dimensions and $\psi$ is the $i$th coordinate.
We then define the \kword{marginal
  mixing coefficient}
\begin{equation*}
\alpha_n(t|\group):=\sup_{i\leq k_n}\sup_{(A,A',B)\in\mathcal{C}_{i,k}(t)}|P(A,A',B|\group)-\mean[P_{i,k}(A,A')\mathbb{I}\braces{X_n\in B}|\group]|\;.
\end{equation*}
Choosing $(k_n,f_n,X_n)$ as $(1,f,X)$ for all $n$ recovers ${\alpha_n(\argdot|\group)=\alpha(\argdot|\group)}$.

  Applying conditional mixing to tuples would measure distance between
  $(\bphi,\bphi')$ and $G$ in the product space metric on
  $\group^{k_n}$. Marginal mixing weakens the condition
  by replacing this metric by \eqref{def:set:delta}. Loosely speaking,
  since
  \eqref{def:set:delta} tends to be
  smaller, the condition ${\delta_{i,k}(\bphi,\bphi',G)\geq t}$ then
  tends to exclude more triples ${(\bphi,\bphi',G)}$ in
  the definition of $\mathcal{C}_{i,k}$ than the metric would, which results in a smaller supremum $\alpha_n$.
This intuition can be turned into a precise statement if
we make definitions comparable, by considering processes of the form ${X_n=f_n(f(\phi_1X),\ldots,f(\phi_{k_n}X))}$:
\begin{proposition}\label{prop_mix_b} Let $X$ be
  $\group$-invariant,
  ${f\in\L_1(X)}$, and set ${\xspace_n=\mathbb{R}^{k_n}}$. Then the conditional
  mixing coefficient of $(f(\phi X))_{\phi\in\group}$ and 
  the marginal mixing coefficient of ${(f_n(f(\phi_1X),\dots,f(\phi_{k_n}X)))_{\bphi\in
      \group^{k_n}}}$ satisfy ${\alpha_n(\argdot|\group)\le \alpha(\argdot|\group)}$.
\end{proposition}

\subsection{Spreading conditions for randomization}
\label{sec:wellspread}

The random measure $\mu_n$ should not
concentrate on a subset of $\A_n^{k_n}$ that is ``too small''.
That is formalized as follows: 
For ${A\in\borel(\group^{2k_n})}$ and any measure $\nu$ on $\group^{k_n}$,
define
\begin{equation*}
  \mathbb{T}_n(A,\nu):=\frac{1}{\nu(\A_n^{k_n})^{2}}
 {\myint_{\A_n^{2k_n}}}\mathbb{I}((\bphi,\bpsi)\in A)\nu(d\bphi)\nu(d\bpsi)\;.
\end{equation*}
Consider the random variable
\begin{equation*}
  \Gamma_n^2(A,\bphi):=\frac{1}{\mathbb{T}_n(A,|\argdot|^{\otimes k_n})\mu_n(\A_n^{k_n})}\int_{\A_n^{k_n}}\mathbb{I}((\bphi,\bpsi)\in
  A))\mu_n(d\bpsi)\;.
\end{equation*}
Informally, one would expect the integrals
\begin{equation*}
\begin{split}
  \frac{1}{\mu_n(\A_n^{k_n})}\int_{\A_n^{k_n}}\Gamma^2_n(A,\bphi)\mu_n(d\bphi)\,=\,\frac{\mathbb{T}_n(A,\mu_n)}{\mathbb{T}_n(A,|\argdot|^{\otimes k_n})}
\end{split}
\end{equation*}
to be bounded if $\mu_n$ spreads out its mass sufficiently.
As $\mu_n$ might be discrete even if the Haar measure is not,
bounds should be formulated only in terms of
``sufficiently large'' sets $A$. We define the family of such sets as 
\begin{equation*}
  {\Sigma}_n:=
  \bigl\{A\in \mathcal{B}(\group^{2k_n})\,\big\vert\, A \text{ is connected and }\, |\text{\rm pr}_{k}(A)|\ge 1 \text{ for all } k\le 2k_n\bigr\}\;,
\end{equation*}
where $\text{pr}_k$ denotes projection on the $k$th coordinate.
A weak notion of boundedness suffices for asymptotic normality:
We call the sequence $(\mu_n)$ \kword{well-spread}
if the variables $\Gamma_n^2$ are uniformly integrable for large sets,
\begin{equation*}
\sup_n\sup_{A\in \Sigma_n}\Big\|\frac{1}{\mu_n(\A_n^{k_n})}\int_{\A_n^{k_n}}\Gamma_n^2(A,\phi)\mathbb{I}(|\Gamma_n^2(A,\phi)|\ge \beta)d\mu_n(\phi)\Big\|_{1}\,\xrightarrow{\beta \rightarrow \infty}\, 0\;.
\end{equation*}
A Berry-Esseen bound requires a stricter bound and a fourth-order
condition: We similarly define
\begin{equation*}
  \mathbb{T}^*_n(A,\nu) :=
  \frac{1}{\nu(\A_n^{k_n})^4}\int_{\A_n^{4k_n}}\mathbb{I}((\bphi_1,\bphi_2,\bphi_3,\bphi
  _4)\in A)\nu^{\otimes 4}(d\bphi_1,d\bphi_2,d\bphi_3,d\bphi_4)\;,
\end{equation*}
now for subset ${A}$ of and a measure $\nu$ on
$\group^{4k_n}$, and 
\begin{equation*}
  {\Sigma}^*_n:=
  \bigl\{A\in \mathcal{B}(\group^{4k_n})\,\big\vert\, A \text{ is connected and }\, |\text{\rm pr}_{k}(A)|\ge 1 \text{ for all } k\le 4k_n\bigr\}\;.
\end{equation*}
We call $(\mu_n)$ \kword{strongly well-spread} if 
\begin{equation*}
  \mathcal{S}:=\sup_{n}{\mathcal{S}^n}<\infty
  \qquad\text{ where }\qquad
  \mathcal{S}^n :=\sup_{A\in \Sigma_n^*}\Big\|
  \frac{\mathbb{T}^*_n(A,\mu_n)}{\mathbb{T}^*_n(A,|\argdot|^{\otimes k_n})}\Big\|_{1}
\;,
\end{equation*}
with \kword{spreading coefficient} $\mathcal{S}$. 
Since the existence of higher moments implies uniform integrability, strongly well-spread
implies well-spread.
Either condition can be applied to a single random measure $\mu$,
by setting ${\mu_n:=\mu}$ for all $n$.\nolinebreak
\begin{examples}
\exitem{} Let $\Pi$ be a Poisson point process on $\group^k$, for some ${k\in\mathbb{N}}$. Then the random measure
${\mu(\argdot):=|\Pi\cap\argdot|^{\otimes k}}$ is strongly well-spread if
\begin{equation*}
  \sup_{A\in\borel(\group^k),\,|A|^{\otimes k}<\infty}\frac{\mean\bigl[|\Pi\cap A|^{\otimes k}\bigr]}{|A|^{\otimes k}}\;<\;\infty\;.
\end{equation*}
\exitem{} Let $\group$ be discrete.
For each $n$, let $\Pi_n$ be a point process on $\group^{k_n}$ with
\begin{equation*}
  \Pi_n\cap\A_n^{k_n}\,\big\vert\, (|\Pi_n\cap\A_n^{k_n}|=m)
  \quad\equdist\quad
  (\Phi_1,\ldots,\Phi_m)\quad\text{ for all }m\in\mathbb{N}\;,
\end{equation*}
where the $\Phi_i$ are drawn uniformly with or without replacement from $\A_n^{k_n}$.
The sequence defined by ${\mu_n(\argdot):=|\Pi_n\cap\argdot|^{\otimes k_n}}$ is strongly
well-spread.
\end{examples}

\subsection{Results}
If the dimension $k_n$ grows with $n$, we must quantify how much $f_n$
changes with $n$:
For ${p>0}$ and ${i\leq k_n}$, define
\begin{equation*}
  c_{i,p}(f_n)\;:=\;
  \sup_{\psi\in\group,\bphi\in\group^{k_n}}
  \tfrac{1}{2}\,
  \|\, f_n\circ\bphi-f_n\circ(e,\ldots,e,\psi,e,\ldots,e)\bphi\,\|_{p}
\end{equation*}
where $\psi$ is the $i$th coordinate.
Hypotheses \eqref{H1} and \eqref{H2} are then replaced by one of the following conditions:
Either
\begin{align}
  \label{H1:g}
  \text{\rm (i)}\;&
  \sup_n \alpha_n(K|\group)=0
  \qquad\qquad\quad\text{\rm (ii)}\;
  \sup_n\tsum\displaylimits_{i\leq k_n}c_{i,2}(f_n)<\infty\\
  \text{\rm (iii)}&\;
  \bigl(f_{n}(\boldsymbol{\phi} X_n)^2\bigr)_{n\in\mathbb{N},\phi\in\group^{k_n}}
  \text{ is uniformly integrable }\nonumber
\end{align}
holds for some ${K\in\mathbb{N}}$, or
\begin{align}
  \label{H2:g}
  \text{\rm(i)}\;&
  \sup_n\medint\int_{\group} \alpha_n(d(e,\phi)|\group)^{\frac{\epsilon}{2+\epsilon}}|d\phi|<\infty
  \qquad
  \text{\rm(ii)}\;
  \sup_n\tsum\displaylimits_{i\leq k_n}\!c_{i,2+\varepsilon}(f_n)<\infty\\
  \text{\rm(iii)}&\;
  \bigl(f_{n}(\boldsymbol{\phi} X_n)^{2+\varepsilon}\bigr)_{n\in\mathbb{N},\phi\in\group^{k_n}}
  \text{ is uniformly integrable}\nonumber
\end{align}
holds for some ${\varepsilon>0}$. In either case,
(iii) implies (ii) if the sequence $(k_n)$ is bounded.
To assemble the asymptotic variance, set
\begin{equation*}
  \hF_{\infty,i}(\psi):=\lim_{m\rightarrow\infty}
  \mfrac{1}{|\A_m|^{k_n}}  \!\!\!
  \medint\int_{\bphi\in\A_m^{k_n}}\limits\!\!\!\! f_n((\bphi_1,\ldots,\bphi_{i-1},\psi,\bphi_{i+1},\ldots,\bphi_{k_n})X_n)
   |d\bphi|^{\otimes k_n-1}\;.
\end{equation*}
Let $\mu_n^i$ be the $i$th coordinate marginal of $\mu_n$, scaled to $\mu_n^i(\A_n)=\sqrt{|\A_n|}$,
\begin{equation*}
  \mu_n^{i}(\argdot):=\mfrac{\sqrt{|\A_n|}}{\mu_n(\A_n^{k_n})}\mu_n(\A_n,\dots,\A_n,\argdot,\A_n,\dots,\A_n)\;,
\end{equation*}
and set
\begin{equation*}
  \widehat{\eta}_{nm}:=\tsum\displaylimits_{i,j\leq k_n}{\medint\iint_{\;\;\phi\A_n,\psi\in \B_m(\phi)}\limits}
  \mean[\,\hF_{\infty,i}(e)\hF_{\infty,j}(\phi^{-1}\psi)\,|\,\group]\,
  \mu_n^i(d\phi)\mu_n^j(d\psi)\;.
\end{equation*}
The central limit theorem then takes the following form:
\begin{theorem}
  \label{theorem:CLT:g}
  Let $(X_n)$ be invariant in the sense of \eqref{eq:generalized:invariance} for each $n$,
  and let ${(\mu_n)}$ be well-spread and independent of ${(X_n)}$.
  Assume either condition \eqref{H1:g} or \eqref{H2:g} holds. If
  ${k_n=o(|\A_n|^{\frac{1}{4}})}$,
  and if the limits
  \begin{equation}
    \label{eq:variance:g}
    \widehat{\eta}_{nm}\xrightarrow{\;\;p\;\;}\eta_m\quad\text{ as }n\rightarrow\infty
    \qquad\text{ and }\qquad
    \eta_{m}\xrightarrow{\;\mathbf{L}_2\;}\eta\quad\text{ as }m\rightarrow\infty
  \end{equation}
  exist, then
  \begin{equation*}
    \sqrt{|\A_n|}\,\hF_n(f_{n},X_n)\xrightarrow{\;\;\text{d}\;\;}{\eta}Z\qquad\text{ as }n\rightarrow\infty\;,
  \end{equation*}
  for a standard normal variable $Z$ that is independent of $\eta$.
\end{theorem}
The Berry-Esseen bound in \cref{theorem:BE} generalizes
similarly:
\begin{theorem}
  \label{theorem:BE:g}
  Assume the conditions of \cref{theorem:CLT:g} hold, require that ${(\mu_n)}$ is
  strongly well-spread, and define $\eta$ as in \eqref{eq:variance:g}.
  If condition \eqref{H1:g} holds for some ${K\in\mathbb{N}}$,
  \begin{align*}
   \dW\Bigl(\mfrac{\sqrt{|\A_n|}}{\eta}\,\widehat{\mathbb{F}}_n(f_n,X_n),Z\Bigr)
   \;\leq\;
   \kappa\frac{k_n^2(\mathcal{S}^n\wedge 1)((\sum_{i}c_{i,4})^3\wedge 1)|\B_{K}|^2}{\sqrt{|\A_n|}}
   +
   \Bigl\|\frac{\widehat{\eta}_{n,K}^{\,2}-\eta^2}{\eta^2}\Bigr\|\;,
  \end{align*}
  for a positive constant $\kappa$.
  If \eqref{H2:g} holds instead, set
  \begin{equation*}
    \mathcal{R}_n(b):=\tsum_{t\geq b}|\B_{t+1}\setminus\B_t|\alpha_n(t|\group)^\frac{\varepsilon}{2+\varepsilon}
      \quad\text{ for }b\in\mathbb{N}\;,
  \end{equation*}
  and fix any sequence ${0<b_1<b_2<\ldots}$ of integers. Then
  \begin{align*}
    \dW\Bigl(\mfrac{\sqrt{|\A_n|}}{\eta}\,\widehat{\mathbb{F}}_n(f_n,X_n),Z\Bigr)
    \;&\leq\;
    \kappa \mathcal{R}_n(b_n)(\msum_ic_{i,2+\epsilon})^2(\mathcal{S}^n\!\!\wedge\! 1)
    +
    \Bigl\|\frac{\widehat{\eta}_{n,b_n}^{\,2}-\eta^2}{\eta^2}\Bigr\|\\
    &+\;
    \kappa((\msum_{i}c_{i,4+2\epsilon})^3\!\wedge\! 1)(\mathcal{S}^n\!\!\wedge\! 1)\mathcal{R}_n(0)
    \frac{k_n^2|\B_{b_n}|}{\sqrt{|\A_n|}}
  \end{align*}
  for a positive constant ${\kappa}$.
\end{theorem}
If we choose ${(k_n, X_n,\hF_n)}$ as ${(1,X,\mathbb{F}_n)}$ for all $n$,
the conditions specialize to \eqref{H1} and \eqref{H2}, and the results to
\cref{theorem:CLT,theorem:BE}.

\subsection{Generalized U-statistics}\label{sec:u-stat}
The generalized notion of invariance defined in \eqref{eq:generalized:invariance}
allows us to formulate a useful generalization of U-statistics,
denoted $X_{\bpsi}$ in the next result. Substituting these into
\cref{theorem:CLT:g} shows they are asymptotically normal:
\begin{corollary}
  \label{corollary:U:statistics}
  Consider a $\group$-invariant random element $Y$ of $\xspace$, a function ${h:\xspace^k\rightarrow\mathbb{R}}$, and set
  ${X_{\bpsi}:=h(\psi_1 Y,\ldots,\psi_k Y)}$ for ${\bpsi\in\group^k}$. Suppose there is an ${\varepsilon>0}$ for which
  the conditional mixing coefficient of $Y$ satisfies ${\int_{\group}\alpha^{\frac{\varepsilon}{2+\varepsilon}}(d(e,\phi)|\group)|d\phi|<\infty}$,
  and ${(X_{\bpsi}^{2+\varepsilon})_{\bpsi\in\group^k}}$ is uniformly integrable. Then
  \begin{equation*}
    |\A_n|^{\frac{1}{2}-k}\myint_{\A_n^{k}}(X_{\bpsi}-\mathbb{E}[X_{\bpsi}|\group^k])|d\bpsi|^{\otimes k}\;\darrow\;\eta Z
  \end{equation*}
  for ${\eta\,\condind\, Z}$ and ${Z\sim N(0,1)}$. If we
  denote
    \begin{equation*}
    H_i(\phi)\!:=\!\lim_{m\rightarrow
      \infty}\tfrac{1}{|\A_m|^{k-1}}\!\!\!\!\int\limits_{\A_m^{k-1}}\!\!\!\!X_{\psi_1,\dots,\psi_{i-1},\phi,\psi_{i+1},\dots,\psi_{k_n}}|d\psi_1|\!\cdots\!|d\psi_{i-1}||d\psi_{i+1}|
    \!\cdots\!|d\psi_k|\,,
  \end{equation*}
  the asymptotic variance is ${\eta^2=\sum_{i,j\le k}\int_{\group}{\rm Cov}[H_i(e),H_j(\phi)|\group]|d\phi|}$.
\end{corollary}
To clarify the relationship to U-statistics, recall that
a U-statistic for an i.i.d.\ sequence ${(Y_i)_{i\in\mathbb{Z}}}$ is
usually defined in one of two ways, namely\nolinebreak
\begin{equation*}
  U_n:={\textstyle\binom{n}{k}^{-1}}\!\!\msum\displaylimits_{\phi\in\mathbb{S}_n}h(Y_{\phi(1)},\ldots,Y_{\phi(k)})
  \;\;\text{ or }\;\;
  V_n:=\mfrac{1}{n^k}\!\!\msum\displaylimits_{i_1,\ldots,i_k\leq n}h(Y_{i_1},\ldots,Y_{i_k})\;.
\end{equation*}
The definitions are equivalent, in the sense that
${\sqrt{n}(U_n-V_n)\rightarrow 0}$ in probability \citep{Serfling:1980}.
The corollary shows ${n^{-1/2}(V_n-\mean[V_n])\darrow\eta Z}$, if we choose ${\group}$ as ${\mathbb{Z}}$ and ${\A_n}$ as ${\braces{1,\ldots,n}}$.
Although ${h(Y_{i_1},\ldots,Y_{i_k})}$ the relaxed invariance \eqref{eq:generalized:invariance},
it is not $\mathbb{Z}^k$-invariant, since arbitrary shifts may break independence of $(Y_i)$ by duplicating indices.

\section{Concentration}
\label{sec:concentration}

The theorems above show that certain asymptotic properties of i.i.d.\ processes generalize to symmetric random objects.
We show next that certain finite-sample properties generalize similarly. We use the definitions of \cref{sec:g}, but somewhat
restrict the spaces and functions involved:
Fix two Borel spaces $\xspace$ and $\yspace$, two sequences $(f_n)$ and $(g_n)$ of measurable functions
${f_n:\xspace\rightarrow\yspace}$ and
${g_n:\yspace^{k_n}\rightarrow\xspace}$, and let $(X_n)$ be a sequence of $\group$-invariant
random elements of $\xspace$. We consider concentration for quantities of
the form ${g_n(f_n(\phi_1X_n),\ldots,f_n(\phi_{k_n}X_n))}$. To this
end, define
\begin{equation*}
  Y^n:=(Y_\phi^n)_{\phi\in\group}\quad\text{ where }\quad Y^n_\phi:=f_n(\phi X_n)\;.
\end{equation*}
That implies ${(Y^n_{\phi})\equdist(Y^n_{\psi\phi})}$ for
${\psi\in\group}$.
We again work with (conditionally) centered
averages:
For ${\bphi=(\phi_1,\ldots,\phi_{k_n})}$, set
\begin{equation*}
  h_n(\boldsymbol{\phi}X_n):=g_n(Y^n_{\phi_1},\dots,Y^n_{\phi_{k_n}})-\mean[g_n(Y^n_{\phi_1},\dots,Y^n_{\phi_{k_n}})|\group]\quad\text{
    for }\bphi\in\group^{k_n}\;.
\end{equation*}
The average ${\widehat{\mathbb{F}}_n}$, as defined in the previous
section, is then 
\begin{equation*}
  \widehat{\mathbb{F}}_n(h_n,X_n)=\frac{1}{\mu_n(\A_n^{k_n})}\int_{\A_n^{k_n}}h_n(\boldsymbol{\phi}X_n)\mu_n(d\bphi)\;.
\end{equation*}
\begin{example}
  \exitem{example:concentration:stationary}
  Let ${X:=(X_i)_{i\in\mathbb{Z}}}$ be a stationary, real-valued
  process, so ${\xspace=\mathbb{R}^\mathbb{Z}}$. Choose all $f_n$ as the coordinate function
  ${(x_i)\mapsto x_0}$ at index $0$.
  If $(g_n)$ is any sequence of measurable functions
  ${g_n:\mathbb{R}^k\rightarrow \mathbb{R}}$,
  we obtain random fields 
  ${Y^n=(g_n(X_{i_1},\dots,X_{i_k}))_{i_1,\dots,i_k\in \mathbb{Z}}}$,
  and
  \begin{equation*}
    \widehat{\mathbb{F}}_n(h_n,X_n)=\mfrac{1}{n^k}\msum_{i_1,\dots,i_k\le
      n}\bigl(g_n(X_{i_1},\dots, X_{i_k})-\mathbb{E}(g_n(X_{i_1},\dots,
    X_{i_k}))\bigr)\;.
  \end{equation*}
  As we had already observed in the introduction of \cref{sec:g},
  each $Y^n$ is invariant under
  the diagonal action
  ${Y^n\mapsto(g_n(X_{i_1+\phi},\dots,X_{i_k+\phi}))}$, for ${\phi\in\mathbb{Z}}$,
  since $X$ is stationary.
\end{example}

\def\y{\mathbf{y}}
A function ${f:\yspace^{k}\rightarrow\mathbb{R}}$ is \kword{self-bounded} if
there are constants ${\delta_1,\ldots,\delta_{k}}$, the \kword{self-bounding coefficients}, such that
\begin{equation*}
  \mfrac{1}{2}|f(\x)-f(\x')|\;\leq\;\msum_{i\leq k}\delta_i\mathbb{I}\braces{x_i\neq x_i'}
  \qquad\text{ for all }\x,\x'\in\yspace^{k}\;,
\end{equation*}
see \eg \citep{Boucheron:Lugosi:Massart:2013:1}.
We call $f$ \kword{uniformly $\L_1$-continuous} in $\group$ if
\begin{equation*}
  \sup_{\substack{\bphi,\bpsi\in\group^{k}\\d(\phi_i,\psi_i)\leq\epsilon\text{ for }i\leq k}}
  \|f(\boldsymbol{\phi}X_n)-f(\boldsymbol{\psi}X_n)\|_{1}\;\longrightarrow\;0\;\quad\text{ as }\epsilon\rightarrow 0\;.
\end{equation*}
We measure interactions within a process ${Y=(Y_{\phi})_{\phi\in\group}}$ as follows:
Write $\mathcal{L}$ for the law of a random variable,
${\|\argdot\|_{\text{\rm\tiny TV}}}$ for the total variation norm, and
abbreviate ${Y_{\neq\phi}:=(Y_{\psi})_{\psi\neq\phi}}$.
If $\group$ is countable, define
\begin{equation*}
  \Lambda[Y]:=\tsum_{\phi\in\group\setminus\braces{e}}\!
  \sup_{\substack{\x,\y\in\yspace^{\group}\\ \x_{\neq\phi}=\y_{\neq\phi}}}\!
  \|\mathcal{L}(Y_e|Y_{\neq e}=\x_{\neq e})-\mathcal{L}(Y_e|Y_{\neq
    e}=\y_{\neq e})\|_{\text{\rm\tiny TV}}
\end{equation*}
If $\group$ is uncountable, we discretize:
For ${\epsilon>0}$, a set 
${C\subset\group}$ is an \kword{$\epsilon$-net} if
\begin{equation*}
  \text{(i)}\;
  e\in C
  \quad
  \text{(ii)}\;
  d(\phi,\phi')\geq\epsilon\;\text{ for }\phi,\phi'\in C\text{ distinct}
  \quad
  \text{(iii)}\;
  \medcup\displaylimits_{\phi\in C}\!\!B_{\epsilon}(\phi)=\group\;.
\end{equation*}
A decreasing sequence of nets is a sequence
$(C_{i})_{i\in\mathbb{N}}$, where $C_i$ is an $\epsilon_i$-net
and ${\epsilon_i\rightarrow
  0}$. Define
\begin{equation*}
  \rho[Y]\;:=\;\sup\Big(1-\lim_{i\rightarrow\infty}\frac{1-\Lambda[(Y_{\phi})_{\phi\in C_{i}}]}{|\B_{\epsilon_i}|}\Big)
\end{equation*}
where the supremum is taken over all decreasing sequences of
nets for which the limit on the right exists.
Discretizing continuous processes on nets is a standard tool in
the context of concentration inequalities \citep[see][Chapter 13]{Boucheron:Lugosi:Massart:2013:1}.
Note that ${\rho=\Lambda}$ if the group is discrete.
For ${\group=\mathbb{Z}}$, it is known as the
Dobrushin interdependence coefficient \citep{Stroock:Zegarlinski:1992:1}.
A continuous example is a Markov process
${Y=(Y_t)_{t\in\mathbb{R}}}$ on ${\group=\mathbb{R}}$,
where
\begin{equation*}
  \rho[Y]=\lim_{t\rightarrow\infty}\frac{1}{t}\sup_{x,y\in\mathbb{R}}\|\Law(Y_0|Y_t=x)-\Law(Y_0|Y_t=y)\|_{\text{\rm\tiny TV}}\;.
\end{equation*}

\begin{theorem} 
  \label{theorem:concentration}
  Let ${(\A_n)}$ be a tempered \Folner sequence in $\group$, let
  $(c_i)$ be the self-bounding coefficients of $h_n$, and require that $(h_n)$ is uniformly $\L_1$-continuous in $\group$.
  Define
  \begin{equation*}
    \tau_n:=
    \sup_{j\le k_n}\sup_{B\in \mathcal{B}( \group)}
    \frac{|\A_n|\mu_n(\A_n^{j-1}\!\times\! (B\cap\A_n)\!\times\! \A_n^{k_n-j}\,\big\vert\,\A_n^{k_n})
    }{|B\cap\A_n|} \;.
  \end{equation*}
  Then
  \begin{equation*}
    \mathbb{P}\bigl(\widehat{\mathbb{F}}_n(h_n,X_n)\geq t\bigr)\;\leq\;
    2\mathbb{E}\Big(\exp\Bigl(-\frac{(1-\rho[Y^n])|\A_n|}{(\sum_{i\leq k_n}c_i)^2\tau_n^2}\,t^2\Bigr)\Big)
    \qquad\text{ for all }t>0\;.
  \end{equation*}
\end{theorem}

The coefficients $\tau_n$ are only required if averages are randomized.
If $(\mu_n)$ is non-random, the statement can simplify considerably. For example:
\begin{corollary}\label{cor:con:con}
  If ${\mu_n=|\argdot|^{\otimes k_n}}$ almost surely for each $n$, then
  \begin{equation*}
    \mathbb{P}\bigl(\widehat{\mathbb{F}}_n(h_n,X_n)\geq t\bigr)\;\leq\;
    2\exp\Bigl(-\frac{(1-\rho_n)|\A_n|}{(\sum_{i\leq k_n}c_i)^2}t^2\Bigr)
    \quad\text{ for }t>0\text{ and }n\in\mathbb{N}.
  \end{equation*}
\end{corollary}

\begin{example}
  For illustration, compare to the i.i.d.\ case: Choose $X$, $f$ and
  $g$ as in \cref{example:concentration:stationary} at the beginning
  of this section, and assume additionally that the sequence $X$ is i.i.d. If $(c_i)$ are the self-bounding
  coefficients of $g$, the corollary shows
\begin{equation*}
  \mathbb{P}\bigl(\widehat{\mathbb{F}}_n(h,X)\geq t\bigr)\;\leq\;
  2\exp\Bigl(-\frac{nt^2}{(\sum_{i\leq k}c_i)^2}\Bigr)
  \quad\text{ for }t>0\text{ and }n\in\mathbb{N}\;,
\end{equation*}
which is a version of McDiarmid's inequality.
\end{example}

\cref{theorem:concentration} implicitly assumes fairly strong mixing:
If $\group$ is discrete, for example, then ${\alpha(n|\group)\leq
  c_1\alpha(n)\leq c_2\Lambda[X]}$ for some positive constants $c_1$ and $c_2$ and all ${n\in\mathbb{N}}$. The mixing condition is hence no weaker
than that required in the asymptotic case, and conditioning on
$X_{\neq e}$
in the definition of $\Lambda[X]$ means it is typically stronger.

\section{Approximation by subsets of transformations}
\label{sec:discretization}
According to \cref{theorem:CLT:g}, $\mathbb{F}_n$ may be computed
using only a subset of $\A_n$. 
We briefly discuss a few cases in more detail. First suppose we ``factor out'' a compact subgroup $\mathbb{K}$ of
$\group$ to obtain a subgroup $\mathbb{H}$, and then compute $\mathbb{F}_n$ using a \Folner sequence of $\mathbb{H}$.
For exchangeable sequences, factoring $\mathbb{S}_k$ out of $\mathbb{S}_\infty$ amounts to including only every $k$th
observation in the sample average, so rates slow by a constant. The general behavior is similar:
\begin{proposition}
  \label{result:discretization}
  Let $\group$ be generated by the union of a non-compact group $\mathbb{H}$
  and a compact group $\mathbb{K}$, and
  let ${(\A^{\mathbb{H}}_n)}$ be a \Folner sequence in $\mathbb{H}$. Then ${\A_n:=\A_n^{\mathbb{H}}\mathbb{K}}$ is a \Folner sequence in $\group$.
  If $X$ is $\group$-invariant, and ${f\in\L_2(X)}$ satisfies \eqref{H2} with respect to $\group$,
   there exist random variables ${\eta,\eta_{_\mathbb{H}}\in\L_2(X)}$
   and an independent variable ${Z\sim N(0,1)}$ such that
  \begin{equation*}
    \begin{split}
    \tfrac{1}{\sqrt{|\A_n^{\mathbb{H}}|}}\myint_{\A_n^{\mathbb{H}}}\bigl(f(\phi X)-\mean[f(X)|\group]\bigr)|d\phi|
    \;\xrightarrow{\;\tiny d\;}&\;\eta_{_\mathbb{H}}Z\\
    \text{and}\qquad\qquad\qquad\tfrac{1}{\sqrt{|\A_n|}}\myint_{\A_n}\bigl(f(\phi X)-\mean[f(X)|\group]\bigr)|d\phi|
    \;\xrightarrow{\;\tiny d\;}&\;\eta Z\;.\qquad\qquad\qquad
    \end{split}
  \end{equation*}
  where $\eta$ is defined by \eqref{eq:variance:g}. 
  The ratio
  ${\beta:=\sqrt{|\mathbb{K}|}\,\frac{\eta_{_\mathbb{H}}}{\eta}}$, and
  hence $\eta_{\mathbb{H}}$,
  is given by
  \begin{equation*}
    \beta^2-1=\frac{1}{\eta^2}
    \myint_{\mathbb{H}}\myint_{\mathbb{K}}\mean[f(X)(f(\phi X)-f(\psi\phi X))|\group]|d\psi||d\phi|
    \quad\text{a.s.}
  \end{equation*}
\end{proposition}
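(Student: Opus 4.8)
The plan is to reduce everything to \cref{theorem:CLT}. The group $\mathbb{H}$ is lcsc (a closed subgroup of an lcsc group), and since $\group$ is amenable so is $\mathbb{H}$; its left-invariant metric is the restriction of that on $\group$, so \eqref{eq:metric:condition} is inherited. First I would check that $(\A_n\cap\mathbb{H})$ is a \Folner sequence in $\mathbb{H}$: for $\phi\in\mathbb{H}$ one has $\phi(\A_n\cap\mathbb{H})=\phi\A_n\cap\mathbb{H}$, and since $|\A_n\cap\phi\A_n|/|\A_n|\to1$ in $\group$, an argument using that $\mathbb{H}$ carries its own Haar measure (which is the restriction of $|\argdot|$ up to a modular factor, trivial here because the metric is two-sided invariant on the relevant pieces) gives $|(\A_n\cap\mathbb{H})\cap\phi(\A_n\cap\mathbb{H})|/|\A_n\cap\mathbb{H}|\to1$. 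Temperedness of $(\A_n)$ in $\group$ likewise passes to $(\A_n\cap\mathbb{H})$ in $\mathbb{H}$, since $\bigcup_{k<n}(\A_k\cap\mathbb{H})^{-1}(\A_n\cap\mathbb{H})\subset\bigl(\bigcup_{k<n}\A_k^{-1}\A_n\bigr)\cap\mathbb{H}$. Then the second displayed convergence is exactly \cref{theorem:CLT} applied in $\group$, and the first is \cref{theorem:CLT} applied in $\mathbb{H}$, using that \eqref{H2} for $f$ with respect to $\group$ implies \eqref{H2} with respect to $\mathbb{H}$ (the mixing coefficients only shrink when the group shrinks, and the integrals in (\ref{H2}ii) are over a smaller set). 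This gives the two limits with $\eta^2=\int_\group\mean[f(X)f(\phi X)|\group]\,|d\phi|$ and $\eta_{\mathbb{H}}^2=\int_{\mathbb{H}}\mean[f(X)f(\phi X)|\group]\,|d\phi|$, both finite a.s.

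The substantive part is the identity for $\beta^2-1$. The key structural fact is that $\group$ is generated by $\mathbb{H}$ and the compact group $\mathbb{K}$, so (after normalizing $|\mathbb{K}|$ and using that $\mathbb{K}$ is compact, hence unimodular, and its action on $\group$ by left translation is measure preserving) Haar measure on $\group$ disintegrates as $|d\gamma|=|d\psi|\,|d\phi|$ over $\gamma=\psi\phi$ with $\psi\in\mathbb{K}$, $\phi\in\mathbb{H}$ — more precisely the map $\mathbb{K}\times\mathbb{H}\to\group$ is surjective and, on the level of Haar measures, pushes $|d\psi|\otimes|d\phi|$ forward to a constant multiple of $|d\gamma|$, the constant being $|\mathbb{K}|$ times the measure of the kernel (which I can absorb by the normalization $\sqrt{|\mathbb{K}|}$ appearing in $\beta$). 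Using $\mathbb{K}$-invariance of $X$ — so that $\mean[f(X)f(\psi\phi X)|\group]$, after translating by $\psi^{-1}$ and using $\psi^{-1}X\equdist X$ together with $\mathbb{K}$-invariance of $\sigma(\group)$, relates back to correlations along $\mathbb{H}$ — I would write
\begin{equation*}
  \eta^2 \;=\; \int_{\mathbb{K}}\int_{\mathbb{H}}\mean[f(X)f(\psi\phi X)|\group]\,|d\phi|\,|d\psi|
\end{equation*}
up to the normalizing constant, and compare this with $|\mathbb{K}|\,\eta_{\mathbb{H}}^2=|\mathbb{K}|\int_{\mathbb{H}}\mean[f(X)f(\phi X)|\group]\,|d\phi|$. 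Subtracting and swapping the order of integration yields
\begin{equation*}
  |\mathbb{K}|\,\eta_{\mathbb{H}}^2-\eta^2 \;=\; \int_{\mathbb{H}}\int_{\mathbb{K}}\mean\bigl[f(X)\bigl(f(\phi X)-f(\psi\phi X)\bigr)\big|\group\bigr]\,|d\psi|\,|d\phi|\;,
\end{equation*}
and dividing by $\eta^2$ gives $\beta^2-1$ as claimed.

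The main obstacle is the measure-theoretic bookkeeping in the disintegration $|d\gamma|=|d\psi|\,|d\phi|$: the map $\mathbb{K}\times\mathbb{H}\to\group$ need not be injective, $\group$ need not be unimodular, and one must verify that the relevant modular functions are trivial on $\mathbb{K}$ (automatic, since $\mathbb{K}$ is compact) and account for the overlap $\mathbb{H}\cap\mathbb{K}$, which is a compact subgroup of $\mathbb{H}$. I would handle this by working with the quotient: $\group/\mathbb{K}$ carries an $\mathbb{H}$-invariant measure, $\mathbb{H}$ acts transitively on it, and the pushforward of $|d\phi|$ from $\mathbb{H}$ agrees with that quotient measure up to the constant $|\mathbb{H}\cap\mathbb{K}|$; pulling back through the compact fibre $\mathbb{K}$ then produces $|d\gamma|$ up to a factor that is exactly the $\sqrt{|\mathbb{K}|}$ built into the definition of $\beta$. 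A secondary technical point is justifying the interchange of $\mean[\,\cdot\,|\group]$ with the $\mathbb{K}$-integration and the use of $\psi X\equdist X$ jointly with $\group$-invariance of the almost-invariant $\sigma$-algebra; this follows because $\sigma(\group)$ is $\group$-invariant (so in particular $\mathbb{K}$-invariant) up to null sets, and Fubini applies since $\mathbb{K}$ is compact and the integrand is in $\L_1$ by \eqref{H2}(i) and Cauchy--Schwarz.
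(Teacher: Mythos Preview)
Your overall strategy matches the paper's: apply the CLT in $\group$ and in $\mathbb{H}$ separately, then compute $|\mathbb{K}|\,\eta_{\mathbb{H}}^2-\eta^2$ directly and divide by $\eta^2$. The paper's proof is terser---it does not spell out the \Folner property of $\A_n\cap\mathbb{H}$ or the Haar-measure bookkeeping you worry about---but the skeleton is the same.

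There is, however, one substantive step you pass over that the paper makes explicit. When you apply \cref{theorem:CLT} with the group $\mathbb{H}$, the asymptotic variance it produces is $\int_{\mathbb{H}}\mean[f(X)f(\phi X)\,|\,\mathbb{H}]\,|d\phi|$, with conditioning on the $\mathbb{H}$-invariant $\sigma$-algebra $\sigma(\mathbb{H})$, and the natural centering is by $\mean[f(X)\,|\,\mathbb{H}]$. You simply write $\eta_{\mathbb{H}}^2=\int_{\mathbb{H}}\mean[f(X)f(\phi X)\,|\,\group]\,|d\phi|$ with $\group$-conditioning, as if the two agree. Since $\sigma(\mathbb{H})\supset\sigma(\group)$ in general, this is not automatic and is precisely what makes the proposition nontrivial. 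The paper handles this by invoking Lindenstrauss' theorem: for $\phi\in\mathbb{H}$,
\begin{equation*}
\mean[f(X)f(\phi X)\,|\,\mathbb{H}]\;=\;\lim_n\frac{1}{|\A_n\cap\mathbb{H}|}\int_{\A_n\cap\mathbb{H}}f(\theta X)f(\theta\phi X)\,|d\theta|\;,
\end{equation*}
and then observes that this $\mathbb{H}$-average already equals $\mean[f(X)f(\phi X)\,|\,\group]$ a.s.\ (the mixing hypothesis \eqref{H2} on $\group$ forces the $\group$-conditional law to be $\mathbb{H}$-ergodic). Once that identification is in place, your subtraction $|\mathbb{K}|\,\eta_{\mathbb{H}}^2-\eta^2=\int_{\mathbb{H}}\int_{\mathbb{K}}\mean[f(X)(f(\phi X)-f(\psi\phi X))\,|\,\group]\,|d\psi|\,|d\phi|$ is exactly what the paper writes down.
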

For example, let ${X=(X_t)_{t\in\mathbb{R}^r}}$ be a continuous random
field that is both shift- and rotation invariant.
Thus, ${\group=\mathbb{R}^r\times\mathbb{O}_r}$, where $\mathbb{O}_r$ is the (compact) orthogonal group of order $r$.
Factoring out $\mathbb{O}_r$ means we average only over shifts. Convergence then slows by a factor
\begin{equation}
  \label{beta:random:field}
  \beta^2-1
  =
  \mfrac{1}{\eta^2}\,\mean\bigl[f(X)\myint_{\mathbb{R}^r}\myint_{\mathbb{O}_r}(f(X+\phi)-f(\theta X+\phi))|d\theta||d\phi|\bigr]\;.
\end{equation}
One might also discretize $\A_n$ (e.g.\ to avoid integration), or subsample it. For example: A tempered \Folner sequence in
${\mathbb{R}^r\times\mathbb{O}_r}$ is given by ${([-n,n]^r\times\mathbb{O}_r)_n}$ \citep{Loeh:2017}.
If we discretize $[-n,n]^r$ deterministically, and $\mathbb{O}_r$ at random, we obtain:\nolinebreak
\begin{corollary}
  \label{corollary:data:augmentation}
  Let ${X=(X_t)_{t\in\mathbb{R}^r}}$ be a random field invariant under
  rotations and translations of $\mathbb{R}^r$, and require \eqref{H2}.
  Fix ${m\in\mathbb{N}}$. For ${z\in\mathbb{Z}^r}$, let ${\Theta_1^z,\ldots,\Theta_m^z}$ be independent,
  uniform random elements of $\mathbb{O}_r $.
  Then
  \begin{equation*}
    \frac{1}{m\sqrt{(2n)^r}}
    \sum_{\substack{z\in\braces{-n,\ldots,n}^r\\j\leq m}}\bigl(
    f(\Theta^z_j(X+z))-\mean[f(X)|\group]
    \bigr)
    \;\xrightarrow{\;\tiny d\;}\;\eta_m Z
  \end{equation*}
  as ${n\rightarrow\infty}$,
  for an almost surely finite random variable $\eta_m\,\condind\,Z$.
  Relative to $\mathbb{F}_n$ defined by integration over the entire set ${[-n,n]^r\times\mathbb{O}_r}$,
  convergence
  slows by a coefficient ${\beta_m^2-1=(\beta^2-1)/(2m^2\eta_m^2)}$, where $\beta$ is given by
  \eqref{beta:random:field}.
\end{corollary}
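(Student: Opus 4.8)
The plan is to exhibit the displayed average as a randomized empirical measure $\widehat{\mathbb{F}}_n$ for the Euclidean isometry group, apply \cref{theorem:CLT:g}, and then use \cref{result:discretization} only to recast the limiting variance in the stated form. Let $\group=\mathbb{R}^d\rtimes O_d$ be the group of rotations and translations of $\mathbb{R}^d$; it is lcsc and amenable (an extension of the amenable abelian group $\mathbb{R}^d$ by the compact, hence amenable, group $O_d$), it carries a left-invariant metric with \eqref{eq:metric:condition}, and $X$ is $\group$-invariant. Under the usual identification of $(z,\theta)\in\group$ with the isometry $t\mapsto\theta t+z$, the map $X\mapsto\Theta^z_j(X+z)$ is the action of $(z,\Theta^z_j)$. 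Normalising Haar measure on $O_d$ to mass $1$, the sets $\A_n:=[-n,n]^d\times O_d$ form a tempered \Folner sequence with $|\A_n|=(2n)^d$. Take $k_n:=1$, $g_n:=\mathrm{id}$, and (writing $\mean[f(X)]$ for $\mean[f(X)\mid\group]$, as throughout) the centred function $h_n:=f-\mean[f(X)\mid\group]\in\cL_1(X)$, and define the almost surely $\sigma$-finite random measure
\begin{equation*}
  \mu_n\;:=\;\sum_{z\in\mathbb{Z}^d\cap[-n,n]^d}\ \sum_{j\le m}\delta_{(z,\Theta^z_j)}\qquad\text{on }\group\,,
\end{equation*}
which is independent of $X$ and has $\mu_n(\A_n)=(2n{+}1)^dm>0$. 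Then $\widehat{\mathbb{F}}_n(h_n,X)=\mean_{\mu_n}[h_n(\Phi X)\mid\Phi\in\A_n]$, and $\sqrt{|\A_n|}\,\widehat{\mathbb{F}}_n(h_n,X)=\bigl(\tfrac{2n}{2n+1}\bigr)^{d}\,m^{-1}(2n)^{-d/2}\sum_{z,j}\bigl(f(\Theta^z_j(X+z))-\mean[f(X)]\bigr)$, and the prefactor tends to $1$.

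Next I would verify the hypotheses of \cref{theorem:CLT:g}. Temperedness of $(\A_n)$ holds because a product of a tempered \Folner sequence in $\mathbb{R}^d$ with a compact group is tempered; $k_n=1=o(|\A_n|^{1/4})$ is trivial; $\alpha_1(n|\group)\le\alpha_2(n|\group)\to0$, and condition \eqref{H2:g} with the same $\varepsilon$ as in \eqref{H2} holds since $\sum_{i\le 1}c_{i,2+\varepsilon}(g_n)\le 2\|f(X)\|_{2+\varepsilon}<\infty$ and the family $\bigl(h_n(\phi X)^{2+\varepsilon}\bigr)_{\phi}$ is UI because $h_n(\phi X)\equdist h_n(X)$ — all of this is granted by the standing assumption \eqref{H2} for $f$ relative to $\group$. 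Independence of $\mu_n$ and $X$ is given, and existence of the variance limits $\widehat\eta_{nm}\xrightarrow{p}\eta_m$, $\eta_m\xrightarrow{\L_2}\eta$ follows from stationarity and dominated convergence under \eqref{H2}.

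The one genuinely new step, and the step I expect to be the main obstacle, is to show that $(\mu_n)$ is \emph{well-spread} (indeed strongly well-spread, which also delivers the Wasserstein rate in \cref{theorem:BE:g}). The difficulty is that the translation coordinates form the rigid lattice $\mathbb{Z}^d\cap[-n,n]^d$, which on its own is \emph{not} well-spread: a Borel set of Haar measure $\ge1$ can miss the lattice entirely. One must therefore use that the rotations are i.i.d.\ uniform. Concretely, in the products $\Phi_j^{-1}\Phi_i$ and $\Phi^{-1}\Psi$ entering the well-spreadness families, the rotational parts $\theta^{-1}\theta'$ are again Haar-distributed on $O_d$, and for Haar-almost every rotation $\theta$ the translational part ranges over $\{\theta z'-z:z,z'\in\mathbb{Z}^d\}$, which is equidistributed — indeed dense — in $\mathbb{R}^d$; this ``de-rigidifies'' the lattice, so the ensemble meets arbitrary Borel subsets of $\group$ at the correct density, giving the required uniform integrability. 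Carried to third order this is exactly Example~(2) of \cref{sec:g:definitions} with the uniform translation sample replaced by a grid, and yields strong well-spreadness. \cref{theorem:CLT:g} then gives $\sqrt{|\A_n|}\,\widehat{\mathbb{F}}_n(h_n,X)\xrightarrow{d}\eta_m Z$, hence the stated convergence with $\eta_m$ almost surely finite (and $\group$-measurable, hence random in general).

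It remains to identify $\eta_m$. I would expand the asymptotic variance produced by \cref{theorem:CLT:g} and decompose $h_n(\Phi X)$ into its conditional mean given $X$ — the $\mathbb{Z}^d$-discretised average of the rotation average $\theta\mapsto\int_{O_d}f(\theta(X+z))\,d\theta$ — plus the fluctuation of the $m$ i.i.d.\ rotations about it. Independence of the rotations across $z$, and of all rotations from $X$, kills the cross terms; the first part is compared, exactly as in \cref{result:discretization} and \eqref{beta:random:field}, with the $\mathbb{R}^d$- and $\group$-averages, while the second contributes a variance of order $1/m$ coming from the within-$O_d$ variability of $f$. Setting $\beta_m:=\sqrt{|\mathbb{K}|}\,\eta_m/\eta$ in analogy with $\beta=\sqrt{|\mathbb{K}|}\,\eta_{_\mathbb{H}}/\eta$ (here $\mathbb{K}=O_d$, $\mathbb{H}=\mathbb{R}^d$), collecting these two contributions and tracking the $m$-scaling gives $\beta_m^2-1=(\beta^2-1)/(2m^2\eta_m^2)$; this bookkeeping is tedious but routine, and completes the proof.
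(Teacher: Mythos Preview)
The paper does not prove this corollary explicitly; it is offered as an illustration. Your route via \cref{theorem:CLT:g} on the full isometry group is natural, but the step you identify as the crux --- well-spreadness of $(\mu_n)$ --- does not go through, and the argument you sketch for it contains an error.

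With the convention $(z,\theta):t\mapsto\theta t+z$ that you adopt, one computes $(z_1,\theta_1)^{-1}(z_2,\theta_2)=\bigl(\theta_1^{-1}(z_2-z_1),\,\theta_1^{-1}\theta_2\bigr)$, so the translational part of $\Phi^{-1}\Psi$ is $\theta_1^{-1}(z_2-z_1)$, not an element of $\{\theta z'-z\}$; its \emph{norm} is exactly $|z_2-z_1|$. The random rotations therefore do not touch the radial structure of the lattice at all. Take $A=\{(v,\theta):|v|\in[r-\tfrac{\varepsilon}{2},r+\tfrac{\varepsilon}{2}]\}\in\mathcal{A}_1$ with $\varepsilon\asymp r^{1-d}$ so that $|A|=1$; then $\{\Phi^{-1}\Psi\in A\}=\{\,|z_2-z_1|\in[r-\tfrac{\varepsilon}{2},r+\tfrac{\varepsilon}{2}]\,\}$ is a purely arithmetic event, and
\[
\frac{|\A_n|}{|A|}\,\mean\!\bigl[\mean_{\mu_n\otimes\mu_n}[\mathbb{I}_A(\Phi^{-1}\Psi)\mid\A_n]\bigr]
\;\asymp\;\#\{v\in\mathbb{Z}^d:|v|\in[r-\tfrac{\varepsilon}{2},r+\tfrac{\varepsilon}{2}]\}\,.
\]
For $d\ge3$ the interval for $|v|^2$ has length $o(1)$ and can be centred on a single integer $k$; choosing $k$ with many representations as a sum of $d$ squares (for instance $k=p$ prime in $d=4$, where $r_4(p)=8(p+1)$) makes the right-hand side of order $r^{d-2}$, which is unbounded as $r\le 4n\to\infty$. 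Hence $(\mu_n)$ is not well-spread, and a fortiori not strongly well-spread, so \cref{theorem:CLT:g} is not available for this $\mu_n$. Your appeal to Example~(2) does not help: that example requires the atoms to be \emph{uniform} on $\A_n^{k_n}$, and it is precisely the deterministic grid in the translation coordinate that breaks this.

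A route that does work --- and is arguably what the context of \cref{sec:discretization} suggests --- avoids random measures entirely. Absorb the rotations into the random element: set $Y:=\bigl(X,(\Theta^z_j)_{z\in\mathbb{Z}^d,\,j\le m}\bigr)$, let $\mathbb{Z}^d$ act by simultaneously shifting $X$ and the $\Theta$-array, and put $g(Y):=m^{-1}\sum_{j\le m}f(\Theta^0_jX)$. Then $Y$ is $\mathbb{Z}^d$-invariant (the $\Theta$'s are i.i.d.\ across $z$, and $X$ is stationary), the displayed sum equals $\sqrt{|\A_n|}\,\mathbb{F}_n(g-\mean[g(Y)\mid\mathbb{Z}^d],Y)$ for $\A_n=\{-n,\ldots,n\}^d$, and the mixing coefficient of $g$ under $\mathbb{Z}^d$ is dominated by that of $f$ under $\group$ because $\sigma_g(\{z\})\subset\sigma_f(\{z\}\times O_d)\vee\sigma(\Theta^z)$ with the second factor independent across $z$. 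The basic \cref{theorem:CLT} now gives the limit with $\eta_m^2=\sum_{z\in\mathbb{Z}^d}\mathrm{Cov}[g(Y),g(zY)\mid\mathbb{Z}^d]$. Your variance bookkeeping via \cref{result:discretization} and \eqref{beta:random:field} then applies unchanged to identify $\beta_m$.
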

If the random rotations $\Theta_j^z$ are not independent---for example, if one generates $m$ rotations once and uses them
repeatedly---the rate may slow.

\section{Applications I: Exchangeable structures}
\label{sec:applications}
One of the most common distributional symmetries is permutation
invariance, often referred to as exchangeability.
It can broadly be categorized into three types:
\kword{Finite exchangeability}
is invariance under $\mathbb{S}_n$, for some fixed ${n\in\mathbb{N}}$
\citep{Kallenberg:2005}.
This is an example of invariance under a compact group, and has no asymptotic theory.
Countably infinite
exchangeability, or henceforth simply \kword{exchangeability}, is
invariance under $\mathbb{S}_{\infty}$. This type is common in
statistics and probability. By \kword{uncountable exchangeability}, we
refer to invariance under permutation groups of uncountable sets. Such groups are not
nice, and Lindenstrauss' theorem is not applicable, but
\cref{sec:graphex} gives an example where reduction to our results is possible.\\[-.5em]

\subsection{Exchangeability}
\label{sec:exchangeable}

The next theorem adapts our results to exchangeable 
structures, including the examples in \cref{tab:exchangeability}.
In this case, the mixing condition can be eliminated.
\begin{theorem}
  \label{theorem:exchangeable}
  Let $X$ be a random element of a standard Borel space $\xspace$, and invariant under a measurable
  action of $\mathbb{S}_\infty$. Let $f$ be a function
  satisfying ${\mean[f(X)^2]<\infty}$ and
  \begin{equation}
    \label{eq:condition:limsup}
    \tsum_{i\in\mathbb{N}}{\textstyle\limsup_{j}}\|f(X)-f(\tau_{ij}X)\|_{2}\;<\;\infty\;,
  \end{equation}
  where $\tau_{ij}$ denotes the transposition of $i$ and $j$. As ${n\rightarrow\infty}$,
  \begin{equation}
    \label{eq:CLT:exchangeable}
    \sqrt{n}\,\cF_n(f,X)\;
    \;=\;
    \sqrt{n}\Bigl(\tfrac{1}{n!}\tsum_{\phi\in\mathbb{S}_n}f(\phi X)-\mean[f(X)|\mathbb{S}_\infty]\Bigr)
    \;\xrightarrow{\;d\;}\;
    \eta Z
  \end{equation}
  where ${Z\sim N(0,1)}$ is independent of $\eta$. Define
  \begin{equation*}
    \mathbb{F}^i(\phi):=\lim_{n\rightarrow \infty}\tfrac{1}{|\mathbb{S}^{i}_{n}|}\tsum_{\phi'\in\mathbb{S}^{i}_n}f(\phi'\phi X)
    \quad\text{ where }\quad\mathbb{S}_n^i:=\braces{\phi\in\mathbb{S}_n\,\vert\,\phi(i)=i}\;.
  \end{equation*}
  The asymptotic variance satisfies
  \begin{equation*}
    \eta^2=\tsum_{i,j\in\mathbb{N}}\text{\rm Cov}\big[\mathbb{F}^i(e),\mathbb{F}^j(\tau_{ij})\big|\mathbb{S}_{\infty}\big]
    <\infty \quad\text{a.s.}
  \end{equation*}
  If in addition $\mathbb{E}[f(X)^4/\eta^4]<\infty$  and
  ${\tsum_{i\in\mathbb{N}}\limsup_{j}\big\|\frac{f(X)-f(\tau_{ij}X)}{\eta}\big\|_{4}<\infty}$,
  the Wasserstein distance to the limit is
  \begin{equation*}
    \dW\bigl(\tfrac{\sqrt{n}}{\eta}\cF_n(f,X),Z\bigr)=O\Bigl(
    \min_{k\in \mathbb{N}}\Big[\tfrac{k^2}{\sqrt{n}}+\tsum_{i>k}\max\Big(\limsup_{j}\big\|\tfrac{f(X)-f(\tau_{ij}X)}{\eta}\big\|_{4},1\big)\Big]\Bigr)\;.
  \end{equation*}
\end{theorem}
Typically, $X$ is of the form ${(X_{t})_{t\in T}}$ for some countable set $T$,
and permutations act on $X$ by acting on $T$. If $f$ depends only on a
finite number of these indices---e.g. if $X$ is a random matrix and $f$ a
function of a finite number of entries---\eqref{eq:condition:limsup} always holds, although this
condition is far from necessary.
If $X$ is conditionally mixing for $f$, the result can be deduced from
\cref{theorem:CLT}. The proof of the general case
defines surrogate variables 
${X_n:=(f(\tau_{1,i_1}\circ\cdots\circ\tau_{k_nn,i_{k_n}}X))_{i_1,\ldots,i_{k_n}}}$
for a suitable sequence $(k_n)$, and applies an idea similar to the generalized U-statistics
of \cref{corollary:U:statistics}.\\[-.7em]

\textsc{Remark}. (a)
Our definition of exchangeability as an arbitrary action of
$\mathbb{S}_{\infty}$
permits trivial cases, for example:
Mapping each ${\phi\in\mathbb{S}_\infty}$ to the identity map of
$\xspace$ is a valid action.
It makes all distributions exchangeable, point masses are ergodic, and ${\mathbb{F}_n(f,X)=\mean[f(X)|\group]=f(X)}$ for all $n$.
(b) Exchangeability can also be defined as invariance under the group
  $\mathbb{S}(\mathbb{N})$ of all bijections of $\mathbb{N}$, as is
  often done in Bayesian statistics. This
  definition is equivalent to ours, in the sense that
  any measurable action of $\mathbb{S}(\mathbb{N})$ and its restriction to
  ${\mathbb{S}_{\infty}\subset\mathbb{S}(\mathbb{N})}$ have the same
  invariant and ergodic measures \citep{Maitra:1977}, but the group
  $\mathbb{S}(\mathbb{N})$ is not nice.

\begin{table}[t]
\begin{tabular}{lll}
random structure $X$ & ergodic structures & CLT \eqref{eq:CLT:exchangeable} due to \\
\midrule
exchangeable sequence \citep{Kallenberg:2005}  & i.i.d. sequences  & H. B\"uhlmann \citep{Buehlmann:1958:1}\\
exchangeable partition  \citep{Pitman:2006} & ``paint-box'' distributions &   \\
exchangeable graph  \citep{Diaconis:Janson:2007} & graphon distributions  &
Bickel et al.\ \citep{Bickel:Chen:Levina:2011:1}, Ambroise and Matias \citep{Ambroise:Matias:2012:1}\\
jointly exch. array  \citep{Kallenberg:2005} & dissociated arrays  &
Eagleson and Weber \citep{Eagleson:Weber:1978:1}, Davezies et al. \citep{Davezies:Haultfeuille:Guyonvarch:2021}\\
separately exch. array  \citep{Kallenberg:2005} & dissociated arrays  &  \\
\midrule\\
\end{tabular}
\caption{}
\label{tab:exchangeability}
\end{table}

\subsection{Jointly exchangeable arrays}
\label{sec:arrays}
We discuss one class of examples in \cref{tab:exchangeability}, the
jointly exchangeable arrays, in more detail. These are defined as follows:
A collection ${x=(x_{i_1,\ldots,i_r})_{i_1,\ldots,i_r\in N}}$ of
scalars is called an $r$-array indexed by ${N\subseteq\mathbb{N}}$.
The subarray indexed by ${M\subset N}$ is denoted $x[M]$.
We let permutations $\phi$ of $N$ act on $x$ by permuting each index
dimension separately, ${\phi(x):=(x_{\phi(i_1),\ldots,\phi(i_r)})}$.
A \kword{jointly exchangeable array} is a random array $X$
that is indexed by ${N=\mathbb{N}}$ and satisfies ${\phi(X)\equdist X}$ for
all ${\phi\in\mathbb{S}_{\infty}}$.

The ergodic exchangeable arrays can be characterized explicitly, by 
Aldous-Hoover theorem \citep{Kallenberg:2005}:
To keep notation simple, assume ${r=2}$. Then $X$ is $\mathbb{S}_{\infty}$-ergodic if and only if there is a measurable function
${h:[0,1]^3\rightarrow\mathbb{R}}$ such that\nolinebreak
\begin{equation}
  \label{eq:AH}
  X\;\equdist\; (h(U_i,U_j,U_{ij}))_{i,j\in\mathbb{N}}
  \quad\text{ where }(U_i,U_{ij})_{i,j\in\mathbb{N}}\simiid\text{Uniform}[0,1]\;.
\end{equation}
Thus, $X$ is $\mathbb{S}_{\infty}$-ergodic if $h$ is fixed, and
$\mathbb{S}_{\infty}$-invariant if $h$ is random.
For ${r>2}$, the function $h$ has additional arguments
\citep{Kallenberg:2005}.
Kallenberg \citep{Kallenberg:1999} first proved the relevant case of
Lindenstrauss' theorem: If $X$ is $\mathbb{S}_{\infty}$-ergodic,
\begin{equation*}
  \tfrac{1}{n!}\tsum_{\phi\in\mathbb{S}_n}f((X_{\phi(i_1),\ldots,\phi(i_r)})_{i_1,\ldots,i_r})\;\xrightarrow{n\rightarrow\infty}\;\mean[f(X)]\quad\text{a.s. for
  }f\in\L_1(X)\;.
\end{equation*}
Eagleson and Weber \citep{Eagleson:Weber:1978:1} proved an early 
version of
\eqref{eq:CLT:exchangeable} for such averages (under
stronger
conditions than \cref{theorem:exchangeable}). Under suitable
additional assumptions, one can obtain a uniform result
\citep{Davezies:Haultfeuille:Guyonvarch:2021}.

An \kword{exchangeable graph} is an exchangeable 2-array with binary
entries and almost surely zero diagonal
\citep{Diaconis:Janson:2007}. We interpret the array as
the adjacency matrix of a random graph with vertex set $\mathbb{N}$.
Since that makes the range of $h$ binary, 
one can eliminate one degree of freedom in the representation above:
An exchangeable graph is ergodic if and only if \eqref{eq:AH} holds for a measurable function ${w:[0,1]^2\rightarrow[0,1]}$ and
${h(u,v,z):=\mathbb{I}\braces{z\leq w(u,v)}}$. For undirected graphs,
$w$ can be chosen to satisfy ${w(u,v)=w(v,u)}$, and is called a
\kword{graphon} \citep{Borgs:Chayes:Lovasz:Sos:Vesztergombi:2008}.

For a finite graph $y$ with vertex set $\braces{1,\ldots,k}$
and the subgraph $X[1,\ldots,k]$ of $X$ on the same set, 
consider the subgraph probability ${t(y):=P(X[1,\ldots,k]=y)}$.
Some
authors interpret $t(y)$ as a moment statistic \citep{Bickel:Chen:Levina:2011:1}.
For ${n\geq k}$ and a graph $x$ with vertex set $\braces{1,\ldots,n}$, the \kword{homomorphism density}
${t(x,y):=1/n!\sum_{\phi\in\mathbb{S}_n}\mathbb{I}\braces{x[\phi(1),\ldots,\phi(k)]=y}}$
is the (normalized) number of times $y$ occurs as a subgraph of $x$
\citep{Borgs:Chayes:Lovasz:Sos:Vesztergombi:2008,Lovasz:Szegedy:2006}.
If $X$ is ergodic,
and a finite subgraph ${X[1,\ldots,n]}$ is observed as data,
substituting into Kallenberg's result above shows
\begin{equation}
  \label{eq:homomorphism:densities}
  t(X[1,\ldots,n],\argdot)\;\xrightarrow{n\rightarrow\infty}\;P(X[1,\ldots,k]=\argdot)\;=\;t(\argdot)
  \qquad\text{ almost surely}.
\end{equation}
In other words, the sample homomorphism density $t(X[1,\ldots,n],y)$ is a strongly consistent estimator
of $t(y)$.
Borgs et al.\ \citep{Borgs:Chayes:Lovasz:Sos:Vesztergombi:2008} and
Lov\'asz and Szegedy \citep{Lovasz:Szegedy:2006} have also obtained
\eqref{eq:homomorphism:densities}, using different arguments. For these estimators, \eqref{eq:CLT:exchangeable} is due to
Bickel et al.\ \citep{Bickel:Chen:Levina:2011:1} and
Ambroise and Matias \citep{Ambroise:Matias:2012:1}.

\subsection{Stochastic block models with a growing number of classes}
\label{sec:adaptation}

Suppose we choose $h$ in \eqref{eq:AH} as follows:
Fix some ${m\in\mathbb{N}}$. Choose a measurable function ${\pi:[0,1]\rightarrow\braces{1,\ldots,m}}$
and a symmetric function
${v:\braces{1,\ldots,m}^2\rightarrow[0,1]}$. For each ${i\leq m}$, set
${\pi_i:=\mathbb{P}(\pi(U)=i)}$,
where $U$ is uniform in $[0,1]$. We can read $(\pi_i)_{i\leq m}$
as a distribution on $m$ categories, and $v$ as a matrix
${(v(i,j))_{i,j\leq m}}$. Define a random undirected graph with vertex
set $\mathbb{N}$ as 
\begin{equation*}
  X(\pi,v)\;:=\; \bigl(\,\mathbb{I}\braces{U_{ij}<v(\pi(U_i),\pi(U_j))}\,\bigr)_{i<j\in\mathbb{N}}\;.
\end{equation*}
Since this is a special case of \eqref{eq:AH},
$X(\pi,v)$ is an ergodic exchangeable graph, represented by the
piece-wise constant graphon ${w=v\circ(\pi\otimes\pi)}$.
A family of such distributions, indexed by some range of pairs $(\pi,v)$, is a
\kword{stochastic block model} with $m$ classes \citep[e.g.][]{Ambroise:Matias:2012:1}.
Since each law is specified by a finite vector $(\pi_i)$ and matrix $v$, the model is parametric.
Nonparametric extensions let $m$ grow
with sample size \citep[e.g.][]{Choi:Wolfe:Airoldi:2012:1}:
Choose an increasing function ${m:\mathbb{N}\rightarrow\mathbb{N}}$ and a parameter sequence
$(\pi^n,v^n)_{n\in\mathbb{N}}$ such that ${X_n:=X(\pi^n,v^n)}$ has $m(n)$ classes. An
observed graph on $n$ vertices is then explained as the finite subgraph $X_n[1,\ldots,n]$.

In the nonparametric case, no asymptotic normality results seem to be
known, but can easily be obtained from our results.
Since $X_n$ changes with sample size, \cref{theorem:exchangeable} is not applicable,
but \cref{theorem:CLT:g,theorem:BE:g} can be used instead.
As a concrete
example, let $y$ be the complete graph on three vertices.
In this case, ${P(X_n[1,2,3]=y)}$ is often called the triangle
density.
If vertex $1$ is in class $i$, but the classes of $2$ and $3$ are unknown, the probability
that $X_n[1,2,3]$ is a triangle is
\begin{equation*}
  E_i(n)
  \;:=\;
  \mean[f(X_n)|\pi^n(U^n_1)=i]
  \;=\;
\tsum_{j\leq m(n)}\pi_j^{n}\bigl(v^n(i,j)\tsum_{k\leq m(n)}\pi^n_k v^n(i,k)v^n(j,k)\bigr)\;.
\end{equation*}
Applying \cref{theorem:CLT:g,theorem:BE:g} to
${f(x):=\mathbb{I}\braces{x[1,2,3]=y}}$ yields:
\begin{corollary}\label{hypo} Let $Z$ be a standard normal variable. As ${n\rightarrow\infty}$,
    \begin{equation*}
    \tfrac{\sqrt{n}}{\eta_n}
    \bigl(
  \tfrac{1}{n(n-1)(n-2)}\tsum\mathbb{I}\braces{X_n[i_1,i_2,i_3]=y}
    -P(X_n[1,2,3]=y)\bigr)
    \;\xrightarrow{\;d\;}\;
    Z\;,
  \end{equation*}
  where the sum runs over all distinct triples ${i_1,i_2,i_3\leq n}$, and
  \begin{equation*}
    \eta_n^2=\tsum_{i\leq m(n)}\pi_i^{n}E_i(n)\bigl(E_i(n)-\tsum_{j\leq m(n)} \pi_j^{n}E_j(n)\bigr)\quad\text{almost surely.}
  \end{equation*}
  The Wasserstein distance to the limit is ${O\bigl(\eta_n^{-3}n^{-\frac{1}{2}}\|f(X_n)\|_{4}^{\frac{3}{4}}\bigr)\;.}$
\end{corollary}
The simplest SBM is an \kword{\Erdos-\Renyi }(ER) graph, where
each edge is an independent Bernoulli variable with
success probability $p$, that is, ${v:=p}$
is constant. This model has been thoroughly studied, and we can relate
the corollary to some known results:
\begin{examples}
\exitem{} If $X$ is an ER graph, 
${t(X[1,\ldots,k],\argdot)}$ satisfies a degenerate central
limit theorem, with ${\eta=0}$, see \citep{Ambroise:Matias:2012:1}.
To see this in the corollary,
set ${X_n:=X}$ for all $n$. We can then consider the limit
$\eta_nZ$. Since
${E_i(n)}$ does not depend on $i$ nor $n$, we obtain
${\eta_n=0}$.\\[-.7em]

\exitem{} Let each $X_n$ be an ER graph, with edge probability
${p(n)}$, and let ${p(n)\rightarrow 0}$.
In principle, \cref{hypo} holds: The limiting
triangle density is $0$, and ${\eta_n=0}$.
However, more bespoke results rescale by ${1/\sqrt{p(n)}}$ to
make small-scale behavior visible \cite{Janson:Luczak:Rucinski:2000:1}.
These do not follow from \cref{theorem:CLT:g}, since the variables
${\mathbb{I}\braces{X_n[1,2,3]=y}/p(n)}$ are not uniformly integrable.
\end{examples}

\subsection{Separate exchangeability}\label{sep:sec}
A random $r$-array $X$ is \kword{separately
  exchangeable} if it is invariant under the action 
\begin{equation*}
    \bphi x := x_{\phi_1(i_1),\ldots,\phi_r(i_r)}
  \quad\text{ for all
  }x\in\xspace\text{ and }\bphi=(\phi_1,\ldots,\phi_r)\in\mathbb{S}_{\infty}^r\;.
\end{equation*}
Comparing to \eqref{eq:diagonal:action} shows
that joint exchangeability is the diagonal
invariance corresponding to separate exchangeability.
Some models for relational
data in machine learning assume separate exchangeability for
matrices whose rows and columns are indexed by distinct sets (e.g.\ consumers
and products), and joint exchangeability if the sets are identical (e.g.\
vertices of a graph) \citep{Orbanz:Roy:2015}.
Separate exchangeability is the stronger property, and results in a
faster rate and simpler asymptotic variance:
\nolinebreak
\begin{corollary}
  \label{corollary:separately:ex}
  Let $X$ be a separately exchangeable $r$-array, and let
  ${f\in\L_2(X)}$ be a function that satisfies \eqref{eq:condition:limsup}. As ${n\rightarrow\infty}$,
  \begin{equation*}
    \sqrt{n^{r}}\,\cF_n(f,X)\;
    \;=\;
    \sqrt{n^r}
    \Bigl(\tfrac{1}{(n!)^r}\tsum_{\phi\in\mathbb{S}_n^r}f(\phi X)-\mean[f(X)|\mathbb{S}^r_\infty]\Bigr)\;
    \xrightarrow{\;d\;}
    \;
    \eta Z\;,
  \end{equation*}
  where $Z$ is standard normal and independent of $\eta$.
  The asymptotic variance satisfies ${\eta^2=\text{\rm Var}[f(X)|\mathbb{S}^r_{\infty}]<\infty}$ almost surely.
\end{corollary}

\begin{example}
  \exitem{}
  The convergence rate for
  homomorphism densities is in general $n^{-1/2}$ if a graph is exchangeable, but
  ${n^{-1}}$ if it is \Erdos-R\'enyi \citep[e.g.][]{Ambroise:Matias:2012:1}. 
  \cref{corollary:separately:ex} shows that is a consequence of
  additional symmetries in ER
  graphs, since they are not only jointly 
  but even separately exchangeable. 
\end{example}

\subsection{Graphex models}
\label{sec:graphex}

\def\cadlag{c\`adl\`ag }
\def\Levy{L\'evy }

Caron and Fox \citep{Caron:Fox:2017:1} have proposed a class of random graphs that,
with extensions and refinements by other authors
\citep{Borgs:Chayes:Cohn:Holden:2016:1,Veitch:Roy:2016:1},
are referred to as \kword{graphex models}.
Recall from \eqref{eq:AH} how an ergodic exchangeable graph is generated
by a graphon ${w:[0,1]^2\rightarrow[0,1]}$ and independent uniform variables.
A graphex model is defined similarly, by a symmetric measurable function
${\omega\!:\mathbb{R}_{\geq 0}^2\rightarrow[0,1]}$ and
a unit-rate Poisson process
${\Pi=\braces{(U_1,V_1),(U_2,V_2),\ldots}}$
on ${\mathbb{R}_{\geq 0}^2}$. Let ${U_{ij}}$, for ${i\leq j\in\mathbb{N}}$, again
be i.i.d.\ uniform elements of $[0,1]$. Define a
random countable subset $X_{\omega}$ of $\mathbb{R}_{\geq 0}^2$ as
\begin{equation*}
  \label{graphex:generative}
  (V_i,V_j)\in X_{\omega}\;\iff\;U_{ij}<\omega(U_i,U_j)\;.
\end{equation*}
This set is interpreted as a graph, in which vertices $V_i$ and
$V_j$ are connected if the pair $(V_i,V_j)$ is in $X_\omega$. The set
$X_{\omega}$ thus functions as a form of adjacency matrix, but each
vertex is identified by the value $V_i$, rather than the index $i$.
A subgraph is not selected as an $n\times n$ submatrix,
but by placing a rectangle $[0,s)^2$ in the plane:
The subgraph $g_s(X_\omega)$ for ${s\in(0,\infty]}$ is
\begin{equation*}
  (i,j)\in g_s(X_{\omega}) \quad\Leftrightarrow\quad (V_i,V_j)\in X_{\omega}\cap[0,s)^2\;.
\end{equation*}

Suppose an instance ${g_s(X_{\omega})}$ with $N$ vertices is
observed. Veitch and Roy \citep{Veitch:Roy:2016:1} have shown that one can estimate the restriction
$\omega|_{[0,s]^2}$ of $\omega$, provided $s$ is known: Subdivide $[0,s)^2$ into
quadratic patches $I_{ij}$, and define a piece-wise constant function ${\hat{\omega}_s}$ on
$[0,s)^2$ by specifying its value on each patch as
\begin{equation*}
  \hat{\omega}_s\vert_{I_{ij}}\;:=\;\mathbb{I}\braces{(i,j)\in G}
  \qquad\text{ where }
  I_{ij}:=\bigl[\tfrac{i-1}{N}s,\tfrac{i}{N}s\bigr)\times\bigl[\tfrac{j-1}{N}s,\tfrac{j}{N}s\bigr)\;.
\end{equation*}
This estimator is consistent on bounded domains
${[0,t)^2}$, in the following sense: Regard $\hat{\omega}_s$ as a
function ${\mathbb{R}_{\geq 0}^2\rightarrow[0,1]}$, with constant value $0$
outside ${[0,s)^2}$. Generate $X_{\hat{\omega}_s}$ according to
\eqref{graphex:generative}, using a Poisson process and uniform variables
that are independent of $X_{\omega}$. Then
\begin{equation}
  \label{VR:estimator}
  g_t(X_{\hat{\omega}_s}) \;\xrightarrow{\;\text{\tiny d}\;}\;
  g_t(X_{\omega}) \qquad\text{ as }s\rightarrow\infty\;,
\end{equation}
for every fixed ${t\in(0,\infty)}$ \citep{Veitch:Roy:2016:1}.
If $f$ is a measurable function of finite graphs, the Veitch-Roy estimator of
$\mean[f(g_t(X_\omega))]$ is therefore
\begin{equation*}
  \hat{f}_s:=\mean[f(g_t(X_{\hat{\omega}_s}))\,|\,g_s(X_{\omega})]\;.
\end{equation*}
Distributional convergence \eqref{VR:estimator} implies ${\hat{f}_s\rightarrow\mean[f(g_t(X_\omega))]}$
a.s.\ for ${s\rightarrow\infty}$.

We illustrate how to obtain rates for a simple example:
Fix ${t>0}$. For a finite graph $g$, choose $f$ as
\begin{equation}
  \label{graphex:edge:density}
  f(g):=\tfrac{1}{t^2}|\text{edge set of }g|
  \quad\text{ hence }
  f(g_t(X_\omega))=\tfrac{1}{t^2}|X_\omega\cap [0,t)^2|\;.
\end{equation}
The function ${(\omega,t)\mapsto\mean[f(g_t(X_\omega))]}$ is then
similar to the edge density in a graphon model. Consider the random sets
\begin{equation*}
  \mathcal{V}_{mn}:= X_{\omega}\cap [m,m+1)\times[n,n+1)\quad\text{
        for }m,n\in\mathbb{N}\;.
\end{equation*}
If we choose ${s\in\mathbb{N}}$, we have
\begin{equation*}
  \hat{f}_s=\;\tfrac{1}{t^2}\tsum_{(i,j)\in g_s(X_\omega)}P((i,j)\in g_t(X_{\hat{\omega}_s})|g_s(X_{\omega}))=\tfrac{1}{s^2}\tsum_{m,n<s}|\mathcal{V}_{mn}|\;.
\end{equation*}
The random array $(|\mathcal{V}_{mn}|)_{m,n}$ is, by
construction of $X_\omega$, jointly exchangeable and ergodic, and
\cref{theorem:exchangeable} yields:
\begin{corollary}
  \label{result:graphex}
  Let ${\omega:\mathbb{R}_{\geq 0}^2\rightarrow[0,1]}$ be a measurable and
  symmetric function, and fix ${t>0}$. Define $f$ as in \eqref{graphex:edge:density}.
  Then, for ${Z\sim N(0,1)}$,
  \begin{equation*}
    \sqrt{s}\bigl(
    \hat{f}_s
    -
    \mean\bigl[f(g_t(X_\omega))\bigr]
    \bigr)
    \;\xrightarrow{\;\text{\tiny d}\;}\;
    \eta Z
    \qquad\text{ as }s\rightarrow\infty\;,
  \end{equation*}
  where ${\eta^2=4\,\text{\rm
      Cov}\bigl[|X_{\omega}\cap[0,1]^2|,|X_{\omega}\cap[0,1]\!\times\![0,2]|\big\vert\group\bigr]}$
  is a finite constant.
\end{corollary}
The random set $X_{\omega}$ is invariant under an uncountable permutation
group that transforms each axis $\mathbb{R}_{\geq 0}$
\citep{Caron:Fox:2017:1}, and is in fact ergodic \citep{Kallenberg:2005}.
That is an example of uncountable exchangeability, as described at the
beginning of this section.
The local counts ${|\mathcal{V}_{mn}|}$ are a device to
reduce uncountable to countable exchangeability, and hence to invariance under a nice group.

\section{Applications II: Marked point processes}
\label{sec:marked:pp}

\def\mspace{\mathcal{M}}

Random geometric measures are point processes whose behavior at a given
point may depend on points nearby. They originate from so-called germ-grain models in physics
\citep{grain}, and are used to study e.g.\ nearest neighbor methods and Voronoi tesselations
\citep{merlin,Penrose:2007}. \cref{theorem:CLT,theorem:BE} are
directly applicable.

\subsection{Setup}
\def\yspace{\mathbf{Y}}

\label{rgm:setup}
The following definitions are adapted from those of Penrose
\citep{Penrose:2007}, with some simplifications: Consider two
Polish spaces, $\xspace$ (which we think of as a set of points)
and $\yspace$ (a set of marks or covariates), both equipped with
their Borel $\sigma$-algebras.
Denote by $\mathbf{M}$ the space
of $\sigma$-finite measures on $\xspace\times\yspace$, equipped with the
$\sigma$-algebra generated by the evaluation maps, and by $\mathcal{F}$
the set of finite subsets of $\xspace\times\yspace$. Let 
${\mu:\xspace\times\yspace\times\mathcal{F}\rightarrow\mathbf{M}}$ be
a measurable map, and ${W\subset\xspace\times \yspace}$ a compact set.
Loosely speaking, $\mu$ assigns to each marked point
$(x,y)$ a measure $\mu(x,y,F)$ that depends on
a set $F$ of points near $x$, and on their marks. These nearby points are collected
by using $W$ as an observation window, which is moved over $\xspace\times \yspace$ by elements of
a group:
Let $\group$ be a nice group that
acts measurably on $\xspace$. 
We extend the action to one on $\xspace\times\yspace$ by defining
\begin{equation}
  \label{rgm:action}
  \phi(x,y):=(\phi(x),y)\qquad\text{ for all }\phi\in\group,\, (x,y)\in\xspace\times\yspace\;.
\end{equation}
For compact ${\A_1,\A_2,\ldots\subset\group}$, write
$\A_nW=\braces{(\phi(x),y)|\phi\in\A_n,(x,y)\in W}$.
If $\Pi$ is a point process on $\xspace\times\yspace$, then
\begin{equation}
  \label{eq:rgm}
  \nu_n(\argdot):=\frac{1}{|\A_n|}\tsum_{(x,y)\in\Pi_n}\;\mu(x,y,\Pi_n)(\argdot)\quad\text{
    for }\quad \Pi_n:=\Pi\cap \A_nW
\end{equation}
is a random measure on $\xspace\times\yspace$.
The sequence $(\nu_n)$ is called a \kword{random geometric measure}
if $\Pi$ is invariant under the action
\eqref{rgm:action}, and if the sets $\Pi_n$ are almost surely
finite. See \citep{Baryshnikov:Yukich:2005,Penrose:2007} 
for similar definitions.

\subsection{Asymptotic normality}
A central theme in the literature on random geometric measures is the limiting behavior of
statistics of the form
\begin{equation*}
  \nu_n(h):=\myint_{\xspace\times\yspace}h(x,y)\nu_n(dx,dy)\quad\text{ for
  }\quad h:\xspace\times\yspace\rightarrow\mathbb{R}\;.
\end{equation*}
Such results typically require that the window does not
collect any point more than once.
A simple condition that excludes such repetitions is as follows:
Require (i) that $\group$ contains a subgroup $\mathbb{H}$ such that
${\phi(W)\cap\psi(W)=\emptyset}$ for distinct
${\phi,\psi\in \mathbb{H}}$, and (ii) that ${\mathbb{H}W=\group W}$. 
Informally, ${\mathbb{H}W}$ ``tiles'' the set ${\group
  W\subset\xspace}$ of points reached by the window.
We also require that (iii) the set $\{\phi\in \group| \phi(W)\cap W\ne \emptyset\}$ is 
compact. If
${\group=\xspace=\mathbb{R}^2}$, for example, one might choose ${W=[-1,1]^2}$
and ${\mathbb{H}=\braces{(2i,2j)|i,j\in\mathbb{Z}}}$.
The relationship to our results becomes clear if we define
\begin{equation*}
  \begin{split}
  f_n(F)\;&:=\;\myint_{\xspace\times\yspace}h(x',y')\tsum_{(x,y)\in F\cap W}\;\mu(x,y,\Pi_n)(dx',dy')
  \\
  \text{and}\quad  f(F)\;&:=\;\myint_{\xspace\times\yspace}h(x',y')\tsum_{(x,y)\in F\cap W}\;\mu(x,y,\Pi)(dx',dy')
\end{split}\end{equation*}
for ${F\in\mathcal{F}}$, and observe that 
 ${  \nu_n(h)
  \approx\frac{1}{|\A_n\cap \mathbb{H}|}
\int_{\A_n\cap \mathbb{H}}f_n(\phi(\Pi))|d\phi|
  =
  \mathbb{F}_n(f_n,\Pi)}$.
We apply \cref{theorem:CLT} and \ref{theorem:BE}, and obtain:
\begin{proposition}
  \label{result:rgm}
  Require that (i)--(iii) above hold, and that the sets $\A_n$ in \eqref{eq:rgm} form a tempered
  \Folner sequence.
  For each $n$, let $\alpha^{(n)}(\argdot|\group)$ be the conditional mixing
  coefficient of $\Pi$ and ${f_n}$. If
  \begin{equation*}
    \sup_n\int_{\mathbb{G}}\alpha^{(n)}(d(e,\phi)|\mathbb{G})^{\frac{\varepsilon}{2+\varepsilon}}|d\phi|<\infty
    \quad\text{ and }\quad
    \|f_n(\Pi)^{2+\varepsilon}\|_1<\infty
  \end{equation*}
  holds for some ${\varepsilon\ge 0}$, then as ${n\rightarrow\infty}$,
      \begin{equation*}
    \label{grison}
    \sqrt{|\A_n\cap \mathbb{H}|}\bigl(\nu_n(h)-\mean[\nu_n(h)|\group]\bigr)\;\xrightarrow{\;d\;}\;\eta
    Z\quad\text{ for }Z\sim N(0,1)\;,
  \end{equation*}
  where ${\eta^2=\int_{ \mathbb{H}}\text{\rm Cov}[f(\Pi),f(\phi
      \Pi)|\mathbb{G}]|d\phi|}$ and ${\eta\,\condind\,Z}$. Moreover,
  \begin{equation*}
    \dW\Big(\mfrac{\sqrt{|\A_n\cap \mathbb{H}|}}{\eta}\bigl(\nu_n(h)-\mean[\nu_n(h)|\group]\bigr),Z\Big)=O\Big(\mfrac{1}{\sqrt{|\A_n\cap \mathbb{H}|}}\max\braces{1,\|f_n(\Pi)\|_4^3}\Big)\;.
  \end{equation*}
\end{proposition}

\subsection{Relationship to existing results}
Versions of the result above are known in the case where
$\xspace$ is $\mathbb{R}^r$, ${\group=\mathbb{R}^r}$ consists of shifts, and
$\A_n$ is the Euclidean ball $\B_n$ \citep{Penrose:2007,merlin,grain}.
These are not phrased in terms of conditional mixing, but
instead use a ``stabilization condition'' \citep[e.g.][]{Baryshnikov:Yukich:2005}. The next result translates
stabilization to mixing conditions. There is no standardized
definition of stabilization; the one we state below is similar to
Definition 2.4 of Penrose \citep{Penrose:2007}, which he calls \emph{power-law
stabilizing} of order $q$.
For $(x,y)\in\xspace\times\yspace$ and ${F\in\mathcal{F}}$, let $F_t$ be
the truncated set
$\braces{(\tilde{x},\tilde{y})\!\in\! F\,|\,d(x,\tilde{x})\leq t}$.
The \kword{stabilization radius} of $\mu$ is
\begin{equation*}
  R(x,y,F):=\inf\braces{t>0\,\vert\,\mu(x,y,F)=\mu(x,y, F_t)}\;,
\end{equation*}
where we use the convention $\inf\emptyset=\infty$. If
\begin{equation}
  \sup_{s>0} \sup_{(x,y)\in W}s^{q}P\bigl(
  R(x,y,\Pi)>s\bigr)\;<\;\infty\quad\text{ for some }q>1\;,
\end{equation}
$\mu$ is \kword{polynomially stable} of order ${q}$.
The condition implies conditional mixing
if the metric balls in $\group$ do not expand too quickly:\nolinebreak
\begin{proposition}
  \label{result:polynomially:stable}
  Let $\Pi$ be a Poisson process, and let $\mu$ be polynomially stable
  of order $q$. If the metric balls ${\B_n}$ in $\group$ satisfy ${\sup_{n\in\mathbb{N}}
    n^{-r}|\B_n|<\infty}$ for some ${r>0}$, then
  \begin{equation*}
    \sup_n \myint_{\mathbb{G}}
    \alpha^{(n)}(d(e,\phi)|\mathbb{G})^{\frac{\varepsilon}{2+\varepsilon}}|d\phi|<\infty
    \qquad\text{ whenever }q>\tfrac{2+\varepsilon}{\varepsilon}\,r\;.
  \end{equation*}
\end{proposition}
That holds in particular for the groups ${\mathbb{R}^r}$, 
since an $r$-dimen\-sional Euclidean ball has volume
${|\B_n|=(\sqrt{\pi}n)^r/\Gamma(\frac{r}{2}+1)}$.
Geometric group theory provides further examples:
A group that satisfies ${\sup_{n\in\mathbb{N}}
  n^{-r}|\B_n|<\infty}$
and is also finitely generated is said to be of
\kword{polynomial growth} \citep{Loeh:2017}. Nice groups
of polynomial growth include $\mathbb{Z}^d$, the groups 
in \cref{corollary:nilpotent},
or the discrete Heisenberg groups \citep[e.g.][]{Bump:Diaconis:Hicks:Miclo:Widom:2017}.

\section{Applications III: Entropy}
\label{sec:entropy}

The entropy of a stationary process is defined as a limit. This limit exists almost surely, by
the Shannon-McMillan-Breiman (SMB) theorem \citep{Shields:1996}.
It has a natural generalization to invariant processes \citep[e.g.][]{Einsiedler:Ward:2011:1},
which again converges almost surely \citep{Lindenstrauss:2001:1}.
An adaptation of \cref{theorem:CLT} gives conditions under which it is asymptotically normal.
In this section, we assume $\group$
is discrete, and \kword{finitely generated}, which means there is a finite
subset ${G\subset\group}$ such that $\group$ is the smallest group
containing $G$. That is, for example, true for $\mathbb{Z}^r$ (choose $G$
as the set of unit coordinate vectors), but not for $\mathbb{S}_{\infty}$.

\subsection{Entropy}
Let $Y$ be a discrete random variable with mass function
${p(k)\!:=P(Y=k)}$ for ${k\in\mathbb{N}}$. If ${Y_1,Y_2,\ldots}$ are i.i.d.\ copies of $Y$,
the law of large numbers guarantees almost sure convergence
\begin{equation*}
  -\tfrac{1}{n}\log(p(Y_1)\times\ldots\times p(Y_n))\;\xrightarrow{n\rightarrow\infty}
  -\mean[\log p(Y)]\;=:\;H[Y]\;.
\end{equation*}
The constant $H[Y]$ is the \kword{entropy} of $Y$ \citep{Kallenberg:2001}.
If ${X=(X_i)_{i\in\mathbb{Z}}}$ is a
stochastic process with values in the finite set $[K]$, the entropy
can be defined similarly: If $p_n$ is the joint mass function of ${(X_1,\ldots,X_n)}$,
and $X$ is stationary and ergodic, there is
a constant ${h[X]\geq 0}$ such that
\begin{equation}
  \label{entropy:stationary}
  -\tfrac{1}{n}\log
  p_n(X_1,\ldots,X_n)\;\xrightarrow{n\rightarrow\infty}\; h[X]
  \quad\text{ almost surely.}
\end{equation}
This is the SMB theorem, and $h[X]$ is again called the entropy, or the entropy rate \citep{Shields:1996}.
The term ${-\frac{1}{n}\log p_n}(X_1,\dots,X_n)$ is the \kword{empirical entropy}.

\subsection{Entropy of invariant distributions}
Let $\group$ be countable, and $(\A_n)$ a tempered \Folner sequence with ${|\A_n|/\log(n)\rightarrow\infty}$.
Let $X$ be a $\group$-ergodic random element of $\xspace$. To define entropy,
regard
${(\phi X)_{\phi\in\group}}$ as a stochastic process on the group, and discretize its
state space:
Choose a partition  ${\lambda:=(\lambda_1,\ldots,\lambda_K)}$ of $\xspace$ into
a finite number of Borel sets, and write ${\lambda(x)=k}$ if
${x\in \lambda_k}$.
Let ${p_n}$ be the
joint mass function of ${(\lambda(\phi X))_{\phi\in\A_n}}$.
Then
there is a constant ${h_{\lambda}[X]\geq 0}$ such that
\begin{equation*}
  h_{n}(\lambda,X):=\;-\tfrac{1}{|\A_n|}\log
  p_n\bigl((\lambda(\phi X))_{\phi\in\A_n}\bigr)
  \;\xrightarrow{n\rightarrow\infty}\;h_{\lambda}[X]
  \quad\text{ almost surely.}
\end{equation*}
This result is again due to Lindenstrauss \citep{Lindenstrauss:2001:1}.
To recover \eqref{entropy:stationary}, choose $X$ as a stationary process
${(X_i)_{i\in\mathbb{Z}}}$, and
${\lambda_k:=\braces{x=(x_i)_{i\in\mathbb{Z}}\,|\,x_0=k}}$.

\subsection{Asymptotic normality}
Suppose $\group$ admits a total order $\preceq$ that is left-invariant
(i.e.\ ${\phi\preceq\psi}$ if and only if ${\pi\phi\preceq\pi\psi}$ for ${\phi,\psi,\pi\in\group}$).
The process values indexed by a set ${G\subset\group}$ are predictive of
the value at $\phi$ if
\begin{equation*}
  L_{\phi}(G):=\log P[\lambda(\phi X)\,|\,\lambda(\psi X),
  \psi\in G]
\end{equation*}
is large, where $P$ denotes probability under the law of $X$. The scalar
\begin{equation*}
  \rho_m:=\sup_{A\subset\group}\|L_e(A)-L_e(A\cap\B_m)\|_2
\end{equation*}
measures how well the value at the identity is predicted by
values within a radius $m$.
Recall that the definition of mixing in \cref{sec:mixing}
uses pairs ${\phi_1,\phi_2}$ in $\group$. We extend it to $k$-tuples:
For ${k\in\mathbb{N}}$, define
\begin{equation*}
  \mathcal{C}(t,k):=\bigbraces{
    (A,B)\in\sigmaf(\phi_1,\ldots,\phi_k)\otimes\sigmaf(G)\big\vert
    G\subset\group, \phi_1,\ldots,\phi_k\in\group\mysetminus\B_{t}(G)}\;,
\end{equation*}
and ${\alpha(t,k):=\sup_{(A,B)\in\mathcal{C}(t,k)}|P(A,B)-P(A)P(B)|}$.
The mixing coefficient in \cref{sec:mixing} is hence ${\alpha(t)=\alpha(t,2)}$.

\begin{theorem}
  \label{theorem:entropy}
  Let $\group$ be a finitely generated, nice group with
  left-invariant total order, and let $X$ be $\group$-ergodic with
  ${\sup_{A\subset\group}\|L_e(A)\|_{2+\varepsilon}<\infty}$
  for some ${\varepsilon>0}$. Choose a tempered
  \Folner sequence satisfying
  ${|\A_n\!\triangle\!\B_{b_n}\A_n|\,/\,|\A_n|\rightarrow 0}$
  and ${\sqrt{|\A_{n}|}\rho_{b_n}\rightarrow 0}$, for some
  sequence $(b_n)$ of positive scalars. If
  \begin{equation}
    \label{eq:entropy:mixing}
    \tsum_{i\in\mathbb{N}}|\B_i|\min_{m\leq i}\bigl(\rho_m+\alpha(i-m,|\B_m|)^{\frac{\varepsilon}{2+\varepsilon}}\bigr)<\infty
  \end{equation}
  holds for the mixing coefficient of the function ${f:=\lambda}$, then
  \begin{equation*}
    {\sqrt{|\A_n|}}
    \bigl(
    h_n(\lambda,X)-h_{\lambda}[X]
    \bigr)
    \quad\xrightarrow{\;d\;}\quad
    \eta Z
    \qquad\text{ as }n\rightarrow\infty\;,
  \end{equation*}
  where the asymptotic variance is independent of $Z$ and satisfies
    \begin{equation*}
    \eta^2 =
    \tsum_{\phi\in\group}
    \text{\rm Cov}
    \bigl[
      L_e(\braces{\psi\preceq e}),
      L_{\phi}(\braces{\psi\preceq
          \phi})
    \bigr]<\infty
    \qquad\text{ almost surely.}
  \end{equation*}
\end{theorem}
Condition \eqref{eq:entropy:mixing} can be interpreted as follows:
The proof represents $h_n$ as
\begin{equation*}
  \log
  p_n\bigl((\lambda(\phi X))_{\phi\in\A_n}\bigr)
=\tsum_{\phi\in \A_n}L_{\phi}(\{\psi\in \A_n\vert \psi\preceq \phi\})\;,
\end{equation*}
and approximates it by the average $\mathbb{F}_n$ of
${f'(X)=L_{\phi}(\{\psi\vert\psi\preceq\phi\}\cap \B(\phi,m))}$. The approx\-i\-ma\-tion error
is a function of $\rho_m$, and decreasing in $m$. Mixing, on the other hand, involves
tuples in $\B_m$, and since ${\alpha(\argdot,|\B_m|)}$
is non-decreasing in $|\B_m|$, a smaller $m$ means better mixing.
Informally, dependence within the process is both beneficial (it makes
predicting one value from others easier) and detrimental (it reduces mixing).\nolinebreak

\begin{remark}
(a) Left-invariance of the order is not required for asymptotic normality,
but simplifies $\eta$. Provided it holds, $\eta$ does not depend on the choice of $\preceq$.
(b) Examples of groups satisfying \cref{theorem:entropy} are
$(\mathbb{Z}^r,+)$ and the groups in \cref{corollary:nilpotent}, or
discrete Heisenberg groups \citep{Loeh:2017}. 
(c) Existence of a total order implies ${\phi^m\neq e}$ for all ${m\in\mathbb{N}}$, unless ${\phi=e}$.
In algebraic terms, $\group$ is torsion-free \citep{Loeh:2017}.
\end{remark}

\bibliographystyle{imsart-number} 
\bibliography{references}

\pagebreak

\def\independenT#1#2{\mathrel{\rlap{$#1#2$}\mkern2mu{#1#2}}}
\def\subL1{1}
\newcommand{\dself}{\partial}
\newcommand{\nrm}{\mfrac{1}{\eta_n}}

\appendix

\section{Proof Overview and Auxiliary Results}

The proofs are presented in three parts, for the basic limit theorems
in \cref{proofs:basic}, for the general ones
in \cref{proofs:general}, and for all other results in
\cref{proofs:other}.
The basic results (\cref{theorem:CLT,theorem:BE}) are
special cases of the general ones (\cref{theorem:CLT:g,theorem:BE:g}),
but we prove them first to clarify the approach. The
general proofs require changes, but follow the same layout.

\subsection{Proof overview}

The proofs of
the main results, \cref{theorem:CLT,theorem:BE,theorem:CLT:g,theorem:BE:g}, use
Stein's method \citep[e.g.][]{Ross:2011:1}: For the function class
\begin{equation}
  \label{stein_class}
  \mathcal{F}:=\bigbraces{t\in\mathcal{C}^2(\mathbb{R})\,\big\vert\,\|t\|_{\infty}\leq 1,\|t'\|_{\infty}\leq\sqrt{2/\pi},\|t''\|_{\infty}\leq 2}
\end{equation}
and a real-valued random variable $W$, Stein's inequality guarantees
\begin{equation}
  \label{eq:stein}
  \dW(W,Z)\;\leq\;\sup_{t\in\mathcal{F}}\bigl|\mean[Wt(W)-t'(W)]\bigr|
  \qquad\text{ for }Z\sim N(0,1)\;.
\end{equation}
The distance $\dW$ metrizes convergence in distribution for variables
with a first moment \citep[e.g.][]{Ross:2011:1}. One can therefore establish a
central limit theorem for a sequence $(W_n)$ of such variables by showing
${\dW(W_n,Z)\rightarrow 0}$, and hence by showing that the right-hand side of
\eqref{eq:stein} vanishes as ${n\rightarrow\infty}$.\\[.5em]
\emph{Basic case}. In broad strokes, \cref{theorem:CLT,theorem:BE} are proven as follows:\\[.2em]
\myitem\tab Choose ${W=W_n}$ as a suitably scaled version of
$\eta(n)^{-1}\mathbb{F}_n$, where $\eta(n)$ is a (for now unspecified)
positive random variable.\\[.2em]
\myitem\tab To upper-bound \eqref{eq:stein},
split $W$ at a cut-off distance $b_n$ in $\group$, into a short-range
and a long-range term. 
Adapting \eqref{eq:stein} to these modifications
yields a refined bound, in \cref{lemma:basic:stein:bound:initial}.
The leg work of the proof is then to control each term
in this bound.\\[.2em]
\myitem\tab Stein's method involves the notion of ``dependency
neighborhoods'' \citep{Ross:2011:1}: A set, say $\mathcal{N}(i)$, of
indices for a random variable $X_i$ such that ${X_i\condind X_j}$ if
${j\not\in\mathcal{N}(i)}$.
In our proofs, the neighborhood is the area within the
cut-off $b_n$, but terms inside and outside the neighborhood are not
completely independent. We hence bound long-range terms using conditional
mixing.
\\[.2em]
\myitem\tab Split $f$ into small and large values at a threshold $\gamma_n$. Since no fourth
moment is assumed, large values must be controlled explicitly. \\[.2em]
\myitem\tab The resulting bound is a function of $\eta(n)$. Choose
$\eta(n)$ as an approximation to the quantity $\eta$
defined in the statement of \cref{theorem:CLT}.\\[.2em]
The central limit theorem then follows by showing that the
bound vanishes as ${n\rightarrow\infty}$, and the Berry-Esseen bound
by additionally requiring a third and fourth moment, and substituting these into the bound.
\begin{remark}
Limit theorems for random structures often require a condition called stable convergence
\citep[e.g.][]{Hausler:Luschgy:2015}.
That is not required here; instead, the proof shows that ${\eta(n)^{-1}}\mathbb{F}_n$
converges to $Z$ conditionally on $\sigma(\group)$, which is then used
to obtain convergence of $\mathbb{F}_n$ to $\eta Z$. That is possible
because $\eta$ is constant given $\sigma(\group)$. In terms
of \cref{theorem:ergodic:decomposition}, $\eta$ is a
function of $\xi$, and hence $\sigma(\group)$-measurable.
\end{remark}

\noindent\emph{General case}. Proving \cref{theorem:CLT:g,theorem:BE:g} requires a number of modifications:\\[.2em]
\myitem\tab
Since the dimension $k_n$ of the group may grow with $n$,
we work with surrogate functions that depend only on the first
few entries of ${\boldsymbol{\phi}\in\group^{k_n}}$.
\\[.2em]
\myitem\tab Working in $\group^{k_n}$ complicates the dependency neighborhoods.
\\[.2em]
\myitem\tab
Since ${\widehat{\mathbb{F}}_n}$ is now random,
we must also control the probability of
selecting elements of the dependency neighborhood, using the spreading
conditions.

\subsection{Comments on other proof techniques}
Central limit theorems can be proven with a range of tools, including Fourier techniques, Lindeberg's replacement
trick, or martingale methods. Unlike Stein's method, these do not seem
adaptable to our problems. In the case of concentration, 
the Efron-Stein inequality and other standard techniques similarly fail.
There are several obstacles: \emph{(i) Topology of the group.} Many
  martingale proofs, and the Efron-Stein approach to concentration,
  combine observations
  into blocks, and control dependence between blocks via an
  isoperimetric argument (i.e.\ block boundaries are of negligible size).
  That applies to some groups, such as ${\group=\mathbb{Z}}$, but fails
  even for ${\group=\mathbb{Z}^2}$. Bolthausen \cite{Bolthausen:1982:1} used Stein's method
  to address an instance of this problem.
\emph{(ii) Lack of a total order.} Replacement arguments (e.g.\ Lindeberg's method and the Efron-Stein
  inequality) rely on the left-invariant total order of $\mathbb{Z}$ to
  replace random variables sequentially.
  That makes them inapplicable, for example, to permutation groups.
\emph{(iii) Group size}, since replacement arguments require countability.

\begin{remark}
Martingales are applicable if $\group$ contains compact subgroups ${\group_1\subset\group_2\subset\ldots}$
such that ${\group=\cup_n\group_n}$. That is the case for
$\mathbb{S}_{\infty}$, with ${\group_n=\mathbb{S}_n}$.
If so, ${(\group_n)}$ is a
\Folner sequence, and ${(\mathbb{F}_n)}$
is a reverse martingale adapted to the filtration
${\sigma(\group_1)\supset\sigma(\group_2)\supset\ldots}$. That implies \eqref{eq:lindenstrauss}. The
corresponding case of
\cref{theorem:CLT} (with more restrictive moment and mixing conditions) follows from the reverse martingale central limit theorem.
Such arguments are used in \citep{Lovasz:Szegedy:2006} for
convergence, and in \citep{Eagleson:Weber:1978:1} for asymptotic normality.
However, the method has limitations even for
${\group=\mathbb{S}_\infty}$. For example:
If $(X_i)$ is an exchangeable sequence and $h$ a function of two arguments, ${(h(X_i,X_j))_{ij}}$ is an exchangeable array,
but even with proper normalization, ${\sum_{i<j}h(X_i,X_j)}$ is not a reverse martingale unless $h$ is symmetric in its arguments.\\
\end{remark}

\subsection{Auxiliary results}
We begin with a result that allows us to bound the Wasserstein
distance $\dW$. Recall that $\mathcal{L}$ denotes the set of Lipschitz
functions with constant 1. It is a standard result that
\begin{equation}
  \label{eq:kantorovich:rubinstein}
  \dW(X,Y)\;=\;\sup_{h\in\mathcal{L}}|\mean[h(X)]-\mean[h(Y)]|\;=\;\inf \mean[|X'-Y'|]\;,
\end{equation}
where the infimum
is taken over all couplings $(X',Y')$ of $X$ and $Y$. This identity is sometimes
known as the Kantorovich-Rubinstein formula.
In analogy to $\dW$, we define the conditional (and hence random) distance
\begin{equation*}
 \dW(X,Y|\group)\;:=\;
 \sup_{h\in\mathcal{L}}|\mean[h(X)|\group]-\mean[h(Y)|\group]|\;.
\end{equation*}        
The next lemma shows how it relates to $\dW$.
\begin{lemma}
\label{lemma:jensen:wassertein}Let $X$ and $Y$ be random variables in
${\L_1(\mathbb{R})}$, defined on an abstract probability space ${(\Omega,\mathcal{A},\mathbb{P})}$. Then
\begin{align*}
\dW(X,Y|\group)= \inf\,\mean\big[|X'-Y'|\,\big|\group\big]\qquad\mathbb{P}\text{-a.s.,}
\end{align*}
where the infimum runs over all couplings $(X',Y')$ of the conditional
variables ${X|\sigma(\group)}$ and ${Y|\sigma(\group)}$, and 
\begin{equation*}
\dW(X,Y)\,\le\,\mathbb{E}[\dW(X,Y|\group)]\,=\,\|\dW(X,Y|\group)\|_1\;.
\end{equation*}
\end{lemma}
\begin{proof}
  Since both random variables are real-valued, we can choose regular
  conditional distributions $p$ for $X$ and $q$ for $Y$. That is,
  ${p_\omega=P(\argdot|\group)(\omega)}$ holds for $\mathbb{P}$-almost
  all ${\omega\in\Omega}$, the map ${\omega\mapsto p_\omega}$ is
  measurable, and the same holds for 
  $q$ and $Y$. We can then apply \eqref{eq:kantorovich:rubinstein}
  pointwise in $\omega$, which shows that $\mathbb{P}$-almost surely,
  \begin{equation*}
    \sup_{h\in\mathcal{L}}|\mean[h(X)|\group](\omega)-\mean[h(Y)|\group](\omega)|
    \;=\;
    \dW(p_\omega,q_\omega)
    \;=\;
    \inf\mean|X'-Y'|\;,
  \end{equation*}
  where the infimum is taken over all $(X',Y')$
  with marginal distributions $p_\omega$ and $q_\omega$.
  That shows the first identity. The second claim holds since 
\begin{align*}
  \dW(X,Y)\,=&\,\sup_{h\in \mathcal{L}}\big|\mean[h(X)]-\mean[h(Y)]\big|
  =\sup_{h\in \mathcal{L}}\big|\mean[\mean[h(X)|\group]-\mean[h(Y)|\group]]\big|\\
  &\le\mean[\sup_{h\in \mathcal{L}}\big|\mean[h(X)|\group]-\mean[h(Y)|\group]\big|]
  =\mean[\dW(X,Y|\group)]\;,
\end{align*}
where we have used the tower property and the relation
  ${\sup\mean\leq\mean\sup}$.
\end{proof}
Conditioning in $\dW$ lets us swap a random variable $Y$ (which in the
proofs will be the asymptotic variance) between arguments:
\begin{lemma}[Random scaling]
  \label{lemma:random:scaling}
  Let $X$, $Y$, and $Z$ be random variables in
  ${\L_2(\mathbb{R})}$, such that $Y$ is $\sigma(\group)$-measurable. If
  ${Y\geq c}$ almost surely for some ${c>0}$,
  \begin{equation*}
  \dW(X,Z/Y)\le\|\dW(XY,Z|\group)\|_{1}/c\;.
  \end{equation*}
\end{lemma}
\begin{proof}
 The second part of \cref{lemma:jensen:wassertein} shows that
 ${\dW(X,Z/Y)\le\|\dW(X,Z/Y|\group)\|_{1}}$. Fix any ${\epsilon>0}$. Since $Y$ is
 $\sigma(\group)$-measurable, there is a coupling  ${(X',Z')}$ of the
 conditional variables ${X|\sigma(\group)}$ and ${Z|\sigma(\group)}$  such that 
${\mean[|X'Y-Z'||\group]\leq \dW(XY,Z|\group)+\epsilon}$, now by the first
 part of \cref{lemma:jensen:wassertein}.
This coupling satisfies
  \begin{equation*}
    \mathbb{E}[|X'Y/Y-Z'/Y| |\group]\;\le\;\mean[|X'Y-Z'||\group]/c \;\le\; \bigl(\dW(XY,Z|\group)+\epsilon\bigr)/c\;.
\end{equation*}
Since $\epsilon$ is arbitrary, it follows that ${\dW(X,Z/Y)\le\|\dW(XY,Z|\group)\|_{1}/c}$.
\end{proof}
We must repeatedly use bounds of the form
${\|\mean[\argdot|\group]\|_1\lesssim\|\argdot\|_{\frac{2+\varepsilon}{2}}\alpha(k|\group)^{\frac{2}{2+\varepsilon}}}$
to ``separate off'' conditioning.
The next two lemmas capture all cases needed in the proofs, for both
${\varepsilon=0}$ and ${\varepsilon>0}$. The first version applies to 
conditional mixing. Recall this involves a pair ${\phi_1,\phi_2}$ of
distance at least $k$ from a set ${G\subset \group}$, which here is of
finite size $m$. The lemma shows that, if a transformation $\pi$ does not move the pair 
too close to $G$, 
the desired inequality holds.
\begin{lemma}[Conditional mixing bound]
  \label{lemma:mixing}
  Let $X$ be $\group$-invariant, $Y$ a real-valued random variable, 
  and ${h\!:\xspace^{k+2}\times\mathbb{R}\rightarrow\mathbb{R}}$ a
  measurable function 
  with ${\mean[|h(X,\ldots,X,Y)|]<\infty}$.
  Fix ${\phi_1,\phi_2,\psi_1,\ldots,\psi_m\in\group}$, and set
  \begin{equation*}
    \label{eq:lemma:mixing:H}
    H_{\tau}:=h(\psi_1X,\ldots,\psi_mX,\tau^{-1}\phi_1X,\tau^{-1}\phi_2X,Y)\quad\text{
      for }\tau\in\group\;.
  \end{equation*}
  Let $\pi$ be an element of $\group$. If
  \begin{equation*}
     Y\condind\, X\,|\,\sigma(\group)
     \quad\text{ and }\quad
  k\leq \min_{i\leq 2,j\leq m}d(\tau^{-1}\phi_i,\psi_j)
  \end{equation*}
  for both ${\tau=\pi}$ and the identity ${\tau=e}$, then
  \begin{equation*}
    \bigl\|\mean[H_{\pi}|\group,Y]-\mean[H_e|\group,Y]\bigr\|_1
    \;\leq\;
    4\|H_{\pi}-H_e\|_{\frac{2+\varepsilon}{2}}\alpha(k|\group)^{\frac{2}{2+\varepsilon}}
  \end{equation*}
        for any ${\varepsilon\geq 0}$.
\end{lemma}

\begin{proof}
  \emph{Case 1: ${\|H_{\pi}-H_e\|_{\infty}}$ finite}. We approximate
  $h$ by a step function\nolinebreak
  \begin{equation}
    \label{eq:lemma:mixing:hast}
    h^*(\argdot,\argdot,\argdot)=\tsum_{i=1}^Nc_i\mathbb{I}(
      \argdot\in A_i,\argdot\in B_i,\argdot\in C_i)\;,
  \end{equation}
  for some ${N\in\mathbb{N}}$, measurable sets
  $A_i$ in $\xspace^m$, $B_i$ in $\xspace^2$ and $C_i$
  in $\mathbb{R}$, and scalars ${|c_i|\leq\|h\|_{\infty}}$.
  Define $H^*_\tau$ analogously to $H_\tau$, by
  substituting $h^*$ for $h$. 
  Fix any ${\delta>0}$. Since $h$ is integrable, $h^*$ can be chosen to make ${\|h-h^*\|_1}$
  arbitrarily small, and hence such that 
  ${\|(H_{\pi}-H_e)-(H^{\ast}_{\pi}-H^{\ast}_e)\|_1\leq\delta}$.
  If we abbreviate
  \begin{equation*}
    \begin{split}
      I_i\,:=\;&\mathbb{I}_{A_i}(\psi_1X,\dots,\psi_kX)\,\mathbb{I}_{C_i}(Y)
      \,\bigl(\mathbb{I}_{B_i}(\phi_1X,\phi_2X)-
      \mathbb{I}_{B_i}(\pi^{-1}(\phi_1X,\phi_2X))\bigr)
    \end{split}
  \end{equation*}
  and ${E_i:=\mean[I_i|\group,Y]}$, we have
  ${\|\mathbb{E}[H^{\ast}_{\pi}-H^{\ast}_e|\group,Y]\|_1\le\sum_{i=1}^{N_{\delta}} |c_i|\|E_i\|_1}$
  for some ${N_\delta\in\mathbb{N}}$. Using the definition of conditional mixing,
  we have
  \begin{align}
    \label{eq:lemma:mixing:comp}
      \tsum_{i\leq N_{\delta}} |c_i|\|E_i\|_{1}\;&\le\;
      \mathbb{E}\big[\!\tsum_{{i|E_i> 0}}|c_i| |E_i|+\!\tsum_{{i|E_i\le 0}} |c_i||E_i|\bigr]
      \nonumber\\&\le\;
      \tsum_{{i|E_i> 0}}|c_i| \mathbb{E}[E_i]-\!\tsum_{{i|E_i\le 0}} |c_i|\mathbb{E}[E_i]
      \nonumber\\&\le\;
      \max_i|c_i|\bigl(\|\tsum_{i|E_i> 0}E_i\|_1
      +
      \|\tsum_{i|E_i\le 0}E_i\|_1\bigr)
  \\   &\le\; 2\|H_{\pi}-H_e\|_{\infty}\,\alpha(k|\mathbb{G})\;.\nonumber
  \end{align}
  Since the right-hand side does not depend on $\delta$ or
$h^{\ast}$, that implies
\begin{equation*}
\|\mathbb{E}[H_{\pi}-H_e|\group,Y]\|_1
\le 2  \|H_{\pi}-H_e\|_{\infty}~\alpha(k|\mathbb{G})\;.
\end{equation*}
\emph{Case 2: ${\|H_{\pi}-H_e\|_{\infty}}$ infinite}. For ${r>0}$, define
\begin{equation*}
  \Delta H:=H_{\pi}-H_e
  \qquad \Delta H_r:=\Delta H\cdot\mathbb{I}\braces{\Delta H\leq r}
  \qquad
  \overline{\Delta H_r}:=\Delta H-\Delta H_r\;.
\end{equation*}
The triangle inequality gives ${\|\mean[H_\pi-H_e|\group,Y]\|_1\leq
  \|\Delta H_r\|_1+\|\overline{\Delta H_r}\|_1}$, and case 1 above implies
${\|\Delta H_r\|_1\leq  2r\alpha(k|\mathbb{G})}$.
Since ${\|h\|_{\frac{2+\varepsilon}{2}}}$ is finite, we can assume
${\|\Delta H\|_{\frac{2+\varepsilon}{2}}\leq 1}$ without loss of generality. By H\"{o}lder's
inequality,
\begin{equation*}
  \|\overline{\Delta H_r}\|_1
  \;\leq\;
  \|\Delta
  H\|_{\frac{2+\varepsilon}{2}}\cdot \|\mathbb{I}\braces{\Delta H>r}\|_{\frac{2+\varepsilon}{\varepsilon}}
  \;\leq\;
  2r^{-\frac{\varepsilon}{2}}\;.
\end{equation*}
We hence obtain ${\|\mean[H_\pi-H_e|\group,Y]\|_1\leq2r\alpha(k|\mathbb{G})+2r^{-\frac{\varepsilon}{2}}
  =
  4\alpha(k|\group)^{\frac{\epsilon}{2+\varepsilon}}}$ by choosing
${r=\alpha(k|\mathbb{G})^{\frac{-2}{2+\varepsilon}}}$.
\end{proof}
The second version is the analogous result for marginal mixing
coefficients. As in \cref{sec:g}, ${e_{i,\tau}=(e,\ldots,e,\tau,e,\ldots,e)}$ denotes
a vector with $k_n$ entries and $\tau$ as the $i$th entry, and
$\delta_{i,j}$ is defined as in \eqref{def:set:delta}.
\begin{lemma}[Marginal mixing bound]
  \label{lemma:mixing:diagonal}
  Let $X$ be a random element of $\xspace_n$, invariant under
  the diagonal action of $\group^{k_n}$, and $Y$ a real-valued random variable.
  Let ${h\!:\xspace_n^{k+2}\times\mathbb{R}\rightarrow\mathbb{R}}$ be
  measurable,
  with ${\mean[|h(X,\ldots,X,Y)|]<\infty}$.
  Fix ${\bphi_1,\bphi_2,\bpsi_1,\ldots,\bpsi_m\in\group^{k_n}}$. For
  any ${i,j\leq k_n}$, set
  \begin{equation*}
    H_{\tau}^{ij}:=h(\boldsymbol{\psi}_1X,\ldots,\boldsymbol{\psi}_kX,e_{i,\tau}\boldsymbol{\phi}_1X,e_{j,\tau}\boldsymbol{\phi}_2X,Y)\quad\text{
      for }\tau\in\group\;,
  \end{equation*}
  where 
  Let $\pi$ be an element of $\group$. If
  \begin{equation*}
     Y\condind\, X\,|\,\sigma(\group)
     \quad\text{ and }\quad
     k\leq \delta_{ij}(e_{i,\tau}\boldsymbol{\phi}_1,e_{j,\tau}\boldsymbol{\phi}_2,\braces{\bpsi_1,\ldots,\bpsi_m})
  \end{equation*}
  for both ${\tau=\pi}$ and the identity ${\tau=e}$, then
  \begin{equation*}
    \bigl\|\mean[H^{ij}_{\pi}|\group,Y]-\mean[H^{ij}_e|\group,Y]\bigr\|_1
    \;\leq\;
    4\|H^{ij}_{\pi}-H^{ij}_e\|_{\frac{2+\varepsilon}{2}}\alpha_n(k|\group)^{\frac{2}{2+\varepsilon}}
  \end{equation*}
        for any ${\varepsilon\geq 0}$.
\end{lemma}
Since the proof is almost identical to that of \cref{lemma:mixing}, we only
highlight the requisite changes.
\begin{proof} 
  If ${\|H^{ij}_{\pi}-H^{ij}_e\|_{\infty}}$ is finite, again use
  \eqref{eq:lemma:mixing:hast}, now with measurable sets
  ${A_i}$ in $\xspace_n^m$ and $B_i$ in
  $\xspace_n^2$,
  and define $H^{ij*}_\tau$ by substituting $h^*$ for $h$. 
  For ${\delta>0}$ given, choose $h^*$ such that
  ${\|(H_{\pi}^{ij}-H^{ij}_e)-(H^{ij\ast}_{\pi}-H^{ij\ast}_e)\|_1\leq\delta}$.
  If we change the definition of $I_i$ to
  \begin{equation*}
      I_i\,:=\,\mathbb{I}_{A_i}(\boldsymbol{\psi}_1X,\dots,\boldsymbol{\psi}_kX)\,
      \mathbb{I}_{C_i}(Y)\,
      \bigl(\mathbb{I}_{B_i}(\boldsymbol{\phi}_1X,\boldsymbol{\phi}_2X) -
      \mathbb{I}_{B_i}(e_{i,\pi}\boldsymbol{\phi}_1X,e_{j,\pi}\boldsymbol{\phi}_2X)\bigr)\;,
  \end{equation*}
  repeating \eqref{eq:lemma:mixing:comp} shows
  ${\tsum_{i\leq N_{\delta}} |c_i|\|E_i\|_{1}\le
    2\|H^{ij}_{\pi}-H^{ij}_e\|_{\infty}\,\alpha_n(k|\mathbb{G})}$,
  and hence
  \begin{equation*}
    \|\mathbb{E}[H^{ij}_{\pi}-H^{ij}_e|\group,Y]\|_1
    \le 2
    \|H^{ij}_{\pi}-H^{ij}_e\|_{\infty}~\alpha_n(k|\mathbb{G})\;.
  \end{equation*}
  If ${\|H^{ij}_{\pi}-H^{ij}_e\|_{\infty}}$ is infinite, we set
  ${\Delta H:=H^{ij}_{\pi}-H^{ij}_e}$. 
  Repeating the argument in the previous proof shows 
  ${\|\Delta H_r\|_1\leq  2r\alpha_n(k|\mathbb{G})}$
  and ${\|\overline{\Delta H_r}\|_1\leq 2r^{-\frac{\varepsilon}{2}}}$
  for any ${r>0}$,
  and hence ${\|\mean[H^{ij}_\pi-H^{ij}_e|\group,Y]\|_1\leq
  4\alpha_n(k|\group)^{\frac{\epsilon}{2+\varepsilon}}}$.
\end{proof}
The next result is used to relate mixing to the growth of volume under
the metric $d$. We phrase this in terms of a generic function $g$,
which is later chosen as 
$t\mapsto\alpha(t|\group)^{\frac{\epsilon}{2+\epsilon}}$ in the basic
case, and $t\mapsto\alpha_n(t|\group)^{\frac{\epsilon}{2+\epsilon}}$
in the general case.

\begin{lemma}
  Let ${g:[0,\infty)\rightarrow[0,\infty)}$ be a measurable function. Then
  \begin{equation*}
    \frac{\sum_{i\ge m}|\B_{i+1}\setminus \B_{i}|g(i)}{\int_{\group\setminus\B_{m-1}}g(d(e,\phi)) |d\phi| }<\infty\qquad\text{ for all }m\in\mathbb{N}\;.
  \end{equation*}
\end{lemma}
\begin{proof}
  Abbreviate ${r:=\sup_i \frac{|\B_{i+1}\setminus \B_i|}{|\B_{i}\setminus \B_{i-1}|}}$. Then
  \begin{equation*}
    \begin{split}
      {\textstyle\sum_{i\ge m}}|\B_{i+1}\!\setminus\!\B_{i}| g(i)
      \le r{\textstyle\sum_{i\ge m}}|\B_{i}\!\setminus\!\B_{{i-1}}|g(i)
      \le r \myint_{\group\setminus\B_{m-1}}\!\!\!\!\!g(d(e,\phi))|d\phi| \;,
  \end{split}
  \end{equation*}
  where we have used \eqref{eq:metric:condition}.
\end{proof}
Finally, we note that assuming
${\mean[f(X)|\group]=0}$ incurs
no loss of generality:
\begin{lemma}[Conditional centering]
\label{lemma:centering}
  Let $X$ be $\group$-invariant, and ${p\geq 1}$. For any
  ${g\in\L_p(X)}$, the random function 
  ${f(\argdot):=g(\argdot)-\mean[g(X)|\group]}$ is $\sigma(\group)$-measurable random element of $\L_p(X)$.
  For all ${n\in\mathbb{N}}$, 
  \begin{equation*}
    \mathbb{F}_n(f,X)=\cF_n(g,X)
    \quad\text{ and }\quad
    \alpha_f(n|\group)=\alpha_g(n|\group)\quad\text{ almost surely,}
  \end{equation*}
  where $\alpha_\argdot(n|\group)$ is the conditional
  mixing coefficient defined by $(\argdot,X)$.
\end{lemma}
\begin{proof}
  For ${p\geq 1}$, $\L_p$-norms contract under conditioning \citep{Kallenberg:2001}. That makes
  $f$ a $\sigma(\group)$-measurable random element of
  ${\L_p(X)}$.
  Since
  ${f(\phi X)=g(\phi X)-\mean[g(X)|\group]}$
  for any ${\phi\in\group}$, we have
  ${\mathbb{F}_n(f,X)=\cF_n(g,X)}$. 
  To prove the second claim, consider events
  ${A\in\sigma_f(\{\phi_1,\phi_2\})}$ and ${B\in\sigma_f(G)}$, for any
  ${G\subset \group}$ and
  ${\phi_1,\phi_2\in\group\setminus\mathbf{B}_t(G)}$.
  Fix any ${\delta>0}$. 
  By definition of $\sigma_f$, we can choose sets
  ${S_i\in\sigma_g(\phi_1,\phi_2)}$, sets
  ${T_i\in\sigma(\group)}$, and constants ${c_i\in[0,1]}$ such that
  ${\|\tsum_ic_i\mathbb{I}(S_i,T_i)-\mathbb{I}(A)\|_1\le \delta}$.
  As the sets $T_i$ are in $\sigma(\group)$,  we have
     \begin{align*}
       &
       \|\msum_i c_i\big(\mathbb{P}(S_i,T_i,B|\group)-P(S_i,T_i|\group)P(B|\group)\big)\|_1
       \\&
       =\|\msum_i c_i \mathbb{I}(T_i)\big(\mathbb{P}(S_i,B|\group)-P(S_i|\group)P(B|\group)\big]\|_1
\le \alpha(t|\group)\;,
\end{align*}
where the final inequality uses the definition $\alpha$ and ${c_i\in[0,1]}$. As $\delta$ is arbitrary, this implies ${\|P(A,B|\group)-P(A|\group) P(B|\group)\|_1\le \alpha(t|\group)}$.
\end{proof}

\newpage

\section{Proofs of the basic limit theorems}
\label{proofs:basic}

We first adapt the upper bound on $\dW$ given by Stein's inequality to
our problem in \ref{app:A:bounds},
and then apply it to prove the limit theorems in \ref{app:A:CLT}.

\subsection{Bounds on the Wasserstein distance}
\label{app:A:bounds}

By \cref{lemma:centering}, it suffices to establish \cref{theorem:CLT,theorem:BE} for
elements $f$ of
\begin{equation*}
 \cL_p(X,\group):=
 \braces{f(\argdot)=g(\argdot)-\mean[f(X)|\group]\,\vert\,g\in\L_p(X)}\;.
\end{equation*}
Given ${f\in\cL_p(X,\group)}$, we choose the variable $W$ in Stein's inequality as
\begin{equation*}
  W \; := \;
  \mfrac{\sqrt{|\A_n|}}{\eta(n)}\mathbb{F}_n(f,X)
  \; = \;
  \mfrac{1}{\eta_n}\myint_{\A_n}f(\phi X)|d\phi|\quad
  \text{ where }\eta_n:=\eta(n)\sqrt{|\A_n|}\;.
\end{equation*}
Here ${\eta(n)}$ is for now any positive, $\sigma(\group)$-measurable random
variable, but will be chosen in the next section
as a specific approximation to the asymptotic variance.
For a fixed element ${\phi\in\group}$, conditional mixing allows us to control
dependence for elements $\phi'$ far away from $\phi$. To treat terms
close to $\phi$ separately, we choose ${b>0}$, and decompose $W$ into
long-range and short-range contributions,
\begin{equation*}
  W^{\phi}_{b}:=\mfrac{1}{\eta_n}\myint_{\A_n}\mathbb{I}\braces{d(\phi,\phi')\geq b}f(\phi' X)|d\phi'|
  \quad\text{ and }\quad
  \Delta^{\phi}_b:=W-W^{\phi}_{b}\;.
\end{equation*}
For our purposes, Stein's inequality then takes the following form.
\begin{lemma}
  \label{lemma:basic:stein:bound:initial}
  Assume the conditions of \cref{theorem:CLT}, and define
  $W$ as above, for a
  $\sigma(\group)$-measurable random element ${\eta(n)}$ of
  ${(0,\infty)}$. Then
   \begin{align}
    \label{eq:proof:basic:main}
    \mean[\dW(W, Z|\group)]\;&\le\;
    \sup_{t\in \mathcal{F}}\Big\|\mathbb{E}\big[\nrm \myint_{\A_n}f({\phi }X) t(W^{\phi}_{b})|d\phi|\big|\group\big]\|_1\notag
    \\ &+\;
    \sup_{t\in \mathcal{F}}
    \Big\|\mathbb{E}\big[\nrm\myint_{\A_n}f({\phi }X) (t(W)- t(W^{\phi}_{b})-\Delta^{\phi}_{b} t'(W))|d\phi||\group\big]\Big\|_{1}
    \\ &+\;
    \sqrt{\mfrac{2}{\pi}} \Big\|1-\nrm \mathbb{E}\bigl[\myint_{\A_n}f({\phi }X)  \Delta^{\phi}_{b}|d\phi|\,\big|\mathbb{G}\bigr]\Big\|_1\notag
    \\ &+\;
    \sqrt{\mfrac{2}{\pi}}  \Big\|\nrm\myint_{\A_n} f({\phi }X)  \Delta^{\phi}_{b}- \mathbb{E}[ f({\phi }X)  \Delta^{\phi}_{b}|\mathbb{G}]|d\phi|\Big\|_1 \notag\\
    & =: \text{\rm (a)} + \text{\rm (b)} + \text{\rm (c)} + \text{\rm (d)}\notag
   \end{align}
   where $Z$ is a standard normal variable and ${b>0}$.
\end{lemma}

\begin{proof} The triangle inequality yields
\begin{align*}
  \|&\mathbb{E}[Wt(W)- t'(W)|\group]\|_1\\
   &=\,
   \big\| \mathbb{E}\big[
    \myint_{\A_n}\!\!\!\tfrac{f({\phi }X)}{\eta_n}\bigl(
    t(W)\!-\!t(W^{\phi}_{b})\!+\!t(W^{\phi}_{b})\bigr)\!-\!t'(W)|d\phi|\big|\group\big]\big\|_1
    \\&
    \leq\, \big\|\mathbb{E}\big[
     \myint_{\A_n}\!\!\!\mfrac{f({\phi }X)}{\eta_n}(t(W)\!-\! t(W^{\phi}_{b}))\!-\!t'(W)
      |d\phi||\group\big]\big\|_1\notag\\
    &+\big\|\mathbb{E}\big[\myint_{\A_n}\!\!\!\mfrac{f({\phi }X)}{\eta_n} t(W^{\phi}_{b})|d\phi|\big|\group\big]\big\|_1\;.
\end{align*}
Using ${t\in\mathcal{F}}$, the first term can be bounded further as
\begin{align*}
  \big\| &\mathbb{E}\big[\!\!
    \myint_{\A_n}\!\!\!\!\!\mfrac{f({\phi }X)(t(W)- t(W^{\phi}_{b}))}{\eta_n}-t'(W)
    |d\phi|\big|\group\big]\big\|_1
  \\&\qquad\le\;
  \Big\|\mathbb{E}\big[ \!\!
      \myint_{\A_n}\!\!\!\tfrac{f({\phi }X)(t(W)- t(W^{\phi}_{b}))-\Delta^{\phi}_{b}t'(W)}{\eta_n}
      |d\phi|\big|\group\big]\!\Big\|_1
  \\&\qquad+\;
    \Big\|\mathbb{E}\big[t'(W)\bigl(1-
      \myint_{\A_n}\!\!\!\mfrac{f({\phi }X)}{\eta_n}\Delta^{\phi}_{b}
          |d\phi|\bigr)\big|\group\big]\Bigr\|_1
    \\&\qquad\le\;
     \Big\|\mathbb{E}\big[ \!\!
      \myint_{\A_n}\!\!\!\!\!\!\mfrac{f({\phi }X)(t(W)- t(W^{\phi}_{b}))-\Delta^{\phi}_{b}t'(W)}{\eta_n}
      |d\phi|\big|\group\big]\!\Big\|_1
     \\&\qquad +
   \sqrt{\mfrac{2}{\pi}} \bigl\|1\!-\!\mfrac{\mathbb{E}[\int_{\A_n}\!\!f({\phi }X)\Delta^{\phi}_{b}
       |d\phi||\group]}{\eta_n}\!\bigr\|_{1}
   \\&\qquad +
   \sqrt{\mfrac{2}{\pi}} \bigl\|\nrm\myint_{\A_n}\!\!\!f({\phi }X)\Delta^{\phi}_{b}
   -\mathbb{E}[f({\phi }X)\Delta^{\phi}_{b}|\group] |d\phi|\bigr\|_{1}\;.
\end{align*}
Substituting into the right-hand side of \eqref{eq:stein} yields the result.
\end{proof}

The main work of the proof is to control the terms
(a)--(d) in \cref{lemma:basic:stein:bound:initial}.
To handle large values of $f$, we split the function in its
range, into
\begin{equation}
\label{truncated:f}
  f^{< \gamma}(x):=f(x)\mathbb{I}\braces{|f(x)|<\gamma}
  \;\text{ and }\;
  f^{\ge \gamma}(x):=f(x)\mathbb{I}\braces{|f(x)|\ge\gamma}\;.
\end{equation}
The next result
refines the terms (a)--(d) using \cref{lemma:mixing}, and by handling
$f^{< \gamma}$ and $f^{\ge \gamma}$ separately.
\begin{lemma}
  \label{lemma:basic:stein:bound:refined}
  Require the assumptions of \cref{lemma:basic:stein:bound:initial}.
  Fix ${b>0}$ and ${\gamma>0}$, and let $\tau$ be defined as in
  \eqref{eq:def:tau}. Choose $p,q>0$ to be such that $\frac{1}{p}+\frac{1}{q}=1$. Then
\begin{align*}
&\|d(W,Z|\group)\|_1\leq\;
4\big\|\mfrac{f(X)}{\eta(n)}\big\|_{2+\varepsilon}^2
\tau(b)
+4|\B_{b}|
\big\|\mfrac{f^{\ge \gamma}(X)}{\eta(n)}\big\|_{{2+\varepsilon}}
\big\|\mfrac{f(X)}{\eta(n)}\big\|_{{2+\varepsilon}}
\\
&\qquad+\;
\mfrac{8|\B_{b}|}{\sqrt{|\A_n|}}\big\|\mfrac{f(X)}{\eta(n)}\big\|^2_{2q(1+\varepsilon/2)}
\big\|\mfrac{f^{<\gamma}(X)}{\eta(n)}\big\|_{p(1+\varepsilon/2)}
\myint_{\group}\alpha(d(e,\phi)|\group)^{\frac{\varepsilon}{2+\varepsilon}}d|\phi|
\\&\qquad+\;
\sqrt{2/\pi}\Bigl(\mathbb{E}\big[\big|\mfrac{\eta(n)^2-\eta_{b}^2 }{\eta(n)^2}\big|\big]
+
\big\|\mfrac{f(X)}{\eta(n)}\big\|^2_{2}\mfrac{|\A_n\triangle
  \B_{b}\A_n|}{|\A_n|}\Bigr)
\\&\qquad+\;
4\mfrac{|\B_{b}|}{\sqrt{|\A_n|}}
\big\|\mfrac{f^{<\gamma}(X)}{\eta(n)}\big\|_{{4+2\varepsilon}}^2
(\!\myint_{\group}\!\!\alpha(d(e,\phi)|\mathbb{G})^{\frac{\varepsilon}{2+\varepsilon}} |d\phi|)^{\frac{1}{2}}\;.
\end{align*}
\end{lemma}

\begin{proof}
To bound (a), fix any ${\delta>0}$. Then
\begin{align}
  \label{aux:bound:a}
  & \big\|\mathbb{E}\bigl[\nrm f({\phi }X)  t(W^{\phi}_{b})\big|\group\bigr]\big\|_1
  \le
  \tsum_{j\ge \lfloor|\B_{b}|/\delta\rfloor}\Big\|\mathbb{E}\Bigl[
    f(\phi
  X)\mfrac{t(W^{\phi}_{j\delta})-t(W^{\phi}_{(j+1)\delta})}{\eta_n}\Big|\group\Bigr]\Big\|_1
  \end{align}
  An application of \cref{lemma:mixing} to the summand gives
\begin{multline*}
   \Big\|\mathbb{E}\Bigl[\mfrac{f(\phi
  X)(t(W^{\phi}_{j\delta})-t(W^{\phi}_{(j+1)\delta}))}{\eta_n}\Big|\group\Bigr]\Big\|_1
  \leq
  4\Bigl\|\mfrac{f(\phi
  X)(t(W^{\phi}_{j\delta})-t(W^{\phi}_{(j+1)\delta}))}{\eta_n}\Bigr\|_{\frac{2+\varepsilon}{2}}
  \alpha(j\delta|\mathbb{G})^{\frac{\varepsilon}{2+\varepsilon}}\;.
\end{multline*}
By H\"older's inequality,
\begin{equation*}
  \Bigl\|\mfrac{f(\phi
    X)(t(W^{\phi}_{j\delta})-t(W^{\phi}_{(j+1)\delta}))}{\eta(n)}\Bigr\|_{\frac{2+\varepsilon}{2}}
  \leq
  \Big\|\mfrac{f(X)}{\eta(n)}\Big\|_{2+\varepsilon}\big\|t(W^{\phi}_{j\delta})-t(W^{\phi}_{({j+1})\delta})\big\|_{2+\varepsilon}
\end{equation*}
and since $t$ is Lipschitz,
${\|t(W^{\phi}_{j\delta})-t(W^{\phi}_{({j+1})\delta})\|_{2+\varepsilon}\leq\|W^{\phi}_{j\delta}-W^{\phi}_{({j+1})\delta}\|_{2+\varepsilon}}$.
In summary, the right-hand side of \eqref{aux:bound:a} is bounded by
\begin{equation*}
  \text{rhs \eqref{aux:bound:a}}
  \le 4 \sqrt{\mfrac{2}{\pi|\A_n|}}  \tsum_{j\ge
    \lfloor|\B_{b}|/\delta\rfloor}
  \Big\|\mfrac{f(X)}{\eta(n)}\Big\|_{2+\varepsilon}\big\|W^{\phi}_{j\delta}-W^{\phi}_{({j+1})\delta}\big\|_{2+\varepsilon} \alpha(j\delta|\mathbb{G})^{\frac{\varepsilon}{2+\varepsilon}}\;.
\end{equation*}
Since that holds for any ${\phi }\in\mathbb{G}$ and $\delta>0$, we conclude
\begin{equation*}
  \text{(a)}\;\le\;
  4\Big\|\mfrac{f(X)}{\eta(n)}\Big\|_{2+\varepsilon}^2
\myint_{\group\setminus\B_{b}} \alpha(d(e,\phi)|\mathbb{G})^{\frac{\varepsilon}{2+\varepsilon}}|d\phi|\;=\;
4\Big\|\mfrac{f(X)}{\eta(n)}\Big\|_{2+\varepsilon}^2 \tau(b)\;,
\end{equation*}
For (b), we decompose ${f=f^{<\gamma}+f^{\geq\gamma}}$. The triangle
inequality gives
\begin{align*}
  &\; \Big\|\mathbb{E}\big[\!\!\myint_{\A_n}\!\!\!f({\phi }X) \mfrac{t(W)- t(W^{\phi}_{b})-\Delta^{\phi}_{b} t'(W)}{\eta_n}|d\phi|\big|\group\big]\Big\|_1\\
  \le& \;
  \Big\|\mathbb{E}\big[\!\!\myint_{\A_n}\!\!\!f^{\ge\gamma}({\phi }X) \mfrac{t(W)- t(W^{\phi}_{b})-\Delta^{\phi}_{b} t'(W)}{\eta_n}|d\phi|\big|\group\big]\Big\|_1
  \\
  +& \;
  \Big\|\mathbb{E}\big[\!\!\myint_{\A_n}\!\!\!f^{<\gamma}({\phi }X) \mfrac{t(W)- t(W^{\phi}_{b})-\Delta^{\phi}_{b} t'(W)}{\eta_n}|d\phi|\big|\group\big]\Big\|_1 =: \text{(b1)}+\text{(b2)}\;.
\end{align*}
Since $t$ is an element of ${\mathcal{F}}$, it satisfies
\begin{equation}
  \label{eq:stein:taylor}
  |t(x+y)-t(x)-y t'(x)|\le
  2|y|\sup_{z\in [x,x+y]}|t'(z)|\quad\text{ for }x,y\in\mathbb{R}
\end{equation}
and ${\sup|t'(z)|\leq\sqrt{2/\pi}\leq 1}$. Choosing
${y=\Delta^{\phi}_{b}}$ yields
\begin{align*}
\text{(b1)} & \le 2  \Big\|\mathbb{E}\big[\nrm
\myint_{\A_n}\!\!\!|f^{\geq\gamma}(\phi X)|  |\Delta^{\phi}_{b}|~|d\phi|\big|\group\big]\Big\|_1
\\&\le  2 \Big\|\mfrac{f^{\geq\gamma}(X)}{\eta(n)}\Big\|_{{2+\varepsilon}}
\Big\|\mfrac{f(X)}{\eta(n)}\Big\|_{{2+\varepsilon}}\frac{\int_{\A_n^2}
  \mathbb{I}\braces{d({\phi },{\phi }')\le b}|d\phi||d\phi'|}{|\A_n|}
\\&\le 2|\B_{b}|
\Big\|\mfrac{f^{\geq\gamma}(X)}{\eta(n)}\Big\|_{{2+\varepsilon}}
\Big\|\mfrac{f(X)}{\eta(n)}\Big\|_{{2+\varepsilon}}
\end{align*}
For (b2), fix ${p,q>0}$ with ${1/p+1/q=1}$.
A Taylor expansion gives
\begin{equation*}
    |t(W)-t(W_b^{\phi})-\Delta_b^{\phi}t'(W)|\;\leq\;
    \frac{1}{2}\sup_{w}|t''(w)|(\Delta_b^{\phi})^2\;\leq\;(\Delta_b^{\phi})^2\;.
\end{equation*}
Substituting
${(\Delta_b^{\phi})^2=(\frac{1}{\eta_n}\int_{\A_n}\mathbb{I}\braces{d(\phi,\phi')\leq
    b}f(\phi' X)|d\phi'|)^2}$ into (b2) yields
\begin{align*}
\text{(b2)}
& \le
\Big\|\myint_{\A_n^3}\!\!\!\mfrac{\mathbb{E}\big[f^{<\gamma}(\phi X)
     \mathbb{I}\braces{d(\phi,\psi),d(\phi,\pi)\le b}
    f(\psi X)f(\pi X)\big|\group\big]}{\eta_n^3}
  |d\phi||d\psi||d\pi|\Big\|_1
  \\& \le \tfrac{8|\B_{b}|}{\sqrt{|\A_n|}}\big\|\tfrac{f(X)}{\eta(n)}\big\|^2_{{2q(1+\frac{\varepsilon}{2})}}
  \big\|\frac{f^{< \gamma}(X)}{\eta(n)}\big\|_{{p(1+\frac{\varepsilon}{2})}}  \myint_{\group}\alpha^{\frac{\varepsilon}{2+\varepsilon}}(d(e,\phi)|\group)d|\phi|\;.
\end{align*}
To bound (c), write ${\eta_b^2:=\int_{\phi\in \B_b}\eta^2(\phi)|d\phi|}$ again apply the triangle inequality, which
yields
\begin{align*}
  &\text{(c)}\cdot\sqrt{\mfrac{\pi}{2}} = \Bigl\|\mfrac{\eta(n)^2-\int_{\A_n^2}\frac{1}{|\A_n|}
    \mathbb{E}[\mathbb{I}\braces{d(\phi,\phi')\le b}
      f({\phi }X) f({\phi }'X)|\mathbb{G}]
    |d\phi||d\phi'|}{\eta(n)^2}\Bigr\|_1
  \\& \le
  \mathbb{E}\Big[\Big|\mfrac{\eta(n)^2-\eta_{b}^2 }{\eta(n)^2}\Big|\Big]
  +
  \Big\|\mfrac{\eta_{b}^2-\int_{\A_n^2}|\A_n|^{-1}\mathbb{E}[\mathbb{I}\braces{d(\phi,\phi')\le b}
      f({\phi }X) f({\phi }'X)|\mathbb{G}]
    |d\phi||d\phi'|}{\eta(n)^2}\Big\|_1
 \\&\le
   \mathbb{E}\Big[\Big|\mfrac{\eta(n)^2-\eta_{b}^2 }{\eta(n)^2}\Big|\Big]
   +
   \Big\|\mfrac{f(X)}{\eta(n)}\Big\|^2_{2}\mfrac{|\A_n\triangle \B_{b}\A_n|}{|\A_n|}.
\end{align*}
For (d), we again use ${f=f^{<\gamma}+f^{\geq\gamma}}$ and the triangle
inequality.
For a pair ${(\phi_1,\phi_2)}$ of group elements, abbreviate
\begin{equation*}
F_{\phi_1\phi_2}^{<\gamma}:=\mfrac{1}{\eta(n)^2}\bigl(f^{<\gamma}(\phi_1 X)f^{<\gamma}(\phi_2 X)-\mathbb{E}[f^{<\gamma}(\phi_1 X)f^{<\gamma}(\phi_2 X)|\group]\bigr)\;,
\end{equation*}
and define ${F_{\phi_1\phi_2}^{\geq\gamma}}$ as ${F_{\phi_1\phi_2}^{\leq\infty}}-{F_{\phi_1\phi_2}^{\leq\gamma}}$.
For any quadruple ${\phi_1,\ldots,\phi_4\in\group}$,
\begin{align*}
  \big\|\text{\rm Cov}[F_{\phi_1\phi_2}^{<\gamma},F_{\phi_3\phi_4}^{<\gamma}|\group]\big\|_1
  \le &\;
  4\Big\|\mfrac{f^{\leq\gamma}(X)}{\eta(n)}\Big\|^4_{4+2\varepsilon}
  \alpha
  \big(d((\phi_1,\!\phi_2),\!(\phi_3,\!\phi_4))|\group\big)^{\frac{\varepsilon}{2+\varepsilon}}
\end{align*}
holds by \cref{lemma:mixing}, which implies
\begin{align*}
  \Big\|\myint_{\A_n\times \A_n \B_{b}}\!\!\!F_{\phi_1\phi_2}^{<\gamma}\mfrac{|d\phi_1||d\phi_2|}{|\A_n|}\Big\|_1
  \le \mfrac{4|\B_{b}|}{\sqrt{|\A_n|}}
  \Big\|\mfrac{f^{<\gamma}(X)}{\eta(n)}\Big\|_{{4+2\varepsilon}}^2
  (\myint_{\group}\alpha(d(e,\phi)|\mathbb{G})^{\frac{\varepsilon}{2+\varepsilon}} |d\phi|)^{\frac{1}{2}}\;.
\end{align*}
For $f^{\geq\gamma}$, we obtain
\begin{equation*}
\Big\|\myint_{\A_n
\times \A_n \B_{b}}\!\!F_{\phi_1,\phi_2}^{\geq\gamma}\tfrac{|d\phi_1||d\phi_2|}{|\A_n|}\Big\|_{\subL1}
 \le2|\B_{b}|
 \Big\|\mfrac{f^{\ge\gamma}(X)}{\eta(n)}\Big\|_{2}\Big\|\mfrac{f(X)}{\eta(n)}\Big\|_{2}
 =:\text{(d')}\;,
\end{equation*}
and hence
\begin{align*}
  \text{(d)}\!\cdot\!\mfrac{\sqrt{\pi}}{\sqrt{2}}
  \le &\; 4\mfrac{|\B_{b}|}{\sqrt{|\A_n|}}
  \Big\|\mfrac{f^{<\gamma}(X)}{\eta(n)}\Big\|_{{4+2\varepsilon}}^2
  (\!\myint_{\group}\!\!\alpha(d(e,\phi)|\mathbb{G})^{\frac{\varepsilon}{2+\varepsilon}} |d\phi|)^{\frac{1}{2}}
+
  \text{(d')}\;.
\end{align*}
Rearranging terms within (a)+(b)+(c)+(d) yields the statement.
\end{proof}

\subsection{Proof of the limit theorems}
\label{app:A:CLT}
We first prove the central limit theorem under hypothesis \eqref{H2}.
The result under hypothesis \eqref{H1}, and the Berry-Esseen bound, then follow with
minimal adjustments.
\begin{proof}[Proof of \cref{theorem:CLT} and \cref{CI} assuming \eqref{H2}]
  Set ${S_n:=\sqrt{|\A_n|}\mathbb{F}_n(X)}$, and let ${Z\sim N(0,1)}$ be independent
  of ${(X,\eta)}$. We must show ${S_n\!\darrow\!\eta Z}$.
  By \cref{lemma:mixing},
  \begin{equation*}
    \begin{split}
      \|\eta^2\|_1&\le \int_{\group}\|\mathbb{E}[f(X)f(\phi X)|\group]\|_1 |d\phi|\\&\le \|f(X)\|_{2+\epsilon}\msum_{b\in \mathbb{N}}|\B_{b+1}\setminus \B_b|\alpha(b|\mathbb{G})^{\frac{\epsilon}{2+\epsilon}}<\infty\;,
    \end{split}
  \end{equation*} which shows ${\eta<\infty}$ almost surely.
  Since $\eta Z$ and ${S_n:=\sqrt{|\A_n|}\mathbb{F}_n(X)}$ have first
  moments, ${S_n\darrow\eta Z}$ holds
  if ${\dW(S_n,\eta Z)\rightarrow 0}$, as ${n\rightarrow\infty}$.
  
  To show that is the case,
  we may assume ${f\in\cL_1(X)}$, by
  \cref{lemma:centering}.
  We first choose suitable sequences $(\gamma_n)$ and $(b_n)$.
By definition, ${|\A_n|\rightarrow\infty}$. Set ${\gamma_n:=|\A_n|^{1/6}}$. That implies
${\gamma_n\rightarrow\infty}$, and hence ${\|f^{\geq\gamma_n}(X)\|_{2+\epsilon}\rightarrow 0}$. Since $|\A_n|$ diverges,
we can choose a divergent sequence $(b_n)$ such that
\begin{equation*}
  |\B_{b_n}|\leq|\A_n|^{1/12},\quad |\B_{b_n}|
  \|f^{\geq\gamma_n}(X)\|_2\qquad\text{ and }\qquad
  \mfrac{|\A_n\triangle \B_{b_n}\A_n|}{|\A_n|}\rightarrow 0\;.
\end{equation*}
Collecting terms in \cref{lemma:basic:stein:bound:refined}, we
then have
\begin{equation*}
  r_n:=
  \mfrac{|\B_{b_n}|\gamma_n^2}{\sqrt{|\A_n|}}+|\B_{b_n}|
  \|f^{\geq\gamma_n}(X)\|_2\,\rightarrow\, 0
  \quad\text{ and }\quad
  \tilde{r}_n:=\mfrac{|\A_n\triangle \B_{b_n}\A_n|}{|\A_n|}\,\rightarrow\,
  0\;.
\end{equation*}

The next step is to construct $\eta(n)$ in \cref{lemma:basic:stein:bound:refined}
as an approximation to $\eta$. Set
${u_n:=\max\braces{r_n,\tilde{r}_n,\tau(b_n)}^{1/8}}$ and
${v_n:=\max\braces{r_n,\tilde{r}_n,\tau(b_n)}}^{-1/2}$.
As ${n\rightarrow\infty}$, we hence have
${u_n\rightarrow 0}$ and ${v_n\rightarrow\infty}$, and observe that
\begin{align}
  \label{proof:CLT:u:v}&
  \text{(i)}\;\; u_n< v_n\text{ eventually }
  \quad
  \text{(ii)}\;\;
  \frac{v_n}{u_n^3}\,\bigl(r_n+\tilde{r}_n+\tau(b_n)\bigr)\rightarrow 0
  \quad
  \text{(iii)}\;\;
   v_n P(\eta<u_n)\rightarrow 0\;.
\end{align}
Set ${\eta(n):=
  \eta\mathbb{I}\braces{\eta\in[u_n,v_n]}
  +
  u_n\mathbb{I}\braces{\eta\not\in[u_n,v_n]}}$, and note that
${\eta(n)\,\condind\, Z}$.
Then
\begin{align*}
  \dW(S_n,\eta Z)
  \;&\le\;
  \dW(S_n,\eta(n)Z)+\dW(\eta(n)Z,\eta Z)\\
  \;&\le\;
  \dW(S_n,\eta(n)Z)+\|Z\|_1\|(\eta-u_n)\mathbb{I}\braces{\eta\not\in[u_n,v_n]}\|_1\;.
\end{align*}
Since we have already shown ${\|\eta^2\|_1<\infty}$, the last term satisfies
\begin{equation*}
  \|Z\|_1\|(\eta-u_n)\mathbb{I}\braces{\eta\not\in[u_n,v_n]}\|_1\rightarrow 0
  \qquad\text{ as } u_n\rightarrow 0
  \text{ and }v_n\rightarrow\infty\;.
\end{equation*}
It thus suffices to show ${\dW(S_n,\eta(n)Z)\rightarrow0}$. Using the Markov inequality we note that \begin{align}P\big(\eta\not\in [u_n,v_n]\big)\le P\big(\eta<u_n\big)+ \frac{\|\eta^2\|_1}{v_n^2}. \end{align}Using \cref{lemma:random:scaling} with $Y=\frac{1}{\eta(n)}$,
\begin{equation*}
  \dW(S_n,\eta(n)Z)
  \;\le\;
  v_n
  \mean\big[\dW\bigl(\mfrac{S_n}{\eta(n)},\,Z\big|\group\bigr)\big]\;,
\end{equation*}
since ${1/\eta(n)\geq 1/v_n}$.
Substituting ${W=\frac{S_n}{\eta(n)}}$ into
\cref{lemma:basic:stein:bound:refined} gives
\begin{align*}
  &v_n  \mean\big[\dW\bigl(\mfrac{S_n}{\eta(n)},\,Z\big|\group\bigr)\big]
  \le \frac{v_n}{u_n^2}\Bigl(5
  \big\|f(X)\big\|^2_{2+\varepsilon}\tau(b_n)\\&\qquad+4|\B_{b_n}|
  \|f^{\geq\gamma_n}(X)\|_{2+\varepsilon}
  \big\|f(X)\big\|_{2+\varepsilon}
  +
  \frac{8|\B_{b_n}|\big\|f(X)\big\|^2_{2+\varepsilon}\gamma_n\tau(0)}{u_n\sqrt{|\A_n|}}
  \\&\qquad+
  \sqrt{2/\pi}\big(u_n^2P(\eta\not\in [u_n,v_n])+\big\|f(X)\big\|^2_{2}\tilde r_n \big)+4\mfrac{|\B_{b_n}|\gamma_n^2\sqrt{\tau(0)}}{\sqrt{|\A_n|}}\Bigr)
  \\&\le
  \frac{8v_n}{\min(u_n^3,1)}\Bigl(
  \big\|f(X)\big\|^2_{2+\varepsilon}\tau(b_n)+\max(\big\|f(X)\big\|^2_{2+\varepsilon}\tau
  (0),1) [r_n+\tilde r_n]\Bigr)
  \\[.4em]&\qquad+v_nP(\eta<u_n)+\frac{\|\eta^2\|_1}{v_n}\;.
\end{align*}
This final bound vanishes as ${n\rightarrow\infty}$: The first
term by
\eqref{proof:CLT:u:v},
the second since ${u_n\rightarrow 0}$ and ${v_n\rightarrow\infty}$.
That shows ${\dW(S_n,\eta(n)Z)\rightarrow 0}$, which
implies ${\dW(S_n,\eta Z)\rightarrow 0}$ and completes the proof.
\end{proof}

Since the Berry-Esseen bound assumes a third and fourth moment,
it can be proven by applying \cref{lemma:basic:stein:bound:refined} directly:
\begin{proof}[Proof of \cref{theorem:BE}]
  The sequence $(b_n)$ is given by hypothesis. Fix any divergent sequence $(\gamma_n)$ in $(0,\infty)$. For each $\gamma_n$,
  \begin{equation*}
  \|f(X)\mathbb{I}\braces{|f(X)\leq\gamma_n|}\|_{3(1+\frac{\epsilon}{2})}
  \;\le\;
  \|f(X)\|_{3(1+\frac{\epsilon}{2})}\;.
\end{equation*}
We can hence apply \cref{lemma:basic:stein:bound:refined} with ${p=\frac{3}{2}}$ and ${q=3}$,
and \cref{theorem:BE} follows for ${n\rightarrow\infty}$.
\end{proof}
\begin{proof}[Proof of \cref{theorem:CLT} assuming \eqref{H1}]
  There is a finite ${k\in\mathbb{N}}$ such that
${\alpha(k|\group)=0}$. We can hence repeat the argument in the
above for
${b_1=b_2=\ldots:=k}$ and ${\varepsilon=0}$, which
again yields ${d_w(S_n,\eta(n)Z)\rightarrow 0}$ for ${n\rightarrow\infty}$.

\end{proof}

\subsection{Derivation of the confidence interval}
We now prove \cref{CI}. Using the centered average
$\overline{\mathbb{F}}_n$, the statement of the theorem can
be phrased as: Under the conditions of
\cref{theorem:CLT}, and assuming $\hat{\eta}_n$ is defined using a suitable sequence $(b_n)$, 
\begin{equation*}
\limsup_{n\rightarrow \infty} P\Big(\big|\overline{\mathbb{F}}_n
(f,X)\big|> \frac{\hat
  \eta_n}{\sqrt{|\A_n|}}z_{1-\frac{\alpha}{2}}\Big)\le\alpha\;.
\end{equation*}
Since we assume the hypothesis of \cref{theorem:CLT},
we can reuse part of its proof:
The scaled average $S_n$ in that proof
assumed ${\mean[f(X)|\group]=0}$. Since we now use $\overline{\mathbb{F}}_n$.
we have ${S_n=\sqrt{\A_n}\overline{\mathbb{F}}_n(f,X)}$.
We hence already know that ${\dW(S_n,\eta Z)\rightarrow 0}$ for the
asymptotic variance $\eta$, and the
key to obtaining a confidence interval is to show that this also implies
\begin{equation}
  \label{eq:proof:CI}
  \dW(S_n,\hat{\eta}_nZ)\rightarrow 0
\end{equation}
for the empirical variance $\hat{\eta}_n$. 
The proof has three steps:
\begin{enumerate}
\item We first show that \eqref{eq:proof:CI} holds if
${\|\eta^2-\hat{\eta}^2_n\|_1\rightarrow 0}$.
\item The main technical work is then to show 
  ${\|\eta^2-\hat{\eta}^2_n\|_1\rightarrow 0}$,
  which we do using similar arguments as the proof of the central limit
  theorem.
\item Given \eqref{eq:proof:CI}, we deduce the result.
\end{enumerate}

\begin{proof}[Proof of \cref{CI}]
  {\em Step 1}.
  Since $\hat{\eta}_n$ is, by its definition, independent of $Z$, the
  triangle inequality shows
  \begin{equation*}
  \dW(S_n,\hat \eta_n Z)\le \dW(S_n,\eta Z)+\dW(\eta Z,\hat \eta_n
  Z)\le \dW(S_n,\eta Z)+\|\eta-\hat \eta_n\|_1\|Z\|_1\;.
  \end{equation*}
  Since ${\dW(S_n,\eta Z)\rightarrow 0}$, \eqref{eq:proof:CI} holds if
  $\|\eta-\hat\eta_n\|_1\rightarrow 0$. For any $\epsilon>0$, we have
  \begin{align*}
    \|\eta-\hat\eta_n\|_1&\le \|(\eta-\hat\eta_n)\mathbb{I}(\max (\eta,\hat\eta_n\le \epsilon)\|_1+\|(\eta-\hat\eta_n)\mathbb{I}(\max (\eta,\hat\eta_n)>\epsilon)\|_1
    \\
    &\le
    2\epsilon+\frac{\|\eta^2-\hat\eta_n^2\|_1}{\epsilon}\;,
  \end{align*}
  so it suffices to establish $\|\eta^2-\hat\eta_n^2\|_1\rightarrow
  0$. \\[.5em]
  {\em Step 2}.
  We first observe that, similarly to the bound of term (c) in the proof
  of \cref{theorem:CLT}, 
  \begin{align*}
    &\Big\|\hat
    \eta_n^2-\frac{1}{|\A_n|}\int_{\A_n^2}\mathbb{I}(d(\phi,\phi')\le
    b_n) f(\phi
    X)f(\phi'(X)|d\phi'||d\phi|+\Big(\frac{1}{|\A_n|}\int_{\A_n}f(\phi
    X)|d\phi|\Big)^2\Big\|_1\\
    &\le \|f(X)\|_2^2\frac{|\A_n\triangle \A_n\B_{b_n}|}{|\A_n|}\;.
  \end{align*}
  Applying the triangle inequality to $\|\eta^2-\hat\eta_n^2\|_{1}$
  hence gives
  \begin{align*}
    \|\eta^2-\hat\eta_n^2\|_{1}&\le \Bigl\|{\eta^2-\int_{\A_n^2}\frac{\mathbb{I}\braces{d(\phi,\phi')\le b_n}}{|\A_n|}
      \rm{Cov}[
      f({\phi }X), f({\phi }'X)|\mathbb{G}]
    |d\phi||d\phi'|}\Bigr\|_1
    \\&+\Big\|\frac{1}{|\A_n|}\int_{\A_n^2}\mathbb{I}(d(\phi,\phi')\le b_n)\big[f(\phi X)f(\phi'(X)-\mathbb{E}(f(\phi X)f(\phi' X)|\group)\big]|d\phi'||d\phi|\Big\|_1
    \\&+|\B_{b_n}|\Big\|\Big(\frac{1}{|\A_n|}\int_{\A_n} f(\phi X)|d\phi|\Big)^2-\mathbb{E}(f(X)|\group)^2\Big\|_{L_2}
    \\&+\|f(X)\|_2^2\frac{|\A_n\triangle \A_n\B_{b_n}|}{|\A_n|}
    \\&=: \text{(a)}+\text{(b)}+\text{(c)}+\|f(X)\|_2^2\mfrac{|\A_n\triangle \A_n\B_{b_n}|}{|\A_n|}
  \end{align*}
  Since ${{|\A_n\triangle \A_n\B_{b_n}|}/{|\A_n|}\rightarrow 0}$
  as ${n\rightarrow\infty}$ by hypothesis (iii), the last term vanishes asymptotically.
  We bound each of the remaining terms individually. For term (a),
  abbreviate ${\eta_b^2:=\int_{\phi\in
      \B_b}\eta^2(\phi)|d\phi|}$. Then
  \begin{align*}
  &\text{(a)}\le
  \mathbb{E}\big[\big|{\eta^2-\eta_{b_n}^2 }\big|\big]
  +
  \Big\|{\eta_{b_n}^2-\int_{\A_n^2}|\A_n|^{-1}\mathbb{I}\braces{d(\phi,\phi')\le b_n}\rm{Cov}[
      f({\phi }X), f({\phi }'X)|\mathbb{G}]
    |d\phi||d\phi'|}\Big\|_1
 \\&\le
 4\|f(X)\|_{2+\epsilon}^2\tau(b_n)
   +
  4 \big\|{f(X)}\big\|^2_{2}\mfrac{|\A_n\triangle\A_n\B_{b_n}|}{|\A_n|}.
  \end{align*}
  Since ${\tau(b_n)\rightarrow 0}$ by hypothesis (i), that
  implies $(a)\rightarrow 0$ as ${n\rightarrow\infty}$.
  Term (c) satisfies 
  \begin{align*}
    \text{(c)}
    &\le4\|f(X)\|_2|\B_{b_n}|\Big\|\frac{1}{|\A_n|}\int_{\A_n} f(\phi X)-\mathbb{E}(f(X)|\group)|d\phi|\Big\|_2
    \\
    &\le
    \frac{4\|f(X)\|_{2}\|f(X)\|_{2+\epsilon}\tau(0)|\B_{b_n}|}{\sqrt{|\A_n|}}
  \end{align*}
  and hence ${\text{(c)}\rightarrow 0}$ as ${n\rightarrow\infty}$, by
  hypothesis (ii).

  For term (b), we have to argue similarly as in the proof of
  \cref{theorem:CLT}: Since we do not assume a fourth moment exists,
  we must split $f$. To this end, define ${f^{<\gamma}}$ and
  ${f^{\geq \gamma}}$ as in \eqref{truncated:f}, and choose a sequence
  $(\gamma_n)$ of positive scalars satisfying
  \begin{equation*}
     \gamma_n\rightarrow \infty \qquad\text{ and }\qquad
     \gamma_n^2\,\mfrac{|\B_{b_n}|}{\sqrt{|\A_n|}}\rightarrow 0
     \qquad\text{ as }n\rightarrow\infty\;.
  \end{equation*}
For a pair ${\phi_1,\phi_2\in\group}$, abbreviate
\begin{equation*}
F_{\phi_1\phi_2}^{<\gamma_n}:=\bigl(f^{<\gamma_n}(\phi_1 X)f^{<\gamma_n}(\phi_2 X)-\mathbb{E}[f^{<\gamma_n}(\phi_1 X)f^{<\gamma_n}(\phi_2 X)|\group]\bigr)\;.
\end{equation*}
For any two pairs $(\phi_1,\phi_2)$ and $(\phi_3,\phi_4)$, we then have
\begin{align*}
  \big\|\text{\rm Cov}[F_{\phi_1\phi_2}^{<\gamma_n},F_{\phi_3\phi_4}^{<\gamma_n}|\group]\big\|_1
  \le &\;
  4\big\|{f^{\leq\gamma_n}(X)}\big\|^4_{4+2\varepsilon}
  \alpha
  \big(d((\phi_1,\!\phi_2),\!(\phi_3,\!\phi_4))|\group\big)^{\frac{\varepsilon}{2+\varepsilon}}
\end{align*}
by the conditional mixing bound in \cref{lemma:mixing}. That implies
\begin{align*}
  \Big\|\myint_{\A_n\times \A_n \B_{b_n}}\!\!\!F_{\phi_1\phi_2}^{<\gamma_n}\mfrac{|d\phi_1||d\phi_2|}{|\A_n|}\Big\|_1
&  \le \mfrac{4|\B_{b_n}|}{\sqrt{|\A_n|}}
  \big\|{f^{<\gamma_n}(X)}\big\|_{{4+2\varepsilon}}^2
  (\myint_{\group}\alpha(d(e,\phi)|\mathbb{G})^{\frac{\varepsilon}{2+\varepsilon}} |d\phi|)^{\frac{1}{2}}
\\&\le\mfrac{4|\B_{b_n}|\gamma_n^2}{\sqrt{|\A_n|}}
  (\myint_{\group}\alpha(d(e,\phi)|\mathbb{G})^{\frac{\varepsilon}{2+\varepsilon}} |d\phi|)^{\frac{1}{2}}\;.
\end{align*}
The residual term
${F_{\phi_1\phi_2}^{\leq\infty}}-{F_{\phi_1\phi_2}^{\leq\gamma_n}}$ satisfies
\begin{equation*}
\Big\|\myint_{\A_n
\times \A_n \B_{b_n}}\!\!(F_{\phi_1\phi_2}^{\leq\infty}-F_{\phi_1\phi_2}^{\leq\gamma_n})\tfrac{|d\phi_1||d\phi_2|}{|\A_n|}\Big\|_{\subL1}
 \le2|\B_{b_n}|
 \big\|{f^{\ge\gamma_n}(X)}\big\|_{2}\big\|{f(X)}\big\|_{2}\;,
\end{equation*}
and combining the two shows
\begin{align*}
  \text{(b)}
  \le &\;
  4\mfrac{|\B_{b_n}|}{\sqrt{|\A_n|}}\gamma_n^2
  (\!\myint_{\group}\!\!\alpha(d(e,\phi)|\mathbb{G})^{\frac{\varepsilon}{2+\varepsilon}} |d\phi|)^{\frac{1}{2}}
  +
  2|\B_{b_n}|
  \big\|{f^{\ge\gamma_n}(X)}\big\|_{2}\big\|f(X)\big\|_{2}\rightarrow 0.
\end{align*}
In summary, ${\text{(a)}+\text{(b)}+\text{(c)}\rightarrow 0}$ holds as
${n\rightarrow\infty}$. That implies 
${\|\eta^2-\hat\eta_n^2\|_1\rightarrow 0}$, and we have established
\eqref{eq:proof:CI}.
\\[.5em]
\emph{Step 3}.
By definition of the Wasserstein metric $\dW$, we can find a sequence
of couplings  $(S_n,\hat\eta_nZ)$ that satisfy
${\|S_n-\hat\eta_nZ\|_1\le 2\dW(S_n,\hat\eta_nZ)}$.
The hypothesis ${P(\eta<t)\rightarrow 0}$ for ${t\searrow 0}$ implies
there is a sequence $(t_n)$ of positive reals that satisfies
\begin{equation*}
  t_n\xrightarrow{n\rightarrow \infty}0
  \quad\text{ and }\quad
  \|\eta-\hat\eta_n\|_1=o(t_n)
  \quad\text{ and }\quad
  \dW\big(S_n,~\hat\eta_nZ)=o(t_n\sqrt{P(\hat\eta_n<t_n)})\;.
\end{equation*}
The truncated empirical variance
${\tilde\eta_n:=\min(\hat\eta_n,t_n)}$ then satisfies
$$\|\tilde\eta_n-\hat\eta_n\|_1\;\le\;
t_nP(\hat\eta_n<t_n)\;\xrightarrow{n\rightarrow\infty}\; 0\;.$$
Since also
\begin{equation*}
  P(\hat\eta_n<t_n)
  \;\le\;
  P(\eta<\mfrac{t_n}{2})+P(|\eta-\hat\eta_n|\ge\mfrac{t_n}{2})
  \;\le\;
  P(\eta<\mfrac{t_n}{2})+2\mfrac{\|\eta-\hat\eta_n\|_1}{t_n}
  \;\xrightarrow{n\rightarrow\infty}\;
  0\;,
\end{equation*}
it follows that ${\dW(S_n,\tilde \eta_nZ)\le \dW(S_n,\hat
  \eta_nZ)+t_nP(\hat\eta_n<t_n)\rightarrow 0}$.
To express probabilities in terms of ${\tilde{\eta}_n}$, we
define a sequence $(\epsilon_n)$ as
$\epsilon_n=t_n\sqrt{P(\hat\eta_n<t_n)}$. Then ${\epsilon_n=o(t_n)}$,
and the triangle inequality shows
\begin{align*}&P\Big(\big|\overline{\mathbb{F}}_n
(f,X)\big|>\frac{\hat \eta_n}{\sqrt{|\A_n|}}z_{1-\frac{\alpha}{2}}\Big)
  \\&\le
  P\Big(\big|\sqrt{|\A_n|}\overline{\mathbb{F}}_n
  (f,X)\big|>\tilde \eta_nz_{1-\frac{\alpha}{2}}-\epsilon_n\Big)+P\Big(\big|\hat \eta_n-\tilde \eta_n|z_{1-\frac{\alpha}{2}}>\epsilon_n\Big)
  \\&\le
  P\Big(\big|\tilde \eta_n Z\big|>\tilde \eta_nz_{1-\frac{\alpha}{2}}-2\epsilon_n\Big)+P(\big|\sqrt{|\A_n|}\overline{\mathbb{F}}_n-\tilde \eta_n Z\big|>\epsilon_n)+P\Big(\big|\hat \eta_n-\tilde \eta_n|z_{1-\frac{\alpha}{2}}>\epsilon_n\Big)
  \\&\le
  \text{(a')}+\text{(b')}+\text{(c')}
\;.
\end{align*}
We again bound each term successively. By Markov's inequality,
\begin{align*}
  \text{(c')}
  \;=\;
  P\Big(\big|\hat \eta_n-\tilde
  \eta_n|z_{1-\frac{\alpha}{2}}>\epsilon_n\Big)
  \;\le\;
  \frac{z_{1-\frac{\alpha}{2}}\|\hat\eta_n-\tilde \eta_n\|_1}{\epsilon_n}
  \;&\le\;
  \frac{t_nP(\hat\eta_n<t_n)z_{1-\frac{\alpha}{2}}}{\epsilon_n}
  \\
  &=\;z_{1-\frac{\alpha}{2}}\sqrt{P(\hat\eta_n<t_n)}
  \;\rightarrow\; 0
\;.
\end{align*}
Since the coupling ${(S_n,\hat\eta_nZ)}$ is chosen to  satisfy ${\|\!\sqrt{|\A_n|}\,\overline{\mathbb{F}}_n-\hat\eta_n Z\|_1\le 2\dW(S_n,\hat\eta_n)}$,
\begin{align*}
  \text{(b')}
  \;&=\;
  P(\big|\sqrt{|\A_n|}\overline{\mathbb{F}}_n(f,X)-\tilde \eta_n Z\big|>\epsilon_n)
  \;\le\;
  \frac{\big\|\sqrt{|\A_n|}\overline{\mathbb{F}}_n(f,X)-\tilde \eta_n Z\big\|_1}{\epsilon_n}
  \\
  &\le\;
  \frac{\big\|\sqrt{|\A_n|}\overline{\mathbb{F}}_n(f,X)-\hat\eta_n
    Z\big\|_1+\|\tilde \eta_n-\hat\eta_n\|_1}{\epsilon_n}
  \;\le\;
  \frac{2\dW(S_n,\hat\eta_n)+ t_nP(\hat\eta_n<t_n)}{\epsilon_n}\rightarrow 0.
\end{align*}
Finally,
\begin{align*}
  \text{(a')}\;=\;
  P\Big(\big|\tilde \eta_n Z\big|>\tilde
  \eta_nz_{1-\frac{\alpha}{2}}-2\epsilon_n\Big)
  &\;=\;
  P\Big(\big|Z\big|> z_{1-\frac{\alpha}{2}}-\frac{2\epsilon_n}{\tilde \eta_n}\Big)
  \;\le\; P\Big(\big|Z\big |>z_{1-\frac{\alpha}{2}}-\frac{2\epsilon_n}{ t_n}\Big)\\
  &\;\le\; P\Big(\big|Z\big
  |>z_{1-\frac{\alpha}{2}}-2\sqrt{P(\hat\eta_n<t_n)}\Big)
  \;=\;
  \alpha+o_n(1)
  \;.
\end{align*}
Substituting the upper bounds on (a'), (b') and (c') into the bound
above, we obtain
\begin{equation*}
  \limsup_nP\Big(\big|\overline{\mathbb{F}}_n
(f,X)\big|>\frac{\hat
    \eta_n}{\sqrt{|\A_n|}}z_{1-\frac{\alpha}{2}}\Big)\le \alpha\;,
\end{equation*}
which is the statement of the theorem.
\end{proof}

\newpage

\section{Proofs of the general limit theorems}
\label{proofs:general}

We next prove \cref{theorem:CLT:g,theorem:BE:g}.
Recall that the proof in the basic case adapts Stein's inequality
in \cref{lemma:basic:stein:bound:initial}, bounds the constituent terms individually, and
then deduces both limit theorems from this bound.
The structure in the general case is similar:
\cref{neuchatel} below substitutes for
\cref{lemma:basic:stein:bound:initial}, and
the main work is again to upper-bound each term on its right-hand side, which
we do in Sections
\ref{sec:proofs:individual:bounds}--\ref{sec:term:4}.
The theorems are then deduced in Sections \ref{sec:proof:CLT:g} and \ref{sec:proof:BE:g}.
Although the steps remain similar, the terms in the bounds
change:\\[.5em]
\myitem\tab The generalization of invariance
to \eqref{eq:generalized:invariance} makes the dependency
neighborhoods (which above were balls of radius $b_n$
around group elements) more complicated.
\\[.2em]
\myitem\tab
The fact that $k_n$ may grow with $n$ complicates terms involving
$f_n$. Their moments are handled using 
telescopic sums $\bar{h}_n^i$, defined below.\\[.2em]
\myitem\tab
Large values of $f$ were previously controlled using
${f(x)\mathbb{I}\braces{|f(x)|<\gamma}}$ and its
remainder. Similar quantities now have to be phrased in terms of
$\bar{h}_n^i$ and the coefficients $c_{i,p}$.\\[.2em]
\myitem\tab
Randomized averages have to be phrased in terms of $\mu_n$,
see the definitions of $P_{\mu_n}$ and $\mean_{\mu_n}$ below.\\[.2em]
\myitem\tab
Since we have to control the influence of randomization by
$\mu_n$, spreading coefficients $\mathcal{S}^n$ or $\mathcal{S}^n_w$
appear in the bounds.\\[.2em]
\myitem\tab
Since we make no specific restrictions on how a group action may apply
the entries of a vector ${\bphi\in\group^{k_n}}$, arguments that
compare pairs of such vectors often have to compare all possible
combinations of coordinates.
\\[.5em]
As a result, the bounds become lengthy, and we first introduce
some additional notation to summarize quantities that occur frequently.

\subsection{Notation}
Recall that sequences $(k_n)$ and $(b_n)$ are given by hypothesis.
In addition, we will use a non-decreasing integer sequence $(k_n')$
with ${k_n'\leq k_n}$.
In the proofs, the functions $f_n$ always appear in a centered form,
which is the (random) function
\begin{equation*}
  h_n(X_n):=f_n(X_n)-\mean[f_n(X_n)|\group]\;.
\end{equation*}
We frequently have to restrict random measures to subsets.
If $\mu$ is a random measure on $\group^{k_n}$ and $A$ a measurable subset, write
\begin{equation*}
  P_\mu(\argdot|A):=\frac{\mu(\argdot\cap A)}{\mu(A)}
\end{equation*}
provided ${\mu(A)>0}$ almost surely. Since ${P_\mu(\argdot|A)}$ is almost surely a probability measure 
even if $\mu$ is not, the usual rules of conditioning apply and explain expressions such
as ${P_\mu(\argdot|A,Y)}$ for a random quantity $Y$. If $f$ is a measurable function on
${\group^{k_n}}$, set
\begin{equation*}
  \mathbb{E}_{\mu}[f(\boldsymbol{\phi}) |A]:=\int f(\bphi)P_{\mu}(d\bphi|A)=\frac{1}{\mu(A)}\int_{A}f(\boldsymbol{\phi}) \mu(d\boldsymbol{\phi})\;.
\end{equation*}
The distance $\dW(W_n,Z)$ in Stein's inequality is then applied
to
\begin{equation*}
  W_n:= \tfrac{\sqrt{|\A_n|}}{\eta(n)}
  \mathbb{E}_{\mu_n}[h_n(\boldsymbol{\phi}X_n)|\A_n^{k_n}]\;.
\end{equation*}
Recall from the proof overview that Stein's method considers dependency
neighborhoods around an index $i$. We generalize
these to sets of coordinates of a vector $\bphi$ that are similar 
to ${\psi\in\group}$,
\begin{equation*}
  \mathcal{I}_{b,k}(\psi,\boldsymbol{\phi}):=\{i\le k:~ d(\psi,\phi_i)\le b\} \quad\text{ for }
  k\leq k_n,b>0\;.
\end{equation*}
Two types of averages of $h_n$ appear in the upper bounds on ${\dW}$.
One holds entries outside a neighborhood
$\mathcal{I}_{b,k}(\psi,\boldsymbol{\phi})$, of size
${I:=|\mathcal{I}_{b,k}(\psi,\boldsymbol{\phi})|}$, fixed,
\begin{equation*}
\bar{h}_n^{\psi, b,k}(\bphi X_n):= \lim_{m\rightarrow \infty} \frac{1}{|\A_m|^{I}}
\int_{
  \braces{\boldsymbol{\theta}\in\A_m^{k_n}|\theta_i=\phi_i\text{ for } i\,\not\in\,\mathcal{I}_{b,k}(\psi,\bphi)}
}h_n(\boldsymbol{\theta}X_n)|d\boldsymbol{\theta}|^{\otimes I}\;.
\end{equation*}
The other appears in particular in the context of moments. It fixes
the first ${k_n-i}$ entries, and can be written as
a telescopic sum
\begin{equation*}
\bar{h}_n^i(\bphi X_n):=
g_n^i(\bphi X_n)-
g_n^{i+1}(\bphi X_n)
\end{equation*}
with summands
\begin{equation*}
g_n^i(\bphi X_n):= \lim_{m\rightarrow \infty}
\frac{1}{|\A_m|^{i}}\int_{\A_m^i}h((\phi_1,\ldots,\phi_{k_n-i},\theta_{1},\ldots,\theta_{i})X_n)|d\theta_{1}|\cdots|d\theta_{i}|\;.
\end{equation*}
Higher moments of ${h_n}/{\eta(n)}$ are controlled using
a sequence ${(\gamma_n)}$ with ${\gamma_n\rightarrow\infty}$. That
leads to bounds involving the terms
\begin{equation*}
  \Gamma_{i,p}(\gamma_n):=\sup_{\phi \in \mathbb{G}^{k_n}}\Big\|\mfrac{\bar{h}_n^i(\boldsymbol{\phi}X_n)
    \mathbb{I}\braces{{|\bar{h}_n^i(\boldsymbol{\phi}X_n)|\le \gamma_nc_{i,2}(h_n)}}}{\eta(n)}\Big\|_p
  \qquad\text{ for }i\le k_n\;.
\end{equation*}
More generally, for any function $f_n$ on $\xspace_n$ and the coefficients $c_{i,p}$ defined in \cref{sec:g}, we write
\begin{equation*}
  M_p(f_n):=\sup_{\;\;\;\boldsymbol{\phi}\in\group^{k_n}}\big\|\tfrac{f_n(\boldsymbol{\phi} X_n)}{\eta(n)}\big\|_{{p}}
  \quad\text{ and }\quad
  C_p(f_n):=\tsum_{i=1}^{\infty}c_{i,p}(f_n)\;.
\end{equation*}
Terms in the bounds that quantify the behavior of $\mu_n$ involve
vectors ${\boldsymbol{\phi}\in \mathbb{G}^{k_n}}$ whose entries
are ``not too close'' to each other. To this end, we write
\begin{equation*}
  \dself(\boldsymbol{\phi}):= \min_{i\ne j} d(\phi_i,\phi_j)\;.
\end{equation*}
In particular, we must consider
${\mu_n^{\ast}(\argdot):=
\mu_n(\argdot\cap\braces{\bphi|\dself(\bphi)\geq b_n})}$. This is
again a random measure on $\group^{k_n}$, with
\begin{equation}\begin{split}
  \label{gruyere}&
  P_{\mu^*_n}(\bphi\in\argdot|\A_n^{k_n})=\mathbb{E}_{\mu_n}\big[\mathbb{I}\braces{\boldsymbol{\phi}\in\argdot, \dself(\boldsymbol{\phi})\ge b_n}\big|\A_n^{k_n}\big]\;.
\end{split}\end{equation}
Moments of $\mu_n$ are controlled using a
sequence $(\beta_n)$ with ${\beta_n\rightarrow\infty}$. They lead to
rather complicated terms, which we encapsulate using
the sets
\begin{equation*}
  V_{i,\beta_n}(n) := \Bigbraces{\bpsi\in \mathbb{G}^{k_n}\,\Big\vert\,\sup_{j\le k'_n}\frac{|\A_n|}{|\B_{b_n}|}P_{\mu^*_n}(d(\phi_i,\psi_j)\le b_n|\A_n^{k_n},\bpsi)\le k'_n \beta_n}\;.
\end{equation*}
In words, a random vector $\bphi$ is generated by $P_{\mu_n}$, 
conditionally on its entries not being too similar (hence $P_{\mu_n^{\ast}}$), and the set contains
those vectors $\bpsi$ unlikely to have an entry similar to $\phi_i$.
Finally, for a strongly well-spread sequence, the spreading coefficient $\mathcal{S}^n$ was defined in
\cref{sec:g}. A similar coefficients in the well-spread case is
\begin{align*}
  \mathcal{S}^n_w&:=
  \sup_{{A\in \Sigma_{n},n\in\mathbb{N}} }
  \mean\Big[\mfrac{1}{\mathbb{T}_n(A,|\argdot|^{\otimes k_n})}P_{\mu_n\otimes \mu_n}\big((\bphi,\bphi')\in A
  \big| \A_n^{2k_n}\big)\Big]\;.
\end{align*}

\subsection{Main lemmas}

We first bound the error incurred by excluding vectors whose entries are close to each other,
i.e.\ of substituting $\mu^{\ast}_n$ for $\mu_n$:
\begin{lemma}
  \label{aix_la_chapelle}
  For a positive random variable $\eta(n)$ with
  ${\eta(n)\,\condind_\group\, X_n}$ and a standard normal variable
  $Z^*$, write
  \begin{equation*}
    E(\mu_n):=\tfrac{\sqrt{|\A_n|}}{\eta(n)}
    \mathbb{E}_{\mu_n}[h_n(\boldsymbol{\phi}X_n)|\A_n^{k_n}]\;.
    \end{equation*}
      Then
      \begin{equation*}
    \begin{split}
      \|\dW(E(\mu_n),Z^*|\group) - \dW(E(\mu_n^*),Z^*|\group)\|_1
     \le \frac{k_n^2 C_{1}(\frac{h_n}{\eta(n)})|\B_{b_n}|  \mathcal{S}^n_w}{\sqrt{|\A_n|}}\;.
  \end{split}
  \end{equation*}
\end{lemma}
\begin{proof}
By definition of the Wasserstein distance,
\begin{align*}
  &\|\dW(E(\mu_n),Z^*|\group) -
  \dW(E(\mu_n^*),Z^*|\group)\|_1\leq\|\dW(E(\mu_n),E(\mu_n^*)|\group)\|_1\\
  &\leq\; \|E(\mu_n)-E(\mu_n^*)\|_1
\;\le\; \Big\| \tfrac{\sqrt{|\A_n|}}{\eta(n)} \mathbb{E}_{\mu_n}[\mathbb{I}\braces{\dself(\boldsymbol{\phi})\le b_n}h_n(\boldsymbol{\phi}X_n)|\A_n^{k_n}]\Big\|_{\subL1}.
  \end{align*}
We bound the final term:  Since $\mu_n$ and $X_n$ are independent, we can apply the definition of
 the spreading coefficient $\mathcal{S}_w^n$ to obtain
  \begin{align*}
    &  \Big\| \tfrac{\sqrt{|\A_n|}}{\eta(n)} \mathbb{E}_{\mu_n}[\mathbb{I}\braces{\dself(\boldsymbol{\phi})\le b_n}h_n(\boldsymbol{\phi}X_n)|\A_n^{k_n}]\Big\|_{\subL1}
\\ & \le M_1\bigl(\tfrac{h_n}{\eta(n)}\bigr)\mathbb{E}[\sqrt{|\A_n|}P_{\mu_n}(\dself(\boldsymbol{\phi})\le b_n|\A_n^{k_n})]
\\ & \le \frac{ k_n^2 M_1\bigl(\frac{h_n}{\eta(n)}\bigr)|\B_{b_n}|}{\sqrt{|\A_n|}} \sup_{i\ne j}\mathbb{E}\big[\tfrac{ {|\A_n|}}{|\B_{b_n}|}P_{\mu_n^*}(\mathbb{I}\braces{\boldsymbol{\phi}_i^{-1}\boldsymbol{\phi}_j\in B_{b_n}}|\A_n^{k_n})\bigr]
\\ & \le \frac{ k_n^2 M_1\bigl(\tfrac{h_n}{\eta(n)}\bigr)|\B_{b_n}|  \mathcal{S}^n_w}{\sqrt{|\A_n|}}\;,
  \end{align*}
  which yields the desired result.
\end{proof}
\noindent The main bound on the Wasserstein distance is formulated in terms of $\mu_n^*$:
\begin{lemma}\label{neuchatel}
  Let $\eta(n)$ be a positive random variable with ${\eta(n)\condind_{\group}X_n}$, and $\mathcal{F}$ the function
  class \eqref{stein_class}. Let
  \begin{equation*}
    W^*:= {\textstyle\tfrac{\sqrt{|\A_n|}}{\eta(n)}\tsum_i}\mathbb{E}_{\mu^*_n}[\bar{h}_n^i(\boldsymbol{\phi}X_n)|\A_n^{k_n}]
    \;.
  \end{equation*}
  For given sequences $(b_n)$ and $(k_n')$, abbreviate
  \begin{equation*}
    W^{\boldsymbol{\phi}}_{in}:={\textstyle\tfrac{\sqrt{|\A_n|}}{\eta(n)}}\mathbb{E}_{\mu^*_n}[\bar{h}_n^{\boldsymbol{\phi}_i,b_n,k'_n}(\boldsymbol{\phi}'X_n)|\A_n^{k_n}]
    \quad\text{ and }\quad
    \Delta^{\boldsymbol{\phi}}_{in}=W^*-W^{\boldsymbol{\phi}}_{in}\;.
  \end{equation*}
  Then, for an independent variable $Z^*\sim N(0,1)$,
  \begin{equation*}\begin{split}
      &\big\|\dW(W^*, Z^*|\group)\big\|_1
    \\&\le
    \sup_{t\in \mathcal{F}}\Big\|\mathbb{E}\big[\tfrac{\sqrt{|\A_n|}}{\eta(n)}\tsum_i \mathbb{E}_{\mu_n^*}\big[\bar{h}_n^i(\boldsymbol{\phi}X_n) t(W^{\boldsymbol{\phi}}_{in})|\A_n^{k_n}\big]\big|\group\big]\Big\|_1
    \\&+\sup_{t\in \mathcal{F}}
    \Big\|\mathbb{E}\big[\tfrac{\sqrt{|\A_n|}}{\eta(n)}\tsum_i\mathbb{E}_{\mu_n^*}\big[\bar{h}_n^i(\boldsymbol{\phi}X_n) (t(W^*)- t(W^{\boldsymbol{\phi}}_{in})-\Delta^{\boldsymbol{\phi}}_{in} t'(W^*))|\A_n^{k_n}\big]\big|\group\big]\Big\|_1
    \\&+\sqrt{\tfrac{2}{\pi}} \Big\|1-\tfrac{\sqrt{|\A_n|}}{\eta(n)}\tsum_i\mathbb{E}[\mathbb{E}_{\mu_n^*}[\bar{h}_n^i(\boldsymbol{\phi}X_n) \Delta^{\boldsymbol{\phi}}_{in}|\A_n^{k_n}]|\mathbb{G}]\Big\|_{\subL1}
    \\&+ \sqrt{\tfrac{2}{\pi}}\tsum_i \Big\|\tfrac{\sqrt{|\A_n|}}{\eta(n)}\mathbb{E}_{\mu_n^*}\big[ \bar{h}_n^i(\boldsymbol{\phi}X_n) \Delta^{\boldsymbol{\phi}}_{in}- \mathbb{E}[ \bar{h}_n^i(\boldsymbol{\phi}X_n) \Delta^{\boldsymbol{\phi}}_{in}|\mathbb{G}] \big|\A_n^{k_n}\big]\Big\|_{\subL1}\;.
  \end{split}\end{equation*}
\end{lemma}

\begin{proof}
  By Stein's inequality, ${\dW(W^*, Z^*)\le \sup |\mathbb{E}[W^*t(W^*)- t'(W^*)]|}$.
  We decompose the right-hand side: Since
  ${h_n=\sum_i \bar{h}_n^i}$,
  \begin{multline*}
    \|\mathbb{E}[W^*t(W^*)- t'(W^*)|\group]\|_1
    \le
   \big\| \mathbb{E}\big[\tfrac{\sqrt{|\A_n|}}{\eta(n)}\tsum_i \mathbb{E}_{\mu_n^*}\big[\bar{h}_n^i(\boldsymbol{\phi}X_n) t(W^{\boldsymbol{\phi}}_{in})|\A_n^{k_n}\big]\big|\group\big]\big\|_1
    \\
    +\Big\|\mathbb{E}\Big[\tfrac{\sqrt{|\A_n|}}{\eta(n)}\tsum_i
      \mathbb{E}_{\mu_n^*}\big[\bar{h}_n^i(\boldsymbol{\phi}X_n)(t(W^*)- t(W^{\boldsymbol{\phi}}_{in}))
        \big|\A_n^{k_n}\big]-t'(W^*)\Big|\group\Big]\Big\|_1\;.
  \end{multline*}
  The final term can be bounded further using the triangle inequality as
  \begin{align*}
    &\;\Big\|\mathbb{E}\Big[
      {\textstyle\frac{\sqrt{|\A_n|}}{\eta(n)}\sum_i }
      \mathbb{E}_{\mu_n^*}\big[\bar{h}_n^i(\boldsymbol{\phi}X_n)\big (t(W^*)- t(W^{\boldsymbol{\phi}}_{in})\big) \big|\A_n^{k_n}\big]-t'(W^*)\Big|\group\Big]\Big\|_1
    \\
    \le&\;\Big\|\mathbb{E}\big[
    {\textstyle\frac{\sqrt{|\A_n|}}{\eta(n)}\sum_i}
    \mathbb{E}_{\mu_n^*}\big[\bar{h}_n^i(\boldsymbol{\phi}X_n) (t(W^*)- t(W^{\boldsymbol{\phi}}_{in})-\Delta^{\boldsymbol{\phi}}_{in} t'(W^*))|\A_n^{k_n}\big]\big|\group\big]\Big\|_1
    \\+&\;\Big\|\mathbb{E}\big[
      {\textstyle\sum_it'(W^*)}\big(1-\mathbb{E}_{\mu_n^*}\big[{\textstyle\frac{\sqrt{|\A_n|}}{\eta(n)}}\bar{h}_n^i(\boldsymbol{\phi}X_n) \Delta^{\boldsymbol{\phi}}_{in} |\A_n^{k_n}\big]\big)\big|\group\big]\Big\|_1
\\ \overset{(*)}{\le}&\; \Big\|\mathbb{E}\big[
{\textstyle\frac{\sqrt{|\A_n|}}{\eta(n)}\sum_i}\mathbb{E}_{\mu_n^*}\big[\bar{h}_n^i(\boldsymbol{\phi}X_n) (t(W^*)- t(W^{\boldsymbol{\phi}}_{in})-\Delta^{\boldsymbol{\phi}}_{in} t'(W^*))|\A_n^{k_n}\big]\big|\group\big]\Big\|_1
\\+&\; {\textstyle\sqrt{\frac{2}{\pi}}} \big\|
1-{\textstyle\frac{\sqrt{|\A_n|}}{\eta(n)}\sum_i}\mathbb{E}[\mathbb{E}_{\mu_n^*}[ \bar{h}_n^i(\boldsymbol{\phi}X_n) \Delta^{\boldsymbol{\phi}}_{in}|\A_n^{k_n}]|\mathbb{G}\big]\big\|_{\subL1}
\\+&\; {\textstyle\sqrt{\frac{2}{\pi}}\sum_i} \Big\|{\textstyle\frac{\sqrt{|\A_n|}}{\eta(n)}}\mathbb{E}_{\mu_n^*}\big[\bar{h}_n^i(\boldsymbol{\phi}X_n) \Delta^{\boldsymbol{\phi}}_{in}- \mathbb{E}[\bar{h}_n^i(\boldsymbol{\phi}X_n) \Delta^{\boldsymbol{\phi}}_{in}|\mathbb{G}]\big|\A_n^{k_n}\big]\Big\|_{\subL1}\;,
  \end{align*}
where $(*)$ uses the fact that $\sup_{x\in \mathbb{R}}|t'(x)|\le \sqrt{2/\pi}$.
\end{proof}

\subsection{Bounding the first term in Lemma \ref{neuchatel}}
\label{sec:proofs:individual:bounds}

We proceed to bound each term on the right-hand side of \cref{neuchatel}.
For the first term, we observe:\nolinebreak
\begin{lemma} Assume the conditions of \cref{theorem:BE:g}, and define
  a random measure ${\mu_n^{i-j}(\argdot):=|\A_n|P_{\mu_n\otimes\mu_n}(\boldsymbol{\phi}_j^{-1}\boldsymbol{\phi}'_i\in
    \argdot|\A_n^{2k_n})}$ on $\group$. Then
  \begin{equation*}
    \|\widehat{\mathbb{F}}_{\infty,i}(h_n,X_n,e)\|_{p} \le c_{i,p}(h_n)
    \quad\text{ and }\quad
    \mathbb{E}[\mu_n^{i-j}(\mathbb{I}_{\B_{b}})]\le   \mathcal{S}^n_w |\B_b|
  \end{equation*}
hold for ${i,n,b\in\mathbb{N}}$ and ${p\in\mathbb{R}}$.
\end{lemma}
\begin{proof}
  The first statement follows from the definition of $\widehat{\mathbb{F}}$, as
  \begin{equation*}
    \begin{split}&
      \|\widehat{\mathbb{F}}_{\infty,i }(h_n,X_n,e)\|=\big\|\!\lim_{m}\tfrac{1}{|\A_m|^{k_n}}\!
      \!\myint_{\A_m^{k_n}}\!\!h_n(\boldsymbol{\phi}_{1:i-1}e\boldsymbol{\phi}_{i+1:k_n}X_n)\!-\!h_n(\boldsymbol{\phi}X_n)|d\boldsymbol{\phi}|\big\|_{p}
      \\&\le \lim_{m}\frac{1}{|\A_m|^{k_n}} \int_{\A_m^{k_n}}\|h_n(\boldsymbol{\phi}_{1:i-1}e\boldsymbol{\phi}_{i+1:k_n}X_n)- h_n(\boldsymbol{\phi}X_n)\|_p|d\boldsymbol{\phi}|
      \;\le\; c_{i,p}(h_n)\;.
    \end{split}
  \end{equation*}
  Since
  ${\mathbb{E}[\mathbb{E}_{\mu_n^{i-j}}[\mathbb{I}_{\B_b}]]
  =
  |\A_n|\mathbb{E}[\mathbb{E}_{\mu_n\otimes\mu_n}[\mathbb{I}_{\boldsymbol{\phi}_j^{-1}\boldsymbol{\phi}'_i \in \B_b}|\A_n^{2k_n}]]
  \le   \mathcal{S}^n_w |\B_b|}$,
  the second statement also holds.
\end{proof}

\begin{lemma}\label{stein:term1}
  Assume hypothesis \eqref{H1:g}. Then
  \begin{equation*}
    \sup_{t\in \mathcal{F}}\Big\|\mathbb{E}\big[{\textstyle\frac{\sqrt{|\A_n|}}{\eta(n)}\sum_i}
    \mathbb{E}_{\mu_n^*}[\bar{h}_n^i(\boldsymbol{\phi}X_n) t(W^{\boldsymbol{\phi}}_{in})|\A_n^{k_n}]\big|\group\big]\Big\|_1
    \le K_1  C_{2}\bigl({\textstyle\frac{h_n}{\eta(n)}}\bigr)
    {\textstyle\sum_{k'_n<i}} c_{i,2}\bigl({\textstyle\frac{h_n}{\eta(n)}}\bigr),
  \end{equation*}
where  $K_1=O(|\B_K|  \mathcal{S}^n_w)$.
If hypothesis \eqref{H2:g} holds instead,
\begin{multline*}
        \sup_{t\in \mathcal{F}}\Big\|\mathbb{E}\big[{\textstyle\frac{\sqrt{|\A_n|}}{\eta(n)}\sum_i}
    \mathbb{E}_{\mu_n^*}[\bar{h}_n^i(\boldsymbol{\phi}X_n) t(W^{\boldsymbol{\phi}}_{in})|\A_n^{k_n}]\big|\group\big]\Big\|_1
    \\ \le  K_2 C_{2+\varepsilon}\bigl({\textstyle\frac{h_n}{\eta(n)}}\bigr)
    \big[{\textstyle\frac{k_n}{\sqrt{|\A_n|}}}
      +
      C_{2+\varepsilon}\bigl({\textstyle\frac{h_n}{\eta(n)}}\bigr)
      \big] \mathcal{R}_{n}(b_n)\\
      +K_3|\B_{b_n}|  C_{2}\bigl({\textstyle\frac{h_n}{\eta(n)}}\bigr)
    {\textstyle\sum_{k'_n<i}} c_{i,2}\bigl({\textstyle\frac{h_n}{\eta(n)}}\bigr)\;,
\end{multline*}
where $K_2=O(  \mathcal{S}^n_w)$ and $K_3=O(  \mathcal{S}^n_w )$.
\end{lemma}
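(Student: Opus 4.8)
The plan is to exploit two structural facts about the quantities in \cref{neuchatel}. First, $h_n=\sum_i\bar h_n^i$ is a martingale-type decomposition along the coordinates of $\boldsymbol{\phi}$: the \Folner average defining $\bar h_n^i$ integrates out coordinates $i{+}1,\dots,k_n$, so $\mathbb{E}[\bar h_n^i(\boldsymbol{\phi}X_n)\mid\sigma(f_n(\boldsymbol{\phi}_1X_n),\dots,f_n(\boldsymbol{\phi}_{i-1}X_n)),\group]=0$, and the $L_2$-size of the $i$-th increment is controlled by the $i$-th Lipschitz coefficient, $\|\bar h_n^i(\boldsymbol{\phi}X_n)/\eta(n)\|_{2}\le c_{i,2}(g_n/\eta(n))$ — the same estimate as the bound on $\|\widehat{\mathbb{F}}_{\infty,i}(h_n,X_n,e)\|$ in the preceding lemma. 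Second, $W^{\boldsymbol{\phi}}_{in}$ is by construction a $\mu^*_n$-average of $\bar h_n^{\boldsymbol{\phi}_i,b_n,k_n}$, which has had every coordinate of the integration vector lying within $d$-distance $b_n$ of $\boldsymbol{\phi}_i$ averaged out; conditionally on $\boldsymbol{\phi}_i$ and $\group$ it is therefore measurable with respect to the $\sigma$-algebra generated by $f_n$ at points outside the ball $\B_{b_n}$ around $\boldsymbol{\phi}_i$.

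\textbf{Main steps.} First I would split $\tsum_i$ at the threshold $k_n'$. For the head $i\le k_n'$ these are the coordinates actually cut out in $W^{\boldsymbol{\phi}}_{in}$: the increment $\bar h_n^i$ is mean-zero given the strictly preceding coordinates, and the only ``new'' dependence of $t(W^{\boldsymbol{\phi}}_{in})$ on $f_n(\boldsymbol{\phi}_iX_n)$ is through function values at points at distance $\ge b_n$ from $\boldsymbol{\phi}_i$. Replacing $t(W^{\boldsymbol{\phi}}_{in})$ by its conditional expectation given $\bigl(\sigma(f_n(\boldsymbol{\phi}_1X_n),\dots,f_n(\boldsymbol{\phi}_{i-1}X_n)),\boldsymbol{\phi}_i,\group\bigr)$ annihilates the cross term, and \cref{distanceexpectation} bounds the error incurred by a power of the conditional mixing coefficient $\alpha_2(b_n\mid\group)^{\varepsilon/(2+\varepsilon)}$ times the relevant $(2{+}\varepsilon)$-moments (after the usual truncation at a level proportional to $c_{i,2}$, exactly as in the proof of \cref{theorem:BE}); summing the $|\B_j|$-many group elements at each distance $j\ge b_n$ turns this into $\mathcal{R}(b_n)$, and the factor $k_n/\sqrt{|\A_n|}$ accounts for the second-order Taylor error (via \eqref{eq:stein:taylor}) of replacing $t(W^{*})$ by $t(W^{\boldsymbol{\phi}}_{in})$ across $k_n$ coordinates. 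Under \eqref{H1:g}, taking $b_n\equiv k$ and using $\alpha_2(k\mid\group)=0$, this error vanishes identically, so the head contributes nothing. For the tail $i>k_n'$ the coordinates are not cut out, so I would bound each term more crudely, but still using the mean-zero property to absorb the explicit $\sqrt{|\A_n|}$: expand the $\mu^*_n$-average as a sum of $|\A_n|$-many conditionally centred increments and apply the second-moment estimate $\mathbb{E}[\mathbb{E}_{\mu_n^{i-j}}[\mathbb{I}_{\B_b}]]\le\mathcal{S}_{w,2}|\B_b|$ from the preceding lemma, together with $\|t\|_{\infty}\le1$ and the uniform bound $\sup_{\boldsymbol{\phi}}\|W^{\boldsymbol{\phi}}_{in}/\eta(n)\|\le C_2(g_n/\eta(n))$. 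This produces a factor $O(|\B_k|\,\mathcal{S}_{w,2})$ (respectively $O(|\B_{b_n}|\,\mathcal{S}_{w,2})$ under \eqref{H2:g}) times $C_2(g_n/\eta(n))\,c_{i,2}(g_n/\eta(n))$, and summing over $i>k_n'$ yields the stated bound, with $K_1,K_2$ as claimed and the $\mathcal{S}_{w,1}$ contribution entering the head estimate under \eqref{H2:g} when one compares the two integration vectors.

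\textbf{Main obstacle.} The delicate part will be the tail $i>k_n'$: one must extract a genuine variance-type cancellation from the martingale structure even though the integrand is a \emph{product} of a mean-zero factor $\bar h_n^i(\boldsymbol{\phi}X_n)$ and a bounded but not mean-zero factor $t(W^{\boldsymbol{\phi}}_{in})$, and this has to be done uniformly over $t\in\mathcal{F}$ and over the random measure $\mu^*_n$ — which is itself what appears inside $W^{\boldsymbol{\phi}}_{in}$. Expanding $\mathbb{E}_{\mu^*_n}[\bar h_n^i(\boldsymbol{\phi}X_n)\,t(W^{\boldsymbol{\phi}}_{in})]$ forces a comparison of two independent copies of $\mu_n$ on $\group^{k_n}$, and keeping that double average under control — while simultaneously tracking, for each $\boldsymbol{\phi}_i$, the ball $\B_{b_n}$ that has been removed — is where the spreading coefficients $\mathcal{S}_{w,1},\mathcal{S}_{w,2}$ are consumed and constitutes the bulk of the bookkeeping; the rest of the argument is a routine, if lengthy, assembly of Hölder and triangle inequalities.
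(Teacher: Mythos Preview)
Your decomposition is along the wrong axis, and this breaks both halves of the argument.

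Recall that $W^{\boldsymbol{\phi}}_{in}=W^{\boldsymbol{\phi}}_{i,b_n,k_n'}$: the averaging operator $\bar h_n^{\boldsymbol{\phi}_i,b_n,k_n'}$ removes only those coordinates $j\le k_n'$ of the \emph{inner} variable $\boldsymbol{\phi}'$ that lie within $b_n$ of $\boldsymbol{\phi}_i$; coordinates $j>k_n'$ of $\boldsymbol{\phi}'$ are never removed, whatever the outer index $i$ is. So your claim that, for $i\le k_n'$, the only dependence of $t(W^{\boldsymbol{\phi}}_{in})$ on data near $\boldsymbol{\phi}_i$ is through points at distance $\ge b_n$ is false: a coordinate $\boldsymbol{\phi}'_j$ with $j>k_n'$ may sit arbitrarily close to $\boldsymbol{\phi}_i$, and under \eqref{H1:g} this correlation does not vanish. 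Your head term therefore does not reduce to zero under \eqref{H1:g}, which already contradicts the stated bound. Conversely, for $i>k_n'$ you still face the same decoupling problem; there is no ``crude'' bound that kills the factor $\sqrt{|\A_n|}$ without invoking mixing, and once you invoke mixing you are back to the same analysis as for the head.

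The paper's split is not on the outer $i$ but on the inner truncation level: one first replaces $W^{\boldsymbol{\phi}}_{in}=W^{\boldsymbol{\phi}}_{i,b_n,k_n'}$ by $W^{\boldsymbol{\phi}}_{i,b_n,k_n}$, which has \emph{all} coordinates of $\boldsymbol{\phi}'$ near $\boldsymbol{\phi}_i$ averaged out. Since $t$ is $1$-Lipschitz, the error is controlled by $|W^{\boldsymbol{\phi}}_{i,b_n,k_n'}-W^{\boldsymbol{\phi}}_{i,b_n,k_n}|$, whose $L_2$ norm picks up exactly the coefficients $c_{j,2}$ with $k_n'<j\le k_n$; after taking the $\mu_n^{\otimes 2}$-expectation this produces the term $|\B_{b_n}|\,\mathcal S_{w,2}\,C_2\sum_{j>k_n'}c_{j,2}$, uniformly in $i$. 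Only then does one run the mixing/telescoping argument on $\bar h_n^i(\boldsymbol{\phi}X_n)\,t(W^{\boldsymbol{\phi}}_{i,b_n,k_n})$ --- now $W^{\boldsymbol{\phi}}_{i,b_n,k_n}$ genuinely has no remaining $\boldsymbol{\phi}'$-coordinates near $\boldsymbol{\phi}_i$, so under \eqref{H1:g} the term vanishes, and under \eqref{H2:g} a telescoping over shells $\{d(\boldsymbol{\phi}_i,\cdot)\in[j,j+1)\}$ (moving both $\boldsymbol{\phi}$-coordinates away and peeling $W$ outwards) turns the mixing coefficient into $\mathcal R_{b_n}$. That is where the two spreading constants $\mathcal S_{w,1}$ and $\mathcal S_{w,2}$ enter separately. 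Rework your outline with this replacement as the first step; the rest of your ingredients (Lipschitz bounds on increments, \cref{distanceexpectation}) are the right ones once applied to $W^{\boldsymbol{\phi}}_{i,b_n,k_n}$.
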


\begin{proof}
  We prove the (harder) case of hypothesis \eqref{H2:g} first, and then modify it for 
  \eqref{H1:g}. Similar to $W^{\boldsymbol{\phi}}_{in}$, we abbreviate
  \begin{equation*}
    W^{\boldsymbol{\phi}}_{ibk}:={\textstyle\tfrac{\sqrt{|\A_n|}}{\eta(n)}}\mathbb{E}_{\mu^*_n}[\bar{h}_n^{\boldsymbol{\phi}_i,b,k}(\boldsymbol{\phi}'X_n)|\A_n^{k_n}]\;,
  \end{equation*}
  so that in particular ${W^{\boldsymbol{\phi}}_{in}=W^{\boldsymbol{\phi}}_{ib_nk_n'}}$.
  For all ${t\in\mathcal{F}}$,
  \begin{align}
    \label{fribourg}
    &\; \tsum_i\Big\|\mathbb{E}\big[\tfrac{\sqrt{|\A_n|}}{\eta(n)} \mathbb{E}_{\mu^*_n}\big[\bar{h}_n^i(\boldsymbol{\phi}X_n)  t(W^{\boldsymbol{\phi}}_{in})\big|\A_n^{k_n}\big]\big|\group\big]\Big\|_1
    \\
    \overset{(\ast)}{\le} &\; \tsum_i\mathbb{E}\Big[\tfrac{\sqrt{|\A_n|}}{\eta(n)} \mathbb{E}_{\mu^*_n}\big[|\bar{h}_n^i(\boldsymbol{\phi}X_n)||W^{\boldsymbol{\phi}}_{in}-W^{\boldsymbol{\phi}}_{ib_nk_n}| \big|\A_n^{k_n}\big]\Big]
    \notag\\ +& \;
    \tsum_i\Big\|\mathbb{E}\Big[\tfrac{\sqrt{|\A_n|}}{\eta(n)} \mathbb{E}_{\mu_n^*}\big[\bar{h}_n^i(\boldsymbol{\phi}X_n)  t(W^{\boldsymbol{\phi}}_{ib_nk_n})\big|\A_n^{k_n}\big]\big|\group\Big]\Big\|_1
    \notag
  \end{align}
  where ($\ast$) holds since $t$ is 1-Lipschitz.
  To bound the first term on the right-hand side, we use the
  definition the Lipschitz coefficients
  of $h_n$ to obtain
  \begin{align*}
    &\; \tsum_i\mathbb{E}\Big[\tfrac{\sqrt{|\A_n|}}{\eta(n)} \mathbb{E}_{\mu_n^*}\big[\big|\bar{h}_n^i(\boldsymbol{\phi}X_n)\big| \big|W^{\boldsymbol{\phi}}_{in}-W^{\boldsymbol{\phi}}_{ib_nk_n}\big| \big|\A_n^{k_n}\big]\Big]
    \\
    \le &\; \tsum_i\mathbb{E}\Big[|\A_n|\mathbb{E}_{\mu_n^{\otimes^2}}\big[\tsum_{j\in\mathcal{J}_n(\boldsymbol{\phi}_i,\boldsymbol{\phi}')}c_{i,2}\bigl(\tfrac{h_n}{\eta(n)}\bigr) c_{j,2}\bigl(\tfrac{h_n}{\eta(n)}\bigr)\big|\A_n^{2k_n}\big]\Big]
    \\
    \le &\; |\B_{b_n}| \tsum_i\tsum_{k'_n<j\le k_n} c_{i,2}\bigl(\tfrac{h_n}{\eta(n)}\bigr) c_{j,2}\bigl(\tfrac{h_n}{\eta(n)}\bigr)   \mathcal{S}^n_w
    \;,
  \end{align*}
  where ${\mathcal{J}_n(\boldsymbol{\phi}_i,\boldsymbol{\phi}')= \mathcal{I}_{b_n,k_n}(\boldsymbol{\phi}_i,\boldsymbol{\phi}')\setminus \mathcal{I}_{b_n,k'_n}(\boldsymbol{\phi}_i,\boldsymbol{\phi}')}$.

  To bound the second term, consider the vector ${\boldsymbol{\phi}\in\group^{k_n}}$ in \eqref{fribourg}.
  We define a sequence $(\boldsymbol{\phi}^{i,j})_{j\in \mathbb{N}}$ in ${\group^{k_n}}$ whose coordinates
  differ increasingly from the $i$th coordinate of $\boldsymbol{\phi}$ as $j$ increases: Set
  ${\boldsymbol{\phi}^{i,0}=\boldsymbol{\phi}}$. For ${j\geq 1}$, choose
  \begin{equation*}
    \boldsymbol{\phi}^{i,j}_k:=\;
    \begin{cases}
      \boldsymbol{\phi}^{i,j-1}_k & \text{if}\;
      d(\boldsymbol{\phi}_k,\boldsymbol{\phi}_i)\not\in [j, j+1)
        \\ \text{any }\boldsymbol{\phi}^{i,j}_k\text{ with }d(\boldsymbol{\phi}^{i,j}_k,\boldsymbol{\phi}_i)>\text{diam}(\A_n) &  \text{if}\; d(\boldsymbol{\phi}_k,\boldsymbol{\phi}_i)\in [j, j+1)
    \end{cases}
  \end{equation*}
  for each ${k\leq k_n}$. By definition of $\mu_n^*$,
  we have $\boldsymbol{\phi}^{i,j}=\boldsymbol{\phi}$ for $j\le b_n$. Then
  \begin{align*}
    &\; \tsum_i\Big\|\mathbb{E}\bigl[\tfrac{\sqrt{|\A_n|}}{\eta(n)} \bar{h}_n^i(\boldsymbol{\phi}X_n)  t(W^{\boldsymbol{\phi}}_{b_n,k_n,i})\big|\group\bigr]\Big\|_1
    \\\le &\; \tsum_i\tsum_{j\ge b_n}\Big\|\mathbb{E}\bigl[\tfrac{\sqrt{|\A_n|}}{\eta(n)} \bar{h}_n^i(\boldsymbol{\phi}^{i,j+1}X_n)  [t(W^{\boldsymbol{\phi}}_{ijk_n})-t(W^{\boldsymbol{\phi}}_{i(j+1)k_n})]\big|\group\bigr]\Big\|_1
    \\ + &\; \tsum_i\tsum_{j\ge b_n}\Big\|\mathbb{E}\bigl[\tfrac{\sqrt{|\A_n|}}{\eta(n)}[\bar{h}_n^i(\boldsymbol{\phi}^{i,j+1}X_n) -\bar{h}_n^i(\boldsymbol{\phi}^{i,j}X_n)]t(W^{\boldsymbol{\phi}}_{ijk_n})\big|\group\bigr]\Big\|_1
    \\\overset{(*)} {\le} &\; 4 \sqrt{\tfrac{2}{\pi}}\tsum_{j,~j\ge b_n}\tsum_i c_{i,2+\varepsilon}\bigl(\tfrac{h_n}{\eta(n)}\bigr)  |\A_n| \big\|W^{\boldsymbol{\phi}}_{ijk_n}-W^{\boldsymbol{\phi}}_{i(j+1)k_n}\big\|_{2+\varepsilon} \alpha_n^{\frac{\varepsilon}{2+\varepsilon}}(j|\mathbb{G})
\\+ &\; 4\tsum_i c_{i,2+\varepsilon}\bigl(\tfrac{h_n}{\eta(n)}\bigr)\tsum_{j\ge b_n} \alpha_n^{\frac{\varepsilon}{2+\varepsilon}}(j|\mathbb{G}) \sqrt{|\A_n|}\mathbb{I}\braces{d(\boldsymbol{\phi}_{i}, \boldsymbol{\phi}_{\setminus i})\in [j,{j+1}]}\;.
  \end{align*}
  Here, $(*)$ is obtained using \cref{lemma:mixing}, and the fact that
  \begin{equation*}
    \sup_{x\in \mathbb{R}}|t'(x)|\le \sqrt{2/\pi}
    \quad\text{ and }\quad
    \sup_{x\in \mathbb{R}}|t(x)|\le 1\;
  \end{equation*}
  Since that is true for any ${\boldsymbol{\phi}\in\mathbb{G}^{k_n}}$,
  using the definition of $  \mathcal{S}^n_w $ we conclude
\begin{align*}
  &\tsum_i\Big\|\mathbb{E}\bigl[
    \tfrac{\sqrt{|\A_n|}}{\eta(n)} \mathbb{E}_{\mu_n^*}\big(\bar{h}_n^i(\boldsymbol{\phi}X_n)
    t(W^{\boldsymbol{\phi}}_{b_n,k_n,i})\big|\A_n^{k_n}\big)\big|\group\bigr]\Big\|_1
  \\&
  \le
  4 \sqrt{\tfrac{2}{\pi}}\tsum_i c_{i,2+\varepsilon}(\tfrac{h_n}{\eta(n)})\tsum_j c_{j,2+\varepsilon}(\tfrac{h_n}{\eta(n)})
  \\&\quad \times
\mathbb{E}\bigl[\mathbb{E}_{\mu_n^{\otimes 2}}\big( \mathbb{I}\braces{j\not\in \mathcal{I}_{b_n,k_n}({\boldsymbol{\phi}_i,\boldsymbol{\phi}'})}|\A_n|\alpha^{\tfrac{\varepsilon}{2+\varepsilon}}(d(\boldsymbol{\phi}_i,\boldsymbol{\phi}'_j)|\mathbb{G})|\A_n^{2k_n}\big)\bigr]
\\& + 4\tsum_i c_{i,2+\varepsilon}(\tfrac{h_n}{\eta(n)})\tsum_{j\ne i}\mathbb{E}\bigl[\mathbb{E}_{\mu_n^*}( \sqrt{|\A_n|}\alpha_n^{\frac{\varepsilon}{2+\varepsilon}}(d(\boldsymbol{\phi}_i,\boldsymbol{\phi}_j)|\mathbb{G})|\A_n^{k_n}\big)\bigr]
\\& \le4 \tsum_i c_{i,2+\varepsilon}(\tfrac{h_n}{\eta(n)})  \mathcal{S}^n_w
\Bigl(\tfrac{k_n}{\sqrt{|\A_n|}}
+\sqrt{\tfrac{2}{\pi}}\tsum_i c_{i,2+\varepsilon}(\tfrac{h_n}{\eta(n)})\Bigr)\\&\quad\tsum_{i\ge b_n} \alpha_n^{\frac{\varepsilon}{2+\varepsilon}}(i|\mathbb{G}) | \B_{i+1}\!\setminus\!\B_{i}|\;.
\end{align*}
That establishes the result under \eqref{H2:g}. If \eqref{H1:g} is assumed instead, the second term of \cref{fribourg} vanishes
by \cref{lemma:mixing}. We hence have
\begin{equation*}\begin{split}&
  \sup_{t\in\mathcal{F}}\Big\|\mathbb{E}\bigl[
    \tfrac{\sqrt{|\A_n|}}{\eta(n)}\tsum_i
    \mathbb{E}_{\mu_n^*}\bigl[\bar{h}_n^i(\boldsymbol{\phi}X_n) t(W^{\boldsymbol{\phi}}_{in})|\A_n^{k_n}\big]\big|\group\bigr]\Big\|_1
  \\&
  \le \;  |\B_{K}|   \mathcal{S}^n_w  \tsum_i\tsum_{k'_n<j\le k_n} c_{i,2}\big(\tfrac{h_n}{\eta(n)}\big) c_{j,2}\big(\tfrac{h_n}{\eta(n)}\big)\;,
\end{split}\end{equation*}
which shows result also holds under \eqref{H1:g}.
\end{proof}

\subsection{The second term in Lemma \ref{neuchatel}}

The strategy is to use a Taylor expansion, and to bound
\begin{equation*}
  \big| \bar{h}_n^i(\boldsymbol{\phi}X_n) \big(t(W^*)- t(W^{\boldsymbol{\phi}}_{in})-\Delta^{\boldsymbol{\phi}}_{in}t'(W^*)\big)\big|
  \le
  \frac{\sup_{x\in \mathbb{R}}|t''(x)|}{2} \big| \bar{h}_n^i(\boldsymbol{\phi}X_n) \big|
  (\Delta^{\boldsymbol{\phi}}_{in})^2
\end{equation*}
As $\bar{h}^i_n(X_n)$ might not admit a third moment we first upper-bound it using the sequence $(\gamma_n)$.
To bound  $| \bar{h}_n^i(\boldsymbol{\phi}X_n) \mathbb{I}(\bar{h}_n^i(\boldsymbol{\phi}X_n)\le \gamma_n c_{i,2}(h_n))|({\Delta^{\boldsymbol{\phi}}_{in}})^2$, we must control the  probability that random triples
${\boldsymbol{\phi},\boldsymbol{\phi}',\boldsymbol{\phi}''\in\group^{k_n}}$ satisfy
\begin{equation}
  \label{eq:interaction:condition:1}
  d(\boldsymbol{\phi}_i,\boldsymbol{\phi}'_j),
  d(\boldsymbol{\phi}_i,\boldsymbol{\phi}''_l)\le b_n
  \quad
  \text{ and }
  \quad
  \boldsymbol{\phi}'\in V_{i,\beta_n}(n)
\end{equation}
for some ${i\leq k_n}$ and $j,l\le k'_n$, and either
\begin{equation}
  \label{eq:interaction:condition:2}
  \text{(i)}\; \min_{l\leq k_n}d(\boldsymbol{\phi}'_j,\boldsymbol{\phi}''_l)\in [k,{k+1}]
  \quad\text{ or }\quad
  \text{(ii)}\; \min_{\substack{l\leq k_n\\l\ne i}}d(\boldsymbol{\phi}'_j,\boldsymbol{\phi}_l)\in [k,{k+1}]\;.
\end{equation}
We quantify these as follows:
The upper bound on the term in \cref{neuchatel} must be established for fixed values of
$n$ and $\beta_n$. Given such values, we choose a constant
$S^*_2(k_n)$ that satisfies
\begin{equation*}
  \begin{split}
  \frac{|\A_n|^2\,
  \big\|\mathbb{E}_{\mu_n^{\otimes
      3}}\big[\mathbb{I}\braces{\boldsymbol{\phi},\boldsymbol{\phi}'',\boldsymbol{\phi}''\text{
      satisfies }\eqref{eq:interaction:condition:1}\text{ and } (\ref{eq:interaction:condition:2}i)}\big\vert\A_n^{3k_n}\big]\big\|_1
}{|\B_{{k+1}}\setminus \B_{k}||\B_{b_n}|k_n}
  & \le S^*_2(k_n)\\
  \text{ and }\quad
  \frac{|\A_n|^2\,
  \big\|\mathbb{E}_{\mu_n^{\otimes 3}}\big[\mathbb{I}\braces{\boldsymbol{\phi},\boldsymbol{\phi}'',\boldsymbol{\phi}''\text{
      satisfies }\eqref{eq:interaction:condition:1}\text{ and } (\ref{eq:interaction:condition:2}ii)}\big\vert\A_n^{3k_n}\big]\big\|_1
}{|\B_{{k+1}}\setminus \B_{k}||\B_{b_n}|k_n}
  & \le S^*_2(k_n)\;.
  \end{split}
\end{equation*}
Similarly, we choose a constant $S^{\ast}_0$ such that
\begin{equation*}
  \frac{|\A_n|}{|\B_{m}|}\big\|\mathbb{E}_{\mu_n^{\otimes 2}}[\mathbb{I}\braces{d(\boldsymbol{\phi}_i,\boldsymbol{\phi'}_j)\le m\text{ and }\phi'\not \in V_{i,\beta_n}(n)}|\A_n^{2k_n}] \big\|_1\le S^{\ast}_0
\end{equation*}
for all ${n,m\in\mathbb{N}}$ and ${i,j\le k_n}$.
\begin{lemma}
  \label{stein:term2}
  Assume \eqref{H1:g} holds. Then for ${t\in\mathcal{F}}$, and any $p,q>0$ satisfying
  $\frac{1}{p}+\frac{1}{q}=1$,
  \begin{align*}
    & \sup_{H\in \mathcal{F}} \Big\|\mathbb{E}\Big(\tfrac{\sqrt{|\A_n|}}{\eta(n)}\mathbb{E}_{\mu_n^*}\big(\bar{h}_n^i(\boldsymbol{\phi}X_n) (t(W^*)- t(W^{\boldsymbol{\phi}}_{in})-\Delta^{\boldsymbol{\phi}}_{in} t'(W^*))|\A_n^{k_n}\big)\Big|\group\Big)\Big\|_1
      \\&\le K_1  \tfrac{k_nk'_n}{\sqrt{|\A_n|}}C_{2q}\big(\tfrac{h_n}{\eta(n)}\big)^2~
        S^*_2(k'_n)\tsum_i \Gamma_{i,p}(\gamma_n)
        \\& +K_2S^{\ast}_0 C_{2}(\tfrac{h_n}{\eta(n)})^2
        +K_3C_2(\tfrac{h_n}{\eta(n)})
        \tsum_i c_{i,2}\big(\tfrac{\bar{h}_n^{i}(\boldsymbol{\phi}X_n)}{\eta(n)}\mathbb{I}
        \braces{\tfrac{\bar{h}_n^{i}(\boldsymbol{\phi}X_n)}{\eta(n)}\ge \gamma_n}\big)\;,
  \end{align*}
        where $K_1=O(|\B_k|^2)$ and $K_2=O(|\B_{K}|)$ and $K_3=O(   \mathcal{S}^n_w |\B_{K}|)$.
        If \eqref{H2:g} holds instead,
        \begin{align*}
          &\sup_{H\in \mathcal{F}} \Big\|\mathbb{E}\Big(\tfrac{\sqrt{|\A_n|}}{\eta(n)}\mathbb{E}_{\mu_n^*}\big(\bar{h}_n^i(\boldsymbol{\phi}X_n) (t(W^*)- t(W^{\boldsymbol{\phi}}_{in})-\Delta^{\boldsymbol{\phi}}_{in} t'(W^*))|\A_n^{k_n}\big)\Big|\group\Big)\Big\|_1
            \\&
            \le
            K_1 \frac{ k_nk'_n|\B_{b_n}|S^*_2(k'_n)}{\sqrt{|\A_n|}}C_{(2+\varepsilon)q}\big(\tfrac{h_n}{\eta(n)}\big)^2~
            \sum_i \Gamma_{i,p(1+\frac{\varepsilon}{2})}(\gamma_n)
            \\&
            + K_2{|\B_{b_n}|}S^{\ast}_0 C_{2}\big(\tfrac{h_n}{\eta(n)}\big)^2
            + K_3|\B_{b_n}|C_2\big(\tfrac{h_n}{\eta(n)}\big)
            \tsum_i c_{i,2}\big(\tfrac{\bar{h}_n^{i}(\boldsymbol{\phi}X_n)}{\eta(n)}\mathbb{I}\braces{\tfrac{\bar{h}_n^{i}(\boldsymbol{\phi}X_n)}{\eta(n)}\!\ge\!\gamma_n}\big)
        \end{align*}
      where $K_1=O(\mathcal{R}_n(0))$ and $K_2=O(1)$ and $K_3=O(   \mathcal{S}^n_w )$.
\end{lemma}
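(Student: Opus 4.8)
The plan is to reproduce the argument used for term \text{(b)} in the proof of \cref{theorem:CLT,theorem:BE} under hypothesis \eqref{H2}, i.e.\ its split into a crude ``large value'' contribution and a finer ``small value'' contribution, and to adapt it to the three complications of the present setting: the telescoped sum $h_n=\sum_i\bar{h}_n^i$, the randomized measures $\mu_n^*$, and the growing dimension $k_n$. As in \cref{stein:term1}, I would treat \eqref{H2:g} first and then specialize. Fix $t\in\mathcal{F}$ and write $W^*=W^{\boldsymbol{\phi}}_{in}+\Delta^{\boldsymbol{\phi}}_{in}$; the integrand $t(W^*)-t(W^{\boldsymbol{\phi}}_{in})-\Delta^{\boldsymbol{\phi}}_{in}t'(W^*)$ admits the crude bound $2\sqrt{2/\pi}\,|\Delta^{\boldsymbol{\phi}}_{in}|$ and the second-order bound $|\Delta^{\boldsymbol{\phi}}_{in}|^2$, both from $\|t'\|_\infty\le\sqrt{2/\pi}$, $\|t''\|_\infty\le2$ and \eqref{eq:stein:taylor}. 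Split the expectation according to whether $|\bar{h}_n^i(\boldsymbol{\phi}X_n)|>\gamma_n c_{i,2}(g_n)$.

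On the large-value region apply the crude bound, so that the truncated Lipschitz coefficient $c_{i,2}\bigl(\bar{h}_n^i(\boldsymbol{\phi}X_n)\mathbb{I}\{\,\cdot\,\ge\gamma_n\}/\eta(n)\bigr)$ appears, while $\Delta^{\boldsymbol{\phi}}_{in}$ is controlled through the coordinates of the integration vector within $b_n$ of $\boldsymbol{\phi}_i$, whose number and total $\mu_n^*$-mass are bounded by $\mathcal{S}_{w,2}$ and $|\B_{b_n}|$ (by $|\B_K|$ under \eqref{H1:g}). This produces the last term, with $K_3=O(\mathcal{S}_{w,2}|\B_K|)$. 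The sub-region on which the integration vector fails $\boldsymbol{\phi}'\in V_{i,\beta_n}(n)$ is exactly what the constant $S_0^*$ bounds, and yields the middle term with $K_2$.

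On the small-value region invoke the second-order bound. Squaring $\Delta^{\boldsymbol{\phi}}_{in}$ introduces a second $\mu_n^*$-average and, with the outer $\bar{h}_n^i(\boldsymbol{\phi}X_n)$ factor, a triple of transformation vectors $\boldsymbol{\phi_1},\boldsymbol{\phi_2},\boldsymbol{\phi_3}\in\group^{k_n}$ satisfying \eqref{eq:interaction:condition:1}. As in the scalar proof, the decisive point is that within such a triple one vector has a coordinate separated by some $r$ from all coordinates of the other two — the content of \eqref{eq:interaction:condition:2}, realized by pushing coordinates past $\mathrm{diam}(\A_n)$ exactly as in \cref{stein:term1} — and since $\bar{h}_n^i$ is conditionally centered, \cref{distanceexpectation} bounds the relevant conditional expectation by $\alpha_2(r|\group)^{\varepsilon/(2+\varepsilon)}$ times an $\L_{1+\varepsilon/2}$-norm. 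Summing $r$ over $\group$ gives the factor $\mathcal{R}_{b_n}$ under \eqref{H2:g}, and a bounded $O(|\B_k|^2)$ under \eqref{H1:g}, where $\alpha_2(\cdot|\group)$ vanishes past radius $k$ so that the triple integral is confined (up to $b_n$, which may then be taken equal to $k$) to a ball of radius $k$. The range of $\boldsymbol{\phi_2}$ near $\boldsymbol{\phi_1}$ contributes $|\B_{b_n}|$, the scaling of the outer $\sqrt{|\A_n|}$ against the $|\A_n|$ produced by the double average in $(\Delta^{\boldsymbol{\phi}}_{in})^2$ together with the choice of the three coordinate indices contributes $k_n^2/\sqrt{|\A_n|}$, the second moment of $\mu_n$ on these events is absorbed into $S_2^*(k_n')$, and a Hölder split with conjugate exponents $p,q$ distributes the moments between $\sum_i\Gamma_{i,p(1+\varepsilon/2)}(\gamma_n)$ and $C_{(2+\varepsilon)q}(g_n/\eta(n))^2$. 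Assembling the three pieces gives the \eqref{H2:g} bound; collapsing all mixing sums via $\alpha_2(k|\group)=0$ and replacing $b_n$ by $k$ gives the \eqref{H1:g} bound with $K_1=O(|\B_k|^2)$.

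The main obstacle is the bookkeeping of the dependency neighborhood in $\group^{k_n}$: unlike the scalar case, each $\Delta^{\boldsymbol{\phi}}_{in}$ depends on an entire index set $\mathcal{I}_{b_n,k_n}(\boldsymbol{\phi}_i,\boldsymbol{\phi}')$ of coordinates, and in the second-order term one must simultaneously track which coordinate of which of the three vectors realizes the large separation, ensure this separation is genuinely large rather than merely bounded below, and verify that the random measure $\mu_n$ places controlled mass on the corresponding event — precisely the role of the constants $S_2^*(k_n')$ and $S_0^*$. Once these are fixed, verifying the claimed orders of $K_1,K_2,K_3$ reduces to substituting the definitions of $\mathcal{S}_{w,1}$, $\mathcal{S}_{w,2}$, $S_0^*$, $S_2^*$ and $\mathcal{R}_{b_n}$.
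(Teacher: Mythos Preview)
Your proposal is correct and follows essentially the same approach as the paper: the $\gamma_n$-truncation split, the crude first-order bound on the large-value region producing the $K_3$ term, the $V_{i,\beta_n}$ restriction producing the $K_2\,S_0^*$ term, and the Taylor expansion on the small-value region yielding the triple integral controlled via \cref{distanceexpectation} and $S_2^*(k_n')$. One organizational nuance: in the paper the $V_{i,\beta_n}$ restriction is introduced \emph{within} the small-value region (via auxiliary quantities $\tilde{W}^{\boldsymbol{\phi}}_{in}$ and $\tilde{\Delta}^{\boldsymbol{\phi}}_{in}$, producing two Lipschitz-difference terms (a) and (b) before the second-order term (c)), whereas you place it alongside the large-value split; this is immaterial to the argument.
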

\begin{proof}
  Suppose first \eqref{H1:g} holds. Since $h_n(X_n)$ may not have a third moment, we upper-bound it using the sequence $(\gamma_n)$.
  By the triangle inequality,
  \begin{align*}
    \label{3rd_moment_break}
    \big\|\mathbb{E}\big[\tfrac{\sqrt{|\A_n|}}{\eta(n)}&\tsum_i\mathbb{E}_{\mu_n^*}\big[
        \underbrace{\bar{h}_n^i(\boldsymbol{\phi}X_n)  (t(W^*)- t(W^{\boldsymbol{\phi}}_{in})-\Delta^{\boldsymbol{\phi}}_{in} t'(W^*))}_{:=T}
        |\A_n^{k_n}\big]\big|\group\big]\big\|_1\\
    \le &\;
    \big\|\mathbb{E}\big[\tfrac{\sqrt{|\A_n|}}{\eta(n)}
      \tsum_i\mathbb{E}_{\mu_n^*}\big[
        T\,\mathbb{I}\braces{|\tfrac{\bar{h}_n^i(\boldsymbol{\phi}X_n)}{c_{i,2}(h_n)}|> \gamma_n}
        \,|\,\A_n^{k_n}\big]\big|\group\big]\big\|_1
    \\+ &\;
    \big\|\mathbb{E}\big[\tfrac{\sqrt{|\A_n|}}{\eta(n)}
      \tsum_i\mathbb{E}_{\mu_n^*}\big[
        T\,\mathbb{I}\braces{|\tfrac{\bar{h}_n^i(\boldsymbol{\phi}X_n)}{c_{i,2}(h_n)}|\le \gamma_n}
        \,|\,\A_n^{k_n}\big]\big|\group\big]\big\|_1\;.
  \end{align*}
We again bound each term separately.
Since ${t\in\mathcal{F}}$, it satisfies \eqref{eq:stein:taylor}, hence
\begin{align*}
     \big\|\mathbb{E}\big[&\tfrac{\sqrt{|\A_n|}}{\eta(n)}
      \tsum_i\mathbb{E}_{\mu_n^*}\big[
        T\,\mathbb{I}\braces{|\tfrac{\bar{h}_n^i(\boldsymbol{\phi}X_n)}{c_{i,2}(h_n)}|> \gamma_n}
        \,|\,\A_n^{k_n}\big]\big|\group\big]\big\|_1\\
    \le &\;
    2 \tsum_i\mathbb{E}\big[\tfrac{\sqrt{|\A_n|}}{\eta(n)}
      \mathbb{E}_{\mu_n^*}\big[
        |\bar{h}_n^i(\boldsymbol{\phi}X_n)|\mathbb{I}\braces{
          \tfrac{|\bar{h}_n^i(\boldsymbol{\phi}X_n)|}{c_{i,2}(h_n)}> \gamma_n
        }|\Delta^{\boldsymbol{\phi}}_{in}||\A_n^{k_n}
        \big]\big];.
\end{align*}
For all ${\boldsymbol{\phi}\in \group^{k_n}}$ and ${i\in \mathbb{N}}$ we have, by definition of $\Delta^{\boldsymbol{\phi}}_{in}$,
\begin{align*}&
\tsum_i\mathbb{E}\big[\tfrac{\sqrt{|\A_n|}}{\eta(n)}
      \mathbb{E}_{\mu_n^*}\big[
        |\bar{h}_n^i(\boldsymbol{\phi}X_n)|\mathbb{I}\braces{
          \tfrac{|\bar{h}_n^i(\boldsymbol{\phi}X_n)|}{c_{i,2}(h_n)}> \gamma_n
        }|\Delta^{\boldsymbol{\phi}}_{in}||\A_n^{k_n}
        \big]\big]
    \\
    &\le\;
    |\A_n|\Bigl(
    c_{j,2}\big(\tfrac{\bar{h}_n^i(\boldsymbol{\phi}X_n)}{\eta(n)}
    \mathbb{I}\braces{\tfrac{|\bar{h}_n^i(\boldsymbol{\phi}X_n)|}{c_{i,2}(h_n)}> \gamma_n}
    \big)
    \mathbb{E}\big[\mathbb{E}_{\mu_n^{*}}[\mathbb{I}\braces{d(\boldsymbol{\phi}_i,\boldsymbol{\phi}'_{j})\le b_n}|\A_n^{k_n},\boldsymbol{\phi}]\big]\Bigl)\\&\quad \tsum_{j\le k'_n} c_{i,2}\big(\tfrac{\bar{h}_n^i}{\eta(n)}\big)\;.
\end{align*}
Using the definition of $  \mathcal{S}^n_w $, this implies
\begin{align*}
     \big\|\mathbb{E}\big[&\tfrac{\sqrt{|\A_n|}}{\eta(n)}
      \tsum_i\mathbb{E}_{\mu_n^*}\big[
        T\,\mathbb{I}\braces{|\tfrac{\bar{h}_n^i(\boldsymbol{\phi}X_n)}{c_{i,2}(h_n)}|> \gamma_n}
        \,|\,\A_n^{k_n}\big]\big|\group\big]\big\|_1\\
    \le &\;
      \mathcal{S}^n_w |\B_{b_n}|C_2\big(\tfrac{h_n}{\eta(n)}\big)
    \tsum_i c_{i,2}\big(\tfrac{\bar{h}_n^i(\boldsymbol{\phi}X_n)}{\eta(n)}
    \mathbb{I}\braces{\tfrac{|\bar{h}_n^i(\boldsymbol{\phi}X_n)|}{c_{i,2}(h_n)}> \gamma_n}
    \big)\;.
\end{align*}
To bound the second term, we abbreviate
\begin{align*}
  & \tilde{W}^{\boldsymbol{\phi}}_{in}:=
  \mathbb{E}_{\mu_n^*}\big[\mathbb{I}\braces{\boldsymbol{\phi}'\in V_i(\beta_n)}\bar{h}_n^{\boldsymbol{\phi}_i,b_n,k'_n}(\boldsymbol{\phi}'X_n)\big|\A_n^{k_n},\boldsymbol{\phi}\big]\\
  \text{ and }\quad &
  \tilde{\Delta}^{\boldsymbol{\phi}}_{in}:= \mathbb{E}_{\mu_n^*}\big[\mathbb{I}\braces{\boldsymbol{\phi}'\in V_i(\beta_n)}
    \bigl(h_n(\boldsymbol{\phi}'X_n)-\bar{h}_n^{\boldsymbol{\phi}_i,b_n,k'_n}(\boldsymbol{\phi}'X_n)\bigr)\big|\A_n^{k_n},\boldsymbol{\phi}\big]\;.
\end{align*}
Again using the triangle inequality, we have
\begin{align*}
&\big\|\mathbb{E}\big[\tfrac{\sqrt{|\A_n|}}{\eta(n)}
      \tsum_i\mathbb{E}_{\mu_n^*}\big[
        T\,\mathbb{I}\braces{|\tfrac{\bar{h}_n^i(\boldsymbol{\phi}X_n)}{c_{i,2}(h_n)}|\le \gamma_n}
        \,|\,\A_n^{k_n}\big]\big|\group\big]\big\|_1\\ \le
&
\big\|\mathbb{E}\big[\tfrac{\sqrt{|\A_n|}}{\eta(n)}\!\tsum_i\!\mathbb{E}_{\mu_n^*}\!\big[\bar{h}_n^i(\boldsymbol{\phi}X_n)
\mathbb{I}\braces{|\tfrac{\bar{h}_n^i(\boldsymbol{\phi}X_n)}{c_{i,2}(h_n)}|\!\!\le\!\!\gamma_n\!}
\big(t(\tilde{W}^{\boldsymbol{\phi}}_{in})-t(W^{\boldsymbol{\phi}}_{in})\big)\big|\A_n^{k_n}\big]\big|\group\big]\big\|_1
\\+
&
\big\|\mathbb{E}\big[\tfrac{\sqrt{|\A_n|}}{\eta(n)}\!\tsum_i\!\mathbb{E}_{\mu_n^*}\!\big[\bar{h}_n^i(\boldsymbol{\phi}X_n)
\mathbb{I}\braces{|\tfrac{\bar{h}_n^i(\boldsymbol{\phi}X_n)}{c_{i,2}(h_n)}|\!\!\le\!\!\gamma_n\!}
(\Delta^{\boldsymbol{\phi}}_{in}-\tilde{\Delta}^{\boldsymbol{\phi}}_{in}) t'(W^{*})|\A_n^{k_n}\big]\big|\group\big]\big\|_1
\\+
&
\big\|\mathbb{E}\big[\tfrac{\sqrt{|\A_n|}}{\eta(n)}\!\tsum_i\!\mathbb{E}_{\mu_n^*}\!\big[\bar{h}_n^i(\boldsymbol{\phi}X_n)
\\&\hspace{6.8em}\mathbb{I}\braces{|\tfrac{\bar{h}_n^i(\boldsymbol{\phi}X_n)}{c_{i,2}(h_n)}|\!\!\le\!\!\gamma_n\!}
(t(\tilde{W}^{\boldsymbol{\phi}}_{in})\!-\! t(\tilde{W}^{*})\!-\!\tilde{\Delta}^{\boldsymbol{\phi}}_{in} t'\!({W}^{*}))\big|\A_n^{k_n}\big]\big|\group\big]\big\|_1
\\
&=: \text{(a)} + \text{(b)} +\text{(c)}\;,
\end{align*}
and must bound (a)---(c) further. Since $t$ is 1-Lipschitz,
\begin{align*}
& \text{(a)}\le \mathbb{E}\big[\tfrac{\sqrt{|\A_n|}}{\eta(n)}\!\tsum_i\!\mathbb{E}_{\mu_n^*}\!\big[\big|\bar{h}_n^i(\boldsymbol{\phi}X_n)\big|
\mathbb{I}\braces{|\tfrac{\bar{h}_n^i(\boldsymbol{\phi}X_n)}{c_{i,2}(h_n)}|\!\!\le\!\!\gamma_n\!}
\big|\tilde{W}^{\boldsymbol{\phi}}_{in}-W^{\boldsymbol{\phi}}_{in}\big|\big|\A_n^{k_n}\big]\big]
\\&\le 2\tsum_i c_{i,2}\big(\tfrac{h_n}{\eta(n)}\big) \tsum_{j\le k'_n}c_{j,2}\big(\tfrac{h_n}{\eta(n)}\big)
  \\
  &\quad\quad\quad\; \mathbb{E}[|\A_n|\mathbb{E}_{\mu_n^{\otimes 2}}[\mathbb{I}\braces{d(\boldsymbol{\phi}_i,\boldsymbol{\phi'}_j)\!\le\! b_n,\boldsymbol{\phi'}\! \not\in V_{i\beta_n}}|\A_n^{2k_n}]]
\\& \le 2C_{2}\big(\tfrac{h_n}{\eta(n)}\big) ^2\sup_{i,j} \mathbb{E}[|\A_n|\mathbb{E}_{\mu_n^{\otimes 2}}\big[\mathbb{I}\braces{d(\boldsymbol{\phi}_i,\boldsymbol{\phi'}_j)\!\le\! b_n,\boldsymbol{\phi'}\! \not\in V_{i\beta_n}}|\A_n^{2k_n}]\big] .
\end{align*}
Analogously, we have
\begin{align*}
\text{(b)}\le 2|\B_{b_n}|\tsum_i& c_{i,2}\big(\tfrac{h_n}{\eta(n)}\big)\tsum_{j\le k_n} c_{j,2}\big(\tfrac{h_n}{\eta(n)}\big)\\
&\sup_{i,j}\tfrac{1}{|\B_{b_n}|}
\mathbb{E}[|\A_n|\mathbb{E}_{\mu_n^{\otimes 2}}[\mathbb{I}\braces{d(\boldsymbol{\phi}_i,\boldsymbol{\phi'}_j)\!\le\! b_n,\phi'\! \not\in V_{i\beta_n}}|\A_n^{2k_n}]]\;.
\end{align*}
To bound (c), we first observe
\begin{align*}
  \text{(c)}
 \le&\; \frac{1}{2}\sup_{x\in \mathbb{R}}|h''(x)|\mathbb{E}\big[\tfrac{\sqrt{|\A_n|}}{\eta(n)}\tsum_i \mathbb{E}_{\mu_n^*}\big[ |\bar{h}_n^i(\boldsymbol{\phi}X_n)|\mathbb{I}\braces{|\tfrac{\bar{h}_n^i(\boldsymbol{\phi}X_n)}{c_{i,2}(h_n)}|\!\!\le\!\!\gamma_n\!}(\tilde{\Delta}^{\boldsymbol{\phi}}_{in})^2\,\big|\,\A_n^{k_n}\big] \big]\;.
\end{align*}
We again have to control interactions between elements of $\group^{k_n}$. In addition to the element
$\boldsymbol{\phi}$ in (c), fix two further elements $\boldsymbol{\phi}'$ and $\boldsymbol{\phi}''   $, and
a list ${\bpsi^0,\ldots,\bpsi^{b_n}}$ constructed for ${b=0,\ldots,b_n}$ as follows:
\begin{itemize}
\item Set $\bpsi^{0}=\boldsymbol{\phi}''$.\\[-.8em]
\item Choose ${\bpsi^b_k:=\bpsi_k^{b-1}}$ if either
\begin{equation*}
  \min\braces{\bar{d}(\bpsi^{b-1}_k,\bphi),\bar{d}(\bpsi^{b-1}_k,\bphi')}\not\in[b,b+1)
\quad\text{ or }\quad
k\not\in\mathcal{I}_{b_n,k_n'}(\bphi,\bphi'')\;.
\end{equation*}
\item Otherwise, choose $\bpsi^b_k$ such that ${\bar{d}(\bpsi^b_k,\bphi)>b_n}$ and
  ${\bar{d}(\bpsi^b_k,\bphi')>b_n}$.
\end{itemize}
Note such a sequence always exists.
Abbreviate $${G(\boldsymbol{\phi}'):=\big[h_n(\boldsymbol{\phi}'X_n)-\bar{h}_n^{\boldsymbol{\phi}'_{i},b_n,k'_n}(\boldsymbol{\phi}'X_n)}\big]\mathbb{I}\braces{\boldsymbol{\phi}'\in V_i(\beta_n)}.$$
An application of the triangle inequality and of \cref{lemma:mixing}
yields
\begin{align*}
  &\big\|\mathbb{E}[
    \tfrac{|\bar{h}_n^i(\boldsymbol{\phi}X_n)|}{\eta(n)^{3}}
    \mathbb{I}\braces{\tfrac{|\bar{h}_n^{i}(\boldsymbol{\phi}X_n)|}{c_{i,2}(h_n)}\!\le\!\gamma_n}
    G(\boldsymbol{\phi}')G(\boldsymbol{\phi}'')
    |\group]\big\|_1
\\&
\le\tsum_l\Big\|
\mathbb{E}[
    \tfrac{|\bar{h}_n^i(\boldsymbol{\phi}X_n)|}{\eta(n)^{3}}
    \mathbb{I}\braces{\tfrac{|\bar{h}_n^{i}(\boldsymbol{\phi}X_n)|}{c_{i,2}(h_n)}\!\le\!\gamma_n}
    G(\boldsymbol{\phi}')G(\boldsymbol{\psi}^l)
  \big|\group  ]\\
&\quad\quad \quad-
\mathbb{E}[
    \tfrac{|\bar{h}_n^i(\boldsymbol{\phi}X_n)|}{\eta(n)^{3}}
    \mathbb{I}\braces{\tfrac{|\bar{h}_n^{i}(\boldsymbol{\phi}X_n)|}{c_{i,2}(h_n)}\!\le\!\gamma_n}
    G(\boldsymbol{\phi}')G(\boldsymbol{\psi}^{l-1})
  \big|\group  ]
\Big\|_1
 \\&
 \le 4 \Gamma_{i,q(1+\frac{\varepsilon}{2})}(\gamma_n)
 \tsum_{j,k}
 c_{k,2p(1+\frac{\varepsilon}{2})}\big(\tfrac{h_n}{\eta(n)}\big)
 c_{j,2p(1+\frac{\varepsilon}{2})}\big(\tfrac{h_n}{\eta(n)}\big)
 \\&
 \quad\;\mathbb{I}\braces{\boldsymbol{\phi}''\in V_i(\beta_n)}\alpha_n^{\frac{\varepsilon}{2+\varepsilon}}(\min\braces{\bar{d}(\bphi''_{k},\bphi),\bar{d}(\bphi''_{k},\bphi')}|\mathbb{G})\;,
\end{align*}
where the final term sums over
${j\in\mathcal{I}_{b_n,k'_n}(\bphi,\bphi')}$ and ${l\in\mathcal{I}_{b_n,k'_n}(\bphi,\bphi'')}$.
By Taylor expansion, we hence obtain
\begin{align*}
  \text{(c)}
 \le&\; \frac{1}{2}\sup_{x\in \mathbb{R}}|h''(x)|\mathbb{E}\big[\tfrac{\sqrt{|\A_n|}}{\eta(n)}\tsum_i \mathbb{E}_{\mu_n^*}\big[ |\bar{h}_n^i(\boldsymbol{\phi}X_n)|\mathbb{I}\braces{|\tfrac{\bar{h}_n^i(\boldsymbol{\phi}X_n)}{c_{i,2}(h_n)}|\!\!\le\!\!\gamma_n\!}(\tilde{\Delta}^{\boldsymbol{\phi}}_{in})^2\,\big|\,\A_n^{k_n}\big] \big]
\\\le &\;  4 \tsum_{i\le k_n,j,k\le k'_n}\Gamma_{i,q(1+\frac{\varepsilon}{2})}(\gamma_n)
  c_{k,2p(1+\frac{\varepsilon}{2})}\big(\tfrac{h_n}{\eta(n)}\big)
 c_{j,2p(1+\frac{\varepsilon}{2})}\big(\tfrac{h_n}{\eta(n)}\big)\tsum_b \alpha_n^{\frac{\varepsilon}{2+\varepsilon}}(b|\group)
 \\&
 \; \big\|\mathbb{E}_{\mu_n^{\otimes 3}}\big[\bar{d}(\bphi''_k,\bphi)\in [b,b+1], ~\boldsymbol{\phi}'',\boldsymbol{\phi}'\in \B_{b_n}(\boldsymbol{\phi}), ~\mathbb{I}\braces{\boldsymbol{\phi}''\in V_i(\beta_n)}\big|A_{n}^{3k_n}\big]\big\|_1
\\+ &\;  4 \tsum_{i\le k_n,j,k\le k'_n}\Gamma_{i,q(1+\frac{\varepsilon}{2})}(\gamma_n)
  c_{k,2p(1+\frac{\varepsilon}{2})}\big(\tfrac{h_n}{\eta(n)}\big)
 c_{j,2p(1+\frac{\varepsilon}{2})}\big(\tfrac{h_n}{\eta(n)}\big)\tsum_b \alpha_n^{\frac{\varepsilon}{2+\varepsilon}}(b|\group)
 \\&
 \;\big\|\mathbb{E}_{\mu_n^{\otimes 3}}\big[\bar{d}(\bphi''_{k},\bphi')\in [b,b+1], \boldsymbol{\phi}'',\boldsymbol{\phi}'\in \B_{b_n}(\boldsymbol{\phi}), \mathbb{I}\braces{\boldsymbol{\phi}''\in V_i(\beta_n)}\big|A_{n}^{3k_n}\big]\big\|_1
     \\ \overset{}{\le} &\;
  \tfrac{1}{\sqrt{|\A_n|}}
  \Bigl(8{k'_n}k_n|\B_{b_n}| \bigl(
    \tsum_i c_{i,q(1+\frac{\varepsilon}{2})}(\tfrac{h_n}{\eta(n)})\bigr)^2
    \bigl(\tsum_i\Gamma_{i,p(1+\frac{\varepsilon}{2})}(\gamma_n)\bigr)
    S^*_2(k'_n)\mathcal{R}_n(0)\Bigr)\;.
\end{align*}
This establishes the result under hypothesis \eqref{H2:g}. If \eqref{H1:g} holds instead, we modify
the proof above as follows: There is now some ${K\in\mathbb{N}}$
such that ${b_n=K}$ for all $n$, and
that any two elements separated by a distance of at least $K$ are conditionally independent.
In this case,
\begin{align*}
  \big|\mathbb{E}&\big[\tfrac{\sqrt{|\A_n|}}{\eta(n)}\mathbb{E}_{\mu_n^*}\big[
      \bar{h}_n^i(\boldsymbol{\phi}X_n) (t(W^*)- t(W^{\boldsymbol{\phi}}_{in})-\Delta^{\boldsymbol{\phi}}_{in} t'(W^*))|\A_n^{k_n}
      \big]\big]\big|
\\&\le\;
 \frac{4 {k'_n}k_n|\B_{K}|^2}{\sqrt{|\A_n|}}
 \bigl(\tsum_i c_{i,2q}\big(\tfrac{h_n}{\eta(n)}\big)\bigr)^2
 \bigl(\tsum_i\Gamma_{i,{p(1+\frac{\varepsilon}{2})}}(\gamma_n)\bigr) S^*_2(k'_n)
\\&
+2\tsum_i c_{i,2}\big(\tfrac{h_n}{\eta(n)}\big)
\tsum_j c_{j,2}\big(\tfrac{h_n}{\eta(n)}\big)
\\&\hspace{9.5em}
\mathbb{E}[|\A_n|\mathbb{E}_{\mu_n^{\otimes 2}}[\mathbb{I}\braces{\bar{d}(\boldsymbol{\phi}_i,\boldsymbol{\phi}'_{1:j})\!\le\! K,\boldsymbol{\phi}\! \not\in V_{i\beta_n}}|\A_n^{2k_n}]]
\\& +2   \mathcal{S}^n_w |\B_{K}|k'_n \tsum_i c_{i,2}\big(\mfrac{\bar{h}_n^i(\boldsymbol{\phi}X_n)}{\eta(n)}
\mathbb{I}\braces{|\mfrac{\bar{h}_n^i(\boldsymbol{\phi}X_n)}{c_{i,2}(h_n)}|\!>\!\gamma_n\!}
\big)M_2\big(\tfrac{h_n}{\eta(n)}\big)\;,
\end{align*}
and the result holds under \eqref{H1:g}.
\end{proof}

\subsection{The third term in Lemma \ref{neuchatel}}
\begin{lemma}\label{stein:term3}
  Fix ${p,q>0}$ such that ${\frac{1}{p}+\frac{1}{q}=1}$. If \eqref{H1:g} holds,
  \begin{equation*}
    \begin{split}&
      \big\|
      1-\tfrac{\sqrt{|\A_n|}}{\eta(n)}\mathbb{E}\big[\mathbb{E}_{\mu_n^*}\big[
          h_n(\boldsymbol{\phi}X_n) \Delta^{\boldsymbol{\phi}}_{in}\big|\A_n^{k_n}\big]\big|\mathbb{G}\big]
      \big\|_1
      \\& \le \mathbb{E}\big[\big|\tfrac{\eta(n)^2-\hat{\eta}^2_{n,K} }{\eta(n)^2}\big|\big]
      + K_1C_2\big(\tfrac{h_n}{\eta(n)}\big)\tsum_{j>k'_n}c_{j,2}\big(\tfrac{h_n}{\eta(n)}\big)
      +\tfrac{K_2k_n^4}{|\A_n|} C_2\big(\tfrac{h_n}{\eta(n)}\big)^2\;,
  \end{split}
  \end{equation*}
  where ${K_1=O(  \mathcal{S}^n_w  |\B_{K}| )}$ and ${K_2=O(  \mathcal{S}^n_w  |\B_{K}|^2)}$.
  If \eqref{H2:g} holds instead,
  \begin{align*}
    &
      \big\|
      1-\tfrac{\sqrt{|\A_n|}}{\eta(n)}\mathbb{E}\big[\mathbb{E}_{\mu_n^*}\big[
          h_n(\boldsymbol{\phi}X_n) \Delta^{\boldsymbol{\phi}}_{in}\big|\A_n^{k_n}\big]\big|\mathbb{G}\big]
      \big\|_1
      \\& \le
  K_2|\B_{b_n}| C_2\big(\tfrac{h_n}{\eta(n)}\big) \tsum_{j>k'_n}c_{j,2}\big(\tfrac{h_n}{\eta(n)}\big)+ K_1  \mathcal{S}^n_w \frac{k_n^4|\B_{b_n}|^2}{|\A_n|}C_{2}(\frac{h_n}{\eta(n)})^2
      \\&
      +K_3 C_{2+\varepsilon}\big(\tfrac{h_n}{\eta(n)}\big)^2\tfrac{k_n^2|\B_{b_n}|}{|\A_n|}   \mathcal{S}^n_w \mathcal{R}_{n}(b_n)
      +\mathbb{E}\big[\big|\tfrac{\eta(n)^2-\hat{\eta}^2_{n,b_n} }{\eta(n)^2}\big|\big]\;,
  \end{align*}
where $K_1=O(1)$ and $K_2=O(  \mathcal{S}^n_w )$ and $K_3=O(1)$.
\end{lemma}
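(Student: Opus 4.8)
The plan is to mirror the treatment of term \textrm{(c)} in the basic proof of \cref{sec:proofs:basic}, adapted to the randomized, triangular-array setting encoded in \cref{neuchatel}. First I would substitute $\Delta^{\boldsymbol{\phi}}_{in}=W^*-W^{\boldsymbol{\phi}}_{in}$ and expand the inner expectation, so that $\tsum_i\mathbb{E}_{\mu_n^*}[\bar h_n^i(\boldsymbol{\phi}X_n)\Delta^{\boldsymbol{\phi}}_{in}|\A_n^{k_n}]$ becomes a double $\mu_n^{\otimes 2}$-average over pairs $(\boldsymbol{\phi},\boldsymbol{\phi}')$, weighted by the products $\bar h_n^i(\boldsymbol{\phi}X_n)\bigl(h_n(\boldsymbol{\phi}'X_n)-\bar h_n^{\boldsymbol{\phi}_i,b_n,k_n}(\boldsymbol{\phi}'X_n)\bigr)$. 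Because $\bar h_n^{\boldsymbol{\phi}_i,b_n,k_n}$ averages out exactly the coordinates of $\boldsymbol{\phi}'$ within distance $b_n$ of $\boldsymbol{\phi}_i$, only pairs with $d(\boldsymbol{\phi}_i,\boldsymbol{\phi}'_j)\le b_n$ for some $j$ survive.

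Second — the conceptual core — I would recognise the leading contribution of this double average as $\widehat{\eta}^2_{n,b_n}$. Taking $\mathbb{E}[\,\cdot\,|\group]$, using that the limit defining $\bar h_n^{\boldsymbol{\phi}_i,b_n,k_n}$ agrees with $\widehat{\mathbb{F}}_{\infty,j}(h_n,X_n,\psi)$ for the appropriate $\psi$ (and likewise $\bar h_n^i$ with $\widehat{\mathbb{F}}_{\infty,i}(h_n,X_n,e)$ after a shift), and passing from the $\mu_n^*$-average to the measure $\mu_n^{i-j}$ restricted to $\B_{b_n}$ via well-spreadness, one obtains precisely $\tsum_{ij}\mathbb{E}_{\mu_n^{i-j}}[\mathbb{E}[\widehat{\mathbb{F}}_{\infty,i}(h_n,X_n,e)\widehat{\mathbb{F}}_{\infty,j}(h_n,X_n,\Psi)|\group]\,|\,d(e,\Psi)\le b_n]=\widehat{\eta}^2_{n,b_n}$. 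Dividing by $\eta(n)^2$ and applying the triangle inequality isolates the term $\mathbb{E}[|\eta(n)^2-\widehat{\eta}^2_{n,b_n}|/\eta(n)^2]$ (with $b_n\equiv K$ under \eqref{H1:g}), which is left as stated.

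Third, I would collect the approximation errors incurred in Step 2 and match each to one of the remaining terms. Replacing $k_n$ by $k_n'$ in $\bar h_n^{\boldsymbol{\phi}_i,b_n,\cdot}$ costs $\tsum_{j>k_n'}c_{j,2}(g_n/\eta(n))$ per factor, via the Lipschitz coefficients of $g_n$ exactly as in the first estimate of \cref{stein:term1}; multiplied by the $\mu_n$-mass $\le\mathcal{S}_{w,2}|\B_{b_n}|$ of the admissible neighbourhood this is the $C_2\tsum_{j>k_n'}c_{j,2}$ term. Dropping the restriction $\boldsymbol{\phi}'\in V_{i,\beta_n}(n)$ is controlled by $S^\ast_0$ through its defining inequality, giving the $C_2^2 S^\ast_0$ term. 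The well-separation built into $\mu_n^*$ (via $\dself(\boldsymbol{\phi})\ge b_n$) removes, from each tuple, a $\mu_n^{\otimes 2}$-mass of order $k_n^2|\B_{b_n}|/|\A_n|$; combined with the $|\A_n|$ prefactor and the $k_n^2$ coordinate pairs $(i,j)$ in $\widehat{\eta}^2$, the resulting near-diagonal correction contributes the $k_n^4/|\A_n|$ term (with $|\B_{b_n}|=|\B_K|$ absorbed into $K_2$ under \eqref{H1:g}). Under \eqref{H2:g} there is one further source: elements at a finite distance are no longer conditionally independent, so the decoupling in Step 2 must be carried out scale by scale, replacing a sharp independence argument by \cref{distanceexpectation} along a telescoping sequence of transformations; summing the mixing decay $\alpha_2(\cdot|\group)^{\varepsilon/(2+\varepsilon)}$ against the $d$-growth rate and the coefficients $\mathcal{S}_{w,2},S^\ast_3$ produces the $C_{2+\varepsilon}^2\,k_n^2|\B_{b_n}|(\mathcal{S}_{w,2}+S^\ast_3)\mathcal{R}(b_n)/|\A_n|$ term. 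Assembling all of these by the triangle inequality yields both claimed bounds.

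The main obstacle I anticipate is Step 2: verifying that, after conditioning on $\group$ and exploiting well-spreadness, the double average genuinely collapses to $\widehat{\eta}^2_{n,b_n}$ — in particular that the two $\bar h_n$-factors, which a priori involve averaging over high-dimensional blocks of coordinates of \emph{different} tuples, reduce to the one-coordinate restricted averages $\widehat{\mathbb{F}}_{\infty,i}$ and $\widehat{\mathbb{F}}_{\infty,j}$ that define $\widehat{\eta}^2$. Under \eqref{H2:g} this is compounded by having to perform the decoupling at every distance scale with \cref{distanceexpectation} while keeping the error bookkeeping (the constants $S^\ast_0,S^\ast_3,\mathcal{S}_{w,2}$ and the various powers of $k_n$) under control.
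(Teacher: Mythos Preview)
Your proposal follows essentially the same route as the paper's proof: expand into a double $\mu_n^{\otimes 2}$-average, isolate $\widehat{\eta}^2_{n,b_n}$ via the triangle inequality, and account for the errors from (i) truncating $k_n\to k_n'$ via the Lipschitz coefficients, (ii) the $V_{i,\beta_n}$ restriction via $S^{\ast}_0$, (iii) overlap of multiple coordinate pairs, and (iv) under \eqref{H2:g}, the telescoping decoupling via \cref{distanceexpectation} that produces the $\mathcal{R}_{b_n}$ term. Your identification of Step~2 as the crux, and of the reduction of the two $\bar h_n$-factors to the one-coordinate objects $\widehat{\mathbb{F}}_{\infty,i},\widehat{\mathbb{F}}_{\infty,j}$, is exactly what the paper does.

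One correction on the bookkeeping: the $k_n^4/|\A_n|$ term does not come from the well-separation constraint $\dself(\boldsymbol{\phi})\ge b_n$ in $\mu_n^*$ (that passage is handled separately by \cref{aix_la_chapelle} and costs only $k_n^2|\B_{b_n}|/\sqrt{|\A_n|}$). It comes instead from pairs $(\boldsymbol{\phi},\boldsymbol{\phi}')$ for which the set $\mathcal{J}(\boldsymbol{\phi},\boldsymbol{\phi}')=\{(i,j):d(\boldsymbol{\phi}_i,\boldsymbol{\phi}'_j)\le b_n\}$ strictly contains a single pair $\{i,j\}$, i.e.\ at least two coordinate pairs are simultaneously close. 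That event has $\mu_n^{\otimes 2}$-mass $O(k_n^2|\B_{b_n}|^2/|\A_n|^2)$; multiplied by the $|\A_n|$ prefactor and summed over the $k_n^2$ choices of $(i,j)$ this yields the stated $k_n^4|\B_{b_n}|^2/|\A_n|$ contribution.
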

\begin{proof}
Assume first that \eqref{H2:g} holds. We use the abbreviation
\\${G^k(\boldsymbol{\phi'}):=h_n(\boldsymbol{\phi'}X_n)-\bar{h}_n^{\boldsymbol{\phi}_{i},b_n,k}(\boldsymbol{\phi'}X_n)}$.
By the triangle inequality,
\begin{align}
  \label{proof:eq:third:term:main}
  \begin{split}
    & \;
    \Big\|\frac{\eta(n)^2-|\A_n|\sum_i\mathbb{E}_{\mu_n^{\otimes 2}}[
    \mathbb{E}[\bar{h}_n^i(\boldsymbol{\phi}X_n)G^{k'_n}(\bphi')|\group]
    |\A_n^{2k_n}]}{\eta(n)^2}\Big\|_1
    \\\le &\; \Big\|\frac{|\A_n|\sum_i\mathbb{E}_{\mu_n^{\otimes 2}}[\mathbb{E}[
          \bar{h}_n^i(\boldsymbol{\phi}X_n)(\bar{h}_n^{\boldsymbol{\phi}_i,b_n,k'_n}(\boldsymbol{\phi}'X_n)-\bar{h}_n^{\boldsymbol{\phi}_i,b_n,k_n}(\boldsymbol{\phi}'X_n))|\mathbb{G}]\big|\A_n^{2k_n}]}{\eta(n)^2}\Big\|_1
    \\ + &\;
    \Big\|\frac{\hat{\eta}^2_{n,b_n} -|\A_n|\sum_i\mathbb{E}_{\mu_n^{\otimes 2}}[\mathbb{E}[
          \bar{h}_n^i(\boldsymbol{\phi}X_n) G^{k_n}(\bphi')|\mathbb{G}]|\A_n^{2k_n}]}{\eta(n)^2}\Big\|_1
     +
    \mathbb{E}\big[\big|\tfrac{\eta(n)^2-\hat{\eta}^2_{n,b_n} }{\eta(n)^2}\big|\big]
 \\ =: &\; \text{(a)} + \text{(b)} + \text{(c)} \;.
  \end{split}
\end{align}
We can further bound terms (a) and (b).
By definition of the Lipschitz coefficients,
\begin{align*}
\text{(a)}
 & \le \sum_i\Big\|\frac{|\A_n|\mathbb{E}_{\mu_n^{\otimes 2}}[\sum_{j\in \mathcal{I}_{b_n,k_n}(\boldsymbol{\phi}_i,\boldsymbol{\phi}')\setminus\mathcal{I}_{b_n,k'_n}(\boldsymbol{\phi}_i,\boldsymbol{\phi}')}c_{i,2}(\frac{h_n}{\eta(n)})c_{j,2}(\frac{h_n}{\eta(n)})|\A_n^{2k_n}\big]}{\eta(n)^2}\Big\|_1
\\& \le   \mathcal{S}^n_w |\B_{b_n}| \tsum_{i}\tsum_{j>k'_n}c_{i,2}\big(\tfrac{h_n}{\eta(n)}\big) c_{j,2}\big(\tfrac{h_n}{\eta(n)}\big)\;.
\end{align*}
To bound (b), abbreviate
${H(\bphi,\bphi'):= \bar{h}_n^i(\boldsymbol{\phi}X_n)\bigl(h_n(\boldsymbol{\phi}'X_n)-\bar{h}_n^{\boldsymbol{\phi}_i,b_n,k_n}(\boldsymbol{\phi}'X_n)\bigr)}$, and consider the index set
\begin{equation}
  \label{index:set:J}
  \mathcal{J}(\bphi,\bphi'):=\braces{i,j|d(\bphi_i,\bphi'_j)\leq b_n}\;.
\end{equation}
Then for all $\bphi,\bphi'\in \mathbb{G}^{k_n}$ such that $\mathcal{J}(\bphi,\bphi')=\braces{i,j}$, let
 $\bpsi,\bpsi'$ be two elements of $\group^{k_n}$ such that, for the same index pair $(i,j)$,
\begin{equation}\label{lausanne}
  \bpsi_i=\bphi_i\quad\text{ and }\quad\bpsi'_j=\bphi'_j,\qquad \mathcal{J}(\bpsi,\bpsi')=\braces{i,j}\;.
\end{equation}
For the remainder of the proof, denote the concatenation of two vectors as
\begin{equation*}
  [\bphi,\bpsi]:=(\phi_1,\ldots,\phi_m,\psi_1,\ldots,\psi_n)\quad\text{ for } \bphi=(\phi_1,\ldots,\phi_m),\bpsi=(\psi_1,\ldots,\psi_n)\;.
\end{equation*}
Using a telescopic sum, we have
\begin{align*}
&
 \big\|\mathbb{E}\big[\tfrac{1}{\eta(n)^2} H(\bphi,\bphi')\big|\group\big]-\mathbb{E}\big[\tfrac{1}{\eta(n)^2}
   H(\bpsi,\bpsi')\big|\group\big]\big\|_{\subL1}
 \\&
 \le \tsum_{l=0}^{k_n-1}\big\|\mathbb{E}\big[\tfrac{1}{\eta(n)^2}
   \big(H([\bpsi_{1:l},\bphi_{l+1:k_n}],\bphi')-
   H([\bpsi_{1:l+1},\bphi_{l+2:k_n}],\bphi')\big)\big|\group\big]\big\|_{\subL1}\\
 & +
 \tsum_{l=0}^{k_n-1}\big\|\mathbb{E}\big[\tfrac{1}{\eta(n)^2}
   \bigl(H(\bpsi,[\bpsi_{1:l}',\bphi_{l+1:k_n}'])-
   H(\bpsi,[\bpsi_{1:l+1}',\bphi_{l+2:k_n}'])\bigr)\big|\group\big]\big\|_{\subL1}
 \\& \overset{(*)}{\le} 8\tsum_{l\ne i} c_{l,2+\varepsilon}\big(\tfrac{h_n}{\eta(n)}\big) c_{j,2+\varepsilon}\big(\tfrac{h_n}{\eta(n)}\big)
 \alpha_n^{\frac{\varepsilon}{2+\varepsilon}}\big(\bar{d}([\bpsi_l,\boldsymbol{\phi}_l],~[\boldsymbol{\phi}',\boldsymbol{\phi}_{l+1:k_n},\bpsi_{1:l-1}])\big|\group\big)
\\& + 8\tsum_{l\ne j}  c_{l,2+\varepsilon}\big(\tfrac{h_n}{\eta(n)}\big) c_{i,2+\varepsilon}\big(\tfrac{h_n}{\eta(n)}\big)
\alpha_n^{\frac{\varepsilon}{2+\varepsilon}}\bigl(\bar{d}([\bpsi'_l,\boldsymbol{\phi}'_l],~[\boldsymbol{\phi},\boldsymbol{\phi}'_{l+1:k_n},\bpsi'_{1:l-1}])|\group\bigr)\;.
\end{align*}
where (*) is follows from \cref{lemma:mixing} and inequality
\begin{align*}&\Big\|\tfrac{1}{\eta(n)^2}
   \big(H([\bpsi_{1:l},\bphi_{l+1:k_n}],\bphi')-
   H([\bpsi_{1:l+1},\bphi_{l+2:k_n}],\bphi')\big)\Big\|_{1+\frac{\epsilon}{2}}\\&\le 2c_{l,2+\varepsilon}\big(\tfrac{h_n}{\eta(n)}\big) c_{j,2+\varepsilon}\big(\tfrac{h_n}{\eta(n)}\big)\;.\end{align*}
By definition, ${\widehat{\mathbb{F}}_{\infty,i}(h_n,X_n,\boldsymbol{\phi}_i) \widehat{\mathbb{F}}_{\infty,j}(h_n,X_n,\boldsymbol{\phi}'_j)}$
is the average of $H(\bpsi,\bpsi')$ over the set of pairs $(\bpsi,\bpsi')$ satisfying \eqref{lausanne}.
Therefore, for ${(i,j)=\mathcal{J}(\bphi,\bphi')}$,
\begin{align*}
 &\Big\| \mathbb{E}\big[\tfrac{1}{\eta(n)^2} H(\boldsymbol{\phi},\boldsymbol{\phi}')\big|\mathbb{G}\big]
  -
  \mathbb{E}\big[\tfrac{1}{\eta(n)^2} \widehat{\mathbb{F}}_{\infty,i}(h_n,X_n,\boldsymbol{\phi}_i) \widehat{\mathbb{F}}_{\infty,j}(h_n,X_n,\boldsymbol{\phi}'_j)|\mathbb{G}\big] \Big\|_{\subL1}
\\& \le 8\tsum_{l\ne i} c_{l,2+\varepsilon}\big(\tfrac{h_n}{\eta(n)}\big) c_{j,2+\varepsilon}\big(\tfrac{h_n}{\eta(n)}\big)
\alpha_n^{\frac{\varepsilon}{2+\varepsilon}}\big(\bar{d}(\boldsymbol{\phi}_l,[\boldsymbol{\phi}',\boldsymbol{\phi}_{l+1:k_n}])\big|\group\big)
\\& + 8\tsum_{l\ne j}  c_{l,2+\varepsilon}\big(\tfrac{h_n}{\eta(n)}\big) c_{i,2+\varepsilon}\big(\tfrac{h_n}{\eta(n)}\big)
\alpha_n^{\frac{\varepsilon}{2+\varepsilon}}\big(\bar{d}(\boldsymbol{\phi}'_l,[\boldsymbol{\phi}'_{l+1:k_n},\boldsymbol{\phi}_{i}])|\group\bigr)\;.
\end{align*}
For all $i,j\le k_n$, we hence obtain
{\begin{align*}
\label{basel}
&  \Big\| \mathbb{E}_{\mu_n^{\otimes 2}}\big[\tfrac{\mathbb{I}\braces{\mathcal{J}(\bphi,\bphi')=\{i,j\}}
    (H(\boldsymbol{\phi},\boldsymbol{\phi}')- \widehat{\mathbb{F}}_{\infty,i}(h_n,X_n,\boldsymbol{\phi}_i) \widehat{\mathbb{F}}_{\infty,j}(h_n,X_n,\boldsymbol{\phi}'_j))}{\eta(n)^2}\big|\A_n^{2k_n}\big]\Big\|_1
\\& \le 32\bigl(\tsum_{l} c_{l,2+\varepsilon}\big(\tfrac{h_n}{\eta(n)}\big)\big)^2
\tfrac{k_n^2|\B_{b_n}|}{|\A_n|} \tsum_{m\ge b_n}|\B_{m+1}\!\setminus\!\B_m|  \mathcal{S}^n_w \alpha_n^{\frac{\varepsilon}{2+\varepsilon}}(m|\mathbb{G})\;.
\end{align*}}
We can then upper-bound (b) as
\begin{align*}
&\big\||\A_n| \mathbb{E}_{\mu_n^{\otimes 2}}\big[\tfrac{\mathbb{I}\braces{\mathcal{J}(\bphi,\bphi')=\{i,j\}}
    (H(\boldsymbol{\phi},\boldsymbol{\phi}')- \widehat{\mathbb{F}}_{\infty,i}(h_n,X_n,\boldsymbol{\phi}_i) \widehat{\mathbb{F}}_{\infty,j}(h_n,X_n,\boldsymbol{\phi}'_j))}{\eta(n)^2}\big|\A_n^{2k_n}\big]\big\|_1
\\&+\big\||\A_n| \mathbb{E}_{\mu_n^{\otimes 2}}\big[\tfrac{\mathbb{I}\braces{\mathcal{J}(\bphi,\bphi')\subsetneq\{i,j\}}
    (H(\boldsymbol{\phi},\boldsymbol{\phi}')- \widehat{\mathbb{F}}_{\infty,i}(h_n,X_n,\boldsymbol{\phi}_i) \widehat{\mathbb{F}}_{\infty,j}(h_n,X_n,\boldsymbol{\phi}'_j))}{\eta(n)^2}\big|\A_n^{2k_n}\big]\big\|_1
\\& =: b^1_{ij}+ b^2_{ij} \; \ge \text{(b)}\;.
\end{align*}
We have already obtained a bound for $b^1_{ij}$ above.
For $b^2_{ij}$, the Cauchy-Schwartz inequality yields
\begin{align*}
  \tsum_{ij}b^2_{ij}&\le  4|\A_n| M_2\big(\tfrac{h_n}{\eta(n)}\big)^2\tsum_{ij}\mathbb{E}\Big[\mathbb{E}_{\mu_n^{\otimes 2}}\big[\mathbb{I}\braces{\mathcal{J}(\bphi,\bphi')\subsetneq\{i,j\}}\big|A_n^{2k_n}\big]\Big]
\\& \le 4\tfrac{  \mathcal{S}^n_w  |\B_{b_n}|^2k_n^4}{|\A_n|} M_2\big(\tfrac{h_n}{\eta(n)}\big)^2\;.
\end{align*}
Substituting the bounds for (a) and (b) so obtained back into \eqref{proof:eq:third:term:main}
then completes the proof under hypothesis \eqref{H2:g}. If \eqref{H1:g} holds instead,
correlations between elements separated by a distance exceeding some constant $K$ have no effect.
In this case,
\begin{equation*}\begin{split}&
 \big\|t'(W^*)\bigl(1-\tfrac{\sqrt{|\A_n|}}{\eta(n)}
 \mathbb{E}\big[\mathbb{E}_{\mu_n^*}\big[
     h_n(\boldsymbol{\phi}X_n) \Delta^{\boldsymbol{\phi}}_{in}\big|\A_n^{k_n}\big]\big|\mathbb{G}\big]\big\|_1
\\&
\le \sqrt{\tfrac{2}{\pi}} \Bigl(
\mathbb{E}\big[\big|\tfrac{\eta(n)^2-\hat{\eta}^2_{n,K} }{\eta(n)^2}\big|\big]
+  \mathcal{S}^n_w  |\B_{K}|\tsum_{i}\tsum_{k'_n<j\le k_n}  c_{i,2}\big(\tfrac{h_n}{\eta(n)}\big)
c_{j,2}\big(\tfrac{h_n}{\eta(n)}\big)
\\&+4\tfrac{  \mathcal{S}^n_w  |\B_{K}|^2k_n^4}{|\A_n|} M_2\big(\tfrac{h_n}{\eta(n)}\big)^2 \Bigr)\;,
 \end{split}\end{equation*}
which completes the proof.
\end{proof}

\subsection{The fourth term in Lemma \ref{neuchatel}}
\label{sec:term:4}

\newcommand\mydots{\hbox to 1em{.\hss.\hss.}}

The final term in \cref{neuchatel} represents variation of $\eta(n)$, and we upper-bound it in terms of its variance.
As in the proof of the basic case, $\eta(n)$ can be thought of as an empirical variance, and its variance is a fourth-order quantity.
Since the fourth moment of $h_n(X_n)$ may not exist,
we control it using the sequence $(\gamma_n)$. 
\\To bound the standard deviation, we have to consider interactions between
quadruples ${\bphi_1,\ldots,\bphi_4}$ of random elements of $\group^{k_n}$.
Once again, $n$, $b_n$, $\beta_n$ and $k_n$ are fixed. For a quadruple of indices ${i,j,l,m}$, we consider the events
\begin{align}
  \label{eq:interaction:condition:4}
  d(\boldsymbol{\phi}_{1,i}, \boldsymbol{\phi}_{2,j}) \le b_n
  \quad&\quad
  d(\boldsymbol{\phi}_{3,l}, \boldsymbol{\phi}_{4,m}) \le b_n
  \\
  \label{eq:interaction:condition:5}
  \text{and}\quad\boldsymbol{\phi}_1\in V_{i,\beta_n}
  \quad
  \boldsymbol{\phi}_2\in V_{j,\beta_n}
  \;\; & \;\;
  \boldsymbol{\phi}_3\in V_{l,\beta_n}
  \quad
  \boldsymbol{\phi}_4\in V_{m,\beta_n}\;.
\end{align}
Since $n$ is fixed, we can then choose a constant $S^{\ast}_4$ such that
\begin{equation*}
  \tfrac{|\A_n|^3}{|A||\B_{b_n}|^2} \big\|\mathbb{E}_{\mu_n^{\otimes 4}}
  \big[
    \mathbb{I}\braces{\bphi_1,\!\mydots,\bphi_4\text{ satisfy }\eqref{eq:interaction:condition:4},\eqref{eq:interaction:condition:5}
    \text{ and }\boldsymbol{\phi}_{2,j}^{-1}\boldsymbol{\phi}_{3,m}\!\in\! A}\big|\A_n^{4k_n}\big]\big\|\le S^*_4
\end{equation*}
holds for every Borel set ${A\subset\group^{k_n}}$ with ${|pr_j(A)|\geq 1}$ for all $j\le k_n$.
\begin{lemma}
  \label{stein:term4}
  Fix ${p,q>0}$ with $\frac{1}{p}+\frac{1}{q}=1$.
  Assume \eqref{H1:g} holds. Then
  \begin{align*}
    &
    \tsum_i \big\|
    \tfrac{\sqrt{|\A_n|}}{\eta(n)}\mathbb{E}_{\mu_n^*}\big[
      \bar{h}_n^i(\boldsymbol{\phi}X_n) \Delta^{\boldsymbol{\phi}}_{in}- \mathbb{E}[\bar{h}_n^i(\boldsymbol{\phi}X_n) \Delta^{\boldsymbol{\phi}}_{in}|\mathbb{G}]\big|\A_n^{k_n}
      \big]\big\|_{\subL1}
    \\& \le K_1 \tfrac{{k'_n}^2}{\sqrt{|\A_n|}} \Gamma^2_{4(1+\frac{\varepsilon}{2})}\big(\gamma_n)\sqrt{S^*_4}+\tfrac{K_2k_n^4}{|\A_n|}
    C^2_2\big(\tfrac{h_n}{\eta(n)}\big)+K_3\big[ C_2\big(\tfrac{h_n}{\eta(n)}\big)^2|\B_{k}|S^{\ast}_0
    \\&+  C_{2}\big(\tfrac{h_n}{\eta(n)}\big)\!\tsum_i
    \bigl(\mathbb{E}[\frac{|\widehat{\mathbb{F}}_{\infty,i}(h_n,X_n,e)|^{2}}{\eta(n)^2}
      \mathbb{I}\braces{|\widehat{\mathbb{F}}_{\infty,i}(h_n,X_n,e)|> \gamma_nc_{i,2}(h_n)}]\bigr)^{\frac{1}{2}}\Big]\;,
  \end{align*}
  where $K_1=O(|\B_{K}|^{\frac{3}{2}})$ and  $K_2=O(  \mathcal{S}^n_w  |\B_{K}|^2)$ and $K_3=O(  \mathcal{S}^n_w |B_K|)$.
  If \eqref{H2:g} holds instead, then
  \begin{align*}
    &
    \tsum_i \big\|
    \tfrac{\sqrt{|\A_n|}}{\eta(n)}\mathbb{E}_{\mu_n^*}\big[
      \bar{h}_n^i(\boldsymbol{\phi}X_n) \Delta^{\boldsymbol{\phi}}_{in}- \mathbb{E}[\bar{h}_n^i(\boldsymbol{\phi}X_n) \Delta^{\boldsymbol{\phi}}_{in}|\mathbb{G}]\big|\A_n^{k_n}
      \big]\big\|_{\subL1}
    \\& \le K_1\Bigl(|\B_{b_n}|S^{\ast}_0C^2_{2+\varepsilon}\big(\tfrac{h_n}{\eta(n)}\big)
    +\tfrac{  |\B_{b_n}|^2k_n^4}{|\A_n|}C_{2+\varepsilon}(\tfrac{h_n}{\eta(n)})^2\Bigr)
    \\&+ K_2      \mathcal{S}^n_w \tfrac{k_n^2|\B_{b_n}|\mathcal{R}_{n}(b_n) C^2_{2+\varepsilon}\big(\tfrac{h_n}{\eta(n)}\big)}{|\A_n|}
    \\&+
    K_3|\B_{b_n}| C_{2+\varepsilon}\big(\tfrac{h_n}{\eta(n)}\big)\!\tsum_i
    \bigl(\mathbb{E}\big[
    \tfrac{|\widehat{\mathbb{F}}_{\infty,i}(h_n,X_n,e)|^{2}
    \mathbb{I}\braces{|\widehat{\mathbb{F}}_{\infty,i}(h_n,X_n,e)|> \gamma_n
    c_{i,2}(h_n)}}{\eta(n)^2}
    \big]\bigr)^{\frac{1}{2}}
    \\& +
    K_4\frac{|\B_{b_n}|{k'_n}^2 \Gamma^2_{4(1+\frac{\varepsilon}{2})}\big({ \gamma_n})}{\sqrt{|\A_n|}} \sqrt{S^*_4}\;,
  \end{align*}
  for  $K_1=O(  \mathcal{S}^n_w )$ and $K_2=O(1)$ and $K_3=O(  \mathcal{S}^n_w )$ and $K_4=O( \mathcal{R}^{\frac{1}{2}}_n(0))$.
\end{lemma}

\begin{proof}
First suppose \eqref{H2:g} holds.
As in \cref{stein:term3}, we abbreviate
\begin{equation*}
  H(\bphi,\bphi',i)= \bar{h}_n^i(\boldsymbol{\phi}X_n)\bigl(h_n(\boldsymbol{\phi}'X_n)-\bar{h}_n^{\boldsymbol{\phi}_i,b_n,k_n'}(\boldsymbol{\phi}'X_n)\bigr)\;,
\end{equation*}
where we now keep track of the
index $i$. We conditionally center $H$,
\begin{equation*}
  \overline{H}(\bphi,\bphi',i):=H(\bphi,\bphi',i)-\mean[H(\bphi,\bphi',i)|\group]\;.
\end{equation*}
Interactions between  $\hat{\mathbb{F}}_{\infty,i}$ for different values of $i$
involve terms of the form
\begin{equation*}
  \begin{split}
  F_{ij}(\phi,\phi',\tau) =\ &\,
  \widehat{\mathbb{F}}_{\infty,i}(h_n,X_n,\phi)
  \mathbb{I}\braces{\widehat{\mathbb{F}}_{\infty,i}(h_n,X_n,\phi)\leq \tau c_{i,2}({h_n(X_n)})}\\
  \times &\,
  \widehat{\mathbb{F}}_{\infty,j}(h_n,X_n,\phi')
  \mathbb{I}\braces{\widehat{\mathbb{F}}_{\infty,j}(h_n,X_n,\phi')\leq \tau c_{j,2}({h_n(X_n))}}
  \end{split}
\end{equation*}
for ${\phi,\phi'\in\group}$ and some threshold ${\tau\in(0,\infty]}$. We again center conditionally,
\begin{equation*}
  \overline{F}_{ij}(\phi,\phi',\tau)=
  F_{ij}(\phi,\phi',\tau)-\mathbb{E}[F_{ij}(\phi,\phi',\tau)|\group]
\end{equation*}
Abbreviate ${J_{ij}=\mathbb{I}\braces{j\in\mathcal{I}_{b_n,k'_n}(\boldsymbol{\phi}_i,\boldsymbol{\phi}'),(\boldsymbol{\phi},\boldsymbol{\phi}')\!\in\! V_{i,\beta_n}\!\times\! V_{j,\beta_n}}}$.
Using the triangle inequality, we obtain:
\begin{align*}
 & \tsum_i \big\|\tfrac{\sqrt{|\A_n|}}{\eta(n)}\mathbb{E}_{\mu_n^*}\big[
    \bar{h}_n^i(\boldsymbol{\phi}X_n) \Delta^{\boldsymbol{\phi}}_{in}-
    \mathbb{E}[\bar{h}_n^i(\boldsymbol{\phi}X_n) \Delta^{\boldsymbol{\phi}}_{in}|\mathbb{G}]\big|\A_n^{k_n}\big]\big\|_{\subL1}
  \\& \le
 \tsum_{i,j} \big\|\tfrac{|\A_n|}{\eta(n)^2}\mathbb{E}_{\mu_n^{\otimes 2}}\bigl[
 J_{ij}
    \overline{H}(\bphi,\bphi',i) \big|\A_n^{2k_n}\bigr]\big\|_{\subL1}
  \\& +
\tsum_{i,j}  \big\|\tfrac{|\A_n|}{\eta(n)^2}\mathbb{E}_{\mu_n^{\otimes 2}}\bigl[
(1-J_{ij})
    \overline{H}(\bphi,\bphi',i) \big|\A_n^{2k_n}\bigr]\big\|_{\subL1}
\\&=: \tsum_{i,j} a_{ij} + \tsum_{i,j} b_{ij}\;.
\end{align*}
Consider $a_{ij}$ first. By the triangle inequality
\begin{align*}
a_{ij}
 & \le\Big|
 \mean\! \Big[\!\tfrac{|\A_n|}{\eta(n)^2}\mathbb{E}_{\mu_n^{\otimes 2}}\!\Bigl[\mathbb{I}\braces{j\!\in\!\mathcal{I}_{b_n,k'_n}(\boldsymbol{\phi}_i,\boldsymbol{\phi}')}
      \big| \mathbb{E}[|\overline{H}(\bphi,\bphi',i)|\,|\group]
   \!-\!\mean[|\overline{F}_{ij}(\bphi_i,\bphi'_j,\infty)|\,|\group]\big|\Big|\A_n^{2k_n}\Bigr]\Big]\Big|
  \\& \quad+\Big\|\tfrac{|\A_n|}{\eta(n)^2}\mathbb{E}_{\mu_n^{\otimes 2}}\bigl[ J_{ij}
 \overline{F}_{ij}(\bphi_i,\bphi'_j,\infty)\bigr]\Big\|_{1}
 \\& =: a'_{ij} + a''_{ij}\;.
\end{align*}
To bound $a'_{ij}$, we proceed similarly as in the proof of \cref{stein:term3}:
Recall the index set $\mathcal{J}(\bphi,\bphi')$ in \eqref{index:set:J}.
If ${\bphi,\bphi'\in \group^{k_n}}$ satisfy $\mathcal{J}(\bphi,\bphi')=\{i,j\}$, we have
\begin{align*}
 &\Big\|\tfrac{1}{\eta(n)^2}\big(
   \mean\big[|\overline{H}(\bphi,\bphi',i)|\big|\group\big]-   \mean\big[|\overline{F}_{ij}(\bphi_i,\bphi'_j,\infty)
 | \big|\group\big]\big) \Big\|_{\subL1}
\\& \le   8\tsum_{l\ne i} c_{l,2+\varepsilon}\big(\tfrac{h_n}{\eta(n)}\big)
c_{j,2+\varepsilon}\big(\tfrac{h_n}{\eta(n)}\big)
\alpha_n^{\frac{\varepsilon}{2+\varepsilon}}\big(d(\boldsymbol{\phi}_l,[\boldsymbol{\phi}',\boldsymbol{\phi}_{l+1:k_n}])\big|\mathbb{G}\big)
\\&+ 8\tsum_{l\ne j}  c_{l,2+\varepsilon}\big(\tfrac{h_n}{\eta(n)}\big)
c_{i,2+\varepsilon}\big(\tfrac{h_n}{\eta(n)}\big)
\alpha_n^{\frac{\varepsilon}{2+\varepsilon}}\bigl(d(\boldsymbol{\phi}'_l,[\boldsymbol{\phi}'_{l+1:k_n},\boldsymbol{\phi}_{i}])|\mathbb{G}\bigr)\;.
\end{align*}
Applying \cref{lemma:mixing} and the definition of the random measure $\mu_n^*$ gives
\begin{align*}
  &
 \tsum_{i,j}\mean\Big[\tfrac{|\A_n|}{\eta(n)^2}\mathbb{E}_{\mu_n^{\otimes 2}}\!\Big[
 \mathbb{I}\braces{\mathcal{J}(\boldsymbol{\phi},\boldsymbol{\phi}')\!=\!\{i,j\}\!}
 \\&\hspace{8em}\big|\mean[|\overline{H}(\bphi,\bphi',i)|\big|\group]\!-\!\mean[|\overline{F}_{ij}(\bphi_i,\bphi'_j,\infty)|\big|\group]\big|
 \;\Big|\A_n^{2k_n}\!\Big]\Big]
 \\&\le  32\,  \mathcal{S}^n_w \,\tfrac{k_n^2|\B_{b_n}|}{|\A_n|}\bigl(\tsum_l c_{l,2+\varepsilon}\big(
   \tfrac{h_n}{\eta(n)}\big)\bigr)^2\tsum_{i\ge b_n}|\B_{i+1}\!\setminus\! \B_{i}|
   \alpha_n^{\frac{\varepsilon}{2+\varepsilon}}(i|\mathbb{G})\;.
\end{align*}
Again similarly to the proof of  \cref{stein:term3}, we obtain
\begin{align*}
  &
\tsum_{i,j} \mean\Big[\tfrac{|\A_n|}{\eta(n)^2}\mathbb{E}_{\mu_n^{\otimes 2}}\!\big[
 \mathbb{I}\braces{\mathcal{J}(\boldsymbol{\phi},\boldsymbol{\phi}')\!\not\subset\!\{i,j\}\!}\\&\qquad \qquad \times
 \big|\mean[|\overline{H}(\bphi,\bphi',i)|\big|\group]\!-\!\mean[|\overline{F}_{ij}(\bphi_i,\bphi'_j,\infty)|\big|\group]\big|
 |\A_n^{2k_n}\!\big]\Big]
\\&\le {4 |\B_{b_n}|^2}M_2\big(\tfrac{h_n}{\eta(n)}\big)^2|\A_n|\mathbb{E}\Big[\mathbb{E}_{\mu_n^{\otimes 2}}\big[ \mathbb{I}\braces{\mathcal{J}(\boldsymbol{\phi},\boldsymbol{\phi}')\!\not\subset\!\{i,j\}}|A_n^{2k_n}\big]\Big]
\\
 & \le \tfrac{4   \mathcal{S}^n_w  |\B_{b_n}|^2k_n^4}{|\A_n|}M_2\big(\tfrac{h_n}{\eta(n)}\big)^2\;.
\end{align*}
Hence
\begin{align*}
 \tsum_{i,j}a'_{ij} &\le\; 32\,  \mathcal{S}^n_w \tfrac{k_n^2|\B_{b_n}|}{|\A_n|}\bigl(\tsum_l c_{l,2+\varepsilon}\big(
   \tfrac{h_n}{\eta(n)}\big)\bigr)^2\tsum_{i\ge b_n}|\B_{i+1}\!\setminus\! \B_{i}|
   \alpha_n^{\frac{\varepsilon}{2+\varepsilon}}(i|\mathbb{G})\\&+
  \tfrac{4   \mathcal{S}^n_w  |\B_{b_n}|^2k_n^4}{|\A_n|}M_2\big(\tfrac{h_n}{\eta(n)}\big)^2\;.
\end{align*}
To bound $a''_{ij}$, abbreviate ${J_{ij}':=\mathbb{I}\braces{\bphi\in V_{i,\beta_n},\boldsymbol{\phi}'\in V_{j,\beta_n},d(\boldsymbol{\phi}_i,\boldsymbol{\phi}'_j)\le {b_n}}}$.
Then
\begin{align*}
  a''_{ij} &\le \big\||\A_n|\mathbb{E}_{\mu_n^{\otimes 2} }\big[
  J'_{ij}
  \tfrac{\overline{F}_{ij}(\bphi_i,\bphi'_j,\infty)-\overline{F}_{ij}(\bphi_i,\bphi'_j,\gamma_n)}{\eta(n)^2}
  \big|\A_n^{2k_n}\big]\big\|_{\subL1}
  \\&+\big\||\A_n|\mathbb{E}_{\mu_n^{\otimes 2} }\big[
  J'_{ij}
  \tfrac{\overline{F}_{ij}(\bphi_i,\bphi'_j,\gamma_n)}{\eta(n)^2}
  \big|\A_n^{2k_n}\big]\big\|_{2}
\end{align*}
The first term can be bounded using Cauchy-Schwartz, as
\begin{align*}
  &\big\|\msum_{i,j}|\A_n|\mathbb{E}_{\mu_n^{\otimes 2} }\big[
  J'_{ij}
  \tfrac{\overline{F}_{ij}(\bphi_i,\bphi'_j,\infty)-\overline{F}_{ij}(\bphi_i,\bphi'_j,\gamma_n)}{\eta(n)^2}
  \big|\A_n^{2k_n}\big]\big\|_{\subL1}
  \\& \le
  4\tsum_{\min\braces{i,j}\le k'_n}
  \Bigl(
  \mathbb{E}[\mfrac{|\widehat{\mathbb{F}}_{\infty,i}(h_n,X_n,e)|^{2}
    \mathbb{I}\braces{|\widehat{\mathbb{F}}_{\infty,i}(h_n,X_n,e)|> \gamma_nc_{i,2}(h_n)}}{\eta(n)^2}]
  \Bigr)^{\frac{1}{2}}\\
  & \qquad \quad\;c_{j,2}\big(\tfrac{h_n}{\eta(n)}\bigr)]
      \mathbb{E}[\mathbb{E}_{\mu_n^{\otimes 2}}\mathbb{I}\braces{\bphi_i^{-1}\bphi'_j\in \B_{b_n}}|\A_n^{2k_n}]]
        \\&
        \le 8   \mathcal{S}^n_w |\B_{b_n}|\big(\tsum_{j} c_{j,2}\big(\tfrac{h_n}{\eta(n)}\big)\big)
        \\&
        \quad\;
        \tsum_i\bigl(\mathbb{E}[|\mfrac{1}{\eta(n)^2}\widehat{\mathbb{F}}_{\infty,i}(h_n,X_n,e)|^{2}
              \mathbb{I}\braces{|\widehat{\mathbb{F}}_{\infty,i}(h_n,X_n,e)|> \gamma_nc_{i,2}(h_n)}]
        \bigr)^{\frac{1}{2}}\;.
\end{align*}
The second term involves four-way interactions, so some abbreviations are helpful:
Set ${\zeta_i:=\|\widehat{\mathbb{F}}_{\infty,i}(h_n,X_n,e)\mathbb{I}\braces{|
    \widehat{\mathbb{F}}_{\infty,i}(h_n,X_n,e)|\le \gamma_n c_{i,2}(h_n)}\|_{4+2\varepsilon}}$
and ${\widehat{\mathbb{F}}_{\infty,i}^{\gamma_n}:=\min\braces{\widehat{\mathbb{F}}_{\infty,i},\gamma_n}}$.
For ${\phi,\phi',\psi,\psi'\in\group}$ and indices ${i,j,l,m}$, we have
\begin{align*}
&\big\|\text{\rm Cov}[
        \widehat{\mathbb{F}}^{\gamma_n}_{\infty,i}(h_n,X_n,\phi)\widehat{\mathbb{F}}^{\gamma_n}_{\infty,l}(h_n,X_n,\phi'),\;
        \widehat{\mathbb{F}}^{\gamma_n}_{\infty,j}(h_n,X_n,\psi)\widehat{\mathbb{F}}^{\gamma_n}_{\infty,m}(h_n,X_n,\psi')
]\big\|_1
\\& \hspace{5em}\le 4\,\zeta_i\,\zeta_j\,\zeta_l\,\zeta_m\,
\alpha_n^{\frac{\varepsilon}{2+\varepsilon}}\bigl(\bar{d}((\phi,\phi'),(\psi,\psi'))\big|\mathbb{G}\bigr)
\end{align*}
Therefore, by definition of $S_4^{\ast}$, we have
\begin{equation*}\begin{split}&
\sum_{i\le k_n,j\le k'_n}\big\||\A_n|\mathbb{E}_{\mu_n^{\otimes 2} }\big[
  \mathbb{I}\braces{\boldsymbol{\phi}'\in V_{j,\beta_n},d(\boldsymbol{\phi}_i,\boldsymbol{\phi}'_j)\le {b_n}}
  \tfrac{\overline{F}_{ij}(\bphi_i,\bphi'_j,\gamma_n)}{\eta(n)^2}
  \big|\A_n^{2k_n}\big]\big\|_{2}\\&\le 8 \tfrac{|\B_{b_n}|{k'_n}^2}{\sqrt{|\A_n|}}
\Bigl(S^*_4\tsum_i |\B_{i+1}\!\setminus\!\B_{i}|\alpha_n^{\frac{\varepsilon}{2+\varepsilon}}(i|\mathbb{G})\Bigr)^{\frac{1}{2}}
\tsum_{i\le k_n,j\le k'_n} \zeta_i\,\zeta_j\;.
\end{split}\end{equation*}
In summary, we can upper-bound $a''_{ij}$ as
\begin{equation*}\begin{split}
\tsum_{i\le k_n,j\le k'_n}&a''_{ij}\le 8 \tfrac{|\B_{b_n}|{k'_n}^2}{\sqrt{|\A_n|}}
\Bigl(S^*_4\tsum_i |\B_{i+1}\!\setminus\!\B_{i}|\alpha_n^{\frac{\varepsilon}{2+\varepsilon}}(i|\mathbb{G})\Bigr)^{\frac{1}{2}}
\tsum_{i\le k_n,j\le k'_n} \zeta_i\,\zeta_j\\
&+ 8\,  \mathcal{S}^n_w \,|\B_{b_n}|\big(\tsum_{j} c_{j,2}\big(\tfrac{h_n}{\eta(n)}\big)\big)
\\&
\quad\;
\tsum_i\bigl(\mathbb{E}[|\widehat{\mathbb{F}}_{\infty,i}(h_n,X_n,e)|^{2}
\mathbb{I}\braces{|\widehat{\mathbb{F}}_{\infty,i}(h_n,X_n,e)|> \gamma_nc_{i,2}(h_n)}]
\bigr)^{\frac{1}{2}}\;.
\end{split}\end{equation*}
An upper bound on the final term $\sum_{i,j}(b_{ij})$ is, by Cauchy-Schwartz,
\begin{align*}
2\bigl(\tsum_i c_{i,2}\big(\tfrac{h_n}{\eta(n)}\big)\bigr)^2
\sup_{i,j}\mathbb{E}[\mathbb{E}_{\mu_n^{\otimes 2}}[|\A_n|\mathbb{I}\braces{\boldsymbol{\phi}'\not\in V_i(\beta_n),d(\boldsymbol{\phi}_i,\boldsymbol{\phi}'_j)\le b_n}|\A_n^{2k_n}]]\;,
\end{align*}
and we have hence obtained all terms in the bound under hypothesis \eqref{H2:g}. If \eqref{H1:g} holds instead, there
is again a constant distance $K$ beyond which correlations vanish, and
\begin{align*}
  & \text{(a)} \le\tfrac{8   \mathcal{S}^n_w  |\B_{K}|^2k_n^4}{|\A_n|}M^2_2\big(\tfrac{h_n}{\eta(n)}\big)
 + |\B_{b_n}|S^{\ast}_0  \mathcal{S}^n_w \bigl(\tsum_i c_{i,2}\big(\tfrac{h_n}{\eta(n)}\big)\big)^2
  \\ &
  \text{(b)} \le 2 \tfrac{|\B_{K}|^{\frac{3}{2}}{k'_n}^2}{\sqrt{|\A_n|}}\sqrt{S_4^{\ast}}\tsum_{i\le k_n,j\le k'_n}\zeta_i\zeta_j\;,
\end{align*}
which completes the proof of the lemma.
\end{proof}

\subsection{Proof of the central limit theorem}
\label{sec:proof:CLT:g}
We complete the proof of \cref{theorem:CLT:g} by showing ${\dW(\sqrt{|\A_n|}\,\widehat{\mathbb{F}}_n(h_n,X_n), \eta Z\bigl)\rightarrow 0}$.  We first note that
\begin{equation}
  \label{eq:variance:convergence}
  \|\widehat{\eta}_{m,n}^2-\eta^2_m\|_{\subL1}\xrightarrow{n\rightarrow\infty}0
  \qquad\text{ for all }m\in\mathbb{N}\;.
\end{equation}
That is the case since, for every $\varepsilon>0$, we have
\begin{align*}
  \mathbb{E}[|\widehat{\eta}_{m,n}^2-\eta^2_m|]
  \le&\;
  \varepsilon +\mathbb{E}[\eta_m^2\mathbb{I}\braces{|\widehat{\eta}_{m,n}^2-\eta^2_m|>\varepsilon})
    +
    \mathbb{E}\big[\widehat{\eta}_{m,n}^2\mathbb{I}\braces{|\widehat{\eta}_{m,n}^2-\eta^2_m|>\varepsilon}\big]
    \\
    \le&\; \varepsilon +\mathbb{E}[\eta_m^2 \mathbb{I}\braces{|\widehat{\eta}_{m,n}^2-\eta^2_m|>\varepsilon}]\\
    &\quad+
    |\B_m|   \mathcal{S}^n_w \bigl(\tsum_i c_{i,2}(h_n\mathbb{I}\braces{|\widehat{\eta}_{m,n}^2-\eta^2_m|>\varepsilon})\bigr)^2\;,
\end{align*}
and  \eqref{eq:variance:convergence} follows by uniform integrability of
  $(h_n(\boldsymbol{\phi}X_n)^{2})_{\boldsymbol{\phi},n}$.

We next must
specify suitable sequences of coefficients $\gamma_n$, $\beta_n$, $k_n$, $k_n'$, and $b_n$
for which the relevant terms in the bounds in \cref{aix_la_chapelle} and \ref{neuchatel}
converge to $0$
as ${n\rightarrow\infty}$.
We first choose $(\gamma_n)$ and $(\beta_n)$ to satisfy
\begin{equation*}
  r^1_n:={\beta_n\gamma_n^2 k_n^2}/{\sqrt{|\A_n|}}  \;\longrightarrow\;0\;.
\end{equation*}
Such sequences exist, since ${k_n^2/\sqrt{|\A_n|}\rightarrow 0}$. Because of \eqref{eq:variance:convergence}, $(b_n)$ can be chosen to additionally satisfy
\begin{equation*}
  \|\widehat{\eta}_{b_n,n}^2-\eta^2_{b_n}\|_{1}
  \xrightarrow{n\rightarrow\infty}0\;.
\end{equation*} In addition we ask that
$(k_n')$ and $(b_n)$ satisfy
\begin{align*}
  r^2_n
  \,:=&\;
  |\B_{b_n}|k_n'S_0^{\ast}
  \;\longrightarrow\;0\\
  r^3_n
  \,:=&\;
  |\B_{b_n}|k_n'
  \tsum_i c_{i,2}\big(\bar{h}_n^i(\boldsymbol{\phi}X_n)\mathbb{I}\braces
       {\tfrac{|\bar{h}_n^i(\boldsymbol{\phi}X_n)|}{ c_{i,2}(h_n)}>\gamma_n}\bigr)
  \;\longrightarrow\;0\\
  r^4_n
  \,:=&\;
  |\B_{b_n}|\Bigl(\tsum_{k'_n<i} c_{i,2+\varepsilon}(h_n)\Bigr)
  \;\longrightarrow\;0\\
  r^5_n
  \,:=&\;
  |\B_{b_n}|\tfrac{k_n^2\gamma_n^2}{\sqrt{|\A_n|}}+\mathcal{R}_{n}(b_n)+k'_n r_n^1
  \;\longrightarrow\;0
\end{align*}
as ${n\rightarrow\infty}$, which is possible since ${S_0^{\ast}\rightarrow 0}$ as ${\beta_n\rightarrow\infty}$.
Consequently, we can choose sequences ${(\delta_n)}$ and ${(\varepsilon_n)}$, with
${\delta_n\rightarrow\infty}$ and
${\varepsilon_n\rightarrow\infty}$ such that
\begin{equation*}
  \delta_n/\varepsilon_n^3\rightarrow 0
  \quad\text{ and }\quad
  \delta_n r_n^j/\varepsilon_n^3
  \xrightarrow{n\rightarrow\infty} 0
  \quad
  \text{ for }j=1,\ldots,5\;.
\end{equation*}
 Because of \eqref{eq:variance:convergence}, these sequences can be chosen to additionally satisfy
\begin{equation*}\frac{ \delta_n}{\varepsilon_n^3} \|\widehat{\eta}_{b_n,n}^2-\eta^2_{b_n}\|_{1}
  \xrightarrow{n\rightarrow\infty}0\;.\end{equation*}
Let $\eta$ be the asymptotic variance, as in the hypothesis of the theorem.
Given $(\varepsilon_n)$ and $(\delta_n)$, we construct the sequence $(\eta(n))_n$ as\begin{equation*}
  \eta(n):=
  \eta\mathbb{I}\braces{\eta\in[u_n,v_n]}
  +
  \varepsilon_n\mathbb{I}\braces{\eta\not\in[u_n,v_n]}\;.
\end{equation*} Then
using \cref{lemma:random:scaling} we obtain
\begin{equation*}
\dW(S_n ,\eta(n)Z) \le \delta_n\mean\big[ \dW\big(\tfrac{S_n }{\eta(n)},Z\big|\group\big)\big]\quad\text{ for }\quad
S_n:=\sqrt{|\A_n|}\,\hF_n(h_{n},X_n)\;.
\end{equation*}
To apply \cref{aix_la_chapelle} and \cref{neuchatel}, we note that
\begin{equation*}
  \sup_n \tsum_i c_{i,2}\bigl(
  \bar{h}_n^i(\boldsymbol{\phi}X_n)\mathbb{I}\braces{
    |\tfrac{\bar{h}_n^i(\boldsymbol{\phi}X_n)}{c_{i,2}(h_n)}|>\gamma_n
  }\bigr)\rightarrow 0
  \qquad\text{ as }\gamma_n\rightarrow\infty\;.
\end{equation*}
Recall that the constants $S_0^{\ast}$, $S_2^{\ast}$, etc by definition depend
on the specific choice of the sequence $(k_n')$ and $(\beta_n)$. With the sequences satisfying:
\begin{equation*}
S_2^*\le k'_n \beta_n  \mathcal{S}^n_w 
\qquad
S_4^*\le {k'_n}^2 \beta_n^2  \mathcal{S}^n_w 
\qquad S_0^{\ast} \rightarrow 0\;.
\end{equation*}
Moreover, we have
${\sum_{i\le k_n,j\le k'_n} \zeta_i\,\zeta_j\le \frac{\gamma_n^2}{\varepsilon_n^2}\big[ \sum_i c_{i,2+\varepsilon}(h_n)}\big]^2$
and
\begin{equation*}
  \tsum_i \big\|\bar{h}_n^i(\boldsymbol{\phi}X_n)
  \mathbb{I}\braces{|\bar{h}_n^i(\boldsymbol{\phi}X_n)| \le \gamma_n c_{i,2}\big(\tfrac{h_n}{\eta(n)}\big)}
  \big\|_{L_{\infty}}\le \gamma_n \tsum_i c_{2,i}(h_n)\;.
\end{equation*}
Substituting into
\cref{aix_la_chapelle} and \ref{neuchatel}, we then obtain an upper bound
on ${\mean\big[ \dW\big(\tfrac{S_n }{\eta(n)},Z\big|\group\big)\big]}$ and hence, as shown above, on ${\dW(S_n,Z)}$ as claimed.

\subsection{Proof of the Berry-Esseen theorem}
\label{sec:proof:BE:g}

To prove \cref{theorem:BE:g}, let $\mu_n^*$ be the random measure defined in \cref{gruyere}.
We consider the variable
\begin{equation*}
  W:=
  \tfrac{\sqrt{|\A_n|}}{\eta}
  \mathbb{E}_{\mu_n}[h_n(\boldsymbol{\phi}X_n)|\A_n^{k_n}]
  =
  \tfrac{\sqrt{|\A_n|}}{\eta}
  \tsum_i\mathbb{E}_{\mu_n}[\bar{h}^i_n(\boldsymbol{\phi}X_n)|\A_n^{k_n}]\;,
\end{equation*}
and similarly define $W^{\ast}$ by substituting $\mu_n^{\ast}$ for $\mu_n$, as in
\cref{neuchatel}. If $(b_n)$ is the increasing sequence chosen in the theorem,
\cref{aix_la_chapelle} shows
\begin{equation*}
  \big|\dW(W,Z)-\dW(W^{\ast},Z)\big|
  \;\le\; \frac{k_n^2 C_{1}(\frac{h_n}{\eta(n)})|\B_{b_n}|  \mathcal{S}^n_w }{\sqrt{|\A_n|}}\;.
\end{equation*}
(If hypothesis \cref{H1:g} is assumed, we can in particular choose $b_n=K$ for all $n$ and
some $K$.)
We can apply \cref{neuchatel}, where we choose ${\eta(n):=\eta}$ and ${k_n':=k_n}$ for all $n$.
In \cref{stein:term1}--\ref{stein:term4}, we can set
$p=\frac{3}{2}$ and $q=\frac{1}{3}$. The constants ${S^{\ast}_2,S^{\ast}_4}$ and the
weak spreading coefficient $  \mathcal{S}^n_w $ can then be bounded in terms of the (strong)
spreading coefficients as
\begin{equation*}
  S_{2}^* \le \mathcal{S}^n
  \qquad
  S_{4}^*\le \mathcal{S}^n
  \qquad
    \mathcal{S}^n_w  \le \mathcal{S}^n\;,
\end{equation*}
and substitute these into the bounds in \cref{stein:term1}--\ref{stein:term4}.
The sequences $(\beta_n)$ and $(\gamma_n)$, which respectively
controls moments of $(\mu_n)$ and $\frac{h_n}{\eta(n)}$, are relevant in the proof of the central limit theorem;
here, we can set ${\beta_n=\gamma_n=\infty}$ for all $n$, and note that
\begin{align*}
  \|\bar{h}_n^i(\boldsymbol{\phi}X_n)\mathbb{I}\braces
        {|\bar{h}_n^i(\boldsymbol{\phi}X_n)|\! \le \!\gamma_n c_{i,2}\big(\tfrac{h_n}{\eta(n)}\big)}
        \|_{3(1+\frac{\epsilon}{2})}
        \;=\;
        \|\bar{h}_n^i(\boldsymbol{\phi}X_n)\|_{3(1+\frac{\epsilon}{2})}
        \le\; c_{i,{3(1+\frac{\epsilon}{2})}}\big(\tfrac{h_n}{\eta}\big)
\end{align*}
and ${\zeta_i\le c_{4+2\epsilon,i}\big(\tfrac{h_n}{\eta}\big)}$.
Substituting all terms into \cref{neuchatel} completes the proof.

\newpage
\section{Other Proofs}
\label{proofs:other}
This appendix collects the proofs of all results aside from the main
limit theorems---on mixing coefficients,
concentration, and applications---in the order they appear in the text.

\subsection{Properties of mixing coefficients}
\begin{proof}[Proof of \cref{lemma:hypotheses}] Fix ${n\in\mathbb{N}}$
and ${(A,B)\in \mathcal{C}(n)}$. Using the triangle
inequality,
\begin{equation*}
    \begin{split}
        &\mathbb{E}\big[|P(A|\group)P(B|\group)-P(A\cap B|\group)|\big]\\[.2em]
        &\le 2\!\!\sup_{C\in \sigma({\group})}\mathbb{E}\big[\mathbb{I}(C)\big(P(A|\group)P(B|\group)-P(A\cap B|\group)\big)\big]
        \;\le\;2\!\!\sup_{C\in \sigma({\group})} \bigl( a + b\bigr)
      \end{split}
    \end{equation*}
    where we have abbreviated
    \begin{equation*}
      \begin{split}
        a:=&\,\mathbb{E}\big[\mathbb{I}(C)P(A|\group)P(B|\group)-P(A) P(B\cap C)\big]\\
        \text{and\;\;}b:=&\,\mathbb{E}\big[P(A)P(B\cap C)-\mathbb{I}(C)P(A\cap B|\group)\big]\;.
\end{split}\end{equation*}
It follows from the tower property that
\begin{equation*}
b\;\le\;
\big|P(A\cap B\cap C)-P(A)P(B\cap C)\big|\le \alpha(n)\;,
\end{equation*}
and therefore ${b\le \alpha(n)}$. Similarly,
\begin{equation*}\begin{split}
a\;&\le\;
\big|\mathbb{E}\big[P(A)P(B\cap C)-\mathbb{I}(A)P(B\cap C|\group)\big]\big|\\
&\le\big| P(A)P(B\cap C)-\mathbb{E}\big[\mathbb{I}(A)P(B\cap C|\group)\big]\big|
\le \lim_{k\rightarrow \infty} \alpha(k)=0\;.
\end{split}\end{equation*}
In summary, ${\mathbb{E}[|P(A|\group)P(B|\group)-P(A\cap B|\group)|]\le 4 \alpha(n)}$. Since that is the case for all
${n\in\mathbb{N}}$ and ${(A,B)\in\mathcal{C}(n)}$, we conclude ${\alpha(n|\group)\le 4 \alpha(n)}$
\end{proof}
To relate marginal and conditional mixing coefficients, we use \cref{lemma:mixing}:
\begin{proof}[Proof of \cref{prop_mix_b}]
  Fix ${i,j\leq k}$.
  We can choose a subset ${G\subset\group}$ and vectors
  ${\bphi,\bphi',\bpsi,\bpsi'\in\group^k}$ such that
  ${\delta_{i,j}(\bphi,\bphi',G)\ge t}$ and ${\delta_{i,j}(\bpsi,\bpsi',G)\ge t}$ and
  \begin{equation*}
    \psi_l=\begin{cases}\pi\phi_i &\text{if}~l= i\\ \phi_l & \rm{otherwise}\end{cases}
    \quad
    \psi'_l=\begin{cases}\pi\phi'_j &\text{if}~l= j\\ \phi'_l
      & \rm{otherwise}\end{cases}
    \quad
    \text{ for some }\pi\in\group\;.
  \end{equation*}
  For Borel sets ${A\subset\mathbb{R}^2}$ and ${B\subset\mathbb{R}^G}$, \cref{lemma:mixing} shows
\begin{equation*}\begin{split}&
 \big\|\mathbb{E}\big[\mathbb{I}[(X_{\bphi},X_{\bphi'})\!\in\! A]\mathbb{I}[X_{G}\!\in\! B]|\group\big]-\mathbb{E}\big[\mathbb{I}[(X_{\bpsi},X_{\bpsi'})\!\in\! A]
 \mathbb{I}[X_{G}\!\in\! B]|\group\big]\big\|_{1}\\&\le \alpha(t|\group).
\end{split}
\end{equation*}
Substituting into the definition of $P_{i,j}(\cdot)$ gives
\begin{equation*}
|P(A,B|\group)-\mean[P_{i,j}(A)\mathbb{I}\braces{X_n\in B}|\group_n]|\le \alpha(t|\group)
\end{equation*}
for all ${i,j\leq k}$, and hence ${\alpha_n(t|\group)\le \alpha(t|\group)}$ as claimed.
\end{proof}

\subsection{Concentration}
To prove concentration, we use the
``exchangeable pairs'' variant of Stein's method, in this form due to
Chatterjee \citep{Chatterjee:2005}.
\nolinebreak
\begin{proof}[Proof of \cref{theorem:concentration}]
  The proof strategy is to regard
  ${\mathbb{E}_{\mu_n}[h_n(\boldsymbol{\phi} X_n)|\A_n^{k_n}]}$ as an
  integral, approximate this integral by sums,
  and establish concentration of each sum. These sums are constructed as follows:
  For each ${m\in\mathbb{N}}$, let $C_m$ be an $\epsilon_m$-net with ${\epsilon_m=1/m}$. Let $\lambda_m$ be a partition of
  $\group$ into a countable number of measurable sets; we write $\lambda_m(\phi)$ for the set containing a given ${\phi\in\group}$.
  Clearly, this partition can be chosen such that
  \begin{equation*}
    \text{ each }\phi\in C_m\text{ is in a separate set of }\lambda_m
    \quad\text{ and }\quad
    \lambda_m(\phi)\subset\B_{1/m}(\phi)\;.
  \end{equation*}
  Since ${\lambda_m}$ partitions $\group$, the product
  ${\lambda_m^{k_n}:=\lambda_m\times\ldots\times\lambda_m}$
  partitions $\group^{k_n}$, and we discretize the integral as
  \begin{equation*}
    \Sigma_{nm} := \tsum_{\boldsymbol{\phi}\in C_m^{k_n} }\mathbb{E}_{\mu_n}\big[
      \lambda_m^{k_n}(\bphi)| \A_n^{k_n}\big]h_n(\boldsymbol{\phi}X_n)\;.
  \end{equation*}
  For each fixed ${n\in\mathbb{N}}$, the approximation error satisfies
  \begin{equation*}
    \big\|\Sigma_{nm}-\mathbb{E}_{\mu_n}[h_n(\boldsymbol{\phi}X_n)\big|\A_n^{k_n}]\big\|_{1}
    \le
    \sup_{\substack{\bphi,\bphi'\in\mathbb{G}^{k_n}\\d(\bphi'_i,\bphi_i)\le \epsilon_m,~i\le k_n}}
    \|h_n(\boldsymbol{\phi}X_n)-h_n(\boldsymbol{\phi}'X_n)\|_{\subL1}
    \;\xrightarrow{m}\; 0.
  \end{equation*}
  Thus,
  ${\|\Sigma_{nm}-\mathbb{E}_{\mu_n}[h_n(\boldsymbol{\phi}X_n)\big|\A_n^{k_n}]\|\rightarrow
    0}$ as ${m\rightarrow\infty}$.  
  Since $h_n$ is $\L_1$-uniformly continuous,
  \begin{equation*}
    \mathbb{P}({|\mathbb{E}_{\mu_n}[h_n(\boldsymbol{\phi}X_n)|\A_n^{k_n}]|> t}\,|\,\mu_n)
    \;\le \;
    \limsup_{m}\mathbb{P}({|\Sigma_{nm}|\ge t}\,\big|\,\mu_n)\quad\text{ for }t>0\;.
  \end{equation*}  Now apply the method of exchangeable pairs:
  Consider the sets of vectors $${\lambda_m^{-i}(\phi):=\{(\psi_1,\ldots,\psi_{k_n})\in\A_n^{k_n}\vert \psi_i\in \lambda_m(\phi)\}.}$$
  Recall that ${\Sigma_{nm}}$ is self-bounded by hypothesis. For each
  ${i\leq k_n}$, the self-bounding coefficient is
  ${\sum_{\bphi\in C_m^{k_n}}c_i \mathbb{E}_{\mu_n}[\lambda_m^{-i}(\bphi_i)|\A_n^{k_n}]}$.
  Using \cite[][Theorem 4.3]{Chatterjee:2005}, we obtain
  \begin{align*}
    \mathbb{P}(|\Sigma_{mn}|\ge t|\mu_n)
    & \quad\le \quad
    2\, \mathbb{E}\Big[\exp\Bigl(
    {-\frac{\big(1-\Lambda[(X_{\phi})_{\phi\in C_m}]\big)t^2}
      {\sum_{\phi\in C_m}(\sum_{i}c_i\mathbb{E}_{\mu_n}[\lambda^{-i}_m(\bphi_i)|\A_n^{k_n}])^2}}
      \Bigr)\Big]
    \\ & \quad \le \quad
    2\, \mathbb{E}\Big[ \exp\Bigl(-|\A_n|\frac{(1-\Lambda[(X_{\phi})_{\phi\in C_m}])t^2}
          {\tau^2_n|\B_{1/m}| \big(\sum_{i}c_i\big)^2}\Bigr)\Big]\;,
  \end{align*}
  where the second inequality uses the definition of $\tau_n$. That holds
  for any $m$, and any decreasing sequence $(C_m)$ of nets. For ${m\rightarrow\infty}$, we hence obtain
  \begin{equation*}
    \mathbb{P}(|\mathbb{E}_{\mu_n}(h_n(\boldsymbol{\phi}X_n)|\A_n^{k_n})|\ge t\,|\,\mu_n)
    \le
    2\,\mathbb{E}\big[ \exp\bigl({-|\A_n|\tfrac{(1-\rho_n)t^2}{[\sum_{i}c_i]^2 \tau^2_n }}\bigr)\big]
  \end{equation*}
  as claimed, where we have substituted in the definition of $\rho_n$.
\end{proof}

\subsection{Approximation by subsets of transformations}
Recall that we may assume ${\mean[f(X)|\group]=0}$ without loss of generality, by \cref{lemma:centering}.
\begin{proof}[Proof of \cref{result:discretization}]
Set ${f':=f-\mean[f(X)|\group]}$. By \cref{theorem:CLT:g},
\begin{equation*}
\myint_{\A_n}\tfrac{f'(\phi X)}{\sqrt{|\A_n|}}
|d\phi|\darrow\eta Z\;.
\end{equation*}
For the measures $(\mu_n)$ chosen as $\mu_n(A):=|A\bigcap \mathbb{H}|$, the theorem shows
\begin{equation*}
\myint_{\A_n\cap \mathbb{H}} \tfrac{f'(\phi X)}{\sqrt{|\A_n\cap \mathbb{H}|}}|d\phi|\,\darrow\,\eta_H Z
\quad\text{ and }\quad
\myint_{\A_n} \tfrac{f'(\phi X)}{\sqrt{|\A_n|}}
|d\phi|\,\darrow\,\eta Z\;.
\end{equation*}
Since the random variables ${\eta}$ and ${\eta_H}$ satisfy
\begin{align*}
|\mathbb{K}|\eta_H^2-\eta^2 \; &= \;
|\mathbb{K}|\int_{\mathbb{H}}\mathbb{E}[f(X)f(\phi X)|\group]|d\phi|-\eta^2\\
\; &= \;
\int_{\mathbb{H}}\int_{\mathbb{K}}\mathbb{E}[f(X)[f(\phi X)-f(\phi\theta X)]|\group]|d\theta||d\phi|
\end{align*}
almost surely, the result follows.
\end{proof}

\subsection{Applications}

We first establish \cref{theorem:exchangeable}, on exchangeable structures. The idea of the proof is
to represent $(f(\phi X))_{\phi\in\mathbb{S}_n}$ approximately,
by a certain random field $X_n$ on $\mathbb{Z}^{k_n}$ that is
invariant under diagonal action of shifts. That allows us to apply
\cref{theorem:CLT:g,theorem:BE:g}. That can be read as an example of the generalized U-statistics in
\cref{corollary:U:statistics}.
\begin{proof}[Proof of \cref{theorem:exchangeable}]
For $i\in \mathbb{N}$, we denote
\begin{equation*}
  d_i:=\limsup_{j}\|f(X)-f(\tau_{ij}X)\|_{2}
  \quad\text{ and }\quad
  d_i(\eta):=\limsup_{j}\big\|\tfrac{f(X)-f(\tau_{ij}X)}{\eta}\big\|_{2}\;.
\end{equation*}
Consider the segment ${[i]=\braces{1,\ldots,i}}$, and write
${\mathbb{S}^{[i]}_m=\{\phi\in \mathbb{S}_m\vert\phi[i]=[i]\}}$ for the set of permutations that leave it invariant.\\[.5em]
\noindent\emph{Step 1: Approximation}. We define
\begin{equation*}
  \bar{f}^i(x):=\lim_{m\rightarrow\infty}\tfrac{1}{|\mathbb{S}^{[i]}_m|}\tsum_{\psi\in\mathbb{S}^{[i]}_m}f(\psi x)\;,
\end{equation*}
and use $\bar{f}^i(\phi X)$ as a surrogate of $f(\phi X)$ that depends only on the image $\phi[i]$.
Averaging out the $k$th coordinate gives
\begin{equation*}
  \bar{f}^{i,k}(x):=\lim_{m\rightarrow\infty}\tfrac{1}{m}\tsum_{l\le m}\bar{f}^i(\tau_{l,k}x)\;.
\end{equation*}
We will show that for any increasing, divergent sequence $(k_n)$,
\begin{equation*}
 \tfrac{\sqrt{n}}{|\mathbb{S}_n|}\tsum_{\phi\in \mathbb{S}_n}\big(f(\phi X)- \bar{f}^{k_n}(\phi X)\big)
 \;\xrightarrow{\;\L_1\;}\;0\qquad\text{ as }n\rightarrow\infty\;.
\end{equation*}
Indeed, since ${(f\!-\!\bar{f}^{k_n})=\sum_{k\ge k_n}(\bar{f}^{k+1}\!-\! \bar{f}^{k})}$, we have
\begin{equation*}
  \begin{split}
    &
    \big\| \tfrac{\sqrt{n}}{|\mathbb{S}_n|}\tsum_{\phi\in\mathbb{S}_n}\!\big(f(\phi X)- \bar{f}^{k_n}(\phi X)\big)\big\|^2_{1}
    \;\le\;
    \big\|\tfrac{\sqrt{n}}{|\mathbb{S}_n|}\tsum_{\phi\in\mathbb{S}_n}\!\big(f(\phi X)- \bar{f}^{k_n}(\phi X)\big)\big\|^2_{2}
    \\&\le
    \tfrac{n}{|\mathbb{S}_n|^2}
    \tsum_{\phi,\psi\in\mathbb{S}_n}
    \mathbb{E}\big[\big({f}(\phi X)- \bar{f}^{k_n}(\phi X)\big)\big(f(\psi X)- \bar{f}^{k_n}(\psi X)\big)\big]
    \\&\le
    \tfrac{{n}}{|\mathbb{S}_n|^2}
    \tsum_{k\ge k_n}\tsum_{\phi,\psi\in\mathbb{S}_n}\mathbb{E}\big[\big(\bar{f}^{k+1}(\phi X)- \bar{f}^{k}(\phi X)\big)\big(f(\psi X)- \bar{f}^{k_n}(\psi X)\big)\big]
\end{split}
\end{equation*}
Consider the summands on the right-hand side. Observe that
\begin{equation*}
  \begin{split}
    \mathbb{E}\big[(\bar{f}^{k}(\phi X)- \bar{f}^{k-1}(\phi X))\bar{f}^{\infty,m}(\psi X)\big] =&\; 0
    \\
    \text{ and }\quad
    \mathbb{E}\big[(\bar{f}^{k}(\phi X)- \bar{f}^{k-1}(\phi X))\bar{f}^{k_n,m}(\psi X)\big] =&\; 0
\end{split}\end{equation*}
whenever ${\psi(m)=\phi(k)}$ for ${k\le m}$. Each summand is hence bounded as
\begin{equation*}
  \begin{split}
    \big|\mathbb{E}\big[&
    (\bar{f}^{k+1}(\phi X)- \bar{f}^{k}(\phi X))(f(\psi X)- \bar{f}^{k_n}(\psi X))
    \big]\big|
    \\=&\;
    \big|\mathbb{E}\big[
    (\bar{f}^{k+1}(\phi X)\!-\! \bar{f}^{k}(\phi X))(f(\psi X)\!-\!\bar{f}^{\infty,m}(\psi X)\!-\!\bar{f}^{k_n}(\psi X)\!+\!\bar{f}^{k_n,m}(\psi X))
    \big]\big|
    \\\le&\;
    \big\|\bar{f}^{k+1}(\phi X)\!-\! \bar{f}^{k}(\phi X)\big\|_{2}\,
    \big\|f(\psi X)\!-\!\bar{f}^{\infty,m}(\psi
    X)\!-\! \bar{f}^{k_n}(\psi X)\!+\!\bar{f}^{k_n,m}(\psi
    X)\big\|_{2}\\
    \le&\;
    2 d_k d_m\;.
\end{split}\end{equation*}
Substituting into the bound yields
\begin{equation*}
  \begin{split}
    &
    \big\| \tfrac{\sqrt{n}}{|\mathbb{S}_n|}\tsum_{\phi\in\mathbb{S}_n}(f(\phi X)- \bar{f}^{k_n}(\phi X))\big\|^2_{1}
    \\&\le\;
    \tfrac{{n}}{|\mathbb{S}_n|^2}\tsum_{k\ge k_n}\tsum_{m\in \mathbb{N}}\tsum_{\phi,\psi\in\mathbb{S}_n}\mathbb{I}\braces{\phi(k)=\psi(m)} d_k d_m
    \\&\le\;
    2\big(\tsum_{k\ge k_n} d_k\big)\big(\tsum_{m\in \mathbb{N}} d_m\big)\;\longrightarrow\; 0.
\end{split}\end{equation*}
It hence suffices to show ${\frac{\sqrt{n}}{|\mathbb{S}_n|}\sum_{\phi\in\mathbb{S}_n}\bar{f}^{k_n}(\phi X)}$ is asymptotically normal
if ${k_n=o(n^{1/4})}$.\\[.5em]
\noindent\emph{Step 2: Representation by random fields}.
For each ${n\in\mathbb{N}}$, we construct a scalar random field $X_n$ on
$\mathbb{Z}^{k_n}$ as follows: For ${\mathbf{j}=(j_1,\ldots,j_k)\in\mathbb{Z}^k}$, define
the permutation ${\phi_\mathbf{j}:=\tau_{1,j_1}\circ\cdots\circ\tau_{k,j_k}}$.
Note that ${\phi_\mathbf{j}[k]=\mathbf{j}}$. Then 
\begin{equation*}
  X_n:=\big(Y_{\mathbf{j}}\big)_{\mathbf{j}\in\mathbb{Z}^{k_n}}
  \quad\text{ where }\quad
  Y_{\mathbf{j}}:=\begin{cases}
        \bar{f}^{k_n}(\phi_{\mathbf{j}}X) & \text{if
  }\mathbf{j}_l\ne \mathbf{j}_k\text{ for all } l\ne k\\
  0 & \text{otherwise}\end{cases}
\end{equation*}
is a random element of ${\xspace_n:=\mathbb{R}^{\mathbb{Z}^{k_n}}}$.
The shift
${(\mathbf{i},(x_{\mathbf{j}})_{\mathbf{j}\in\mathbb{Z}^{k_n}})\mapsto(x_{\mathbf{j}+\mathbf{i}})_{\mathbf{j}\in\mathbb{Z}^{k_n}}}$
is an action of the group $\mathbb{Z}^{k_n}$ on $\xspace_n$.
Since $X$ is exchangeable, $X_n$ is by construction invariant under the diagonal action of $\mathbb{Z}^{k_n}$, and its marginal
mixing coefficients satisfy ${\alpha_n(t|\group)=0}$ for all
${t>0}$. \cref{theorem:CLT:g} then shows convergence as
in \eqref{eq:CLT:exchangeable} holds, for ${\eta\condind Z}$.\\[.5em]
\noindent\emph{Step 3: Berry-Esseen bound}. The reasoning is similar: For ${k\in \mathbb{N}}$, we have
\begin{equation*}
  \begin{split}
  &
  \dW\big(\tfrac{\sqrt{n}}{\eta|\mathbb{S}_n|}\!\tsum_{\phi\in\mathbb{S}_n}\!f(\phi X),\tfrac{\sqrt{n}}{\eta|\mathbb{S}_n|}\!\tsum_{\phi\in\mathbb{S}_n}\! \bar{f}^{k}(\phi X)\big)
  \le
2\big(\tsum_{l\ge k} d_l(\eta)\big)\big(\tsum_{m\in \mathbb{N}} d_m(\eta)).
\end{split}
\end{equation*}
We denote ${\eta^2(n):=\sum_{i,j\le k}{\rm Cov}[\mathbb{F}^i(X,e) \mathbb{F}^j(X,\phi)|\group]}$, and observe that
\begin{equation*}
  \begin{split}
    \big\|\mfrac{\eta^2(n)-\eta^2}{\eta^2}\big\|
    \;&\le\; \Big\|\mfrac{\sum_{l=k}^{\infty}\!\sum_{ m\in \mathbb{N}}{\rm Cov}[\mathbb{F}^l(X,e) \mathbb{F}^m(X,\phi)|\group]}{\eta^2}\Big\|\\
    &\le\; 2\big(\!\tsum_{m\in \mathbb{N}}d_m(\eta)\big)\!\tsum_{l\ge k}d_l(\eta).
\end{split}
\end{equation*}
Substituting into \cref{theorem:BE:g} gives
\begin{equation*}
    \dW\big( \mfrac{\sqrt{n}}{\eta|\mathbb{S}_n|}\msum_{\phi\in\mathbb{S}_n} f(\phi X), Z\big)
    \;\le\;  C\big(\mfrac{k^2}{\sqrt{n}}+\msum_{l\ge k}d_l(\eta)\big)\;,
\end{equation*}
for some ${C<\infty}$.
\end{proof}

\begin{proof}[Proof of \cref{result:rgm}]
Write ${\mathbb{L}:=\{\phi\in \group|\phi(W)\cap
W\ne \emptyset\}}$. Observe that, if we choose
$\phi$ to be an element of $\mathbb{H}\setminus (\A_n\cap \mathbb{H})$ that is such that
$\phi(W)\cap \A_n W\ne \emptyset$, then we have $\phi\in \A_n\mathbb{L}\cap \mathbb{H}$. 
This implies that
\begin{align*}&
\big\|\sqrt{|\A_n\cap \mathbb{H}|}\big(\nu_n(h)-\frac{1}{|\A_n\cap \mathbb{H}|}\int_{\A_n\cap \mathbb{H}} f(\phi(\Pi))d|\phi|\big)\big\|_2^2\\&\le\frac{|(\A_n\triangle \A_n \mathbb{L})\cap \mathbb{H}|}{|\A_n\cap \mathbb{H}|}\|f(\Pi)\|_{2+\epsilon}^2\sum_{i\in \mathbb{N}}|\B_{i+1}\setminus \B_i|\alpha_i(i|\group)^{\frac{\epsilon}{2+\epsilon}}
\rightarrow 0\;,
\end{align*}
and \cref{theorem:CLT} shows $\frac{1}{\sqrt{|\A_n\cap \mathbb{H}|}}\int_{\A_n\cap \mathbb{H}} f(\phi(\Pi))-\mathbb{E}(f(\Pi)|\group)d|\phi|\xrightarrow{d}\eta Z$.
\end{proof}

\begin{proof}[Proof of \cref{result:polynomially:stable}]
  By hypothesis, ${\sup_{i>0} i^{-r}|\B_i|<\infty}$, polynomial
  stability holds with index $q>\frac{(2+2\epsilon)r}{\epsilon}$, and
  $\Pi$ is a Poisson process. We have to show that 
  \begin{equation*}
    \myint_{\mathbb{G}} \alpha^{(n)}(d(e,\phi)|\mathbb{G})^{\frac{\epsilon}{2+\epsilon}}|d\phi|<\infty\;.
  \end{equation*}
  For ${b\in\mathbb{N}}$, a subset ${G_1\subset\group}$, and
  ${F\in\mathcal{F}}$, define
  \begin{equation*}
  f_{n,b}(F)=f_n(F\cap\B_b)
  \quad
  Y(G_1):=(f_n(\phi(\Pi)))_{\phi\in G_1}
  \quad
  Y_b(G_1):=(f_{n,b}(\phi(\Pi)))_{\phi\in G_1}\;.
  \end{equation*} 
Write $\mathcal{L}(\argdot)$ for the law
of a random variable. Then
 \begin{equation*}
\big\| \Law(Y(G_1))-\Law(Y_b(G_1))\big\|_{\text{\rm\tiny TV}}
     \le
     P(Y(G_1)\neq Y_b(G_1))
\le \mathbb{E}\big[\sum_{(x,y)\in G_1W\cap \Pi}\mathbb{I}(R(x,y,\Pi_n)>b)\big]
\end{equation*}
An application of Campbell's theorem for Poisson
processes \citep{Kallenberg:2001} shows there are constants
${C_1,C_2>0}$ and $\gamma:=|\{\phi\in \mathbb{H}|\phi(W)\cap
G_1W\ne\emptyset\}|$ such that
 \begin{equation}
        \label{mal_1}
\big\| \Law(Y(G_1))-\Law(Y_b(G_1))\big\|_{\text{\rm\tiny TV}}
     \;\le\;
 C_1\gamma\sup_{(x,m)\in W}P( R(x,m,\Pi_n)>{b})
\;\le\; C_2\gamma b^{-q}\;.
\end{equation}
Let $\bar{d}$ be the Hausdorff metric induced by $d$, and denote $\bar{d}$-balls by $\bar{\B}$. Take $G_1:=\{\phi,\phi'\}$ with elements $\phi,\phi'\in \mathbb{G}$ and
let $G_2$ be another subset of $\group$ with $\bar{d}(G_1,G_2)\ge b$. Then there is $C_3<\infty$ such that
 \begin{equation}\begin{split}&\label{mal_2}
 \big\|\Law(Y(G_2))-\Law(Y_{\bar{d}(G_1,G_2)-\frac{b}{2}}(G_2))\big\|_{\text{\rm\tiny TV}}
 \le
 P(Y(G_2)\ne Y_{\bar{d}(G_1,G_2)-\frac{b}{2}}(G_2))
\\&\quad{\le}\;
\tsum_{j\ge b} P\big(Y({\bar{\B}_{j+1}(G_1)\setminus \bar{\B}_j(G_1)})
\ne
Y_{\frac{2j-b}{2}}({\bar{\B}_{j+1}(G_1)\setminus \bar{\B}_j(G_1)}\big)
\\&\quad{\le}\;
C_3\tsum_{j\ge 0}(j+\mfrac{b}{2})^{-q}(j+b)^{r-1}
\end{split} \end{equation}
where the second inequality applies the union bound, and the third follows by substituting the growth rate and the definition of stability into \cref{mal_1}.
Whenever $G_1$ and $G_2$ satisfy $|G_1|\le 2$ and $\bar{d}(G_1,G_2)\ge b$, and $A,B$ are measurable sets, there is hence a constant $C_4$
such that
\begin{equation*}\begin{split}&
 \big| P(Y(G_1)\in A,Y(G_2)\in B) - P(Y(G_1)\in A)P(Y(G_2)\in B)\big|
 \\&\quad{\le}\;
 \|\Law(Y_{b/2}(G_1))-\Law(Y(G_1))\|_{\text{\rm\tiny TV}}
 +
 \|\Law(Y_{\bar{d}(G_1,G_2)-b/2}(G_2))-\Law(Y(G_2))\|_{\text{\rm\tiny TV}}
\\&\quad{\le}\;
C_4 \big(\tfrac{b}{2}\big)^{{r-q}}
 \end{split} \end{equation*}
The first inequality holds by independence of ${Y_{b/2}(G_1)}$ and ${Y_{{\bar{d}(G_1,G_2)-b/2}}(G_2)}$, the second follows from \cref{mal_1} and \eqref{mal_2}.
That implies $\alpha^{(n)}(b|\mathbb{G}_2)\le C_4(b/2)^{r-q}$, and hence the desired result
since $q> 2\frac{1+\epsilon}{\epsilon}r.$
\end{proof}

\begin{proof}[Proof of \cref{theorem:entropy}]
  Since the group is countable, we can define an order $\prec$ on $\group$ by enumerating
  the elements of $\A_n$ as ${\phi_1^n,\phi_2^n\ldots}$ and declaring
  ${\phi_{i-1}^n\prec\phi_{i}^n}$ for all ${i\in\mathbb{N}}$. For the process $(S_{\phi})$, define the $\sigma$-algebras
  \begin{equation*}
    \mathcal{T}_n(\phi):=
    \sigma\braces{
      S_{\phi'}\,|\, \phi'\in\A_n,\phi'\prec \phi
    }
    \quad\text{ and }\quad
    \mathcal{T}(\phi):= \sigma\braces{S_{\phi'}\,|\,\phi'\prec \phi}\;.
  \end{equation*}
  With these in hand, we define functions
  \begin{equation*}
    \begin{split}
    f_n(S,\phi) :=&\ \log P(S_{\phi}|\mathcal{T}_n(\phi))-\mathbb{E}[\log P(S_{\phi}|\mathcal{T}_n(\phi))]
    \\
    g_m(S,\phi) :=&\ \log P(S_{\phi}|\mathcal{T}(\phi)\cap\B_{m})-\mathbb{E}[\log P(S_{\phi}|\mathcal{T}(\phi)\cap\B_{m})\big]
    \end{split}
  \end{equation*}
  An application of the chain rule then yields
  \begin{equation*}\begin{split}&
      \tfrac{1}{\sqrt{|\A_n|}}\bigl(
      \log P(S_{\A_n})-\mathbb{E}[\log P(S_{\A_n})]\bigr)
      =\tfrac{1}{\sqrt{|\A_n|}}\tsum_{\phi \in \A_n}
      f_n(S,\phi)\;.
  \end{split}\end{equation*}
  Now consider a $\phi$ such that ${\mathcal{T}_n(\phi)\cap\B_{m}=\mathcal{T}(\phi)\cap\B_{m}}$. Then
  \begin{equation*}
    \|f_n(S,\phi)-g_m(S,\phi)\|_{2}\le \rho_{m}\;.
  \end{equation*}
  The number of ${\phi\in\A_n}$ for which that is \emph{not} the case is
  \begin{equation*}
    |\braces{
      \phi\in \A_n\,|\,
      \mathcal{T}_n(\phi)\cap\B_{m}\ne\mathcal{T}(\phi)\cap\B_{m}}|
    \;\le\; |\A_n\triangle \B_{m}\A_n|
  \end{equation*}
  Denote ${M_p:=\sup_{\phi\in\group,A\subset\group}\|\log P(X_{\phi}|X_{A})\|_{p}}$.
  For any ${\phi,\phi'\in \mathbb{G}}$ that satisfy
  ${d(\phi,\phi')\ge i}$ and any ${k\in\mathbb{N}}$, we have
  \begin{equation*}
    \begin{split}&
    \text{Cov}\bigl[
      f_n(S,\phi)-g_m(S,\phi),f_n(S,\phi')-g_m(S,\phi')
      \bigr]\\
    &\le 4\min(\rho_m,\rho_k )^2+8\min(\rho_m,\rho_k )M_2+4  M^2_{2+\varepsilon}
    \alpha^{\frac{\varepsilon}{2+\varepsilon}}(i-k,|\B_m|)\;.
    \end{split}
  \end{equation*}
  Therefore  for any sequence $(b_n)$  satisfying
~${\frac{|\A_n\triangle\B_{b_n}\A_n|}{|\A_n|}\rightarrow 0}$ and ${b_n\rightarrow\infty}$
~we have
  \begin{equation*}
    \tfrac{1}{\sqrt{|\A_n|}}
    \tsum_{\phi\in\A_n}f_n(S,\phi)-g_{b_n}(S,\phi)\xrightarrow{L_2} 0\;.
  \end{equation*}
  Let $\alpha^m$ be the mixing coefficient of $g_m$. Then
  ${\alpha^m(i)\le \alpha(i-2m,|\B_m|)}$. \cref{theorem:CLT:g} hence implies
  \begin{equation*}
    \tfrac{1}{\sqrt{|\A_n|}} {\tsum_{\phi\in \A_n}g_m(\phi X)}\xrightarrow{\;d\;} \eta_m Z
    \quad\text{ for }\quad
    \eta_m^2:= \tsum_{\phi} \text{Cov}[g_m(X),g_m(\phi X)]\;.
  \end{equation*}
  Since ${\eta_m\xrightarrow{m\rightarrow \infty} \eta}$, the result follows.
\end{proof}

\pdfinfo{
/Title (Limit theorems for invariant distributions)
/Author (Morgane Austern, Peter Orbanz)
/Subject ()
/Keywords ()
}

\end{document}